\newtheorem{thm}{\bf Theorem}
\newtheorem{lem}{\bf Lemma}[section]
\newtheorem{assump}{\bf Assumption}
\newcommand{\tabincell}[2]{
\begin{tabular}{@{}#1@{}}#2\end{tabular}
}
\title{
Overlapping Domain Decomposition\\ Methods for Ptychographic Imaging
}
\author{ Huibin Chang\thanks{Corresponding author. School of Mathematical Sciences, Tianjin Normal University, Tianjin, China, {\tt E-mail:changhuibin@gmail.com}} 
	\and Roland Glowinski\thanks{Department of Mathematics, University of Houston, Houston, USA }
	\and Stefano Marchesini\thanks{Computational Research Division, Lawrence Berkeley National Laboratory, Berkeley, USA} 
	\and 	Xue-Cheng Tai\thanks{Department of Mathematics, Hong Kong Baptist University, Kowloon Tong, Hong Kong}
	\and  Yang Wang\thanks{Department of Mathematics,	The Hong Kong University of Science and Technology,  Kowloon, Hong Kong
	}
	\and Tieyong Zeng\thanks{Department of Mathematics, The Chinese University of Hong Kong, Hong Kong}}
\begin{document}

\maketitle
\slugger{sisc}{xxxx}{xx}{x}{x--x}

\begin{abstract}
In ptychography experiments,   redundant scanning is usually required  to guarantee the stable recovery, such that  a huge amount of frames are generated,  and thus it  poses a great demand of parallel computing in order to solve this large-scale inverse problem.  
In this paper, we propose the overlapping {\bf D}omain {\bf D}ecomposition {\bf M}ethod{\bf s} (DDMs) to solve the nonconvex optimization problem in ptychographic imaging. They  decouple the problem defined on the whole domain into subproblems only defined  on the subdomains with synchronizing information in the overlapping regions of these subdomains, thus leading to highly parallel algorithms with good load balance.  More specifically,  for the nonblind recovery (with known probe in advance), by enforcing the continuity of the overlapping regions for the image (sample),   the nonlinear optimization model is established based on a novel {\bf s}mooth-{\bf t}runcated {\bf a}mplitude-{\bf G}aussian {\bf m}etric (ST-AGM). Such metric allows for fast calculation  of the proximal mapping with closed form, and meanwhile provides the possibility for the convergence guarantee of the first-order nonconvex optimization algorithm due to its Lipschitz smoothness. Then  the  {\bf A}lternating {\bf D}irection {\bf M}ethod of {\bf M}ultipliers (ADMM) is utilized to generate an efficient {\bf O}verlapping {\bf D}omain {\bf D}ecomposition based {\bf P}tychography  algorithm (OD${}^2$P) for the two-subdomain {\bf d}omain {\bf d}ecomposition (DD), where all subproblems can be computed with close-form solutions.  Due to the  Lipschitz continuity for the gradient of the objective function with  ST-AGM, the convergence of the proposed OD${}^2$P is derived  under  mild conditions.     
Moreover, it is extended to more general case including multiple-subdomain DD and blind recovery. 
Numerical experiments are further conducted to show the performance of proposed algorithms, demonstrating good convergence speed and robustness  to the noise.  { Especially, we report the virtual wall-clock time of proposed algorithm up to 10 processors, which shows  potential for upcoming  massively  parallel  computations.}
\end{abstract}

\begin{keywords}
Overlapping domain decomposition method; Ptychography; Phase retrieval; Parallel computing; Smooth truncated  amplitude-Gaussian metric; Blind recovery
\end{keywords}

\begin{AMS}
	46N10,~49N30,~49N45,~65F22,~65N21
\end{AMS}


\section{Introduction}


X-ray ptychography  \cite{rodenburg2004phase}  enables imaging  with the large field of view and nano-scale resolution, by combining the technique of coherent diffraction imaging  and scanning transmission X-ray microscopy. It has been applied to vast research areas, that helps to reveal the structures and chemical contrast of nano-scale materials. 
Thus it is very meaningful to develop efficient algorithms for ptychographic imaging. As a special case for the phase retrieval problem (the inverse quadratic problem) arising in wide areas including optics, crystallography, materials sciences etc.,  various algorithms developed in \cite{marchesini2007invited, Shechtman2014} and references therein  can be exploited to tackle this problem.  Especially, for ptychography, recent algorithms  \cite{chapman1996phase,rodenburg2004phase,guizar2008phase,thibault2009probe,maiden2009improved,wen2012,osti_1324480,hesse2015proximal,chang2018Blind,fannjiang2019fixed} have been developed in order to further improve the performance and work in various settings, e.g. partial coherence \cite{chang2018partially} and blind recovery \cite{thibault2009probe, maiden2009improved} for unknown probe.

As an important property of ptychography, a huge amount of  the frames are usually collected such that the iterative imaging algorithms usually requires a large memory footprint and high computational cost. Therefore, it poses a great demand to develop parallel algorithms \cite{nashed2014parallel,guizar2014high,marchesini2016sharp, enfedaque2019high}. Namely, the asynchronous parallel algorithm \cite{nashed2014parallel} was performed on partitions of entire dataset asynchronously to get the sub-images, and then merge these sub-images  to the whole image. However, due to possible drifts  and phase ambiguities (linear phase ambiguity \cite{guizar2011phase} for blind recovery) for the recovered sub-images, high computation cost was required for pairwise  registration and correction of phase ambiguities.  To further improve the   asynchronous algorithm, the authors \cite{guizar2014high,nashed2014parallel} also provided   synchronous parallel algorithms, where either the boundary layers of the sub-images were synchronized by averaging all  local neighborhood \cite{nashed2014parallel}, or the entire image and probe were shared across all subdomains \cite{guizar2014high,marchesini2016sharp}.  
Hybrid parallel combined with the alternating projection   algorithm was implemented on GPU workstation \cite{enfedaque2019high}, and similarly to \cite{guizar2014high,marchesini2016sharp}, each subdomain has the individual copy of the whole image and probe leading to high communication cost.

Domain decomposition methods (DDMs) have played a great role in solving partial differential equations numerically \cite{lions1988schwarz,chan1994domain,glowinski1988domain}. Particularly, it allows for highly parallel computing with good load balance,  by decomposing the equations on whole domain to the problems on relatively small subdomains with information synchronization on the partition interfaces. It has also been successfully extended to large-scale image restoration, image reconstruction and other inverse problems, e.g. \cite{xu2010two,fornasier2010convergent,Chang2015sims,1930-8337_2015_1_163,lee2017primal,jiang2018,langer2019overlapping,lee2019finite,chen2019parallel,TOURNIER201988}  and references therein.  To the best of our knowledge, most of existing studies focused on the linear inverse problems with convex minimization modeling.  In this paper, we will explore the potential using DDMs to realize parallel computing for ptychographic imaging, which essentially corresponds to a nonconvex quadratic inverse problem.

However, it seems quite difficult to design corresponding DDMs for the ptychographic imaging. On one hand, the forward operator for ptychography involves the Fourier transform (integral operator) of the multiplication of the probe and sliding patch of the image, such that direct non-overlapping decomposition of  the image into subdomains  cannot split the measured frames completely, leading to both high computational and communication cost (see a related work of the DDM for nonlocal operator \cite{chang2014domain}). On the other hand, there is lack of convergence studies of DDMs for nonconvex minimization problems.  Especially, the convergence for the related first-order operator splitting algorithms seems difficult to guarantee, since the convention objective function \cite{osti_1324480,chang2016Total} is nonsmooth, and the smoothed version \cite{chang2018Blind} based operator-splitting algorithm cannot get rid of additional inner loop.

We aim to design more efficient DDMs for ptychographic imaging with convergence guarantee. Simply one can see that the non-overlapping domain decomposition (DD) of the image (sample) cannot reduce both the computation and communication  costs compared to the overlapping DD due to the Fourier transform. Therefore, we will consider the overlapping DD motivated by the overlapping scan style of ptychography experiments, such that the decomposed frames on the subdomains are completely decoupled. By enforcing the continuity of the image on joint layers of overlapping subdomains, we propose the  {\bf O}verlapping {\bf DD} based {\bf P}tychography algorithm (OD$^2$P) in  the case of two-subdomain DD, utilizing the  Alternating Direction Methods of Multipliers (ADMM \cite{Glowinski1989,Wu&Tai2010,boyd2011distributed}) in order to solve the constraint optimization problem with a modified metric. It is naturally  extended to the case of the multiple-subdomain DD.  As the size of the probe is much smaller than that of the image, the DDM for the blind recovery to recover the image and probe jointly is further proposed by sharing the joint probe across all subdomains, combined with the overlapping DD of the image.   Comparing to the existing parallel imaging algorithms, there is no need to correct the phase ambiguity, and moreover, a clear mathematical framework is given with lower communication cost and strict theoretical convergence under quite mild conditions.  The main contributions of this paper are summarized below:
\begin{itemize}
    \item  A  novel smooth truncated  metric is introduced to measure the recovered intensity and target,  which allows for fast calculation of the proximal mapping with closed form, and meanwhile  guarantees the convergence of the first-order optimization algorithm due to its Lipschitz smoothness.

    \item An overlapping DD is introduced to solve ptychography reconstruction, where a narrow layer of the whole sample is shared with adjacent subdomains.  We further solve the ptychography model on the overlapping DD by using ADMM, which is  essentially a fully splitting algorithm on the sense that all subproblems have closed-form solutions. Furthermore, with mild condition of the stepsizes,  the convergence is derived. To the best of our knowledge, it is the first time to prove the convergence for DDMs in nonconvex problems.
    \item The DDMs are further extended to multiple-subdomain DD and blind recovery.  For blind ptychography, we further propose  a new variational model with the compact support set of Fourier transform of the probe, to get rid of grid pathology for grid scanning. 
    Especially, we report the virtual wall-clock time of proposed algorithm for multiple DD, { which shows high speedup efficiency up to 10 processors and  potential for upcoming  massively parallel computations.}
\end{itemize}

The rest of paper is given below. In section \ref{sec2},  the proposed OD$^2$P based on the two-subdomain DD is presented for nonblind recovery with convergence guarantee under mild conditions of the stepsizes.  It is further extended to the case of multiple-subdomain DD and blind recovery in section \ref{sec3}. Numerous experiments are conducted in section \ref{sec4}. Finally, conclusions and future work are given in section \ref{sec5}.

\section{DDM for nonblind ptychography: Two-subdomain case}
\label{sec2}
In a standard ptychography experiment,  a localized coherent X-ray probe (or illumination) $w$  scans through an image (or sample) $u$, while the detector
collects a sequence of phaseless intensities  in the far field.
Throughout this paper, we consider the following discrete setting.
The variable \noindent$u\in\mathbb C^n$ (the complex Euclidean space) corresponds to a 2D image (sample) with $\sqrt{n}\times\sqrt{n}$ pixels, and $w\in\mathbb C^{\bar m}$  is a localized 2D  probe   with  $\sqrt{\bar m}\times \sqrt{\bar m}$ pixels, where both $u$ and $w$ are written as  a vector by a lexicographical order. A stack of phaseless measurements ~$\tilde f_j\in \mathbb R_+^{\bar m}~\forall 0\leq j\leq J-1$ ($\mathbb R_+^{\bar m}$ is the Euclidean space with non-negative elements) is collected with $
\tilde f_j=|\mathcal F(w\circ \mathcal S_j u)|^2,
$
where notations $|\cdot|, ()^2$ represent the element-wise absolute, and square values of a vector respectively, 
 $\circ$ denotes the element-wise multiplication, and $\mathcal F$ denotes the normalized discrete Fourier transformation. The operator $\mathcal S_j\in \mathbb R^{\bar m\times n}$ is denoted by a binary  matrix that specifies a small window  with the index $j$ and  size  $\bar m$ over the entire image $u$. The general blind ptychography (BP) problem can then be expressed as follows:
\begin{equation}\label{PtychoPR}
{\text{\qquad To find ~}w \in \mathbb C^{\bar m}\text{~and~} u\in \mathbb C^n,~~} \text{such that}~~|\mathcal F(w\circ \mathcal S_j u)|^2=\tilde f_j\ \forall 0\leq j\leq J-1.
\end{equation}

In this section, we assume that the probe $w$ is known in advance. 
Denote the linear operator $A:\mathbb C^{n}\rightarrow \mathbb C^m$ {( $m=J\times \bar m$)} as
$Au:=\big((\mathcal F (w\circ \mathcal S_0 u))^T,(\mathcal F (w\circ \mathcal S_1 u))^T
,\cdots, (\mathcal F (w\circ \mathcal S_{J-1} u))^T\big)^T\ \ \forall \ u\in \mathbb C^n,$
 such that the nonblind ptychography reconstruction can be modeled below:
\begin{equation}\label{eq:ptychoModel}
\text{To find the image $u\in\mathbb C^n$,\ \ s.t. \ }|Au|^2=f,
\end{equation}
with  the phaseless data $f:=(\tilde f^T_0,\tilde f^T_1,\cdots,\tilde f^T_{J-1})^T\in \mathbb R^m$ and the given probe $w.$

\subsection{Overlapping domain decomposition}
Denote the  whole region $\Omega:=\{0, 1, 2, \cdots, n-1\}$ in the discrete setting. There exists the two-subdomain overlapping DD $\{\Omega_d\}_{d=1}^2$,  such that  $\Omega=\bigcup_{d=1}^{2} \Omega_d$ with $\Omega_d:=\{l^d_0, l^d_1,\cdots, l^d_{n_d-1}\}$, and the overlapping region is denoted as $\Omega_{1,2}:=\Omega_1\cap\Omega_2=\{\hat l_0, \hat l_1, \cdots, \hat l_{\hat n-1}\}.$
Here we consider a special overlapping DD. As shown in Fig. \ref{fig1} (b) (totally $4\times 4$ frames) for grid scan,  each subdomain contains a complete  scanning frame, i.e. the right (or left) boundaries of frames near the left(or right) subdomain  interface lies exactly on the right(or left) boundary of subdomains. 
Denote the restriction operators $R_1, R_2$ as $R_d u=u|_{\Omega_d}$ and $R_{1,2}u=u|_{\Omega_{1,2}}$, i.e. $(R_d u)(j)=u(l^d_j)\forall \ 0\leq j\leq n_d-1,$
$(R_{1,2}u)(j)=u(\hat l_j)\forall 0\leq j\leq \hat n-1.$
Then two groups of localized shift operators can be introduced $\{\mathcal S_{j_d}^d\}_{j_d=0}^{J_d-1}, d=1,2$, with $\sum_d J_d=J.$ 
Therefore, we can denote the linear operators $A_1, A_2$  on the subdomains as
\begin{equation}
A_d u_d:=\big((\mathcal F(w\circ\mathcal S_0^d u_d))^T, (\mathcal F(w\circ\mathcal S_1^du_d))\big)^T,\cdots, (\mathcal F(w\circ\mathcal S_{J_d-1}^du_d))^T)^T\ \ d=1,2.
\label{eq:Ad_def}
\end{equation}
Naturally, the measurement $f$ is also decomposed to two non-overlapping parts $f_1, f_2$, i.e. $f_d:=|A_d R_d u|^2$ assuming that $(u,f)$ follows \eqref{eq:ptychoModel}.

\begin{figure}[ht]
\begin{center}
\subfigure[]{\includegraphics[width=.32\textwidth]{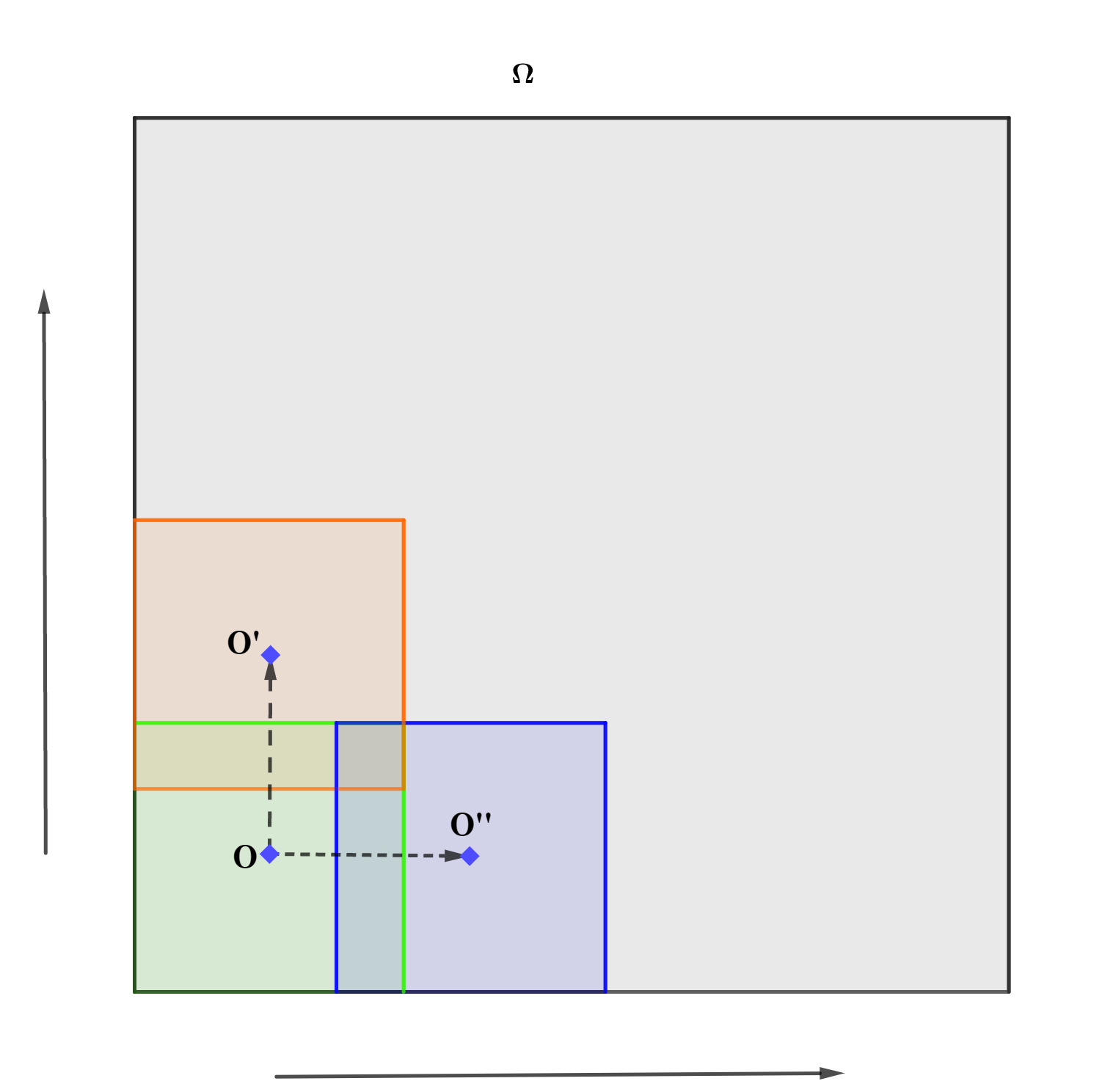}}
\subfigure[]{\includegraphics[width=.3\textwidth]{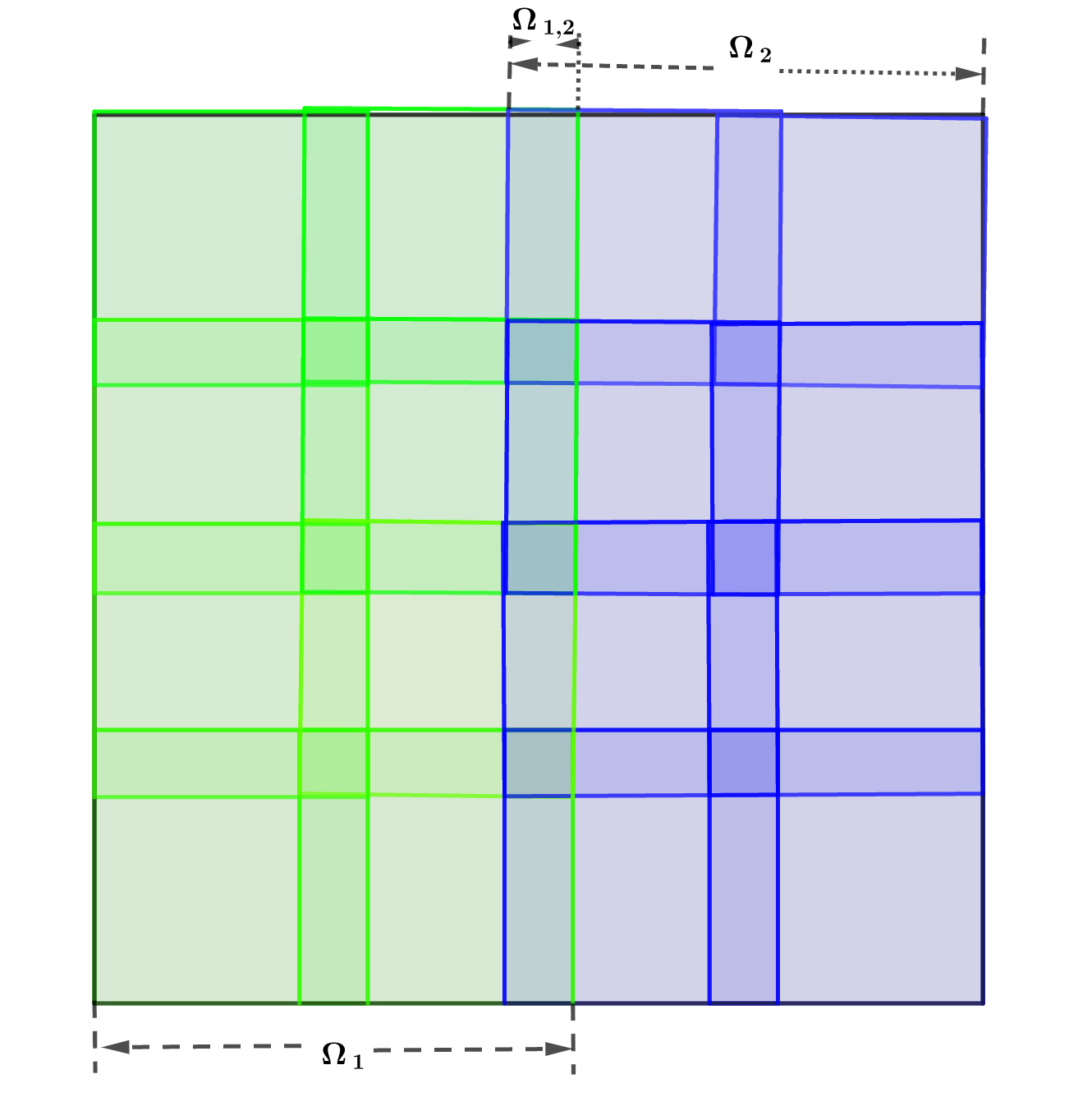}}
\end{center}
    \caption{(a) Ptychography scan in the domain $\Omega$ (grid scan): the starting scan centers at point {\bf O}, and then move up (or to the right) with the center point {\bf O'} (or {\bf O"} ); (b) Two-subdomain DD (totally $4\times 4$ frames): The subdomains $\Omega_1, \Omega_2$ are generated by two $4\times 2$-scans, and the overlapping region $\Omega_{1,2}=\Omega_1\cap\Omega_2$. }
\label{fig1}
\end{figure}

We remark that as shown in Fig. \ref{fig1}, the width of the overlapping region $\Omega_{1,2}$ is $\sqrt{\bar m}-n_{stepsize}$, with the scan stepsize $n_{stepsize}$ pixels (assuming the stepsizes are same for $x, y-$directions). Therefore, letting the stepsize $n_{stepsize}<\sqrt{\bar m}$ guarantees the sufficient overlap of the two-subdomain DD, that immediately demonstrates that   sufficient redundancy of the scan for ptychography leads to enough overlap of the DD. We also remark that  one can first divide the measurements with non-overlapping partitions as shown in Fig. \ref{fig1} (b), that will automatically generate the overlapping DD.
  
\subsection{A new nonlinear optimization model}\label{sec-ST-AGM}
For simplicity,   we consider the decomposition with only two subdomains, and rewrite the problem \eqref{eq:ptychoModel} on the whole region to the following equations
\begin{equation}\label{eq:forward}
\begin{split}
|A_1 u_1|^2=f_1,~~|A_2 u_2|^2=f_2,
\end{split}
\end{equation}
with the continuity of the overlapping regions
\begin{equation}\label{eq:continuity}
\mathcal {\mathbf  \pi}_{1,2}u_1=\mathcal {\mathbf  \pi}_{2,1}u_2,
\end{equation}
where the operators $\pi_{1,2}$ (restriction from $\Omega_1$ into $\Omega_{1,2}$) and $\pi_{2,1}$ (restriction from $\Omega_2$ into $\Omega_{1,2}$) are denoted as
$\pi_{1,2}u_1:= R_{1,2} R_1^T u_1,$ and $\pi_{2,1}u_2:= R_{1,2} R_2^T u_2.$ 
Remarkably, the continuity condition is essential for the perfect reconstruction of the ptychography. Without this condition, the redundancy of the boundary layer may not be sufficient such that there will be visible artifacts in the final reconstruction results and meanwhile phase ambiguities need to be corrected. 

We will discuss how to solve the above problem efficiently. One may solve the problem using a Schwarz alternating method \cite{lions1988schwarz}. However, directly solving this multivariable quadratic system is generally NP-complete. Alternatively, we seek  the solution by  nonlinear optimization. With the popular amplitude-Gaussian metric (AGM) \cite{wen2012,fannjiang2019fixed} to measure the fitting errors, one can readily establish the following  nonlinear optimization model 
\begin{equation*}
\min_{u_1, u_2}\sum\nolimits_{d=1}^2\tfrac12\left( \big\|| A_d u_d|-\sqrt{f_d}\big\|^2\right),\ \ s.t. \ \mathcal {\mathbf  \pi}_{1,2}u_1=\mathcal {\mathbf  \pi}_{2,1}u_2,
\end{equation*} 
where $\sqrt{\cdot}$ denotes the element-wise square root of a vector, and $\|\cdot\|$ denotes the standard $\ell^{2}$ norm in real or complex Euclidean space.  Due to lack of smoothness,  one may introduce a modified metric \cite{guizar2008phase,chang2018Blind,gao2019solving} with additional penalization factor in order to gain the convergence guarantee. 
However, an inner loop has to be introduced \cite{chang2018Blind}, since the proximal mapping of the modified metric does not have closed form solution. 

Readily one sees that the AGM is smooth  except at the origin. Instead of smoothing the function by adding a global penalization factor \cite{guizar2008phase,chang2018Blind,gao2019solving}, a more simple but effective way is to  cut off the AGM near the origin, and then add back a smooth function which can keep the global minimizer unchanged. At the same time, it should not  increase  computational cost compared with AGM.   Hence, a novel smooth truncated AGM (ST-AGM) $\mathcal G_\epsilon(\cdot; f)$ with truncation parameter $\epsilon>0$ is designed below:
\begin{equation}\label{eq:STAGM}
\mathcal G_\epsilon(z; f):=\sum\nolimits_j g_\epsilon(z(j); f(j)),
\end{equation}
where $\forall\ \  x\in \mathbb C, b\in\mathbb R^+,$
\begin{equation}\label{eq:obj-1}
g_{\epsilon}(x; b):=
\left\{
\begin{split}
&\frac{1-\epsilon}{2}\left( b-\tfrac{1}{\epsilon}{|x|^2}\right), \text{\ \ if\ } |x|<\epsilon \sqrt{b};\\
&\frac{1}{2} \big||x|-\sqrt{b}\big|^2, \text{\ \ \ \ \ otherwise.}
\end{split}
\right.
\end{equation}

One readily sees that the new metric is a mixture of the AGM and a quadratic function (smooth connected). Please also see Fig. \ref{fig0} for the landscape of ST-AGM in  1-dimension: Among four different plots with $\epsilon=0, 0.1, 0.5, 1.5$, one can observe that  the minimizer does not change w.r.t. different $\epsilon$ ($0\leq \epsilon<1$); However, if $\epsilon>1$, the global minimizer moves to the origin. Hence, to keep the minimizer unchanged, we set  $\epsilon\in (0,1)$  hereafter.

Denote the corresponding proximal mapping of $g_\epsilon$ as
$
\mathrm{Prox}_{\mathcal G_\epsilon;\lambda}(y):=\arg\min_z \mathcal G_{\epsilon}(z; f) +\tfrac{\lambda}{2}\|z-y\|^2.
$
After simple calculations,  one derives the following closed form solution with $\epsilon\in (0,1)$:
\begin{equation}
\ \ \ \ (\mathrm{Prox}_{\mathcal G_\epsilon;\lambda}(y))(j)=\mathrm{sign}(y(j))\times
\left\{
\begin{split}
&\max\Big\{0, \tfrac{\lambda|y(j)|}{\lambda-\tfrac{1-\epsilon}{\epsilon}}\Big\},\text{\ \ \ \ \ if } |y(j)|<\epsilon-\tfrac{1-\epsilon}{\lambda}\sqrt{f(j)};\\
&\tfrac{\sqrt{f(j)}+\lambda|y(j)|}{1+\lambda},\text{\ \ \ \ \ otherwise. \ }\\
\end{split}
\right. 
\label{eq:Prox_sol}
\end{equation}
\begin{figure}[h!]
\begin{center}
\includegraphics[width=.45\textwidth]{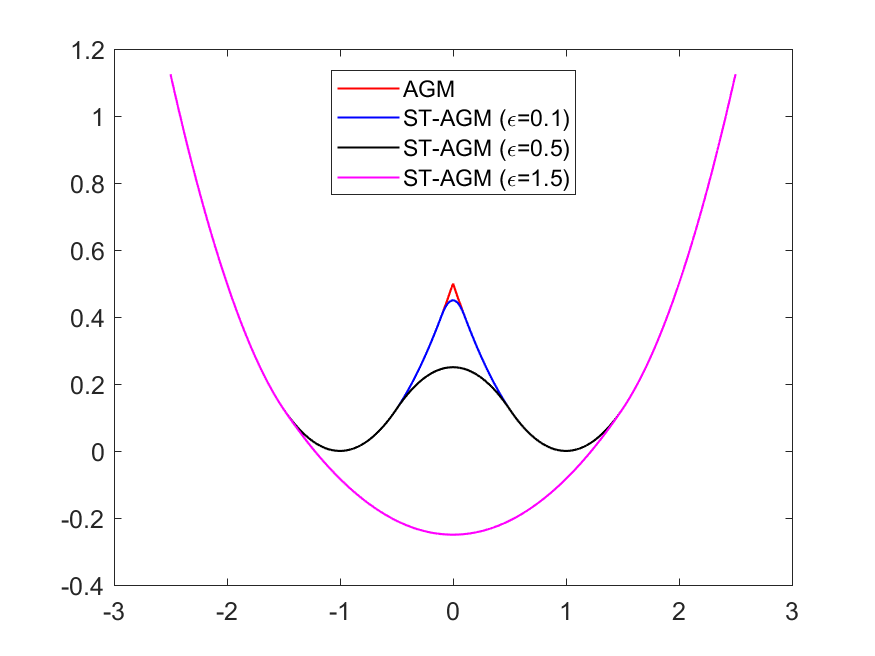}
\end{center}
    \caption{Landscapes of ST-AGM as $g_\epsilon(x, 1)\ \forall x\in\mathbb R$ ($\epsilon=0, 0.1, 0.5, 1.5$). If $\epsilon=0$, ST-AGM is exactly the same as AGM.
}
\label{fig0}
\end{figure}
One can further show that $\mathcal G_\epsilon(z; f)$ is Lipschitz differentiable by the following lemma.
\begin{lem}
Denote $L:=\frac{2}{\epsilon}-1$.
The function $\mathcal G_\epsilon(\cdot; f)$ is Lipschitz differentiable, i.e. 
\begin{equation}\label{eq:Lips}
 \|\nabla_z \mathcal G_{\epsilon}(z; f)-\nabla_z \mathcal G_{\epsilon}(y; f)\|
 \leq L\| z-y \|  \ \ \  \forall \ z, y\in \mathbb C^m.
\end{equation}
\end{lem}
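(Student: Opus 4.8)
The plan is to exploit the separable structure. Since $\mathcal G_\epsilon(z;f)=\sum_j g_\epsilon(z(j);f(j))$, the gradient $\nabla_z\mathcal G_\epsilon(z;f)$ is the concatenation of the per-coordinate gradients $\nabla_x g_\epsilon(\cdot;f(j))$, so that $\|\nabla_z\mathcal G_\epsilon(z;f)-\nabla_z\mathcal G_\epsilon(y;f)\|^2=\sum_j\|\nabla_x g_\epsilon(z(j);f(j))-\nabla_x g_\epsilon(y(j);f(j))\|^2$. Hence it suffices to prove that for every fixed $b=f(j)\ge 0$ the scalar map $x\mapsto g_\epsilon(x;b)$, viewed on $\mathbb C\cong\mathbb R^2$, has an $L$-Lipschitz gradient with $L$ \emph{independent of} $b$; summing over $j$ then yields \eqref{eq:Lips}. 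Identifying $x$ with $(\mathrm{Re}\,x,\mathrm{Im}\,x)$, I would write $g_\epsilon(x;b)=\phi(|x|)$ as a radial function of $r=|x|$, with $\phi(r)=\tfrac{1-\epsilon}{2}(b-r^2/\epsilon)$ for $r<\epsilon\sqrt b$ and $\phi(r)=\tfrac12(r-\sqrt b)^2$ for $r\ge\epsilon\sqrt b$.

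The decisive point is that the truncation makes $g_\epsilon$ genuinely $C^1$, which is precisely what the AGM fails to be. First I would record the one-sided gradients: on the inner region $\nabla_x g_\epsilon=-\tfrac{1-\epsilon}{\epsilon}\,x$ (a smooth quadratic, hence no singularity at $x=0$), and on the outer region $\nabla_x g_\epsilon=(1-\sqrt b/|x|)\,x$. Evaluating both on the interface sphere $|x|=\epsilon\sqrt b$ gives the common value $-\tfrac{1-\epsilon}{\epsilon}\,x$, so the gradient extends continuously across the interface and $g_\epsilon\in C^1(\mathbb C)$; continuity of the value itself, built into the ``smooth connected'' construction, is checked the same way.

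On each open region $g_\epsilon$ is $C^2$, and I would bound the spectral norm of its Hessian via the radial and tangential eigenvalues of a radial function. The inner region gives the single eigenvalue $-\tfrac{1-\epsilon}{\epsilon}$, while the outer region gives eigenvalues $1$ (radial) and $1-\sqrt b/|x|$ (tangential), the latter ranging in $[-\tfrac{1-\epsilon}{\epsilon},1)$ for $|x|\ge\epsilon\sqrt b$. Thus $\|\nabla^2 g_\epsilon\|\le\max\{1,\tfrac{1-\epsilon}{\epsilon}\}$ wherever it is defined, and since $\epsilon\in(0,1)$ this is dominated by $L=\tfrac{2}{\epsilon}-1$. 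To upgrade this pointwise bound to a global one I would integrate along the segment joining any $y,z$: the segment meets the interface sphere in at most two points, and $\nabla_x g_\epsilon$ is continuous everywhere and $C^1$ off this finite set, so $\nabla_x g_\epsilon(z)-\nabla_x g_\epsilon(y)=\int_0^1 \nabla^2 g_\epsilon(y+t(z-y))(z-y)\,dt$ is valid and delivers $\|\nabla_x g_\epsilon(z)-\nabla_x g_\epsilon(y)\|\le L\|z-y\|$.

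I expect the main obstacle to be the interface $|x|=\epsilon\sqrt b$: $g_\epsilon$ is only piecewise smooth there, so the argument hinges on first establishing the $C^1$ matching, which is what legitimizes the fundamental theorem of calculus along segments crossing the non-smooth sphere, rather than on any single Hessian computation. The origin, by contrast, is automatically harmless, since the truncation replaces the non-differentiable $|x|$ by the smooth $|x|^2$; and the stated $L=2/\epsilon-1$ should be read as a deliberately loose but clean upper bound for the sharp constant $\max\{1,(1-\epsilon)/\epsilon\}$, which also makes plain why $L$ does not depend on $b$ and hence survives the coordinatewise summation.
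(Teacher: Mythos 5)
Your proof is correct, but it follows a genuinely different route from the paper's. After the same coordinatewise reduction, the paper never touches second derivatives: it writes down the gradient explicitly on the two regions $\mathcal K_1=\{x:|x|<\epsilon\sqrt b\}$ and $\mathcal K_2=\mathbb C\setminus\mathcal K_1$ and runs a three-case estimate (both points in $\mathcal K_1$, both in $\mathcal K_2$, one in each) using only the triangle inequality and $\big||x_1|-|x_2|\big|\le |x_1-x_2|$; the worst cases give the constant $|\tfrac1\epsilon-1|+\tfrac1\epsilon$, which for $\epsilon\in(0,1)$ is exactly $L=\tfrac2\epsilon-1$. You instead first establish the $C^1$ gluing across the interface $|x|=\epsilon\sqrt b$ (a point the paper only asserts with ``readily one knows $g_\epsilon\in\mathcal C^1$''), then bound the Hessian eigenvalues of the two radial pieces, and globalize by integrating the Hessian along segments, using that a segment meets the interface circle in at most two points so the fundamental theorem of calculus applies to the continuous, piecewise-$C^1$ gradient. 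What your route buys is the sharp constant $\max\{1,\tfrac{1-\epsilon}{\epsilon}\}$ (so the stated $L$ is seen to be a clean overestimate) and an explanation of why the mixed case needs no separate treatment: your FTC step across the interface replaces the paper's third case entirely. What the paper's route buys is elementary self-containedness --- no Hessian spectra, no absolute-continuity argument, just algebraic estimates --- at the price of the looser constant; since the lemma states that looser constant and it is the one consumed downstream (in the descent inequality and the parameter set $\mathcal K$), both arguments deliver everything the rest of the paper needs. One small caveat in your write-up: when $b=f(j)=0$ the inner region is empty and $g_\epsilon(\cdot;0)=\tfrac12|x|^2$, so the phrase ``eigenvalues $1$ and $1-\sqrt b/|x|$'' should be read with that degenerate case in mind; it is harmless, but worth a sentence.
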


\begin{proof}
We will  show the Lipschitz continuity of $\nabla_x g_\epsilon(\cdot; b)$.  
Denoting $\mathcal K_1:=\{x\in\mathbb C:\ |x|<\epsilon \sqrt{b}\}$ and $\mathcal K_2:=\mathbb C\setminus \mathcal K_1.$
Readily one knows that $g_\epsilon(x;b)\in \mathcal C^1$ if $\epsilon>0,$  where the gradient is given below
\begin{equation}\label{eq:objgrad}
\nabla_x g_\epsilon(x; b)=\left\{
\begin{split}
& (1-\tfrac{1}{\epsilon}) x, \text{\ \ if\ } x\in \mathcal K_1;\\
& (1-\tfrac{\sqrt{b}}{|x|})x, \text{\ otherwise.}
\end{split}
\right.
\end{equation}
It is not difficult to see that
\begin{equation}\label{eq:L1}
|\nabla_x g_\epsilon(x_1; b)-\nabla_x g_\epsilon(x_2; b)|=  |\tfrac{1}{\epsilon}-1|\ |x_1-x_2|\ \ \ \forall x_1, x_2\in \mathcal K_1.
\end{equation}
One can also get
\begin{equation}\label{eq:L2}
\begin{split}
&|\nabla_x g_\epsilon(x_1; b)-\nabla_x g_\epsilon(x_2; b)|
\leq
|(1-\tfrac{\sqrt{b}}{|x_1|})(x_1-x_2)|+|(\tfrac{\sqrt{b}}{|x_1|}-\tfrac{\sqrt{b}}{|x_2|})x_2|\\
\leq &|1-\tfrac{\sqrt{b}}{|x_1|}| \ |x_1-x_2|+\tfrac{\sqrt{b}}{|x_1|}\big||x_1|-|x_2|\big|
\leq (|\tfrac{1}{\epsilon}-1|+\tfrac{1}{\epsilon})|x_1-x_2|\ \ \ \forall x_1, x_2\in \mathcal K_2.
\end{split}
\end{equation}
One can further derive
\begin{equation}\label{eq:L3}
\begin{split}
&|\nabla_x g_\epsilon(x_1; b)-\nabla_x g_\epsilon(x_2; b)|
=|(1-\tfrac{1}{\epsilon}) x_1-(1-\tfrac{\sqrt{b}}{|x_2|})x_2|\\
\leq &|1-\tfrac{1}{\epsilon}| \ |x_1-x_2|+|x_2||\tfrac{1}{\epsilon}-\tfrac{\sqrt{b}}{|x_2|}|
\leq (|\tfrac{1}{\epsilon}-1|+\tfrac{1}{\epsilon})|x_1-x_2| \ \ \ \forall x_1\in\mathcal K_1, x_2\in \mathcal K_2.
\end{split}
\end{equation}
By \eqref{eq:L1}-\eqref{eq:L3}, one gets
\begin{equation}\label{eq:L4}
|\nabla_x g_\epsilon(x_1; b)-\nabla_x g_\epsilon(x_2; b)| \leq    (|\tfrac{1}{\epsilon}-1|+\tfrac{1}{\epsilon})|x_1-x_2|\ \ \ \forall \ x_1, x_2\in\mathbb C,
\end{equation}
that demonstrates the Lipschitz continuity of $\nabla_x g_\epsilon(\cdot; b).$

Finally one can derive
\[
\begin{split}
&\|\nabla_z \mathcal G_{\epsilon}(z; f)-\nabla_z \mathcal G_{\epsilon}(y; f)\|
=\sqrt{\sum\nolimits_j \|\nabla_{x} g_{\epsilon}(z(j); f(j))-\nabla_{x}  g_{\epsilon}(y(j); f(j))\|^2}\\
\leq&(|\tfrac{1}{\epsilon}-1|+\tfrac{1}{\epsilon})\| z-y \| \ \ \forall \ z,y\in\mathbb C^m,
\end{split}
\]
where the last relation is based on \eqref{eq:L4}. That concludes  this lemma.
\end{proof}

{We  remark that  when considering the following optimization problem w.r.t. the solution on the entire region
\[
\min\nolimits_u \mathcal G_\epsilon(Au;f),
\]
one can readily get the first-order optimality condition by \eqref{eq:objgrad}
\[
A^*\big((\bm 1-\tfrac{\sqrt{f}}{\max\{\epsilon \sqrt{f}, |Au|\}})\circ Au\big)=0,
\]
where we assume that $f>0$ as \cite{osti_1324480} (one can always remove the zero intensity values). It does not have singularity at the origin, and  is exactly the first-order optimality condition for the AGM based optimization problem in the case of $\epsilon=0$. Moreover, one can notice that the stationary points of the ST-AGM defined in  \eqref{eq:obj-1}  keep unchanged compared with those of AGM, and new metric only differs from the AGM  near the origin (local maximum) by setting the truncation parameter   $\epsilon\in(0,1)$ in \eqref{eq:obj-1}. Therefore, it should not affect the performance of new algorithm developed in the paper. 
  }

Hereafter, we consider the nonlinear optimization problem with ST-AGM. In order to enable the parallel computing of $u_1$ and $u_2$, we introduce an auxiliary variable $v$ which is only defined { in the overlapping region $\Omega_{1,2}$,} and then are concerned with the following model:
\begin{equation}
\begin{split}
    &\min\nolimits_{u_1, u_2, v}\sum\nolimits_{d=1}^2\mathcal G_\epsilon(A_d u_d;f_d)\ \text{~~~~s.t.}\ \mathcal {\mathbf  \pi}_{d,3-d}u_d-v=0,~ d=1,2.
\end{split}
\label{model:twosubdomain}
\end{equation}

\subsection{{\bf O}verlapping DD based {\bf p}tychography algorithm (OD${}^2$P)}
As one of the most popular first-order operator-splitting algorithm, the ADMM  \cite{Glowinski1989,Wu&Tai2010,boyd2011distributed} has been applied to ptychography reconstruction, showing competitive performances for Poisson noisy measurements as well as  large scan stepsizes  \cite{wen2012,chang2018Blind, fannjiang2019fixed}. We will also solve the proposed model \eqref{model:twosubdomain} using ADMM.

{
In order to develop an iterative scheme without inner loop as well as with fast convergence for large-step scan,} two  auxiliary variables $z_1, z_2$ are introduced below:
\begin{equation}
\begin{split}
    &\min\nolimits_{u_1, u_2, v, z_1, z_2}\sum\nolimits_{d=1}^2\mathcal G_\epsilon(z_d;f_d)\ \\
s.t.& \  \ \mathcal {\mathbf  \pi}_{d,3-d}u_d-v=0, \ \ \ A_d u_d-z_d=0,\ d=1, 2,
\end{split}
\label{eq:model-constraint}
\end{equation}
and then introduce the corresponding augmented Lagrangian for \eqref{eq:model-constraint} as follows:
\begin{equation}\label{eq:augLag}
\begin{split}
&\mathcal L(u_1, u_2, v, z_1, z_2,\Lambda_{1,2},\Lambda_{2,1}, \Gamma_1, \Gamma_2):=\sum\nolimits_{d=1}^2\mathcal G_\epsilon(z_d;f_d)\ \\
&\ \ \ \ + r\sum\nolimits_{d=1}^2\left( \Re(\langle \Lambda_{d,3-d}, \mathcal {\mathbf  \pi}_{d,3-d}u_d-v\rangle)+\tfrac{1}{2}\|\mathcal {\mathbf  \pi}_{d,3-d}u_d-v\|^2\right)\\
&\ \  \  +\eta\sum\nolimits_{d=1}^2\left(  \Re(\langle \Gamma_d, \mathcal A_d u_d-z_d \rangle) +\tfrac{1}{2}\|A_d u_d-z_d \|^2\right),
\end{split}
\end{equation}
{ with $\Re(\cdot)$ denoting the real part of a complex number.}
Then we are concerned with the following saddle point problem below:
\begin{equation}\label{eq:SaddlePoint}
\max\nolimits_{\Lambda_{1,2},\Lambda_{2,1},\Gamma_1,\Gamma_2}\qquad\min\nolimits_{u_1,u_2,v,z_1,z_2}\mathcal L(u_1, u_2, v, z_1, z_2,\Lambda_{1,2},\Lambda_{2,1}, \Gamma_1, \Gamma_2).
\end{equation}
To solve the saddle point problem \eqref{eq:SaddlePoint}, an iterative scheme using  the alternating minimization is given below: To  get $(u_1^{n+1}, u_2^{n+1}, v^{n+1}, z_1^{n+1}, z_2^{n+1}, \Gamma_1^{n+1}, \Gamma_2^{n+1}, \Lambda_{1,2}^{n+1}, \Lambda_{2,1}^{n+1})$ by the following steps:
\begin{equation}\label{eq:ADMM}
\left\{
\begin{split}
\text{Step 1.\ \ }&(v^{n+1}, z^{n+1}_1, z^{n+1}_2)=\arg\min\nolimits_{v, z_1, z_2} \mathcal L(u^{n}_1, u^{n}_2, v, z_1, z_2,\Lambda^{n}_{1,2},\Lambda^{n}_{2,1}, \Gamma^{n}_1, \Gamma_2^{n})
\\ &\hskip 2.7cm=\arg\min_{v, z_1, z_2} \sum\nolimits_{d=1}^2 \mathcal G_\epsilon(z_d;f_d)+\tfrac{\eta}{2}\sum\nolimits_{d=1}^2 \|z_d-(\Gamma^{n}_d+A_d u_d^{n}) \|^2\\
&\hskip 4.5cm  +\tfrac{r}{2}\sum\nolimits_{d=1}^2\left(\|v-(\mathcal {\mathbf  \pi}_{d,3-d}u^{n}_d+\Lambda^{n}_{d,3-d})\|^2\right);\\
\text{Step 2.\ \ }&\Gamma_d^{n+1}=\Gamma^n_d+ A_d u^{n}_d-z^{n+1}_d, d=1,2;\\
\text{Step 3.\ \ }&(u_1^{n+1}, u_2^{n+1})=\arg\min\nolimits_{u_1,u_2} \mathcal L(u_1, u_2, v^{n+1}, z_1^{n+1}, z_2^{n+1},\Lambda^n_{1,2},\Lambda^n_{2,1}, \Gamma^{n+1}_1, \Gamma^{n+1}_2)\\
 &\hskip 1cm =\arg\min\nolimits_{u_1, u_2}\tfrac{\eta}{2}\sum\nolimits_{d=1}^2 \|A_d u_d+\Gamma_d^{n+1}-z_d^{n+1} \|^2+\tfrac{r}{2}\sum\nolimits_{d=1}^2\|\mathcal {\mathbf  \pi}_{d,3-d}u_d-v^{n+1}+\Lambda^n_{d,3-d}\|^2;\\
 \text{Step 4.\ \ }&\Lambda_{d,3-d}^{n+1}= \Lambda^n_{d,3-d}+{\mathbf  \pi}_{d,3-d}u^{n+1}_d-v^{n+1}, d=1,2;\\
\end{split}
\right.
\end{equation}
with the approximation solutions $(u_1^{n}, u_2^{n}, v^{n}, z_1^{n}, z_2^{n}, \Gamma_1^{n}, \Gamma_2^{n}, \Lambda_{1,2}^{n}, \Lambda_{2,1}^{n})$ in the $n^{th}$ step.

One readily knows that the objective functions for all subproblems (sub-optimization problem) of proposed ADMMs are differentiable, such that we have
\begin{equation}\label{eq:Stationary1}
\left\{
\begin{split}
&0=\nabla_{z_d}\mathcal G_\epsilon(z_d^{n+1}; f_d)- \eta \Gamma_d^{n+1},\ \ d=1,2;\\
&0=v^{n+1}-\tfrac{1}{2}\sum\nolimits_{d=1}^2({\mathbf  \pi}_{d,3-d}u^{n}_d+\Lambda^{n}_{d,3-d});\\
&0=(r{\mathbf  \pi}_{d,3-d}^T{\mathbf  \pi}_{d,3-d}+\eta A_d^*A_d) u_d^{n+1}-\big(r{\mathbf  \pi}_{d,3-d}^T(v^{n+1}-\Lambda^{n}_{d,3-d})+\eta A_d^*(z^{n+1}_d-\Gamma^{n+1}_d)\big), d=1,2; \\
\end{split}
\right.
\end{equation}
where the third equation is derived by taking partial gradient of the augmented Lagrangian and the multiplier update in \eqref{eq:ADMM}.
By  \eqref{eq:Prox_sol} and the first two equations in \eqref{eq:Stationary1}, one can get closed forms of $z_d^{n+1}$ and $v^{n+1}$.  In order to get the solution for $u_d^{n+1}, d=1,2$, we further discuss how to solve the linear systems in the last two equations in \eqref{eq:Stationary1}. With the definition of $A_d$ in \eqref{eq:Ad_def}, 
one can get 
\begin{equation}\label{eq:AtA}
A_d^* A_d=\mathrm{diag}\big(\sum\nolimits_{j_d=0}^{J_d-1} (\mathcal S_{j_d}^d)^T |w|^2\big),~d=1,2.
\end{equation}
One can also get 
$
\pi_{d,3-d}^T\pi_{d,3-d}=R_dR_{d,3-d}^TR_{d,3-d}R_d^T=\mathrm{diag}(\bm\sigma_d),
~d=1,2,
$
with  $\bm\sigma_d\in \mathbb R^{n_d}$  denoted as 
\begin{equation}
\bm\sigma_d(j)=\left\{
\begin{split}
&1,\ \ \text{if \ } l_{j}^d\in \Omega_{1,2};\\
&0,\ \ \text{otherwise,}
\end{split}
\right.
\ \ \ \forall ~0\leq j\leq n_d-1.\ \
\end{equation}
Therefore, $u_d^{n+1} (d=1,2)$ can be determined explicitly as below: $\forall~0\leq j_d\leq n_d-1,$
\begin{equation}
u_d^{n+1}(j_d)=\left\{
\begin{split}
&\tfrac{ (r{\mathbf  \pi}_{d,3-d}^T(v^{n+1}-\Lambda^{n}_{d,3-d})+\eta A_d^*(z^{n+1}_d-\Gamma^{n+1}_d)) (j_d) }{r+\eta \sum\nolimits_{j_d=0}^{J_d-1} ((\mathcal S_{j_d}^d)^T |w|^2)(j_d) }, \text{~if~} l_{j_d}^d\in \Omega_{1,2};\\
&\tfrac{ (A_d^*(z^{n+1}_d-\Gamma^{n+1}_d)) (j_d) }{\sum\nolimits_{j_d=0}^{J_d-1} ((\mathcal S_{j_d}^d)^T |w|^2)(j_d) }, \text{~\quad\qquad\qquad\qquad\quad~otherwise.}
\end{split}
\right.\label{eq:Alg1-solver-u}
\end{equation}

The overall overlapping DD based ptychography  algorithm (OD${}^2$P) is given as below:
\vskip .2in
\begin{minipage}{.95\textwidth}
\begin{center}
\rule{\textwidth}{1mm}
\vskip .1in
\centering{Algorithm 1: {\bf O}verlapping {\bf DD} based {\bf P}tychography  algorithm (OD${}^2$P)}
\vskip .1in
\rule{\textwidth}{.8mm}
\begin{itemize}
\item[Step 0.] Initialize $u_1^0=\bm 1, u_2^0=\bm 1, v^0$, $z_1^0=A_1 u_1^0, z_2^0=A_2 u_2^0,$ and multipliers $\Gamma_1^0=\bm 0, \Gamma_2^0=\bm 0,\Lambda^0_{1,2}=\Lambda^0_{2,1}=\bm 0.$  Set $n:=0.$

\item[Step 1.] Update $z^{n+1}_1$ and $z_2^{n+1}$ in parallel by
\begin{equation}\label{eq:updateZ}
\begin{split}
&z_d^{n+1}=\mathrm{Prox}_{\mathcal G_\epsilon;\eta}(\Gamma^{n}_d+A_d u_d^{n}), d=1,2;\ \ \
\end{split}
\end{equation}
and  update $v^{n+1}$ by
\begin{equation}\label{eq:updateV}
v^{n+1}=\tfrac{1}{2}\sum\nolimits_{d=1}^2({{\mathbf  \pi}_{d,3-d}u^{n}_d+\Lambda^n_{d,3-d}});
\end{equation}

\item[Step 2.] Update multipliers $\Gamma_1^{n+1}, \Gamma_2^{n+1}$ in parallel by
\begin{equation}\label{eq:updateM-1}
\begin{split}
&\Gamma_d^{n+1}=\Gamma^n_d+ A_d u^{n}_d-z^{n+1}_d,\ \  d=1,2; \\
\end{split}
\end{equation}

\item[Step 3.] Update $u^{n+1}_1, u^{n+1}_2$ in parallel by \eqref{eq:Alg1-solver-u}.

\item[Step 4.] Update multipliers $\Lambda_{1,2}^{n+1}, \Lambda_{2,1}^{n+1}$  by
\begin{equation}\label{eq:updateM-2}
\begin{split}
& \Lambda_{d,3-d}^{n+1}= \Lambda^n_{d,3-d}+{\mathbf  \pi}_{d,3-d}u^{n+1}_d-v^{n+1},\ \ d=1,2.\\
\end{split}
\end{equation}
\item[Step 5.] If satisfying the stopping condition, then stop and output $u_1^{n+1}$ and $u_2^{n+1}$ as the final solution; otherwise set $n:=n+1$, and go to Step 1.
\end{itemize}
\vskip .1in
\rule{\textwidth}{1mm}
\end{center}
\end{minipage}

\subsection{Convergence analysis}

First by the last  equation in \eqref{eq:Stationary1} and the multipliers update in \eqref{eq:ADMM}, one can derive
\begin{equation}\label{eq:Stationary2}
\begin{split}
0&={\eta}A_d^*(A_d u_d^{n+1}+\Gamma_d^{n+1}-z_d^{n+1})+r\mathcal {\mathbf  \pi}_{d,3-d}^T\Lambda^{n+1}_{d,3-d}, d=1,2.\\
\end{split}
\end{equation}

\begin{assump}\label{assump1}
The matrices $A_d^*A_d~(d=1,2)$  are non-singular, i.e. there exists strictly positive constant $\hat c_d (d=1,2)$, s.t.
$
\|A_d h_d\|\geq \hat c_d \|h_d\|\ \ \ \forall h_d\in\mathbb C^{n_d}.
$
\end{assump}

{
The assumption is  to guarantee the boundedness of the iterative sequence { $\{u_d^n\}$, if $\{A_d u_d^n\}$} is bounded.  
By \eqref{eq:AtA}, the assumption holds with sufficiently small scan stepsize, i.e.  each scan position and its neighbours have sufficient overlapping (one can also refer to Remark 4.1 of \cite{chang2018Blind}). 
}

Due to \eqref{eq:Lips}, one can prove that the function $\mathcal G_\epsilon(\cdot;f)$ has the descent property \cite{chang2018Blind}:
\begin{equation}\label{eq:descent}
\mathcal G_\epsilon(z;f)-\mathcal G_\epsilon(\hat z;f)-\Re(\langle \nabla_z \mathcal G_\epsilon(\hat z; f), z-\hat z\rangle)\leq \  \tfrac{L}{2}\|z-\hat z\|^2\ \ \ \forall \ z, \hat z\in \mathbb C^m.
\end{equation}

\begin{lem}\label{le:2}
Denote $E(z)=\frac{1}{2}\|Tz- f\|^{2}+M(z)$, with convex function $M$ and linear mapping $T$.  Letting $z^*$ be the global minimizer, i.e. $E(z^*)=\arg\min_z E(z)$,  then we have
\begin{displaymath}\label{eq:convexDescent}
E(z)-E(z^*)\geq\tfrac12\|T(z-z^*)\|^{2}.
\end{displaymath}
\end{lem}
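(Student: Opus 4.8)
The plan is to exploit the special additive structure of $E$ as a convex quadratic $Q(z):=\tfrac12\|Tz-f\|^2$ plus the convex term $M$, and to combine the first-order optimality of the global minimizer $z^*$ with an \emph{exact} expansion of the quadratic part. First I would record the gradient of the smooth part, $\nabla Q(z)=T^*(Tz-f)$. Since $Q$ is convex and $M$ is convex, $E$ is convex, so the global minimizer $z^*$ is characterized by the stationarity inclusion $-\nabla Q(z^*)\in\partial M(z^*)$; that is, the vector $-T^*(Tz^*-f)$ is a subgradient of $M$ at $z^*$.

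Next, using the subgradient inequality for the convex function $M$ with precisely this subgradient, I obtain, for every $z$,
\[
M(z)-M(z^*)\ \geq\ -\Re\langle T^*(Tz^*-f),\,z-z^*\rangle\ =\ -\Re\langle Tz^*-f,\,T(z-z^*)\rangle.
\]
In parallel I would expand the quadratic part exactly, with no inequality incurred. Writing $Tz-f=(Tz^*-f)+T(z-z^*)$ and applying $\|a+b\|^2=\|a\|^2+2\Re\langle a,b\rangle+\|b\|^2$,
\[
Q(z)-Q(z^*)\ =\ \Re\langle Tz^*-f,\,T(z-z^*)\rangle+\tfrac12\|T(z-z^*)\|^2.
\]

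Finally I would add the two displays, noting $E(z)-E(z^*)=\big(Q(z)-Q(z^*)\big)+\big(M(z)-M(z^*)\big)$. The cross terms $\pm\Re\langle Tz^*-f,\,T(z-z^*)\rangle$ cancel identically, leaving exactly $E(z)-E(z^*)\geq\tfrac12\|T(z-z^*)\|^2$, which is the claim. The only delicate point, and the one deserving care, is the optimality step: because $M$ is merely convex and need not be differentiable, I must invoke the subdifferential form of stationarity rather than a gradient identity, and I must keep the real-part convention $\Re\langle\cdot,\cdot\rangle$ for the complex inner product consistent throughout so that the linear terms match and cancel. Everything else is an exact algebraic identity, so convexity of $M$ is the single inequality used; in particular the bound is attained along any direction in the kernel of $T$.
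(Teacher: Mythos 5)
Your proof is correct. The paper itself omits the argument (declaring the lemma trivial), and what you give is precisely the standard proof it has in mind: the exact quadratic expansion $Q(z)-Q(z^*)=\Re\langle Tz^*-f,\,T(z-z^*)\rangle+\tfrac12\|T(z-z^*)\|^2$ combined with the subgradient inequality for $M$ at the optimality condition $-T^*(Tz^*-f)\in\partial M(z^*)$, with the cross terms cancelling; your care about using the subdifferential (rather than a gradient) for the possibly nonsmooth $M$ and the $\Re\langle\cdot,\cdot\rangle$ convention on $\mathbb{C}^n$ is exactly what is needed for the applications in the paper (e.g.\ the $u_d$-subproblem estimate), where $M$ is itself a convex quadratic.
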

The proof of  above lemma is trivial, and therefore we omit the details. Denote $X^n:=(u^{n}_1, u^{n}_2, v^n, z_1^{n}, z_2^{n},$ $\Lambda^{n}_{1,2}, \Lambda^{n}_{2,1}, \Gamma^{n}_1, \Gamma_2^{n})$.
{ Then we can estimate the lower bound of the changes for the objective function values in the following lemma, that can further show  the non-increasing of the sequence $\{\mathcal L(X^n)\}$ with proper parameters $r$ and  $\eta$ (see Lemma \ref{lem:bound}).}
\begin{lem}\label{lem:non-increasing}
\begin{equation*}
\begin{split}
&\mathcal L(X^n)-\mathcal L(X^{n+1})\\
\geq&\left(\tfrac{r}{2}-\tfrac{2\eta^2}{r}\|A_1^*A_1\|^2\right)\ \|\pi_{1,2} (u_1^n-u_1^{n+1})\|^2+\left(\tfrac{r}{2}-\tfrac{2\eta^2}{r}\|A_2^*A_2\|^2\right)\  \|\pi_{2,1} (u_2^n-u_2^{n+1})\|^2\\
&+\left(\tfrac{\eta-3L}{2}-\tfrac{L^2}{\eta}-\tfrac{2(L+\eta)^2}{r^2} \|\pi_{1,2}A_1^*\|^2\right)\ \|z_1^n-z_1^{n+1}\|^2+\left(\tfrac{\eta-3L}{2}-\tfrac{L^2}{\eta}-\tfrac{2(L+\eta)^2}{r^2} \|\pi_{2,1}A_2^*\|^2\right)\ \|z_2^n-z_2^{n+1}\|^2\\
&+r\|v^n-v^{n+1}\|^2+\tfrac{\eta}{2}\sum\nolimits_{d=1}^2 \|A_d (u_d^n-u_d^{n+1})\|^2\\
\end{split}
\end{equation*}
\end{lem}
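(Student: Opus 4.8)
The plan is to telescope the one–step change of the augmented Lagrangian \eqref{eq:augLag} across the four blocks updated in \eqref{eq:ADMM}. Introducing the intermediate states obtained after Step~1, Step~2 and Step~3, I would write
\[
\mathcal L(X^n)-\mathcal L(X^{n+1})=\Delta_{1}+\Delta_{2}+\Delta_{3}+\Delta_{4},
\]
where $\Delta_1$ is the change across the joint $(v,z_1,z_2)$–minimization, $\Delta_2$ across the $\Gamma$–update, $\Delta_3$ across the $(u_1,u_2)$–minimization, and $\Delta_4$ across the $\Lambda$–update. Because Steps~1 and~3 are exact minimizations, $\Delta_1,\Delta_3\ge 0$ and will supply all the positive terms, whereas $\Delta_2,\Delta_4\le 0$ and must be absorbed by the primal increments.

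For $\Delta_1$ the Step~1 objective is fully separable and is a convex quadratic in $v$ plus the functions $\mathcal G_\epsilon(\cdot;f_d)+\tfrac{\eta}{2}\|\cdot-(\Gamma_d^n+A_du_d^n)\|^2$ in each $z_d$. Applying Lemma~\ref{le:2} to the $v$–quadratic (with the two penalty blocks stacked into the linear map $T$) yields the decrease $r\|v^n-v^{n+1}\|^2$. For each $z_d$, the quadratic part is $\eta$–strongly convex while $\mathcal G_\epsilon$ has $L$–Lipschitz gradient by \eqref{eq:Lips}; using the stationarity $\nabla_z\mathcal G_\epsilon(z_d^{n+1};f_d)=\eta\Gamma_d^{n+1}$ from \eqref{eq:Stationary1} together with the lower descent inequality attached to \eqref{eq:descent}, I would obtain a decrease of at least $\tfrac{\eta-L}{2}\|z_d^n-z_d^{n+1}\|^2$ (so the stated $\tfrac{\eta-3L}{2}$ holds a fortiori). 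For $\Delta_3$ the $u$–objective is an exact convex quadratic with block Hessian $\eta A_d^*A_d+r\,\pi_{d,3-d}^T\pi_{d,3-d}$, so Lemma~\ref{le:2} gives
\[
\Delta_3=\sum\nolimits_{d=1}^2\Big(\tfrac{\eta}{2}\|A_d(u_d^n-u_d^{n+1})\|^2+\tfrac{r}{2}\|\pi_{d,3-d}(u_d^n-u_d^{n+1})\|^2\Big),
\]
already producing the $\tfrac{\eta}{2}\|A_d(\cdot)\|^2$ terms and the leading $\tfrac r2\|\pi_{d,3-d}(\cdot)\|^2$ terms.

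Since $\mathcal L$ is affine in each multiplier and the Step~2 and Step~4 residuals coincide with the multiplier increments, a one–line computation gives $\Delta_2=-\eta\sum_d\|\Gamma_d^{n+1}-\Gamma_d^n\|^2$ and $\Delta_4=-r\sum_d\|\Lambda_{d,3-d}^{n+1}-\Lambda_{d,3-d}^n\|^2$. The $\Gamma$–increment is controlled by differencing $\eta\Gamma_d^{n+1}=\nabla_z\mathcal G_\epsilon(z_d^{n+1};f_d)$ and invoking \eqref{eq:Lips}, giving $\|\Gamma_d^{n+1}-\Gamma_d^n\|\le\tfrac{L}{\eta}\|z_d^n-z_d^{n+1}\|$ and hence the $-\tfrac{L^2}{\eta}$ contribution to the $z_d$ coefficient. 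The delicate quantity is $\|\Lambda_{d,3-d}^{n+1}-\Lambda_{d,3-d}^n\|$: I would difference the stationarity relation \eqref{eq:Stationary2}, left–multiply by $\pi_{d,3-d}$, and use $\pi_{d,3-d}\pi_{d,3-d}^T=I_{\hat n}$ (immediate from $\pi_{d,3-d}=R_{1,2}R_d^T$ and $\Omega_{1,2}\subseteq\Omega_d$) to solve
\[
r\big(\Lambda_{d,3-d}^{n+1}-\Lambda_{d,3-d}^n\big)=-\eta\,\pi_{d,3-d}A_d^*\big[A_d(u_d^{n+1}-u_d^n)+(\Gamma_d^{n+1}-\Gamma_d^n)-(z_d^{n+1}-z_d^n)\big].
\]

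The main obstacle is to convert the $u$–contribution on the right so that it is controlled by $\|\pi_{d,3-d}(u_d^n-u_d^{n+1})\|$, matching the term already generated by $\Delta_3$, rather than by an uncontrolled $\|u_d^n-u_d^{n+1}\|$. This is exactly where the diagonal structure \eqref{eq:AtA} enters: since $A_d^*A_d$ is diagonal it commutes with the coordinate restriction $\pi_{d,3-d}$, so $\pi_{d,3-d}A_d^*A_d(u_d^{n+1}-u_d^n)=(A_d^*A_d)|_{\Omega_{1,2}}\pi_{d,3-d}(u_d^{n+1}-u_d^n)$ and therefore $\|\pi_{d,3-d}A_d^*A_d(u_d^{n+1}-u_d^n)\|\le\|A_d^*A_d\|\,\|\pi_{d,3-d}(u_d^n-u_d^{n+1})\|$. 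Combining this with the $\Gamma$– and $z$–increment bounds through $\|a+b\|^2\le 2\|a\|^2+2\|b\|^2$ bounds $\|\Lambda_{d,3-d}^{n+1}-\Lambda_{d,3-d}^n\|^2$ by $\tfrac{2\eta^2}{r^2}\|A_d^*A_d\|^2\|\pi_{d,3-d}(u_d^n-u_d^{n+1})\|^2+\tfrac{2(L+\eta)^2}{r^2}\|\pi_{d,3-d}A_d^*\|^2\|z_d^n-z_d^{n+1}\|^2$, so that $\Delta_4$ subtracts precisely the quantities appearing with $\|\pi_{d,3-d}(\cdot)\|^2$ and $\|z_d^n-z_d^{n+1}\|^2$ in the statement. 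Collecting $\Delta_1,\dots,\Delta_4$ and reading off the coefficients then yields the claimed inequality; the only genuinely tedious part is this constant bookkeeping for the $z_d$ coefficient, everything else being a routine assembly of the telescoped terms.
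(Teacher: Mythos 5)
Your proposal is correct and follows essentially the same route as the paper's own proof: the same block-by-block telescoping in the ADMM update order, Lemma~\ref{le:2} for the exact quadratic $v$- and $u$-minimizations, the multiplier-increment-equals-residual identity for the $\Gamma$- and $\Lambda$-updates, the Lipschitz bound $\|\Gamma_d^{n+1}-\Gamma_d^n\|\le\tfrac{L}{\eta}\|z_d^n-z_d^{n+1}\|$, and crucially the same use of \eqref{eq:Stationary2} together with $\pi_{d,3-d}\pi_{d,3-d}^T=I$ and the diagonality of $A_d^*A_d$ to control $\|\Lambda_{d,3-d}^{n+1}-\Lambda_{d,3-d}^n\|$ by $\|\pi_{d,3-d}(u_d^n-u_d^{n+1})\|$ and $\|z_d^n-z_d^{n+1}\|$, followed by $(a+b)^2\le 2(a^2+b^2)$. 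The only deviation is that you expand the $z_d$-block around $z_d^{n+1}$ using its stationarity, obtaining the slightly sharper coefficient $\tfrac{\eta-L}{2}$ where the paper expands around $z_d^n$ and pays an extra Cauchy term to get $\tfrac{\eta-3L}{2}$; since your bound implies the stated one, this is a harmless (indeed mildly improved) variant of the same argument.
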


\begin{proof}
In order to estimate the decay of the augmented Lagrangian, the following relation \cite{chang2018Blind} is given below:
\begin{equation}\label{eq:QuaDiff}
\|z\|^2-\|y\|^2=\|z-y\|^2+2\Re\langle y, z-y\rangle\ \ \ \forall \ z, y\in\mathbb C^m.
\end{equation}

First for the $z_d-$subproblems,  by \eqref{eq:QuaDiff}, one readily has
\begin{equation*}
\begin{split}
&\mathcal L(X^n)-\mathcal L(u^{n}_1, u^{n}_2, v^n, z_1^{n+1}, z_2^{n+1},\Lambda^{n}_{1,2},\Lambda^{n}_{2,1}, \Gamma^{n}_1, \Gamma_2^{n})\\
=&\sum\nolimits_d \mathcal G_\epsilon(z^n_d;f_d)+\tfrac{\eta}{2} \|z^n_d-(\Gamma^{n}_d+A_d u_d^{n}) \|^2-\left(\mathcal G_\epsilon(z_d^{n+1};f_d)+\tfrac{\eta}{2} \|z_d^{n+1}-(\Gamma^{n}_d+A_d u_d^{n}) \|^2\right)\\
\stackrel{\eqref{eq:QuaDiff}}{=}&\sum\nolimits_d\left(\mathcal G_\epsilon(z^n_d;f_d)-\mathcal G_\epsilon(z^{n+1}_d;f_d)+\tfrac{\eta}{2} \|z^n_d-z_d^{n+1}\|^2+\eta \Re(\langle z_d^{n+1}-(\Gamma^{n}_d+A_d u_d^{n}), z_d^n-z_d^{n+1}\rangle)\right)\\
\stackrel{\eqref{eq:ADMM}}{=}&\sum\nolimits_d\left(\mathcal G_\epsilon(z^n_d;f_d)-\mathcal G_\epsilon(z^{n+1}_d;f_d)+\tfrac{\eta}{2} \|z^n_d-z_d^{n+1}\|^2-\eta \Re(\langle \Gamma^{n+1}_d, z_d^n-z_d^{n+1}\rangle)\right)\\
\stackrel{\eqref{eq:descent}}{\geq}&\sum\nolimits_d\left(\Re(\langle\nabla_z\mathcal G_\epsilon(z_d^n; f_d)-\eta\Gamma^{n+1}_d, z_d^{n}-z_d^{n+1}\rangle)+\tfrac{\eta-L}{2} \|z^n_d-z_d^{n+1}\|^2\right).
\end{split}
\end{equation*}
Further by the first equation in \eqref{eq:Stationary1}, \eqref{eq:Lips} and Cauchy's inequality $\Re(\langle z, y\rangle)\geq -\tfrac{L}{2}\|z\|^2-\tfrac{1}{2L}\|y\|^2$,
one gets
\begin{equation}\label{eq:ZD2-1}
 \begin{split}
&\mathcal L(X^n)-\mathcal L(u^{n}_1, u^{n}_2, v^n, z_1^{n+1}, z_2^{n+1},\Lambda^{n}_{1,2},\Lambda^{n}_{2,1}, \Gamma^{n}_1, \Gamma_2^{n})\geq\tfrac{\eta-3L}{2} \sum\nolimits_d\|z^n_d-z_d^{n+1}\|^2.
\end{split}
\end{equation}
For $v-$subproblem, it is quite standard, and we omit the details. Hence combined with \eqref{eq:ZD2-1}, one gets
\begin{equation}\label{eq:vD-1}
\begin{split}
&\mathcal L(X^n)-\mathcal L(u_1^{n}, u_2^{n}, v^{n+1}, z_1^{n+1}, z_2^{n+1},\Lambda_{1,2}^{n},\Lambda_{2,1}^{n}, \Gamma_1^{n}, \Gamma_2^{n})\geq\tfrac{\eta-3L}{2} \sum\nolimits_d\|z^n_d-z_d^{n+1}\|^2+r\|v^n-v^{n+1}\|^2.
\end{split}
\end{equation}
\vskip .1in
For the multipliers update $\{\Gamma^n_d\}$, one gets
\begin{equation}\label{eq:GD-1}
\begin{split}
&\mathcal L(u_1^{n}, u_2^{n}, v^{n+1}, z_1^{n+1}, z_2^{n+1},\Lambda_{1,2}^{n},\Lambda_{2,1}^{n}, \Gamma_1^{n}, \Gamma_2^{n})-
\mathcal L(u_1^{n}, u_2^{n}, v^{n+1}, z_1^{n+1}, z_2^{n+1},\Lambda_{1,2}^{n},\Lambda_{2,1}^{n}, \Gamma_1^{n+1}, \Gamma_2^{n+1})\\
=&-\eta\sum\nolimits_d\|\Gamma_d^n-\Gamma_d^{n+1}\|^2{\geq}-\tfrac{L^2}{\eta}\sum\nolimits_d\|z_d^{n}-z_d^{n+1}\|^2,
\end{split}
\end{equation}
where the last equation is derived by the first equation in  \eqref{eq:Stationary1} and \eqref{eq:Lips}.
\vskip .1in

For the sequences $\{u_d^n\}$, based on Lemma \ref{le:2}, one gets
\begin{equation}\label{eq:uD-1}
\begin{split}
&\mathcal L(u_1^{n}, u_2^{n}, v^{n+1}, z_1^{n+1}, z_2^{n+1},\Lambda_{1,2}^{n},\Lambda_{2,1}^{n}, \Gamma_1^{n+1}, \Gamma_2^{n+1})-
\mathcal L (u_1^{n+1}, u_2^{n+1}, v^{n+1}, z_1^{n+1}, z_2^{n+1},\Lambda_{1,2}^{n},\Lambda_{2,1}^{n}, \Gamma_1^{n+1}, \Gamma_2^{n+1})\\
\geq &\tfrac{\eta}{2}\sum\nolimits_d \|A_d (u_d^n-u_d^{n+1})\|^2+\tfrac{r}{2}\left(\|\pi_{1,2} (u_1^n-u_1^{n+1})\|^2+ \|\pi_{2,1} (u_2^n-u_2^{n+1})\|^2\right).
\end{split}
\end{equation}
\vskip .1in

For the multipliers $\{\Lambda^n_{1,2}\}$ and $\{\Lambda^n_{2,1}\}$ update, by \eqref{eq:Stationary2}, one has
$\Lambda^{n+1}_{1,2}=\tfrac{\eta}{r}\mathcal {\mathbf  \pi}_{1,2}A_1^*(A_1 u_1^{n+1}+\Gamma_1^{n+1}-z_1^{n+1})\\
                   =\tfrac{\eta}{r}{\mathbf  \pi}_{1,2}A_1^*A_1 {\mathbf  \pi}_{1,2}^T({\mathbf  \pi}_{1,2}u_1^{n+1})+\tfrac{1}{r}\mathcal {\mathbf  \pi}_{1,2}A_1^*\nabla_{z_1}\mathcal G_\epsilon(z_1^{n+1}; f_1)-\tfrac{\eta}{r}\mathcal {\mathbf  \pi}_{1,2}A_1^*z_1^{n+1},$
by the first equation in \eqref{eq:Stationary1} and $A_1^*A_1$ being diagonal matrix.
Then by \eqref{eq:Lips}, one can estimate the successive error of the multiplier $\{\Lambda^{n}_{1,2}\}$ as
$\|\Lambda_{1,2}^{n}-\Lambda_{1,2}^{n+1}\|\leq \tfrac{\eta}{r}\|A_1^*A_1\|\ \|\pi_{1,2}(u_1^n-u_1^{n+1})\|+ \tfrac{L+\eta}{r}\ \|\pi_{1,2}A_1^*\|\ \|z_1^n-z_1^{n+1}\|.
$
 By the inequality $(a+b)^2\leq 2(a^2+b^2)\ \forall\ a, b\in\mathbb R,$ one immediately obtains
\begin{equation}\label{eq:mult1}
\begin{split}
\|\Lambda_{1,2}^{n}-\Lambda_{1,2}^{n+1}\|^2\leq \tfrac{2\eta^2}{r^2}\|A_1^*A_1\|^2\ \|\pi_{1,2}(u_1^n-u_1^{n+1})\|^2+ \tfrac{2(L+\eta)^2}{r^2}\ \|\pi_{1,2}A_1^*\|^2\ \|z_1^n-z_1^{n+1}\|^2,\\
\end{split}
\end{equation}
Similarly, one can get
\begin{equation}\label{eq:mult2}
\begin{split}
\|\Lambda_{2,1}^{n}-\Lambda_{2,1}^{n+1}\|^2\leq \tfrac{2\eta^2}{r^2}\|A_2^*A_2\|^2\ \|\pi_{2,1}(u_2^n-u_2^{n+1})\|^2+ \tfrac{2(L+\eta)^2}{r^2}\ \|\pi_{2,1}A_2^*\|^2\ \|z_2^n-z_2^{n+1}\|^2.
\end{split}
\end{equation}
Combined with \eqref{eq:mult1} and \eqref{eq:mult2}, one gets
\begin{equation}\label{eq:LD1}
\begin{split}
&\mathcal L(u_1^{n+1}, u_2^{n+1}, v^{n+1}, z_1^{n+1}, z_2^{n+1},\Lambda_{1,2}^{n},\Lambda_{2,1}^{n}, \Gamma_1^{n+1}, \Gamma_2^{n+1})-
\mathcal L (X^{n+1})\\
= &-r\|\Lambda_{1,2}^n-\Lambda_{1,2}^{n+1}\|^2-r\|\Lambda_{2,1}^n-\Lambda_{2,1}^{n+1}\|^2\\
\geq&-\tfrac{2\eta^2}{r}\|A_1^*A_1\|^2\ \|\pi_{1,2}(u_1^n-u_1^{n+1})\|^2-\tfrac{2\eta^2}{r}\|A_2^*A_2\|^2\ \|\pi_{2,1}(u_2^n-u_2^{n+1})\|^2\\
    &- \tfrac{2(L+\eta)^2}{r^2}\ \|\pi_{1,2}A_1^*\|^2\ \|z_1^n-z_1^{n+1}\|^2-\tfrac{2(L+\eta)^2}{r^2}\ \|\pi_{2,1}A_2^*\|^2\ \|z_2^n-z_2^{n+1}\|^2.
\end{split}
\end{equation}
Summing up \eqref{eq:vD-1}, \eqref{eq:GD-1}, \eqref{eq:uD-1}, and \eqref{eq:LD1} concludes this lemma.
\end{proof}

\begin{lem}\label{lem:lowerBound}
The augmented Lagrangian is lower bounded, i.e.  $\mathcal L(X^n)>-\infty,$ if
\[
(r, \eta)\in\left\{(r, \eta)\in\mathbb R^2_+:\ \eta-1\geq 0, r-\eta\max\{\|\pi_{1,2}A_1^*\|,\|\pi_{2,1}A_2^*\|\}\geq 0\right\}.\]
\end{lem}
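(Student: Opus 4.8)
The plan is to evaluate $\mathcal L$ at an updated iterate $X^{n+1}$, for which the stationarity relations \eqref{eq:Stationary1}--\eqref{eq:Stationary2} hold, and to show that the resulting expression is in fact nonnegative, so that $\mathcal L(X^n)\geq 0>-\infty$ for $n\geq 1$ (the initial value $\mathcal L(X^0)$ being trivially finite). First I would complete the square separately in the two penalty blocks of \eqref{eq:augLag}: writing $w_d:=A_d u_d^{n+1}+\Gamma_d^{n+1}-z_d^{n+1}$ and $s_d:=\pi_{d,3-d}u_d^{n+1}-v^{n+1}+\Lambda_{d,3-d}^{n+1}$, the $\Gamma$-block equals $\tfrac{\eta}{2}\|w_d\|^2-\tfrac{\eta}{2}\|\Gamma_d^{n+1}\|^2$ and the $\Lambda$-block equals $\tfrac{r}{2}\|s_d\|^2-\tfrac{r}{2}\|\Lambda_{d,3-d}^{n+1}\|^2$. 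Thus $\mathcal L(X^{n+1})=\sum_d\big[\mathcal G_\epsilon(z_d^{n+1};f_d)+\tfrac{\eta}{2}\|w_d\|^2-\tfrac{\eta}{2}\|\Gamma_d^{n+1}\|^2+\tfrac{r}{2}\|s_d\|^2-\tfrac{r}{2}\|\Lambda_{d,3-d}^{n+1}\|^2\big]$, and it remains only to control the two negative multiplier terms.

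The heart of the argument, and the reason $\eta\geq 1$ enters, is the scalar inequality $g_\epsilon(x;b)-\tfrac{1}{2\eta}|\nabla_x g_\epsilon(x;b)|^2\geq 0$ for every $x\in\mathbb C$, $b>0$, whenever $\eta\geq 1$. I would verify this directly on the two branches of \eqref{eq:objgrad}. On $\mathcal K_2$ one has $|\nabla_x g_\epsilon|^2=(|x|-\sqrt b)^2=2g_\epsilon$, so the left side equals $(1-\tfrac1\eta)g_\epsilon\geq 0$; on $\mathcal K_1$ the left side is affine and strictly decreasing in $|x|^2$, and its limiting value at the branch endpoint $|x|^2=\epsilon^2 b$ is $\tfrac{(1-\epsilon)^2 b}{2}(1-\tfrac1\eta)\geq 0$. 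Summing over coordinates and using the first relation of \eqref{eq:Stationary1}, namely $\eta\Gamma_d^{n+1}=\nabla_z\mathcal G_\epsilon(z_d^{n+1};f_d)$, yields $\mathcal G_\epsilon(z_d^{n+1};f_d)-\tfrac{\eta}{2}\|\Gamma_d^{n+1}\|^2=\mathcal G_\epsilon(z_d^{n+1};f_d)-\tfrac{1}{2\eta}\|\nabla_z\mathcal G_\epsilon(z_d^{n+1};f_d)\|^2\geq 0$. Hence the objective together with the $\Gamma$-block is bounded below by the retained positive square $\tfrac{\eta}{2}\|w_d\|^2$.

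It then remains to dominate $\tfrac{r}{2}\|\Lambda_{d,3-d}^{n+1}\|^2$ by this retained square. Applying $\pi_{d,3-d}$ to \eqref{eq:Stationary2} and using $\pi_{d,3-d}\pi_{d,3-d}^T=I$ on $\Omega_{1,2}$ (the restriction of the zero-extension is the identity) gives the closed form $\Lambda_{d,3-d}^{n+1}=-\tfrac{\eta}{r}\pi_{d,3-d}A_d^* w_d$, so that $\tfrac{r}{2}\|\Lambda_{d,3-d}^{n+1}\|^2\leq \tfrac{\eta^2}{2r}\|\pi_{d,3-d}A_d^*\|^2\|w_d\|^2$. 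Discarding the nonnegative terms $\tfrac r2\|s_d\|^2$, I arrive at $\mathcal L(X^{n+1})\geq\sum_d\tfrac{\eta}{2}\big(1-\tfrac{\eta}{r}\|\pi_{d,3-d}A_d^*\|^2\big)\|w_d\|^2$, which is nonnegative precisely when $r\geq\eta\|\pi_{d,3-d}A_d^*\|^2$ for $d=1,2$, i.e. under the stated stepsize restriction (read with the squared operator norm, consistently with the usage in Lemma \ref{lem:non-increasing}). This gives $\mathcal L(X^{n+1})\geq 0$ and proves the claim.

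The main obstacle is the scalar inequality of the second paragraph: unlike a generic $L$-smooth function, $g_\epsilon$ does not automatically satisfy $g_\epsilon-\tfrac1{2\eta}|\nabla_x g_\epsilon|^2\geq 0$ for $\eta$ merely above the smoothness threshold, and establishing it forces one to exploit the precise truncated form \eqref{eq:obj-1}; this is exactly what pins the condition at $\eta\geq 1$ rather than $\eta\geq L$. A secondary point requiring care is the bookkeeping that keeps the positive square $\tfrac\eta2\|w_d\|^2$ produced by the $\Gamma$-block available to absorb the $\Lambda$-multiplier, together with the isometry $\pi_{d,3-d}\pi_{d,3-d}^T=I$ used to pass from \eqref{eq:Stationary2} to the closed form for $\Lambda_{d,3-d}^{n+1}$.
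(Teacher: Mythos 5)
Your proposal is correct and follows essentially the same route as the paper's proof: complete the squares in the two penalty blocks of \eqref{eq:augLag}, eliminate $\Gamma_d^{n+1}$ via the first relation of \eqref{eq:Stationary1} and $\Lambda_{d,3-d}^{n+1}$ via \eqref{eq:Stationary2} together with $\pi_{d,3-d}\pi_{d,3-d}^T=I$, absorb the $\Lambda$-term into the retained square $\tfrac{\eta}{2}\|A_du_d^{n+1}-z_d^{n+1}+\Gamma_d^{n+1}\|^2$, and verify $g_\epsilon-\tfrac{1}{2\eta}|\nabla_x g_\epsilon|^2\geq 0$ branch by branch using $\eta\geq 1$. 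If anything, your version is slightly tighter: you show the truncated branch is nonnegative (the paper only notes it is bounded), and your explicit flagging of the squared operator norm $\|\pi_{d,3-d}A_d^*\|^2$ cleans up a point the paper's own proof passes over silently when it writes the unsquared norm in the final estimate.
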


\begin{proof}
First we have
\begin{equation*}
\begin{split}
&\mathcal L(X^{n+1}):=\sum\nolimits_{d}\mathcal G_\epsilon(z_d^{n+1};f_d)\ \\
&\ \ \ \ +\tfrac{r}{2}\|\mathcal {\mathbf  \pi}_{1,2}u_1^{n+1}-v^{n+1}+\Lambda^{n+1}_{1,2}\|^2-\tfrac{r}{2}\|\Lambda^{n+1}_{1,2}\|^2\\
&\ \ \ \ +\tfrac{r}{2}\|\mathcal {\mathbf  \pi}_{2,1}u_2^{n+1}-v^{n+1}+\Lambda^{n+1}_{2,1}\|^2-\tfrac{r}{2}\|\Lambda^{n+1}_{2,1}\|^2\\
&\ \  \  +\eta\sum\nolimits_d\left( \tfrac{1}{2}\|A_d u_d^{n+1}-z_d^{n+1}+\Gamma_d^{n+1}\|^2-\tfrac{1}{2}\|\Gamma_d^{n+1}\|^2\right)\\
\stackrel{\eqref{eq:Stationary1}, \eqref{eq:Stationary2}}{=}&\ \ \ \tfrac{r}{2}\|\mathcal {\mathbf  \pi}_{1,2}u_1^{n+1}-v^{n+1}+\Lambda^{n+1}_{1,2}\|^2-\tfrac{\eta^2}{2r}\|\pi_{1,2} A_1^*(A_1 u_1^{n+1}+\Gamma_1^{n+1}-z_1^{n+1})\|^2\\
& +\tfrac{r}{2}\|\mathcal {\mathbf  \pi}_{2,1}u_2^{n+1}-v^{n+1}+\Lambda^{n+1}_{2,1}\|^2-\tfrac{\eta^2}{2r}\|\pi_{2,1} A_2^*(A_2 u_2^{n+1}+\Gamma_2^{n+1}-z_2^{n+1})\|^2\\
& +\sum\nolimits_d\left( \tfrac{\eta}{2}\|A_d u_d^{n+1}-z_d^{n+1}+\Gamma_d^{n+1}\|^2+\mathcal G_\epsilon(z_d^{n+1};f_d)-\tfrac{1}{2\eta}\|\nabla_{z_d}\mathcal G_\epsilon(z_d^{n+1};f_d)\|^2\right)\\
\hskip -.2in\geq&\ \ \ \tfrac{r}{2}\|\mathcal {\mathbf  \pi}_{1,2}u_1^{n+1}-v^{n+1}+\Lambda^{n+1}_{1,2}\|^2+\tfrac{r}{2}\|\mathcal {\mathbf  \pi}_{2,1}u_2^{n+1}-v^{n+1}+\Lambda^{n+1}_{2,1}\|^2\\
&+\sum\nolimits_d\left( \tfrac{\eta}{2}(1-\tfrac{\eta}{r}\max\{\|\pi_{1,2}A_1^*\|,\|\pi_{2,1}A_2^*\|\}) \ \|A_d u_d^{n+1}-z_d^{n+1}+\Gamma_d^{n+1}\|^2 \right)\\
&+\sum\nolimits_{d,j}\left(g_\epsilon(z_d^{n+1}(j),f_d(j))-\tfrac{1}{2\eta}\|\nabla_{x}g_\epsilon(z_d^{n+1}(j), f_d(j))\|^2\right).
\end{split}
\end{equation*}
By the definition of \eqref{eq:obj-1} and its gradient \eqref{eq:objgrad},
one gets
\begin{equation*}
\begin{split}
&\sum\nolimits_{d,j}\left(g_\epsilon(z_d^{n+1}(j),f_d(j))-\tfrac{1}{2\eta}\|\nabla_{x}g_\epsilon(z_d^{n+1}(j), f_d(j))\|^2\right)\\
=&\sum\nolimits_d\tfrac12(1-\tfrac{1}{\eta})\sum\nolimits_{j_1\in\{j:\ |z_d^{n+1}(j)|>=\epsilon \sqrt{f_d(j)}\}}
\big||z_d^{n+1}(j_1)|-\sqrt{f_d(j_1)}\big|^2\\
&+\sum\nolimits_d\sum\nolimits_{j_2\in\{j:\ |z_d^{n+1}(j)|<\epsilon \sqrt{f_d(j)}\}}
\left(\tfrac{1-\epsilon}{2}f_d(j_2)-\tfrac{1-\epsilon}{2\epsilon}(1+\tfrac{1-\epsilon}{\eta\epsilon})|z_d^{n+1}(j_2)|^2 \right).
\end{split}
\end{equation*}
Hence one gets
\begin{equation}\label{eq:LB}
\begin{split}
&\mathcal L(X^{n+1})\geq
\tfrac{r}{2}\|\mathcal {\mathbf  \pi}_{1,2}u_1^{n+1}-v^{n+1}+\Lambda^{n+1}_{1,2}\|^2+\tfrac{r}{2}\|\mathcal {\mathbf  \pi}_{2,1}u_2^{n+1}-v^{n+1}+\Lambda^{n+1}_{2,1}\|^2\\
&+\sum\nolimits_d\left( \tfrac{\eta}{2}(1-\tfrac{\eta}{r}\max\{\|\pi_{1,2}A_1^*\|,\|\pi_{2,1}A_2^*\|\}) \ \|A_d u_d^{n+1}-z_d^{n+1}+\Gamma_d^{n+1}\|^2 \right)\\
&+\sum\nolimits_d\tfrac12(1-\tfrac{1}{\eta})\sum\nolimits_{j_1\in\{j:\ |z_d^{n+1}(j)|>=\epsilon \sqrt{f_d(j)}\}}
\big||z_d^{n+1}(j_1)|-\sqrt{f_d(j_1)}\big|^2\\
&+\sum\nolimits_d\sum\nolimits_{j_2\in\{j:\ |z_d^{n+1}(j)|<\epsilon \sqrt{f_d(j)}\}}
\left(\tfrac{1-\epsilon}{2}f_d(j_2)-\tfrac{1-\epsilon}{2\epsilon}(1+\tfrac{1-\epsilon}{\eta\epsilon})|z_d^{n+1}(j_2)|^2 \right).
\end{split}
\end{equation}
Readily one knows the last term is bounded since $|z_d^{n+1}(j_2)|<\epsilon\sqrt{f_d(j_2)}$. By setting $\eta\geq 1$ and $r\geq\eta\max\{\|\pi_{1,2}A_1^*\|,\|\pi_{2,1}A_2^*\|\}$, one can immediately conclude this lemma.

\end{proof}

\begin{lem}\label{lem:bound}
Letting $(r, \eta)\in \mathcal K$ with
\begin{equation}
\label{eq:paramsSet}
\begin{split}
&\mathcal K:=\Big\{(r, \eta)\in \mathbb R^2_+:\ \ \eta-1\geq 0, \ \ { r-\eta\max\{2\|A_1^*A_1\|, 2\|A_2^*A_2\|,\|\pi_{1,2}A_1^*\|,\|\pi_{2,1}A_2^*\|\}>0,}\\
&\ \ \qquad\qquad\qquad\qquad \tfrac{\eta-3L}{2}-\tfrac{L^2}{\eta}-\tfrac{2(L+\eta)^2}{r^2} \max\{\|\pi_{1,2}A_1^*\|^2,  \|\pi_{2,1}A_2^*\|^2\}>0 \Big\},
\end{split}
\end{equation}
the following assertions hold:
\begin{itemize}
    \item[(1)] The sequence $\{\mathcal L(X^n)\}$ is non-increasing with the finite lower bound.
    \item[(2)] The following successive error converges to zero, i.e.
\begin{equation}
\begin{split}
\lim_{n\rightarrow+\infty}X^n-X^{n+1}=\bm 0,
\end{split}
\end{equation}
    \item[(3)] The iterative sequence $\{X^n\}$  is bounded  under Assumption \ref{assump1}.
\end{itemize}
\end{lem}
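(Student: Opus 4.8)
The plan is to dispatch the three assertions in increasing order of difficulty, reading (1) almost verbatim off the two preceding lemmas, bootstrapping (2) from monotone convergence, and reserving the genuine work for the boundedness claim (3). For assertion (1), I would first check that membership $(r,\eta)\in\mathcal K$ in \eqref{eq:paramsSet} renders every coefficient on the right-hand side of Lemma \ref{lem:non-increasing} nonnegative: the requirement $r>\eta\max\{2\|A_1^*A_1\|,2\|A_2^*A_2\|,\dots\}$ gives $r>2\eta\|A_d^*A_d\|$ and hence $\tfrac{r}{2}-\tfrac{2\eta^2}{r}\|A_d^*A_d\|^2\geq 0$, killing the possible negativity of the two $\|\pi_{d,3-d}(u_d^n-u_d^{n+1})\|^2$ coefficients, while the last inequality defining $\mathcal K$ makes the two $\|z_d^n-z_d^{n+1}\|^2$ coefficients strictly positive; the remaining coefficients $r$ and $\tfrac{\eta}{2}$ are automatically positive. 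Thus $\mathcal L(X^n)-\mathcal L(X^{n+1})\geq 0$, so $\{\mathcal L(X^n)\}$ is non-increasing. Since $\mathcal K$ also forces $\eta\geq 1$ and $r\geq\eta\max\{\|\pi_{1,2}A_1^*\|,\|\pi_{2,1}A_2^*\|\}$, the hypotheses of Lemma \ref{lem:lowerBound} hold and the sequence is bounded below; monotonicity plus a finite lower bound yields convergence of $\{\mathcal L(X^n)\}$, which proves (1).

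For assertion (2), convergence of $\{\mathcal L(X^n)\}$ forces $\mathcal L(X^n)-\mathcal L(X^{n+1})\to 0$. Because every coefficient in Lemma \ref{lem:non-increasing} is strictly positive on $\mathcal K$, each squared term must vanish, giving $v^n-v^{n+1}\to\bm 0$, $z_d^n-z_d^{n+1}\to\bm 0$, and $A_d(u_d^n-u_d^{n+1})\to\bm 0$ for $d=1,2$. Invoking Assumption \ref{assump1} through $\|A_d h_d\|\geq\hat c_d\|h_d\|$ upgrades the last of these to $u_d^n-u_d^{n+1}\to\bm 0$. The multiplier increments then come for free: the first line of \eqref{eq:Stationary1} together with \eqref{eq:Lips} gives $\|\Gamma_d^n-\Gamma_d^{n+1}\|\leq\tfrac{L}{\eta}\|z_d^n-z_d^{n+1}\|\to 0$, and the estimates \eqref{eq:mult1}--\eqref{eq:mult2} bound $\|\Lambda_{d,3-d}^n-\Lambda_{d,3-d}^{n+1}\|$ by the already-vanishing quantities $\|\pi_{d,3-d}(u_d^n-u_d^{n+1})\|$ and $\|z_d^n-z_d^{n+1}\|$. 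Collecting all nine blocks of components gives $X^n-X^{n+1}\to\bm 0$.

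Assertion (3) is the crux, and the lever is the lower-bound identity \eqref{eq:LB}. Since $\{\mathcal L(X^n)\}$ is non-increasing it is bounded above by $\mathcal L(X^0)$, so on $\mathcal K$ every nonnegative summand on the right of \eqref{eq:LB} is uniformly bounded, while the only sign-indefinite term (the sum over $j_2$) is automatically bounded because there $|z_d^{n}(j_2)|<\epsilon\sqrt{f_d(j_2)}$. I would then peel off boundedness in a fixed cascade. First, the term $\tfrac12(1-\tfrac1\eta)\sum_{j_1}\big||z_d^n(j_1)|-\sqrt{f_d(j_1)}\big|^2$ controls the ``large'' entries of $z_d^n$ and the ``small'' entries obey $|z_d^n(j_2)|<\epsilon\sqrt{f_d(j_2)}$, so $\{z_d^n\}$ is bounded. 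Next, using $\nabla_z\mathcal G_\epsilon(\bm 0;f)=\bm 0$ (valid since $f>0$) with \eqref{eq:Lips}, the first line of \eqref{eq:Stationary1} gives $\|\Gamma_d^n\|=\tfrac1\eta\|\nabla_z\mathcal G_\epsilon(z_d^n;f_d)\|\leq\tfrac{L}{\eta}\|z_d^n\|$, so $\{\Gamma_d^n\}$ is bounded. The term $\tfrac{\eta}{2}\big(1-\tfrac\eta r\max\{\cdots\}\big)\|A_d u_d^n-z_d^n+\Gamma_d^n\|^2$ then bounds $\|A_d u_d^n\|$, and here Assumption \ref{assump1} is decisive, converting this into boundedness of $\{u_d^n\}$.

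Finally I would recover the remaining variables. The relation derived above \eqref{eq:mult1}, namely $\Lambda_{d,3-d}^n=\tfrac{\eta}{r}\pi_{d,3-d}A_d^*(A_d u_d^n+\Gamma_d^n-z_d^n)$ (obtained from the stationarity condition \eqref{eq:Stationary2} together with $\pi_{d,3-d}\pi_{d,3-d}^T=I$ on the overlap), has a right-hand side that is now bounded, so $\{\Lambda_{d,3-d}^n\}$ is bounded; and the update \eqref{eq:updateV} writes $v^{n+1}$ as the average of the bounded quantities $\pi_{d,3-d}u_d^n+\Lambda_{d,3-d}^n$, whence $\{v^n\}$ is bounded. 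Assembling all components yields boundedness of $\{X^n\}$. The main obstacle is precisely this cascade in (3): the identity \eqref{eq:LB} controls only the composite quantities $\|A_d u_d^n-z_d^n+\Gamma_d^n\|$ and $\big\||z_d^n|-\sqrt{f_d}\big\|$ rather than the iterates themselves, so the argument hinges on disentangling them in the correct order and, most importantly, on Assumption \ref{assump1} to pass from a bound on $\|A_d u_d^n\|$ to a bound on $\|u_d^n\|$ --- without non-singularity of $A_d^*A_d$ the $u_d$-iterates, and with them $v^n$ and $\Lambda_{d,3-d}^n$, could drift to infinity along the kernel of $A_d$.
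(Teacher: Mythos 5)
Your proposal is correct and follows essentially the same route as the paper's own (much terser) proof: positivity of the coefficients in Lemma \ref{lem:non-increasing} under $\mathcal K$ plus Lemma \ref{lem:lowerBound} for items (1)--(2), and the same boundedness cascade $z_d^n \to \Gamma_d^n \to A_d u_d^n \to u_d^n$ (via Assumption \ref{assump1}) $\to \Lambda_{d,3-d}^n \to v^n$ from \eqref{eq:LB}, \eqref{eq:Stationary1}, and \eqref{eq:Stationary2} for item (3). The only cosmetic difference is that you invoke Assumption \ref{assump1} explicitly to pass from $A_d(u_d^n-u_d^{n+1})\to\bm 0$ to $u_d^n-u_d^{n+1}\to\bm 0$ in item (2), which the paper leaves implicit behind its citation of \eqref{eq:Stationary1}--\eqref{eq:Stationary2}; this is a fair (arguably cleaner) accounting of where the assumption is actually used.
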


Before giving the detailed proof, we will show that the set $\mathcal K$ is nonempty.  By denoting $$c_0:=\max\{2\|A_1^*A_1\|, 2\|A_2^*A_2\|,\|\pi_{1,2}A_1^*\|,\|\pi_{2,1}A_2^*\|\},\qquad c_1:=2\max\{\|\pi_{1,2}A_1^*\|^2,  \|\pi_{2,1}A_2^*\|^2\},$$
and letting $\eta\geq 1, r>c_0\eta,$
we further have 
$c_1\tfrac{(L+\eta)^2}{r^2}< \tfrac{c_1}{c_0^2}\tfrac{(L+\eta)^2}{\eta^2}=\tfrac{c_1}{c_0^2}{(L/\eta+1)^2}\leq \tfrac{c_1}{c_0^2}{(L+1)^2} $ (the first inequality is based on $r>c_0\eta$, and the second is based on $\eta\geq 1$). Therefore, one just needs to guarantee the following inequality $\tfrac{\eta-3L}{2}-\tfrac{L^2}{\eta}-\tfrac{c_1}{c_0^2}{(L+1)^2}>0,$ with choosing proper $\eta$.
By letting $\eta>6L+2L^2+\tfrac{2c_1}{c_0^2}{(L+1)^2},$  the above inequality holds. Finally setting
$(r,\eta)$ in the nonempty set $\big\{(r,\eta):~~ \tfrac{r}{c_0}>\eta> \max\{6L+2L^2+\tfrac{2c_1}{c_0^2}{(L+1)^2}, 1\} \big\}$, it also belongs to the set $\mathcal K$, that immediately implies  it is nonempty.
Roughly to say, with big enough parameters $\eta$ and $r$ ($r$ is no less than $\eta$ with a constant factor), the above lemma holds.

\begin{proof}
The first two items (Items (1)-(2))  can be easily proved by Lemmas \ref{lem:non-increasing}-\ref{lem:lowerBound}, \eqref{eq:Stationary1}, and \eqref{eq:Stationary2}. Notice that the last inequality of \eqref{eq:paramsSet} is required in order to guarantee the positivity of the coefficients for the terms related with  $\|z_d^n-z_d^{n+1}\|, d=1,2.$

One can readily get the boundedness of $\{z_d^n\}$ by \eqref{eq:LB}. By the first equation of \eqref{eq:Stationary1} and continuity of $\nabla_{z_d}\mathcal G_\epsilon(z_d;f_d)$, the sequences $\{\Gamma_d^n\}$ are also bounded. Since $\{A_d u_d^n-z_d^n+\Gamma_d^n\}$ are bounded by \eqref{eq:LB} and Assumption \ref{assump1},  one can further conclude the boundedness of $\{u_d^n\}.$ By \eqref{eq:Stationary2}, one can get the boundedness of $\{\Lambda_{1,2}^n\}$ and $\{\Lambda_{2,1}^n\}$, as well as the boundedness of $\{v^n\}$. That finishes the proof of Item (3).
\end{proof}

\begin{thm}[subsequence convergence]\label{thm1}
Letting $(r, \eta)\in \mathcal K$, any limit point of $\{X^n\}$ is the stationary point of the saddle point problem \eqref{eq:SaddlePoint}  under Assumption \ref{assump1}.
\end{thm}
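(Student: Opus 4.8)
The plan is to establish subsequence convergence by combining the three assertions of Lemma~\ref{lem:bound} with the stationarity relations already derived. Since $(r,\eta)\in\mathcal K$, Item~(3) of Lemma~\ref{lem:bound} guarantees that $\{X^n\}$ is bounded, so by the Bolzano--Weierstrass theorem there exists a convergent subsequence $\{X^{n_k}\}$ with some limit point $X^*=(u_1^*,u_2^*,v^*,z_1^*,z_2^*,\Lambda_{1,2}^*,\Lambda_{2,1}^*,\Gamma_1^*,\Gamma_2^*)$. The goal is to verify that $X^*$ satisfies the first-order optimality (KKT) conditions of the saddle point problem \eqref{eq:SaddlePoint}, namely the two feasibility constraints $\pi_{d,3-d}u_d^*=v^*$ and $A_du_d^*=z_d^*$ for $d=1,2$, together with the stationarity equations in \eqref{eq:Stationary1} and \eqref{eq:Stationary2} evaluated at the limit.

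First I would exploit Item~(2), $\lim_{n\to\infty}(X^n-X^{n+1})=\bm 0$. Applied to the multiplier-update steps \eqref{eq:updateM-1} and \eqref{eq:updateM-2}, the vanishing of $\Gamma_d^n-\Gamma_d^{n+1}=A_du_d^n-z_d^{n+1}$ and of $\Lambda_{d,3-d}^n-\Lambda_{d,3-d}^{n+1}=\pi_{d,3-d}u_d^{n+1}-v^{n+1}$ immediately yields, in the limit along the subsequence, the two feasibility conditions $A_du_d^*=z_d^*$ and $\pi_{d,3-d}u_d^*=v^*$. Next I would pass to the limit in the stationarity system \eqref{eq:Stationary1}: the first equation $\nabla_{z_d}\mathcal G_\epsilon(z_d^{n+1};f_d)=\eta\Gamma_d^{n+1}$ survives because $\nabla_z\mathcal G_\epsilon(\cdot;f)$ is Lipschitz continuous (Lemma~1), hence continuous, so $\nabla_{z_d}\mathcal G_\epsilon(z_d^*;f_d)=\eta\Gamma_d^*$; the $v$-update \eqref{eq:updateV} passes to $v^*=\tfrac12\sum_d(\pi_{d,3-d}u_d^*+\Lambda_{d,3-d}^*)$; and the $u_d$-equation, being a continuous (indeed linear) relation in the iterates, passes to its limiting form. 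Similarly \eqref{eq:Stationary2} passes to $\eta A_d^*(A_du_d^*+\Gamma_d^*-z_d^*)+r\pi_{d,3-d}^T\Lambda_{d,3-d}^*=0$. Assembling these limiting identities shows that $X^*$ is a stationary point of \eqref{eq:SaddlePoint}.

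The main obstacle, and the step requiring the most care, is ensuring that every relation used in the passage to the limit is genuinely continuous in the arguments that are being sent to their limits, since some equations in \eqref{eq:Stationary1}--\eqref{eq:Stationary2} mix iterates at index $n$ and $n+1$ (for instance the $u_d^{n+1}$-equation involves $v^{n+1}$, $\Lambda^n_{d,3-d}$, $z_d^{n+1}$ and $\Gamma_d^{n+1}$). To reconcile these mixed-index relations I would rely on Item~(2) to identify the limits of $\{X^{n_k}\}$ and $\{X^{n_k+1}\}$: because the successive error tends to zero, the shifted subsequence $\{X^{n_k+1}\}$ converges to the \emph{same} limit $X^*$, so there is no ambiguity between $n$- and $(n+1)$-indexed quantities in the limit. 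The only subtlety is the nonsmoothness of $\mathcal G_\epsilon$ itself at the origin, but this is circumvented because $\mathcal G_\epsilon$ is in fact $\mathcal C^1$ with a Lipschitz gradient (established in Lemma~1 and \eqref{eq:objgrad}), so $\nabla_z\mathcal G_\epsilon$ is continuous everywhere and the first equation of \eqref{eq:Stationary1} passes cleanly to the limit without recourse to subdifferential calculus.

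Finally I would remark that this argument yields only \emph{subsequence} convergence to a stationary point, which is exactly what the theorem claims; upgrading to convergence of the full sequence would require additional structure (such as a Kurdyka--{\L}ojasiewicz inequality for the augmented Lagrangian), which is beyond the present statement. Thus the proof is complete once the limiting KKT system is verified.
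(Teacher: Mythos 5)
Your proposal is correct and follows essentially the same route as the paper: use the boundedness and vanishing successive errors from Lemma \ref{lem:bound} to extract a limit point and obtain the feasibility conditions $A_du_d^\star=z_d^\star$, $\pi_{d,3-d}u_d^\star=v^\star$, then pass to the limit in \eqref{eq:Stationary1}--\eqref{eq:Stationary2} (justified by continuity of $\nabla_z\mathcal G_\epsilon$) to recover the stationarity system of \eqref{eq:SaddlePoint}. The paper states this more tersely (recording the limiting system \eqref{eq:StationarySaddle-0} and simplifying it via the feasibility equations), while you spell out the mixed-index and continuity details that the paper leaves implicit.
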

\begin{proof}
By Lemma \ref{lem:bound} and \eqref{eq:Stationary1}-\eqref{eq:Stationary2}, one can readily prove that any limit point $X^\star:=(u^{\star}_1, u^{\star}_2, v^\star, z_1^{\star},$ $ z_2^{\star},\Lambda^{\star}_{1,2},\Lambda^{\star}_{2,1}, \Gamma^{\star}_1, \Gamma_2^{\star})$ of the iterative sequence satisfies the following equations
\begin{equation}\label{eq:StationarySaddle-0}
\begin{split}
&0=\nabla_{z_d}\mathcal G_\epsilon(z_d^\star;f_d)- \eta \Gamma_d^\star;\\
&0=v^{\star}-\tfrac{1}{2}\sum\nolimits_{d=1}^2({\mathbf  \pi}_{d,3-d}u^{\star}_d+\Lambda^{\star}_{d,3-d});\\
&0=(r{\mathbf  \pi}_{d,3-d}^T{\mathbf  \pi}_{d,3-d}+\eta A_d^*A_d) u_d^{\star}-\big(r{\mathbf  \pi}_{d,3-d}^T(v^{\star}-\Lambda^{\star}_{d,3-d})+\eta A_d^*(z^{\star}_d-\Gamma^{\star}_d)\big), d=1,2; \\
&0=A_d u_d^\star-z_d^\star;\\ &0=\pi_{d,3-d}u_d^\star-v^\star, d=1,2;\\
\end{split}
\end{equation}
By the last two equations of the above relation, it  can be simplified below:
\begin{equation}\label{eq:StationarySaddle-1}
\begin{split}
&0=\nabla_{z_d}\mathcal G_\epsilon(z_d^\star;f_d)- \eta \Gamma_d^\star,\ d=1,2;\\
&0=\Lambda^{\star}_{1,2}+\Lambda^{\star}_{2,1};\\
&0=r{\mathbf  \pi}_{d,3-d}^T\Lambda^{\star}_{d,3-d}+\eta A_d^*\Gamma^{\star}_d,  d=1,2; \\
&0=A_d u_d^\star-z_d^\star,\ d=1,2;\\
&0=\pi_{d,3-d}u_d^\star-v^\star, d=1,2,\\
\end{split}
\end{equation}
that is exactly the stationary point of \eqref{eq:SaddlePoint}, and therefore it finishes the proof.
\end{proof}

\begin{thm}[global convergence]
Letting $(r, \eta)\in \mathcal K$, the iterative sequence $\{X^n\}$ converges to the stationary point of the saddle point problem \eqref{eq:SaddlePoint} under Assumption \ref{assump1}.
\label{thm2}
\end{thm}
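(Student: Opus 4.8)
The plan is to upgrade the subsequence convergence of Theorem \ref{thm1} to convergence of the whole sequence by invoking the Kurdyka--{\L}ojasiewicz (KL) framework of Attouch--Bolte--Svaiter. The three structural ingredients required by that framework are a sufficient-decrease estimate, a relative-error (subgradient) bound, and the KL property of the augmented Lagrangian at the cluster points. Lemma \ref{lem:bound} already supplies boundedness of $\{X^n\}$ and the vanishing of the successive errors $X^n-X^{n+1}$, so the remaining work is to sharpen these facts into the two quantitative estimates and to verify the KL property; combining all four then forces $\sum_n\|X^n-X^{n+1}\|<\infty$, whence $\{X^n\}$ is Cauchy.

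First I would record the sufficient-decrease inequality. For $(r,\eta)\in\mathcal K$ every coefficient on the right-hand side of Lemma \ref{lem:non-increasing} is strictly positive, so there is a constant $a>0$ with
\[
\mathcal L(X^n)-\mathcal L(X^{n+1})\geq a\,\|X^n-X^{n+1}\|^2 .
\]
To make this an estimate in the full vector $X$, one must also dominate the successive differences of the dual blocks $v,\Lambda_{d,3-d},\Gamma_d$ by those of the primal blocks $u_d,z_d$: this is exactly what \eqref{eq:updateV}, \eqref{eq:GD-1}, \eqref{eq:mult1}--\eqref{eq:mult2} and the first line of \eqref{eq:Stationary1} (together with the Lipschitz bound \eqref{eq:Lips}) provide, so that $\|X^n-X^{n+1}\|^2$ is controlled by $\sum_d(\|\pi_{d,3-d}(u_d^n-u_d^{n+1})\|^2+\|z_d^n-z_d^{n+1}\|^2)+\|v^n-v^{n+1}\|^2$.

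Next I would establish the relative-error bound. Because $\mathcal G_\epsilon(\cdot;f)$ is $\mathcal C^1$ with Lipschitz gradient (cf. \eqref{eq:Lips}), the augmented Lagrangian $\mathcal L$ is continuously differentiable jointly in all nine blocks, so its limiting subdifferential reduces to the ordinary gradient. Evaluating $\nabla\mathcal L(X^{n+1})$ block by block and subtracting the stationarity relations \eqref{eq:Stationary1}--\eqref{eq:Stationary2}, which hold for the iterates by construction, each surviving term is a fixed linear combination of $u_d^n-u_d^{n+1}$, $z_d^n-z_d^{n+1}$, $v^n-v^{n+1}$ and the multiplier increments; bounding $\nabla_z\mathcal G_\epsilon(z^{n+1})-\nabla_z\mathcal G_\epsilon(z^{n})$ via \eqref{eq:Lips} yields a constant $b>0$ with $\|\nabla\mathcal L(X^{n+1})\|\leq b\,\|X^n-X^{n+1}\|$. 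For the KL property, I would pass to real variables by writing each complex coordinate as a pair: then $\mathcal G_\epsilon$ is piecewise defined by the polynomial $\tfrac{1-\epsilon}{2}(b-\tfrac1\epsilon|x|^2)$ and $\tfrac12(|x|-\sqrt b)^2$ on the semialgebraic pieces $\{|x|<\epsilon\sqrt b\}$ and its complement, hence is semialgebraic, and all remaining terms of $\mathcal L$ are quadratic, so $\mathcal L$ is a semialgebraic and therefore a KL function. The abstract Attouch--Bolte--Svaiter theorem then gives finite length, and the single limit point is a stationary point of \eqref{eq:SaddlePoint} by Theorem \ref{thm1}.

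I expect the main obstacle to be the relative-error estimate: one must carefully match each block-gradient of $\mathcal L$ at $X^{n+1}$ against the correct stationarity equation so that no term involving the \emph{current} multipliers survives uncontrolled, handling in particular the coupling through $\pi_{d,3-d}A_d^*$ already seen in \eqref{eq:mult1}--\eqref{eq:mult2}. A secondary subtlety is the seam of $\mathcal G_\epsilon$ at $|x|=\epsilon\sqrt b$; although $g_\epsilon$ is $\mathcal C^1$ across it, the verification of the KL inequality over the two semialgebraic branches must be carried out through the real-variable reformulation rather than naively in $\mathbb C$.
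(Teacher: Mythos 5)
Your proposal is correct and follows essentially the same route as the paper: both establish that the gradient of the augmented Lagrangian is bounded by $\|X^n-X^{n+1}\|$ (the paper outsources this to Lemma 4.3 of \cite{chang2018Blind}, you sketch its block-by-block derivation), both verify that $\mathcal L$ is semi-algebraic by passing to real and imaginary parts and describing the two branches of $g_\epsilon$ as polynomial pieces on semi-algebraic sets, and both then invoke the Kurdyka--{\L}ojasiewicz convergence framework (\cite{bolte2014proximal} in the paper, Attouch--Bolte--Svaiter in your write-up, which is the same machinery) together with the descent estimates of Lemmas \ref{lem:non-increasing}--\ref{lem:bound} and Theorem \ref{thm1} to conclude finite length and hence convergence of the whole sequence.
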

\begin{proof}
It is quite standard to prove that the gradient of the augmented Lagrangian is bounded by the $\|X^n-X^{n+1}\|$,  which simply follows Lemma 4.3 of \cite{chang2018Blind}. One just needs to show that the augmented Lagrangian with the ST-AGM metric is semi-algebraic function. First, the graph of the $g_\epsilon(z_d(j),f_d(j))$ is semi-algebraic (We study this property by separating the real and imaginary parts of the complex-valued variables following \cite{chang2018Blind}), since it can be expressed by
{ 
\[
\begin{split}
&\Big(~\{(z_{d,r}(j),z_{d,i}(j),F(j))\in\mathbb R^3:\ F(j)-\tfrac{1-\epsilon}{2}(f_d(j)-\tfrac{1}{\epsilon}((z_{d,r}(j))^2+(z_{d,i}(j))^2))=0\}\\
&~\cap\{(z_{d,r}(j),z_{d,i}(j),F(j))\in\mathbb R^3:\ (z_{d,r}(j))^2+(z_{d,i}(j))^2-\epsilon^2 f_d(j)<0\}\Big)\ \\
\bigcup &
\Big(~
\{(z_{d,r}(j),z_{d,i}(j),F(j))\in\mathbb R^3:\
\tfrac{1}{2}((z_{d,r}(j))^2+(z_{d,i}(j))^2+f_d(j))-F(j)\geq 0
\}\\
&\!\!\!\!\!\!\!\!\cap
\{(z_{d,r}(j),z_{d,i}(j),F(j))\in\mathbb R^3:\!\!
(\tfrac{1}{2}((z_{d,r}(j))^2+(z_{d,i}(j))^2\!+\!f_d(j))\!-\!F(j))^2-f_d(j)((z_{d,r}(j))^2+(z_{d,i}(j))^2)\!=\!0
\}\\
&\cap
\{(z_{d,r}(j),z_{d,i}(j),F(j))\in\mathbb R^3:\ (z_{d,r}(j))^2+(z_{d,i}(j))^2-\epsilon^2 f_d(j)\geq 0\}
\Big),
\end{split}
\]
}
with $z_d(j):=z_{d,r}(j)+\bm i z_{d,i}(j)$ and $\bm i^2=-1.$
Immediately  one sees that the function $D_j(z_d; f_d):=g_\epsilon(z_d(j);$ $ f_d(j))$ is semi-algebraic. Then ST-AGM $\mathcal G_\epsilon(z_d;f_d)=\sum\nolimits_j D_j(z_d;f_d)$ is also semi-algebraic since the finite sum of the algebraic functions are also semi-algebraic \cite{bolte2014proximal}. Since other terms other than ST-AGM are also semi-algebraic,    the augmented Lagrangian function $\mathcal L(X)$ is semi-algebraic. Thus one concludes this theorem based on Theorem \ref{thm1}, the gradient of augmented Lagrangian bounded by $\|X^n-X^{n+1}\|$ and semi-algebraic property following \cite{bolte2014proximal}.
\end{proof}

We remark that Theorem \ref{thm1} demonstrates that limit point of iterative sequence  satisfies the constraint $\pi_{1,2} u_1=\pi_{2,1}u_2.$ Denote 
\[
\forall 0\leq j\leq n-1,\ \ \ 
u^\star(j):=\left\{
\begin{split}
&(R_1^T u_1^\star)(j),\text{~~if~}j\in \Omega_1\setminus\Omega_{1,2};    \\
& (R_{1,2}^T v^\star)(j),\text{~if~}j\in \Omega_{1,2};\\
& (R_2^T u_2^\star)(j),\text{\ \ \ \  otherwise};    \\
\end{split}
\right.
\]
 $z^\star:=A u^\star,$ and $\Gamma^\star:=(\Gamma_1^T,\Gamma_2^T)^T.$
Further with the given stationary point system \eqref{eq:StationarySaddle-1}, we have
\[
\left\{
\begin{split}
&0=\nabla_{z}\mathcal G_\epsilon(z^\star; f)- \eta \Gamma^\star,\\
&0=Au^\star-z^\star,\\
&0=A^*\Gamma^\star,\\
\end{split}
\right.
\]
Then we get 
$
A^*\nabla_{z}\mathcal G_\epsilon(Au^\star;f)=0,
$
that is exactly the first-order optimality condition for the optimization problem on the whole domain as 
$
\min_u \mathcal G_\epsilon (Au;f).
$
Hence, by merging these two sub-solutions, Theorem \ref{thm2} actually demonstrates the proposed algorithm is able to produce the  stationary point for the original optimization problem defined on the whole domain.

\section{Extensions}
\label{sec3}
In this part, we  mainly discuss further extensions for multiple-subdomain DD (more than two) and blind recovery.
\subsection{Multiple-subdomain DD}
We consider the DD with $D$ subdomains ($D\geq 2$).  Denote the  overlapping DD as $\Omega:=\bigcup_{d=1}^{D} \Omega_d$, and there exists a non-empty index set 
$\widetilde{\mathscr  N}:=\{(i_1,i_2):\ \Omega_{i_1}\bigcap\Omega_{i_2}\not=\emptyset ~\forall 1\leq i_1, i_2\leq D \}$ denoting the index pairs for overlapped  subdomains, and 
$\mathscr N:=\{(i_1,i_2)\in\widetilde{\mathscr N}:~~i_1<i_2\}$.  In a similar manner to the two-subdomain case in the last section, one can denote $A_d, f_d(1\leq d\leq D)$ and the restriction operator $\pi_{i_1,i_2}\ \forall (i_1,i_2)\in\mathscr N$  from the subdomain $\Omega_{i_1}$ to the overlapping region $\Omega_{i_1, i_2}$ between  the subdomains $\Omega_{i_1}$ and $\Omega_{i_2}$,  such that to get the solution $u_d$ on the subdomain $\Omega_d \forall 1\leq d\leq D$,
one has to solve the following equations:
\begin{equation}
|A_d u_d|^2={f_d}\ \forall 1\leq d\leq D;\ \ \\
\pi_{i_1, i_2} u_{i_1}=\pi_{i_2, i_1} u_{i_2}\ \forall (i_1,i_2)\in \mathscr N,
\end{equation}
that exactly extends \eqref{eq:forward} and \eqref{eq:continuity} to multiple subdomains DD. 
The following constraint optimization problem is given by introducing auxiliary variables $\{z_d\}_{d=1}^D$ and $\{v_{i_1,i_2}\}_{(i_1,i_2)\in\mathscr N}$ 
\begin{equation*}
\begin{split}
    &\min\nolimits_{\{u_d\}_{d=1}^{D}, \{v_{i_1,i_2}\}_{(i_1,i_2)\in\mathscr N} }\ \ \  \sum\nolimits_{d=1}^{D}
    \mathcal G_\epsilon(z_d;f_d)\ \\
s.t.& \ \mathcal {\mathbf  \pi}_{i_1,i_2}u_{i_1}-v_{i_1,i_2}=0, \mathcal {\mathbf  \pi}_{i_2,i_1}u_{i_2}-v_{i_1,i_2}=0 \ \ (i_1,i_2)\in \mathscr N,\\
    & \   \ z_d-A_d u_d=0,\ 1\leq d\leq D.
\end{split}
\end{equation*}
Similar to the two-subdomain case, the corresponding augmented Lagrangian is given below:
\begin{equation}\label{eq:augLag-multi-Domain}
\begin{split}
&\mathcal L_m(\{u_d\},\{v_{i_1,i_2}\}_{(i_1,i_2)\in\mathscr N}, \{z_d\},\{\Lambda_{i_1,i_2}\}_{(i_1,i_2)\in\widetilde{\mathscr N}}, \{\Gamma_d\}):=\sum\nolimits_{d=1}^{D}\mathcal G_\epsilon(z_d;f_d)\ \\
&\ \ \ \ + \sum\nolimits_{(i_1,i_2)\in\mathscr N}\Big(r\Re(\langle \Lambda_{i_1,i_2}, \mathcal {\mathbf  \pi}_{i_1,i_2}u_{i_1}-v_{i_1,i_2}\rangle)+\tfrac{r}{2}\|\mathcal {\mathbf  \pi}_{i_1,i_2}u_{i_1}-v_{i_1,i_2}\|^2\Big)\\
&\ \ \ \ + \sum\nolimits_{(i_1,i_2)\in\mathscr N}\Big(r\Re(\langle \Lambda_{i_2,i_1}, \mathcal {\mathbf  \pi}_{i_2,i_1}u_{i_2}-v_{i_1,i_2}\rangle)+\tfrac{r}{2}\|\mathcal {\mathbf  \pi}_{i_2,i_1}u_{i_2}-v_{i_1,i_2}\|^2\Big)\\
&\ \  \  +\eta\sum\nolimits_{d=1}^{D}\left(  \Re(\langle \Gamma_d, \mathcal A_d u_d-z_d \rangle) +\tfrac{1}{2}\|A_d u_d-z_d \|^2\right),
\end{split}
\end{equation}
with multipliers $\{\Lambda_{i_1,i_2}\}_{(i_1,i_2)\in\widetilde{\mathscr N}}$ and $\{\Gamma_d\}_{d=1}^D$.

We only analyze the $u_{d}-$subproblem which is different from the two-subdomain case in Algorithm 1. One has
 \begin{equation*}
 \min\nolimits_{u_{d}} \Big(\sum\nolimits_{(d,i_2)\in\mathscr N}\tfrac{r}{2}\|\Lambda_{d,i_2}+\mathcal {\mathbf  \pi}_{d,i_2}u_{d}-v_{d,i_2}\|^2+\sum\nolimits_{(i_1,d)\in\mathscr N }\tfrac{r}{2}\|\Lambda_{d,i_1}+\mathcal {\mathbf  \pi}_{d,i_1}u_{d}-v_{i_1,d}\|^2\Big)+\tfrac{\eta}{2}\|A_d u_d-z_d+\Gamma_d\|^2,
\end{equation*}
such that the first order optimality condition is given as
\begin{equation*}
\begin{split}
&\Big(\eta A_d^*A_d+r\sum\nolimits_{ (d,i_2), (i_1,d)\in\mathscr N }(\pi_{d,i_1}^T\pi_{d,i_1}+\pi_{d,i_2}^T\pi_{d,i_2})\Big)u_d\\
=&\eta A_d^*(z_d-\Gamma_d)
+r\sum\nolimits_{(d,i_2)\in\mathscr N}\pi_{d,i_2}^T(v_{d,i_2}-\Lambda_{d,i_2})+r\sum\nolimits_{(i_1,d)\in\mathscr N, }\pi_{d,i_1}^T(v_{i_1,d}-\Lambda_{d,i_1}).
\end{split}
\end{equation*}
Similar to the case of two-subdomain, this subproblem has closed-form solution as \eqref{eq:Alg1-solver-u}, such that it can also be  very efficiently solved.

Similar to the two subdomain decomposition, the overall overlapping DD based ptychography  algorithm for  multiple subdomains (OD${}^2$P${_m}$) is given as below:
\begin{center}
\rule{\textwidth}{1mm}
\centering{Algorithm 2: {\bf O}verlapping {\bf DD} based {\bf P}tychography  algorithm for {\bf M}ultiple Subdomains (OD${}^2$P${_m}$)}
\begin{minipage}{1\textwidth}
\rule{\textwidth}{.8mm}
\begin{itemize}
\item[Step 0.] Initialize $u_d^0=\bm 1, \{v_{i_1,i_2}^0\}$, $z_d^0=A_d u_d^0,$ and multipliers $\Gamma_d^0=\bm 0, \Lambda^0_{i_1,i_2}=\bm 0.$  Set $n:=0.$

\item[Step 1.] Update $\{z^{n+1}_d\}_{d=1}^{D}$  by \eqref{eq:updateZ}
and  update $\{v_{i_1,i_2}^{n+1}\}_{(i_1,i_2)\in\mathscr N}$ by
\begin{equation}\label{eq:updateV-multi}
v_{i_1,i_2}^{n+1}=\tfrac{1}{2}({{\mathbf  \pi}_{i_1,i_2}u^{n}_{i_1}+{\mathbf  \pi}_{i_2,i_1}u^{n}_{i_2}+\Lambda^n_{i_1,i_2}+\Lambda^n_{i_2,i_1}});
\end{equation}

\item[Step 2.] Update multipliers in parallel by
\begin{equation}\label{eq:updateM-1-multi}
\begin{split}
&\Gamma_d^{n+1}=\Gamma^n_d+ A_du^{n}_d-z^{n+1}_d\ \forall \ 1\leq d\leq D; \\
\end{split}
\end{equation}

\item[Step 3.] Update $\{u^{n+1}_d\}_{d=1}^{D}$  in parallel by
\begin{equation*}
\begin{split}
&u_d^{n+1}=\Big(\eta A_d^*A_d+r\sum\nolimits_{ (d,i_2), (i_1,d)\in\mathscr N }(\pi_{d,i_1}^T\pi_{d,i_1}+\pi_{d,i_2}^T\pi_{d,i_2})\Big)^{-1}\\
\times&\Big(\eta A_d^*(z^{n+1}_d-\Gamma^{n+1}_d)
+r\sum\nolimits_{(d,i_2)\in\mathscr N}\pi_{d,i_2}^T(v^{n+1}_{d,i_2}-\Lambda^n_{d,i_2})+r\sum\nolimits_{(i_1,d)\in\mathscr N, }\pi_{d,i_1}^T(v^{n+1}_{i_1,d}-\Lambda^n_{d,i_1})\Big);
\end{split}
\end{equation*}

\item[Step 4.] Update multipliers  by
\begin{equation}\label{eq:updateM-2-multi}
\begin{split}
& \Lambda_{i_1,i_2}^{n+1}= \Lambda^n_{i_1,i_2}+{\mathbf  \pi}_{i_1,i_2}u^{n+1}_{i_1}-v_{i_1,i_2}^{n+1},\\
& \Lambda_{i_2,i_1}^{n+1}= \Lambda^n_{i_2,i_1}+{\mathbf  \pi}_{i_2,i_1}u^{n+1}_{i_2}-v_{i_2,i_1}^{n+1},\\
\end{split}
\end{equation}
for all $(i_1,i_2)\in\mathscr N.$
\item[Step 5.] If satisfying the stopping condition, then stop and output $u_d^{n+1}$ as the final solution; otherwise set $n:=n+1$, and go to Step 1.
\end{itemize}
\end{minipage}
\vskip .1in
\rule{\textwidth}{1mm}
\end{center}

\vskip .1in

	Among all the variables update  in Algorithm 2,  most variables (except $v_{i_1,i_2}^{n+1}$ and the multipliers $\Lambda_{i_1,i_2}^{n+1}$) can be computed independently of other variables defined on another subdomain. The information of  overlapping regions $v_{i_1,i_2}^{n+1}$ is synchronized by the overlapping parts of two sub-solutions $u_{i_1}^n, u_{i_2}^n$, as well as the multiplier $\Lambda_{i_1,i_2}$. Hence, in principle the proposed OD${}^2$P$_m$ can be implemented highly parallel, since the communications only happen at the  boundary layers (much smaller sizes compared with the entire image).

For the convergence analysis, it is almost the same as the case of two subdomains and therefore we omit the details and give the convergence theorem directly as below:
\begin{thm}
Letting $(r, \eta)\in \mathcal K_{D}$  with
\begin{equation}\label{eq:paramsSet_multi}
\begin{split}
&\mathcal K_{D}:=\Big\{(r, \eta)\in \mathbb R^2_+:\ \ \eta\geq 1, r-2\eta\max\{\|A_d^*A_d\|^2\}_{d=1}^{ D}>0,\\
&\ \ \qquad\qquad\qquad \tfrac{\eta-3L}{2}-\tfrac{L^2}{\eta}-\tfrac{2(L+\eta)^2}{r^2} \max\nolimits_{(i_1,i_2)\in\mathscr N}\max\{\|\pi_{i_1,i_2}A_{i_1}^*\|^2,  \|\pi_{i_2,i_1}A_{i_2}^*\|^2\}>0 \Big\},
\end{split}
\end{equation}
then under Assumption \ref{assump1} for $D-$subdomains, the iterative sequence  $\widetilde X^n:=\big(\{u^{n}_d\},\{v_{i_1,i_2}^n\}_{(i_1,i_2)\in\mathscr N}, \{z_d^{n}\},$ $\{\Lambda^{n}_{i_1,i_2}\}_{(i_1,i_2)\in\widetilde{\mathscr N}}, \{\Gamma^{n}_d\}_{d=1}^{D}\big)$ generated by OD${}^2$P${}_m$
converges to the stationary point of the saddle point problem of the augmented Lagrangian defined in  \eqref{eq:augLag-multi-Domain}.
\end{thm}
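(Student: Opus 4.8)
The plan is to follow verbatim the five-step architecture developed for the two-subdomain case in Lemmas~\ref{lem:non-increasing}--\ref{lem:bound} and Theorems~\ref{thm1}--\ref{thm2}, re-indexing each estimate over the richer structure of $\mathscr N$ and $\widetilde{\mathscr N}$. First I would record the first-order optimality conditions of the subproblems of Algorithm~2, paralleling \eqref{eq:Stationary1}: the $z_d$- and $v_{i_1,i_2}$-steps are unchanged, while the $u_d$-step, after inserting the multiplier updates \eqref{eq:updateM-1-multi}--\eqref{eq:updateM-2-multi}, gives the multi-subdomain analogue of \eqref{eq:Stationary2},
\begin{equation*}
0 = \eta A_d^*\big(A_d u_d^{n+1} + \Gamma_d^{n+1} - z_d^{n+1}\big) + r\sum\nolimits_{i:\,(d,i)\in\widetilde{\mathscr N}} \pi_{d,i}^T \Lambda_{d,i}^{n+1}.
\end{equation*}

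The key preparatory step is to recover each multiplier individually from this single coupled identity. Since the overlap regions $\Omega_{d,i}$ attached to a fixed subdomain $\Omega_d$ for distinct neighbours $i$ are disjoint boundary layers, the restriction operators satisfy $\pi_{d,i}\pi_{d,j}^T = \delta_{ij}\mathrm{Id}$ on the relevant overlap, so applying $\pi_{d,i}$ collapses the sum and yields $\Lambda_{d,i}^{n+1} = \tfrac{\eta}{r}\pi_{d,i}A_d^*(A_d u_d^{n+1}+\Gamma_d^{n+1}-z_d^{n+1})$, exactly as in the two-subdomain derivation of $\Lambda_{1,2}^{n+1}$. Using that $A_d^*A_d$ is diagonal (cf.\ \eqref{eq:AtA}) together with the Lipschitz bound \eqref{eq:Lips}, each successive multiplier error $\|\Lambda_{i_1,i_2}^n-\Lambda_{i_1,i_2}^{n+1}\|$ is then controlled by $\|\pi_{i_1,i_2}(u_{i_1}^n-u_{i_1}^{n+1})\|$ and $\|z_{i_1}^n-z_{i_1}^{n+1}\|$, reproducing \eqref{eq:mult1}--\eqref{eq:mult2}.

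With this in hand the sufficient-decrease estimate goes through block by block: the $z_d$-decrease uses the descent property \eqref{eq:descent}, the $v_{i_1,i_2}$-decrease is the standard quadratic estimate, the $u_d$-decrease uses Lemma~\ref{le:2}, and the multiplier-ascent loss is absorbed by the bound of the previous paragraph, summed over $\mathscr N$; the condition $(r,\eta)\in\mathcal K_D$ in \eqref{eq:paramsSet_multi} is exactly what makes every resulting coefficient positive. Lower-boundedness of $\mathcal L_m$ follows from completing the square in \eqref{eq:augLag-multi-Domain} and the explicit boundedness of $g_\epsilon(\cdot)-\tfrac{1}{2\eta}|\nabla_x g_\epsilon(\cdot)|^2$, exactly as in Lemma~\ref{lem:lowerBound}. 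These two facts give convergence of the successive errors to zero and, via Assumption~\ref{assump1} for the $D$ subdomains, boundedness of $\{\widetilde X^n\}$, hence subsequence convergence to a stationary point of the saddle-point problem for \eqref{eq:augLag-multi-Domain} as in Theorem~\ref{thm1}. Global convergence then follows by verifying that $\mathcal L_m$ is semi-algebraic—each $g_\epsilon$-term is, by the same representation used in Theorem~\ref{thm2}, and every remaining term is polynomial—and that $\nabla \mathcal L_m$ is bounded by $\|\widetilde X^n-\widetilde X^{n+1}\|$, so that the Kurdyka--\L{}ojasiewicz machinery of \cite{bolte2014proximal} applies as in Theorem~\ref{thm2}.

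The main obstacle is precisely the per-multiplier identity of the second paragraph: unlike the two-subdomain case, a single $u_d$ now couples to several multipliers at once, so isolating each $\Lambda_{d,i}$—on which the entire decrease estimate hinges—is not automatic. It rests on the geometric fact that the overlap layers of $\Omega_d$ with its different neighbours do not share pixels, giving $\pi_{d,i}\pi_{d,j}^T=0$ for $i\neq j$. Were this to fail (for instance at points where three or more subdomains meet), the cross terms $\pi_{d,i}\pi_{d,j}^T\Lambda_{d,j}$ would survive and one would instead need a block-invertibility argument to express and bound the individual multiplier increments. Establishing—or arranging the decomposition so as to guarantee—this disjointness is therefore the crux of the extension; everything downstream is a routine re-indexing of the two-subdomain estimates.
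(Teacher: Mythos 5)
Your proposal is correct and follows essentially the same route as the paper: the paper itself gives no separate argument for the multi-subdomain theorem, stating only that the analysis "is almost the same as the case of two subdomains," and your block-by-block re-indexing of Lemmas~\ref{lem:non-increasing}--\ref{lem:bound} and Theorems~\ref{thm1}--\ref{thm2} is precisely that omitted extension. Your observation that isolating each $\Lambda_{d,i}$ from the coupled optimality condition requires the overlap layers of $\Omega_d$ with distinct neighbours to be pixel-disjoint (true for the stripe-type DD the paper uses, but not for decompositions with cross points) is a genuine hypothesis the paper leaves implicit, and making it explicit strengthens rather than deviates from the intended argument.
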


{Similar to the 2-subdomain case, the set \eqref{eq:paramsSet_multi} of the parameters $r,\eta$ is nonempty}.

\subsection{Blind ptychography }

To reduce the grid pathology \cite{chang2018Blind} (ambiguity derived by the multiplication of any periodical function and the true solution) due to grid scan, introduce the support set constraint of the probe, i.e. $\mathcal O:=\{w:\ (\mathcal Fw)(j)=0,\ j\in \mathscr J\},$ {  with the support set $\bar {\mathscr J}$ denoted as the complement of the set $\mathscr J$ (index set for zero values for the Fourier transform of  the probe). } We will explain why it would help to remove the grid pathology intuitively. Assume that one gets a solution multiplied by a non-constant vector $e$, such that
one gets
$
\mathcal F(w\circ S_j (u\circ e))=(\mathcal F(\mathcal S_j e)\ast\mathcal Fw)\ast
\mathcal F(\mathcal S_j u).
$
Obviously, it demonstrates
that
$\mathcal F(\mathcal S_j e)\ast\mathcal Fw$ will enlarge the support set, that may disobey the compact support constraint.

For simplicity, we consider the blind ptychography problem for two-subdomain DD:
\begin{equation*}
\begin{split}
    &\min\nolimits_{\{w, u_1, u_2, v\}}\sum\nolimits_{d=1}^2\mathcal G_\epsilon(B_d(w, u_d);f_d)+\mathbb I_{\mathcal O}(w),\ \ s.t. \  {\pi}_{d,3-d}u_d-v=0, d=1,2, \ \
\end{split}
\end{equation*}
where the bilinear mapping
$B_d(w, u_d)$ is denoted as
$(B_d)_{j_d}(w,u_d):=\mathcal F (w\circ \big(S^d_{j_d}u_d))\ \forall\ 0\leq j_d\leq J_{d}-1$(
$\sum_{d=1}^{2} J_{d}=J$), and the indicator function $\mathbb I_{\mathcal O}$ is defined as
\[
\mathbb I_{\mathcal O}(w):=\left\{
\begin{split}
&0,\qquad\text{if~} w\in \mathcal O;\\
&+\infty, \text{~otherwise.}
\end{split}
\right.
\]
To enable parallel computing, we consider the following constraint optimization problems
\begin{equation*}
\begin{split}
    &\min\nolimits_{\{w, w_1, w_2, u_1, u_2, v, z_1, z_2\}}\sum_{d=1}^2\mathcal G_\epsilon(z_d;f_d)+\mathbb I_{\mathcal O}(w)\ \\
s.t.& \ \mathcal {\pi}_{1,2}u_1-v=0,\ \  \mathcal { \pi}_{2,1}u_2-v=0, \ w_d=w, \ z_d=B_d(w_d, u_d), d=1,2.  \ \
\end{split}
\end{equation*}

{To get rid of heavy notations, we introduce the following definitions.}
By denoting
\begin{equation*}
\bm \pi=
\begin{pmatrix}
\pi_{1,2}&\bm 0\\
0&\pi_{2,1}
\end{pmatrix},
\ \mathbf I_0=
\begin{pmatrix}
I_0\\
I_0
\end{pmatrix},
\
\mathbf I_1=
\begin{pmatrix}
I_1\\
I_1\\
\end{pmatrix},
\
\mathbf B(\mathbf W, \mathbf U)=
\begin{pmatrix}
B_1(\omega_1, u_1)\\
B_2(\omega_2, u_2)
\end{pmatrix}
\end{equation*}
with identity matrices  $I_0, I_1$,
 we rewrite the constrained condition below:
\begin{equation*}
\begin{split}
&{\bm\pi} \mathbf U-\mathbf I_0  v=0,\ \  {\bf W}-{\mathbf I_1} w=0,\ \  \mathbf B(\mathbf W, \mathbf U)-\mathbf Z=0,
\end{split}
\end{equation*}
where $\mathbf U=(u_1^T, u_2^T)^T, \mathbf W=(w_1^T, w_2^T)^T,$ and $\mathbf Z=(z_1^T, z_2^T)^T$.
Hence we consider the following model:
\begin{equation*}
\begin{split}
    &\min_{w, \mathbf W,\mathbf U, v, \mathbf Z}\mathcal {G}_\epsilon(\mathbf Z; \mathbf f)+\mathbb I_{\mathcal O}(w)\ \\
\text{s.t.}&\ \  {\bm\pi} \mathbf U-\mathbf I_0  v=0,\ \ {\bf W}-{\mathbf I_1} w=0,\ \  \mathbf B(\mathbf W, \mathbf U)-\mathbf Z=0,  \ \
\end{split}
\end{equation*}
with $\mathbf f:=(f^T_1, f^T_2)^T.$

Similarly, the augmented Lagrangian can be formulated below:
\begin{equation}\label{eq:augmentedLagrangianBlind}
\begin{split}
    &\mathcal L_b(w,\mathbf W, \mathbf U, v, \mathbf Z, \mathbf \Lambda, \mathbf \Gamma, \mathbf \Delta)=\mathcal { G}_\epsilon(\mathbf Z; \mathbf f)+\mathbb I_{\mathcal O}(w)\\
    &\ \  +r\Re(\langle{\bm\pi} \mathbf U-\mathbf I_0  v, \mathbf \Lambda\rangle)+ \eta\Re(\langle \mathbf B(\mathbf W, \mathbf U)-\mathbf Z, \mathbf \Gamma\rangle)+\mu\Re(\langle {\bf W}-{\mathbf I_1} w, \mathbf\Delta\rangle)\\
    &\ \   +\tfrac{r}{2}\|{\bm\pi} \mathbf U-\mathbf I_0  v\|^2+ \tfrac{\eta}{2}\|\mathbf B(\mathbf W, \mathbf U)-\mathbf Z\|^2+\tfrac{\mu}{2}\|{\bf W}-{\mathbf I_1} w\|^2,
\end{split}
\end{equation}
with multipliers $\mathbf \Lambda, \mathbf \Gamma, \mathbf \Delta$ and positive parameters $r, \eta, \mu.$
Then alternatively one needs to solve the following saddle point problem below
\begin{equation}\label{eq:saddlepointBlind}
\max_{ \mathbf \Lambda, \mathbf \Gamma, \mathbf \Delta}\ \ \min_{w,\mathbf W, \mathbf U, v, \mathbf Z}\mathcal L_b(w,\mathbf W, \mathbf U, v, \mathbf Z, \mathbf \Lambda, \mathbf \Gamma, \mathbf \Delta).
\end{equation}

One can solve the above saddle point problem step by step  using  a variant ADMM with additional proximal term for the $\mathbf U-$subproblem below:
\begin{align}
(w^{n+1},v^{n+1},\mathbf Z^{n+1})=&\arg\min_{w, v, \mathbf Z}     \mathcal L_b(w, \mathbf W^n, \mathbf U^n, v, \mathbf Z, \mathbf \Lambda^n, \mathbf \Gamma^n, \mathbf \Delta^n)\\
\bm \Gamma^{n+1}=&\bm \Gamma^{n}+\mathbf B(\mathbf W^n, \mathbf U^n)-\mathbf Z^{n+1}; \label{eq:admm_Gamma}\\
\mathbf W^{n+1}=&\arg\min_{\mathbf W}     \mathcal L_b( w^{n+1}, \mathbf W, \mathbf U^n, v^{n+1}, \mathbf Z^{n+1}, \mathbf \Lambda^n, \mathbf \Gamma^{n+1}, \mathbf \Delta^{n})\\
\mathbf U^{n+1}=&\arg\min_{\mathbf U}     \mathcal L_b(w^{n+1}, \mathbf W^{n+1},  \mathbf U, v^{n+1}, \mathbf Z^{n+1}, \mathbf \Lambda^n, \mathbf \Gamma^{n+1}, \mathbf \Delta^{n})+\tfrac{\gamma}{2}\|\mathbf U-\mathbf U^n\|^2\\
\mathbf \Delta^{n+1}=&\mathbf \Delta^n+ {\mathbf W^{n+1}}-{\mathbf I_1} w^{n+1};\label{eq:admm_Delta}\\
\mathbf \Lambda^{n+1}=&\mathbf \Lambda^n+{\bm\pi} \mathbf U^{n+1}-\mathbf I_0  v^{n+1}.\label{eq:admm_Lambda}
\end{align}

For the $w-$subproblem,
$
w^{n+1}=\arg\min_w \mathbb I_{\mathcal O}(w)+\tfrac{\mu}{2}\|\mathbf\Delta^n+ \mathbf W^n-\mathbf I_1 w\|^2.
$
Readily one has
\begin{equation}\label{eq:blind_w}
w^{n+1}=\tfrac12\mathcal F^* M \mathcal F\mathbf I_1^T(\mathbf \Delta^n+\mathbf W^n),\\
\end{equation}
where the  matrix $M$ is diagonal with diagonal elements defined as
\begin{equation*}
M(j,j)=\left\{
\begin{split}
&1,\  \ \text{if\ } j\in \mathscr J;\\
&0, \ \ \text{otherwise}.
\end{split}
\right.
\end{equation*}
For the $v-$subproblem,
\begin{equation}\label{eq:blind_v}
v^{n+1}=\arg\min_v  \tfrac{r}{2}\|\mathbf\Lambda^n+\bm \pi \mathbf U^n-{\mathbf I_0} v\|^2=\tfrac{1}{2}\mathbf I_0^T(\mathbf \Lambda^n+\bm \pi \mathbf U^n).
\end{equation}
For the $z-$subproblem,
\begin{equation}\label{eq:blind_Z}
\begin{split}
\mathbf Z^{n+1}&=\arg\min_{\mathbf Z}    \mathcal {  G}_\epsilon(\mathbf Z; \mathbf f)+\tfrac{\eta}{2}\|\mathbf\Gamma^n+\mathbf B(\mathbf W^n, \mathbf U^n)-\mathbf Z\|^2\\
&=\mathrm{Prox}_{\mathcal {  G}_\epsilon, \eta}(\mathbf\Gamma^n+\mathbf B(\mathbf W^n, \mathbf U^n)),
\end{split}
\end{equation}
where $\mathrm{Prox}_{\mathcal {G}_\epsilon, \eta}(\mathbf Z)=(\mathrm{Prox}_{ {\mathcal  G}_\epsilon, \eta}^T(z_1), \mathrm{Prox}_{\mathcal {G}_\epsilon, \eta}^T(z_2))^T\ \ \forall\ \mathbf Z=(z_1^T, z_2^T)^T$.

Define two related  operators by fixing one variable for the bilinear operator ${\mathbf B(\cdot, \cdot)}$ as
$
\mathbf D_{\mathbf W}(\mathbf U):=\mathbf B(\mathbf W,\mathbf U),$ 
$
\mathbf D_{\mathbf U}(\mathbf W):=\mathbf B(\mathbf W,\mathbf U) \ \ \forall \ \mathbf W, \mathbf U.
$
Then  $\mathbf D_{\mathbf W}$ and $\mathbf D_{\mathbf U}$ are  linear, and moreover, they are linear (also bounded) w.r.t. the subscripts $\mathbf W, \mathbf U$ respectively.

For $\mathbf W-$subproblem, one has
\begin{equation}\label{eq:blind_W}
\begin{split}
\mathbf W^{n+1}&=\arg\min_{\mathbf W}   \tfrac{\eta}{2}\|\mathbf B(\mathbf W,\mathbf U^n)-\mathbf Z^{n+1}+\mathbf \Gamma^{n+1}\|^2+\tfrac{\mu}{2}\| \mathbf W-\mathbf I_1 w^{n+1}+\mathbf \Delta^{n}\|^2 \\
&=\big(\eta\mathbf D_{\mathbf U^{n}}^*\mathbf D_{\mathbf U^{n}}+\mu \mathbf I_1\big)^{-1}
\big(\eta\mathbf D_{\mathbf U^{n}}^*(\mathbf Z^{n+1}-\mathbf \Gamma^{n+1})+\mu (\mathbf I_1 w^{n+1}-\mathbf \Delta^n)\big),
\end{split}
\end{equation}
with  $\mathbf D_{\mathbf U^{n}}^*\mathbf D_{\mathbf U^{n}}=\mathrm{diag}\Big(\big((\sum\nolimits_{j_1=0}^{J_1-1} \mathcal S_{j_1}^1 |u_1^{n}|^2)^T, (\sum\nolimits_{j_2=0}^{J_2-1} \mathcal S_{j_2}^2 |u_2^{n}|^2)^T\big)^T\Big)$.

For $\mathbf U-$subproblem, one has to solve the following problem
\begin{equation}\label{eq:blind_U}
\begin{split}
\mathbf U^{n+1}&=\arg\min_{\mathbf U} \tfrac{\eta}{2}\|\mathbf B(\mathbf W^{n+1},\mathbf U)-\mathbf Z^{n+1}+\mathbf \Gamma^{n+1}\|^2+\tfrac{r}{2}\|\bm \pi \mathbf U-\mathbf I_0 v^{n+1}+\mathbf \Lambda^n\|^2+\tfrac{\gamma}{2}\|\mathbf U-\mathbf U^n\|^2\\
&=\big(\eta \mathbf D_{\mathbf W^{n+1}}^*\mathbf D_{\mathbf W^{n+1}}+r\bm\pi^T\bm\pi+\gamma \mathbf I\big)^{-1}
\big(\eta\mathbf D_{\mathbf W^{n+1}}^*(\mathbf Z^{n+1}-\mathbf \Gamma^{n+1})+r\bm\pi^T(\mathbf I_0v^{n+1}-\mathbf \Lambda^n) +\gamma \mathbf U^n \big),
\end{split}
\end{equation}
where 
$\mathbf D_{\mathbf W^{n+1}}^*\mathbf D_{\mathbf W^{n+1}}=\mathrm{diag}\Big(\big((\sum\nolimits_{j_1=0}^{J_1-1} (\mathcal S_{j_1}^1)^T |w_1^{n+1}|^2)^T, (\sum\nolimits_{j_2=0}^{J_2-1} (\mathcal S_{j_2}^2)^T |w_2^{n+1}|^2)^T\big)^T\Big)$
and $\mathbf I$ denotes the identity matrix, i.e. $\mathbf I \mathbf U=\mathbf U$.

Finally the {\bf o}verlapping {\bf DD} based {\bf b}lind {\bf p}tychography   (OD${}^2$BP)  is given as below:
\vskip .2in
\begin{center}
\begin{minipage}{.95\textwidth}
\rule{\textwidth}{1mm}
\vskip .1in
\centering{Algorithm 3: \ {\bf O}verlapping {\bf DD} based {\bf B}lind {\bf P}tychography   (OD${}^2$BP)}
\vskip .1in
\rule{\textwidth}{.8mm}
\begin{itemize}
\item[Step 0.] Set $n:=0.$
\item[Step 1.] Update $w^{n+1}, v^{n+1}, \mathbf Z^{n+1}$ in parallel by \eqref{eq:blind_w}, \eqref{eq:blind_v}, \eqref{eq:blind_Z}.
\item[Step 2.] Update the multipliers $\mathbf \Gamma^{n+1}$   by \eqref{eq:admm_Gamma}.
\item[Step 3.] Update $\mathbf W^{n+1}$ by \eqref{eq:blind_W}.
\item[Step 4.] Update $\mathbf U^{n+1}$ by \eqref{eq:blind_U}.
\item[Step 5.] Update  $\mathbf \Delta^{n+1}$ and $\mathbf \Lambda^{n+1}$    by  \eqref{eq:admm_Delta} and \eqref{eq:admm_Lambda}.
\item[Step 6.] If satisfying the stopping condition, then stop and output $\mathbf U^{n+1}$ as the final output; otherwise, set $n:=n+1,$ and go to Step 1.
\end{itemize}
\end{minipage}
\vskip .1in
\rule{.95\textwidth}{1mm}
\end{center}
For the convergence analysis,  the blind case becomes more sophisticated. Limited to the current technique, we have to make the  assumption of the boundedness for the iterative sequence. Then with proper $(r,\eta,\mu, \gamma)$,  similarly to the nonblind case, one can prove that any limit point of $\{\widehat X^n\}$ converges to the stationary point of the saddle point problem \eqref{eq:saddlepointBlind}.

\section{Numerical Experiments}
\label{sec4}
To get perfect reconstruction results, periodical boundary condition is usually enforced as \cite{chang2018Blind},
that produces almost same redundancy of each pixel of the sample. To make the simulation more realistic, instead
we assume that the image to be recovered has boundary with pixel values being all ones, because the sample during the experiments are usually  in the vacuum and the region without sample occupying has constant contrast. In the implementation of all algorithms, we enforce the region to be ones.

We consider the zone-plate probe with $64\times 64$ pixels (See Fig. \ref{fig2} (c)-(d)), and the complex-valued sample is generated
by taking two real-valued images as its absolute parts (See Fig. \ref{fig2} (a) ) and phase (See Fig. \ref{fig2} (b) ) parts respectively. The raster grid is adopted for ptychography scan, i.e. all scanning points lie in isometric square grid.

For noisy free measurements, the proposed algorithms stop at the $n^{th}$ iteration, if  the R-factors $\mathrm{RF}^n:=\tfrac{\sum_d\||A_d u^n_d|-\sqrt{f_d}\|_1}{\sum_d\|\sqrt{f_d}\|_1}\leq 1.0\times 10^{-5}$ or $\tfrac{\sum_d\||B_d (w^n,u^n_d)|-\sqrt{f_d}\|_1}{\sum_d\|\sqrt{f_d}\|_1}\leq 1.0\times 10^{-5}$ for  non-blind and blind recovery, respectively,  or the maximum iteration number reaches 1000.  For the noisy measurements, the proposed algorithms stop at $n^{th}$ iteration,  if  the relative error $\mathrm{RE}^n:=\max_d\tfrac{\|u_d^{n}-u_d^{n-1}\|}{\|u_d^n\|}\leq 1.0\times 10^{-3}$ or the maximum iteration number reaches 200. In order to measure the quality of recovery from noisy measurements, the signal-to-noise ratio (SNR) in dB is used, which is denoted below:
$ \mathrm{SNR}(u_{r}, u_g)=-10\log_{10}\tfrac{\| u_r-u_g\|^2}{\|u_r\|^2},$
where $u_r$ and $u_g$ corresponds to the recovery image and ground truth respectively.

We initialize all variables as shown in Algorithms 1-3. As mentioned in subsection \ref{sec-ST-AGM}, the parameter $\epsilon$ used for the  smooth truncated metric \eqref{eq:obj-1} sets to 0.5. The other parameters appearing in the proposed algorithms are chosen heuristically  in order to either get faster convergence for noiseless measurements, or get better recovery quality for noisy measurements, which will be specified in the following tests. Finally,  the final solution over the entire domain is obtained by merging the solutions on the subdomains derived by proposed algorithms, i.e.   keeping the pixel values of non-overlapping region unchanged, while averaging the overlapping regions.
\begin{figure}
\begin{center}
\subfigure[]{\includegraphics[width=.15\textwidth]{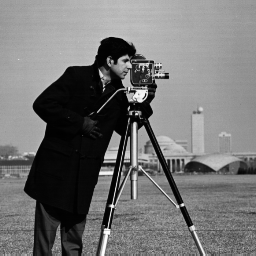}}
\subfigure[]{\includegraphics[width=.15\textwidth]{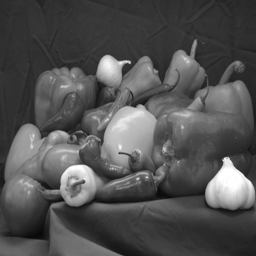}}\ \ \ \ \qquad\qquad
\subfigure[]{\includegraphics[width=.04\textwidth]{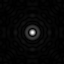}}
\subfigure[]{\includegraphics[width=.04\textwidth]{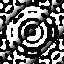}}
\end{center}
    \caption{Ground truth of the sample with absolute part (a) and phase part (b), probe with absolute part (c) and  phase part (d).  }
\label{fig2}
\end{figure}

\subsection{Performances of OD$^2$P}\label{subsec-1}

We first show the performance of the proposed  OD$^2$P algorithm for the nonblind case with two-subdomain decomposition. Set the parameters $\eta=0.1, r=4.0\times 10^3$.
The resolution of the sample  is of $256\times 256$ pixels. Set the scan stepsize to 8 pixels for both $x$ and $y$ directions. Therefore the entire measurement $f$ consists of $J=25\times 25$ frames, and its partitions $f_1$ and $f_2$ consist of $13\times 25$ and $12\times 25$ frames respectively. { The width of the overlapping region is 56 (pixels).} The recovery results are put in Fig. \ref{fig3}, where the recovery results (absolute parts in the first row of Fig. \ref{fig3} and phase parts in the second row of Fig. \ref{fig3})  at 1st, 2nd, 5th, 50th, and the final iterations demonstrate how the proposed algorithm improve the results gradually.  Especially, one may notice that the mismatch between the overlapping regions of  two decomposed parts disappears as iteration goes, by inferring from the first three columns of Fig. \ref{fig3}. That again shows the efficiency of the proposed algorithm. The absolute values of residual between the recovery images and the truth are put in Fig. \ref{fig3-1}, where one can obviously observe the  differences between two decomposed parts get weak as iteration goes. 
The errors changes including the relative errors and R-factors w.r.t. iteration numbers are  put in Fig. \ref{fig3-2} (a), that implies the convergence of the proposed algorithm. { However,  ripples appear in these curves of Fig. \ref{fig3-2} (a). With larger $\eta=0.2$, such ripples disappear as shown in Fig. \ref{fig3-2} (b), while the convergence gets slower since the proposed algorithm reaches the desired error tolerance after more iterations (181 iterations).  Please see more tests about the impact by the parameters in section \ref{sec-4.3}.}

\begin{figure}
\begin{center}
\subfigure{\includegraphics[width=.18\textwidth]{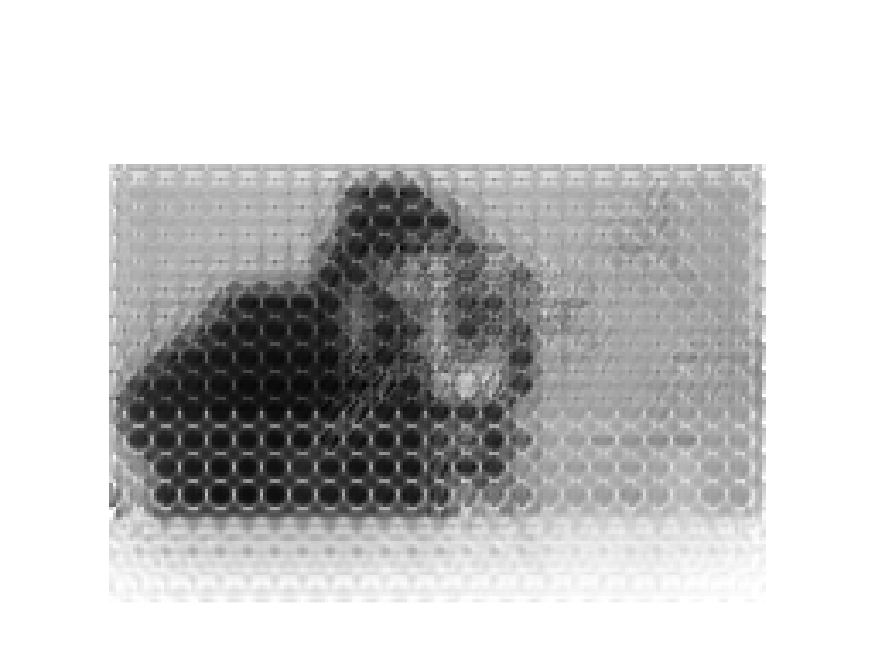}}
\subfigure{\includegraphics[width=.18\textwidth]{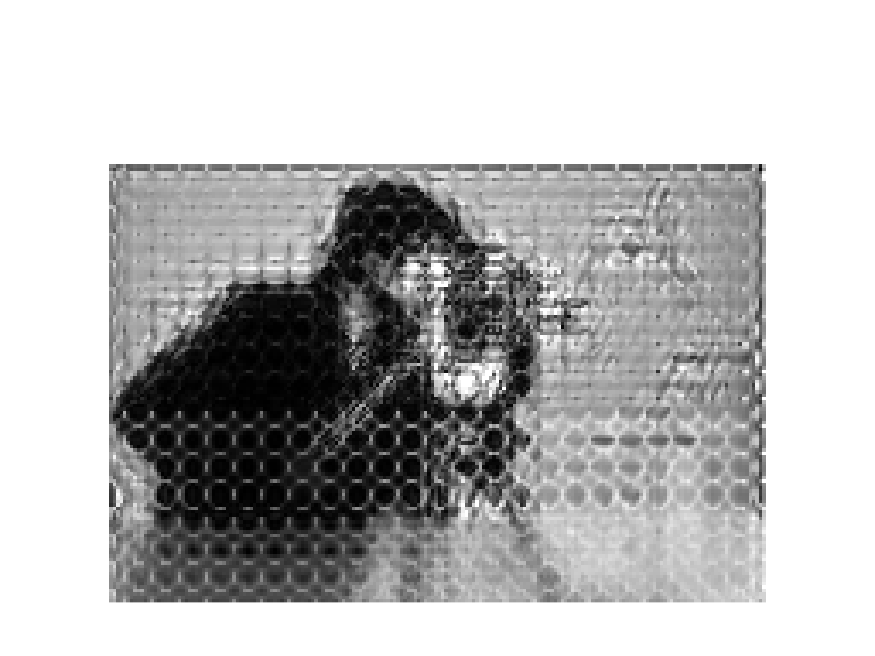}}
\subfigure{\includegraphics[width=.18\textwidth]{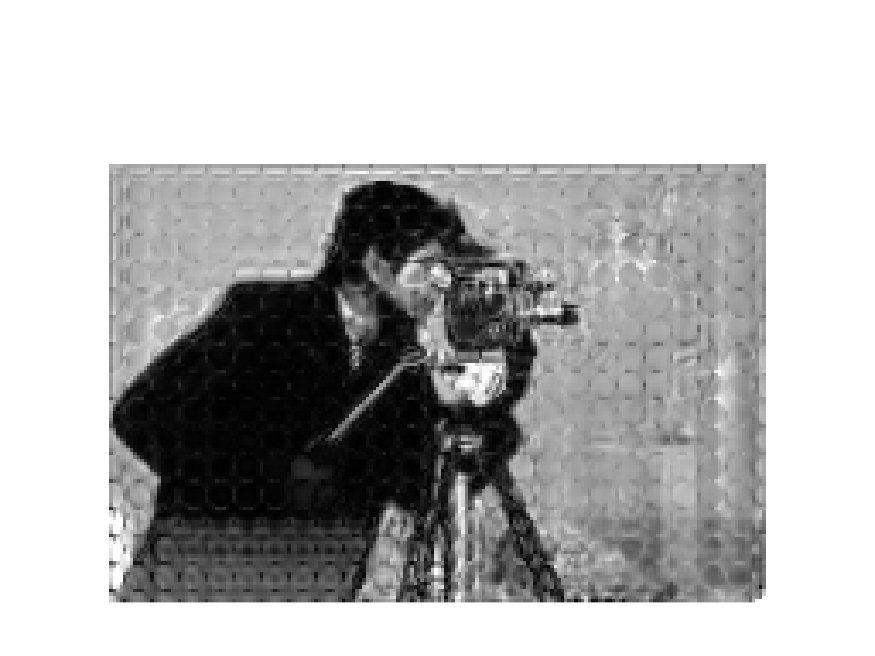}}
\subfigure{\includegraphics[width=.18\textwidth]{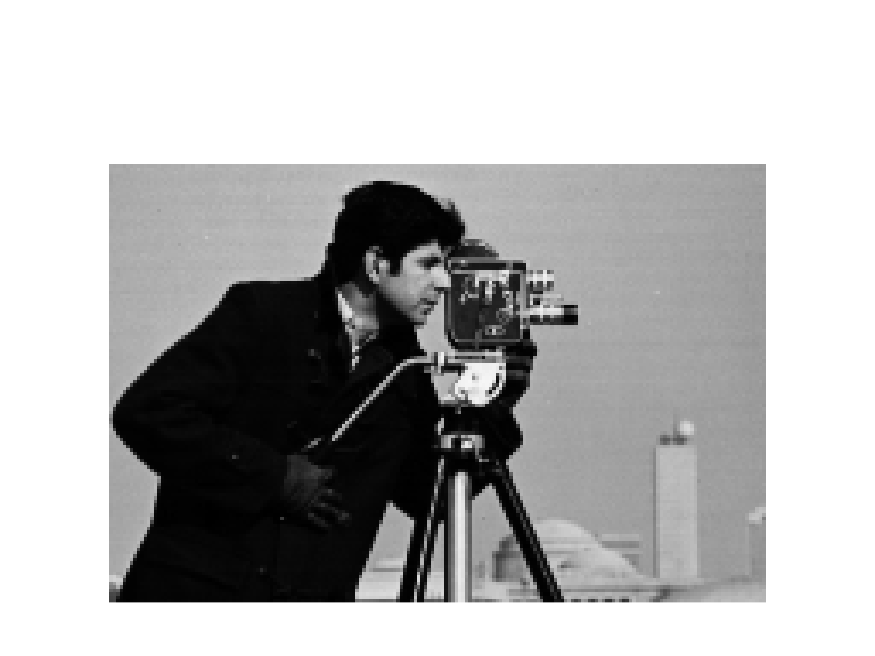}}
\subfigure{\includegraphics[width=.18\textwidth]{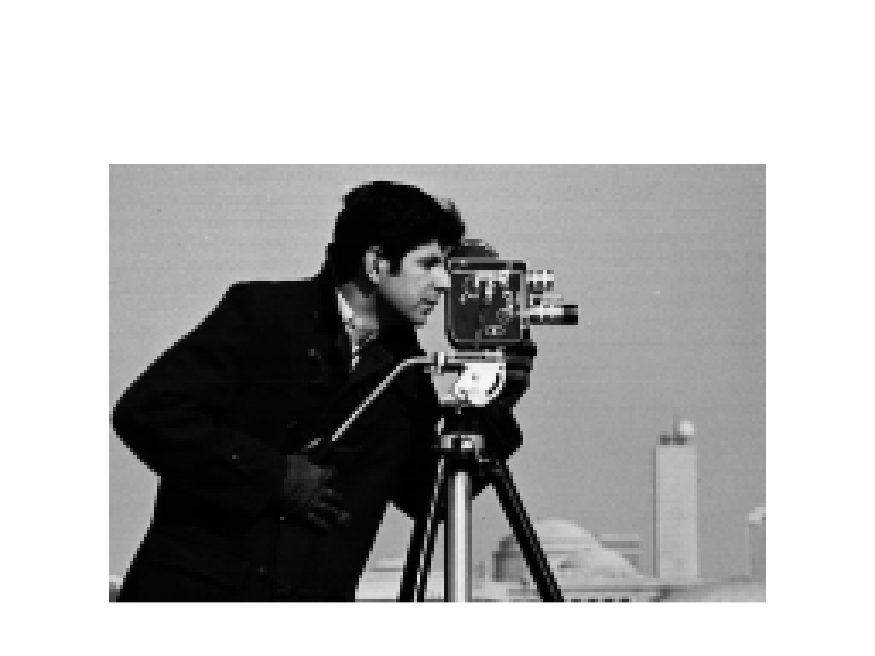}}
\vskip -.4in
\subfigure{\includegraphics[width=.18\textwidth]{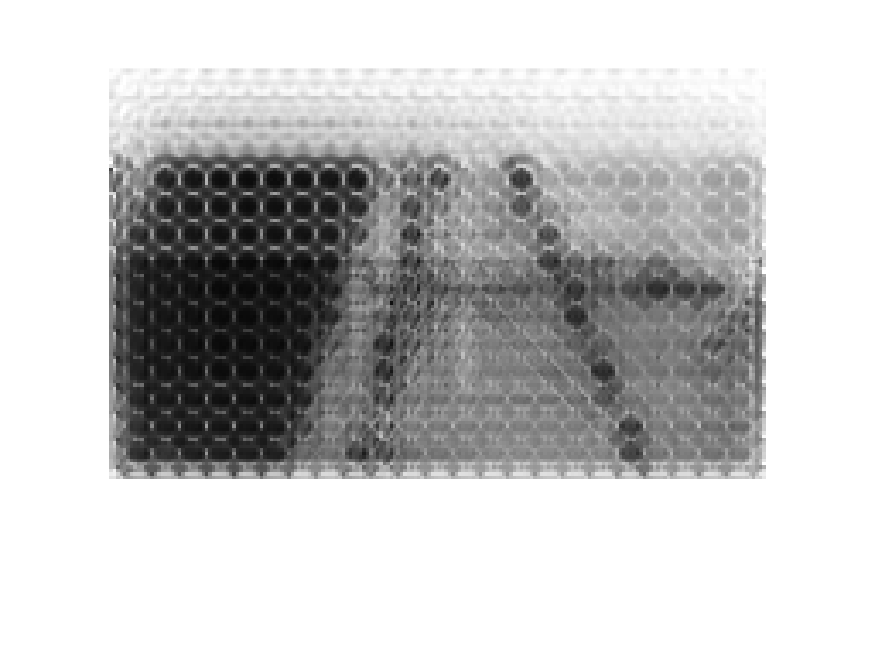}}
\subfigure{\includegraphics[width=.18\textwidth]{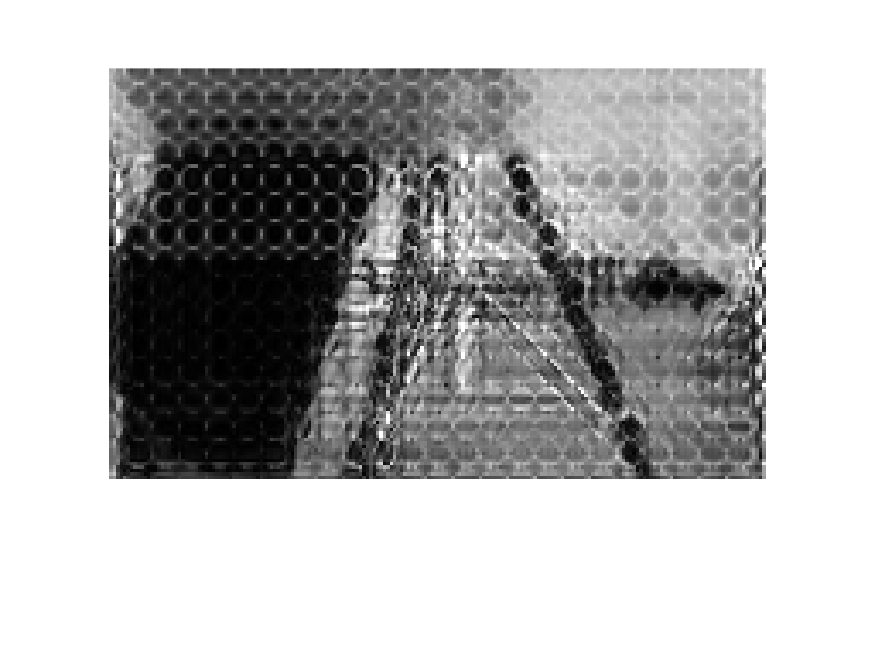}}
\subfigure{\includegraphics[width=.18\textwidth]{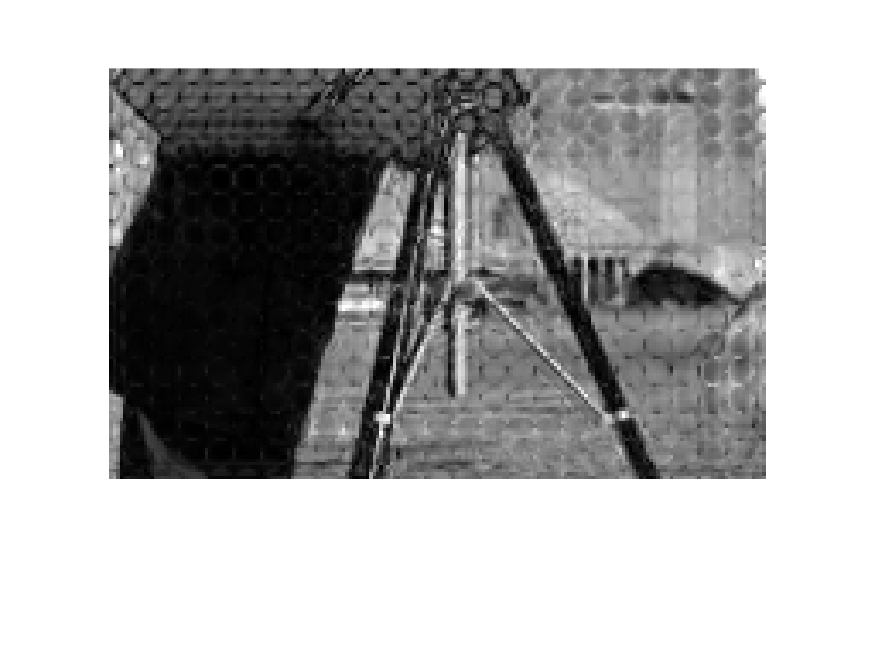}}
\subfigure{\includegraphics[width=.18\textwidth]{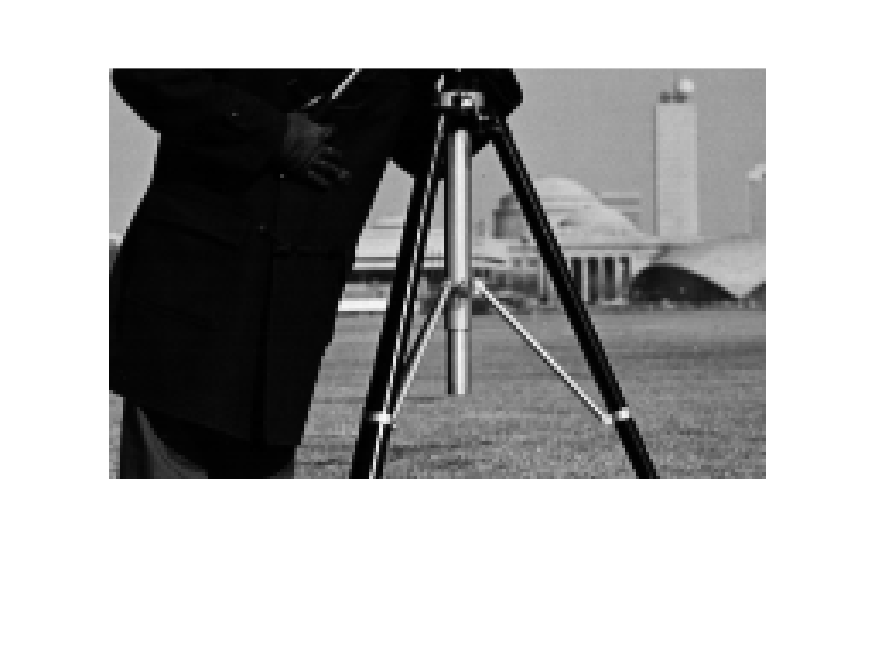}}
\subfigure{\includegraphics[width=.18\textwidth]{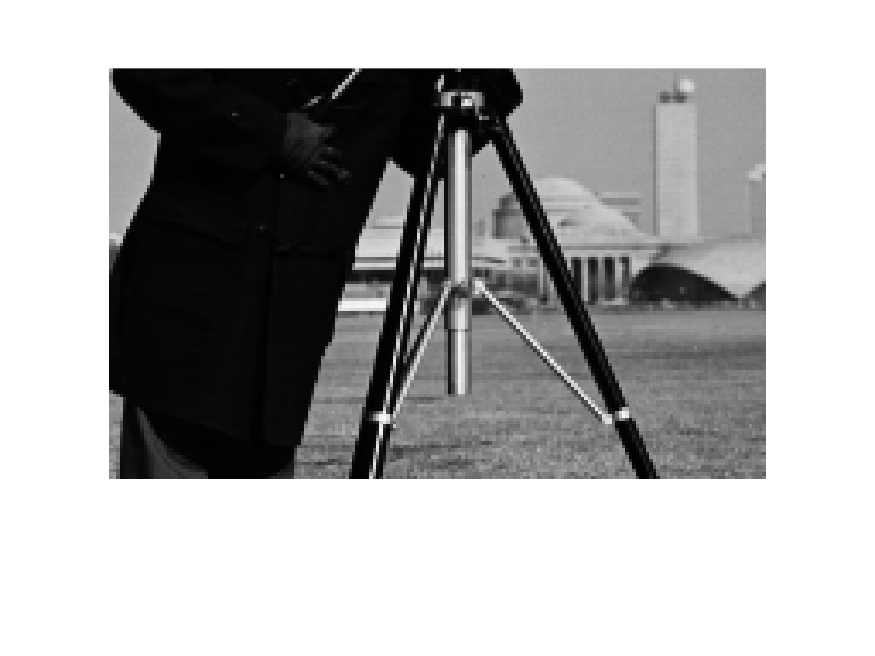}}
\\
\subfigure{\includegraphics[width=.18\textwidth]{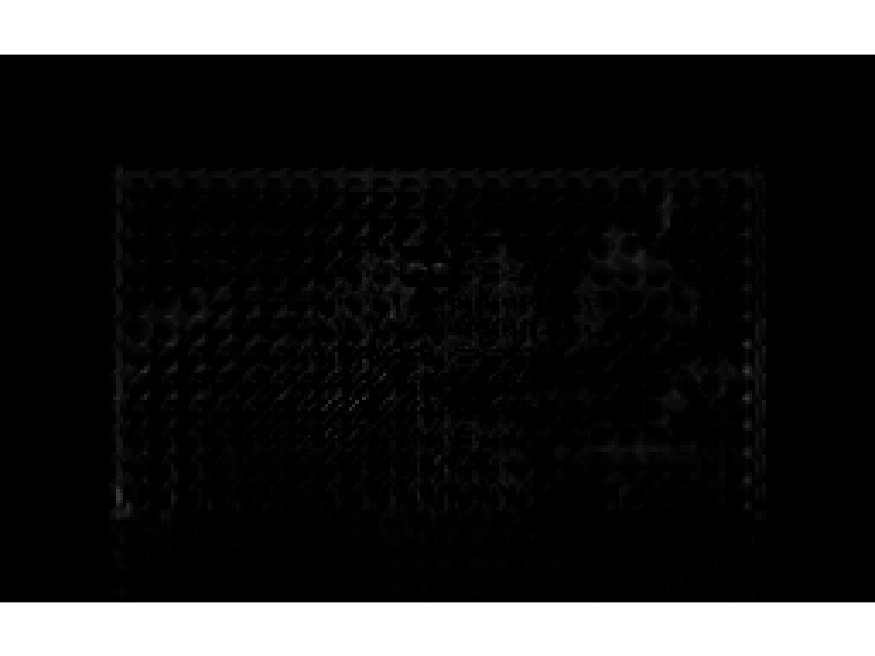}}
\subfigure{\includegraphics[width=.18\textwidth]{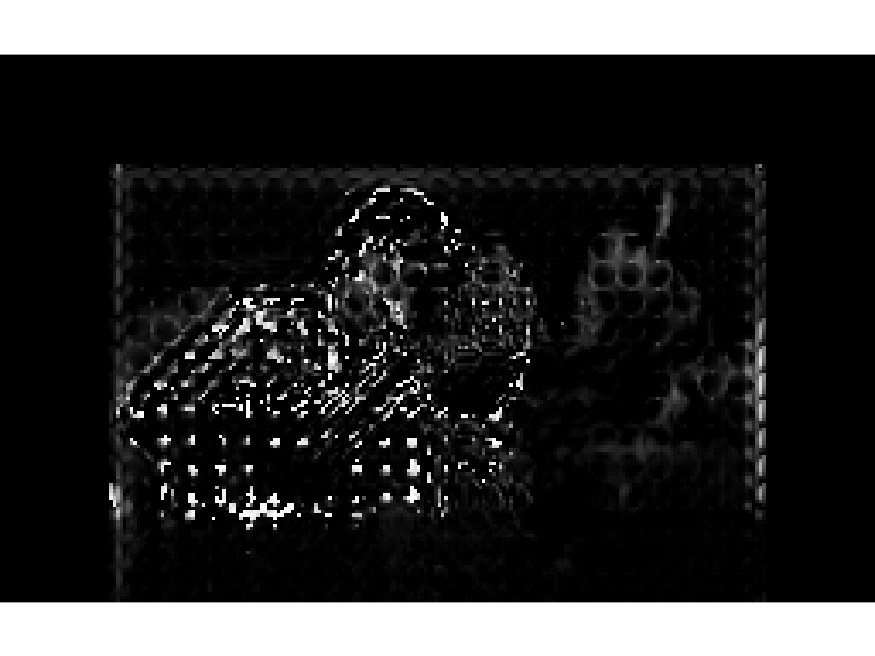}}
\subfigure{\includegraphics[width=.18\textwidth]{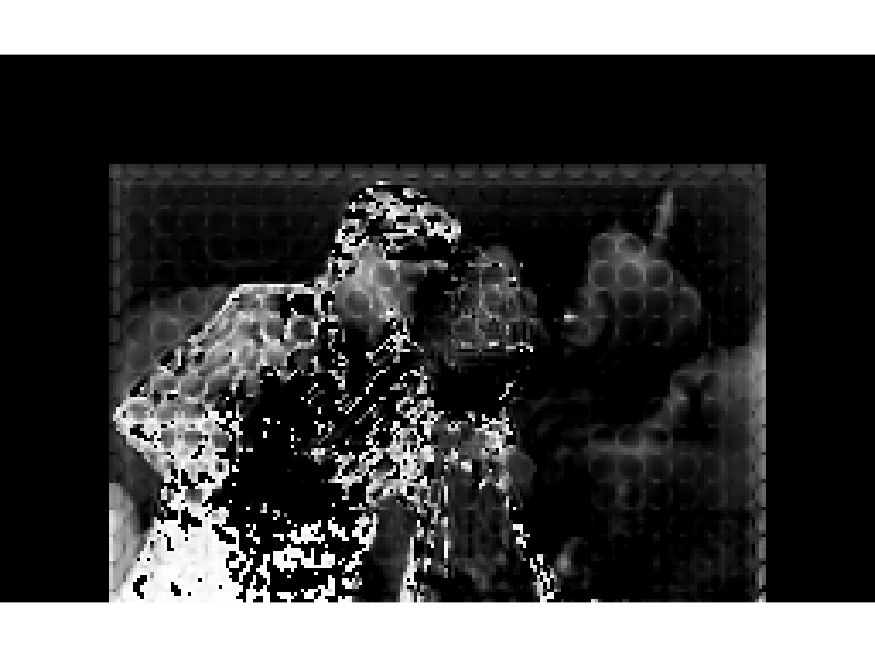}}
\subfigure{\includegraphics[width=.18\textwidth]{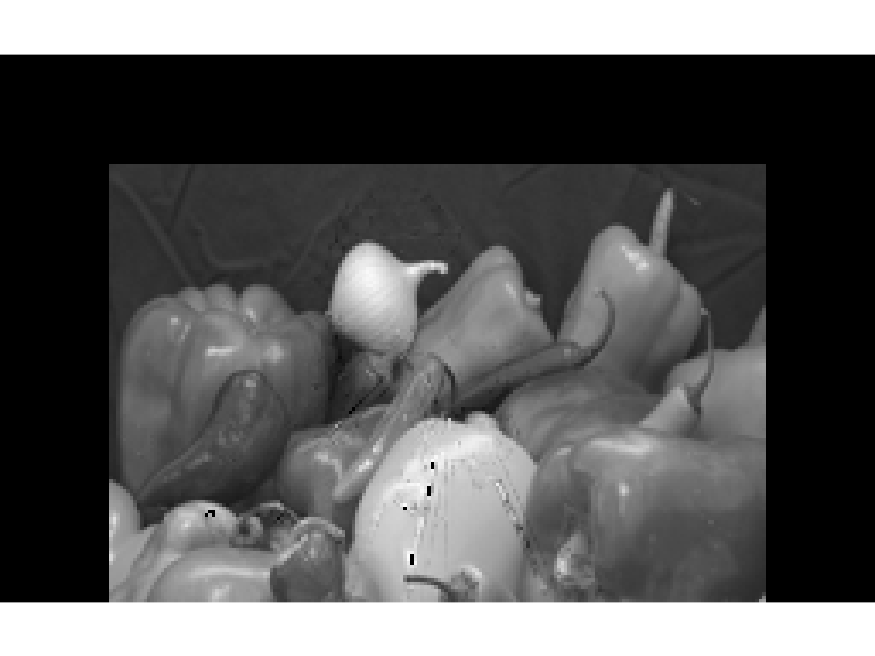}}
\subfigure{\includegraphics[width=.18\textwidth]{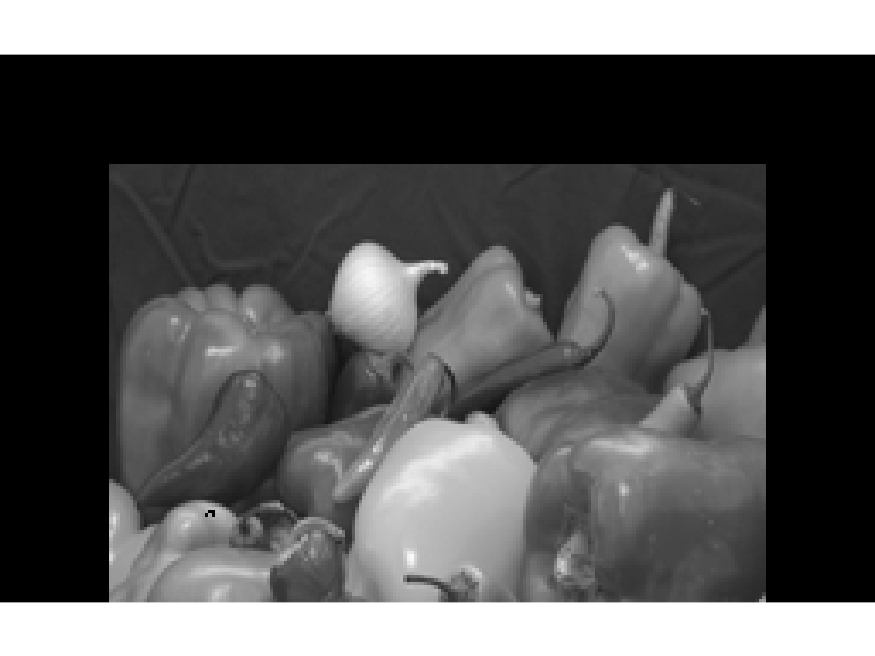}}
\vskip -.4in
\setcounter{subfigure}{0}
\subfigure[1st]{\includegraphics[width=.18\textwidth]{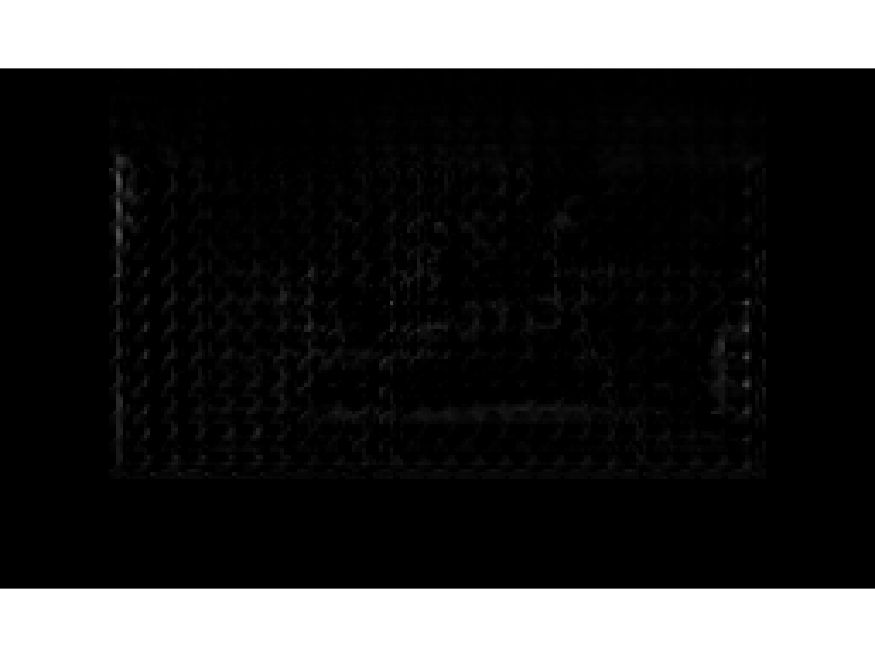}}
\subfigure[2nd]{\includegraphics[width=.18\textwidth]{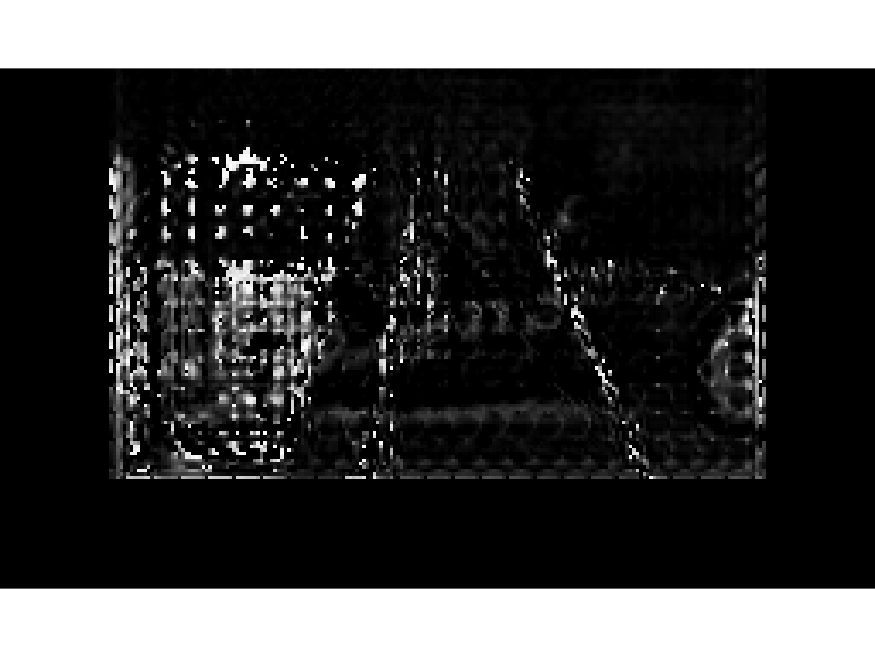}}
\subfigure[5th]{\includegraphics[width=.18\textwidth]{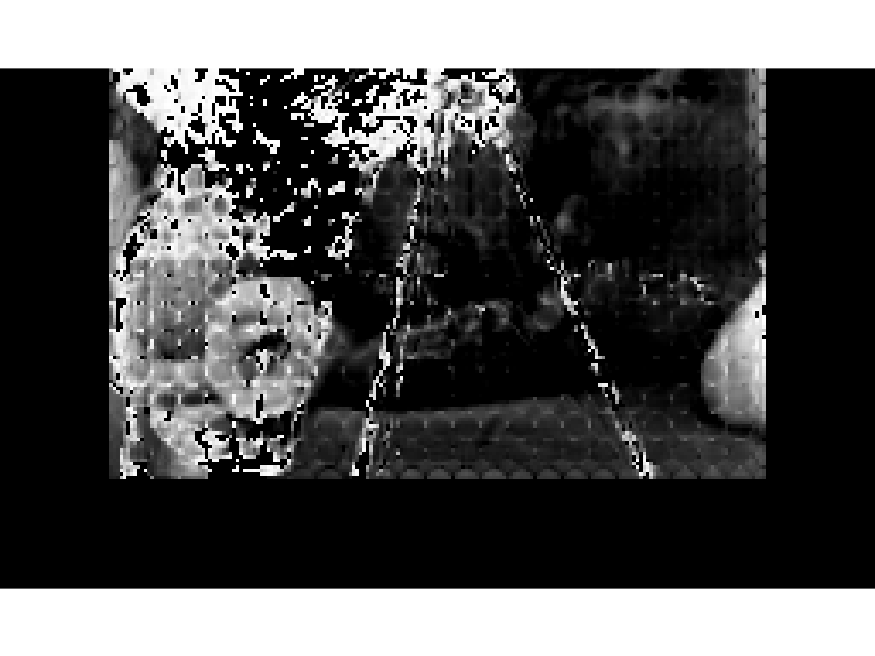}}
\subfigure[50th]{\includegraphics[width=.18\textwidth]{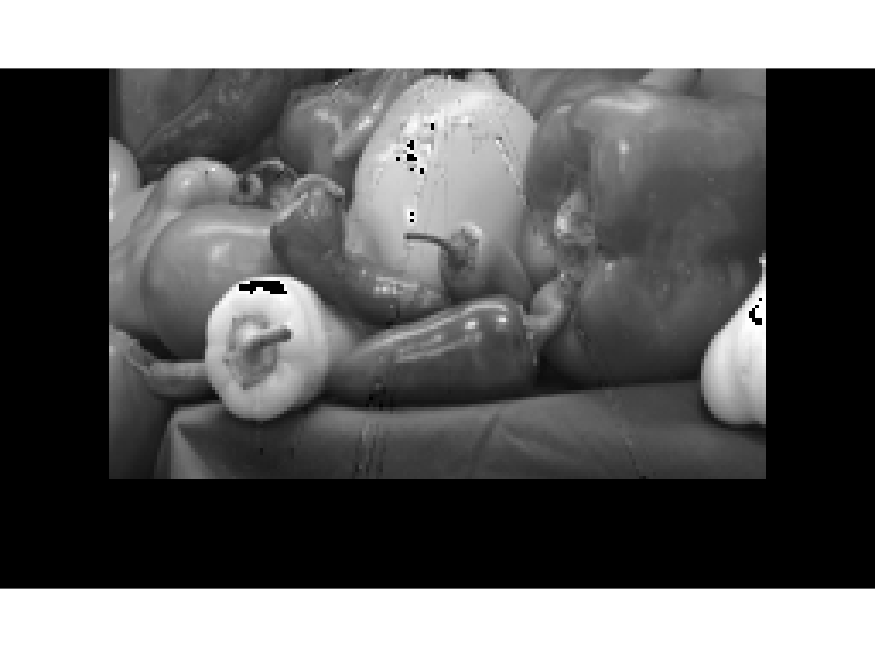}}
\subfigure[168th]{\includegraphics[width=.18\textwidth]{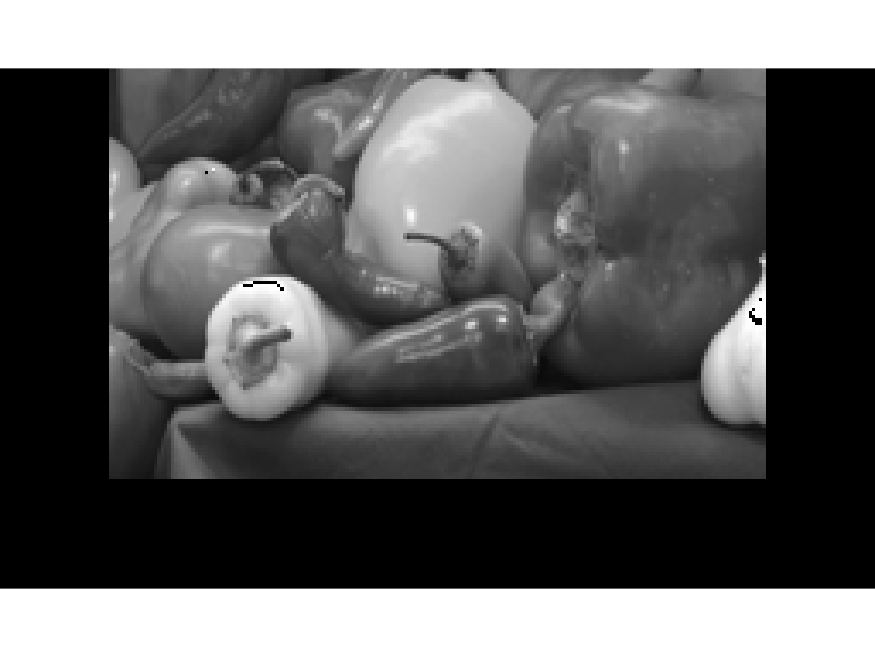}}
\end{center}
    \caption{Recovery results at 1st, 2nd, 5th, 50th, 168th (final iteration) iterations.  Recovered absolute parts (top and down parts for decomposed samples) in 1-2 rows and phase parts in 3-4 rows respectively (top and down parts for decomposed samples), shown in the range of $[0,1]$
    and $[0,\pi]$ for the absolute and phase parts respectively.}
\label{fig3}
\end{figure}

\begin{figure}
\begin{center}
\subfigure[]{\includegraphics[width=.19\textwidth]{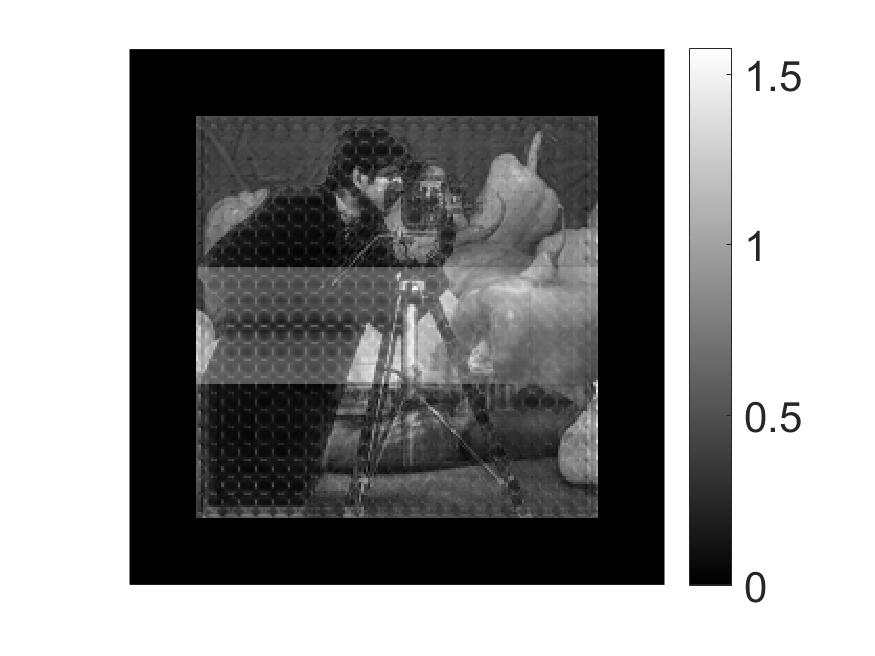}}
\subfigure[]{\includegraphics[width=.19\textwidth]{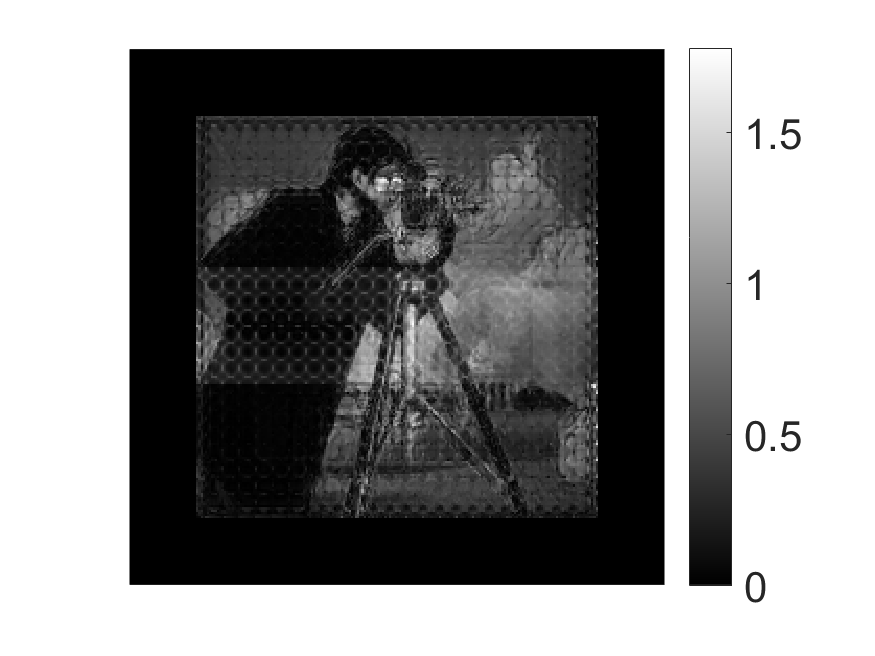}}
\subfigure[]{\includegraphics[width=.19\textwidth]{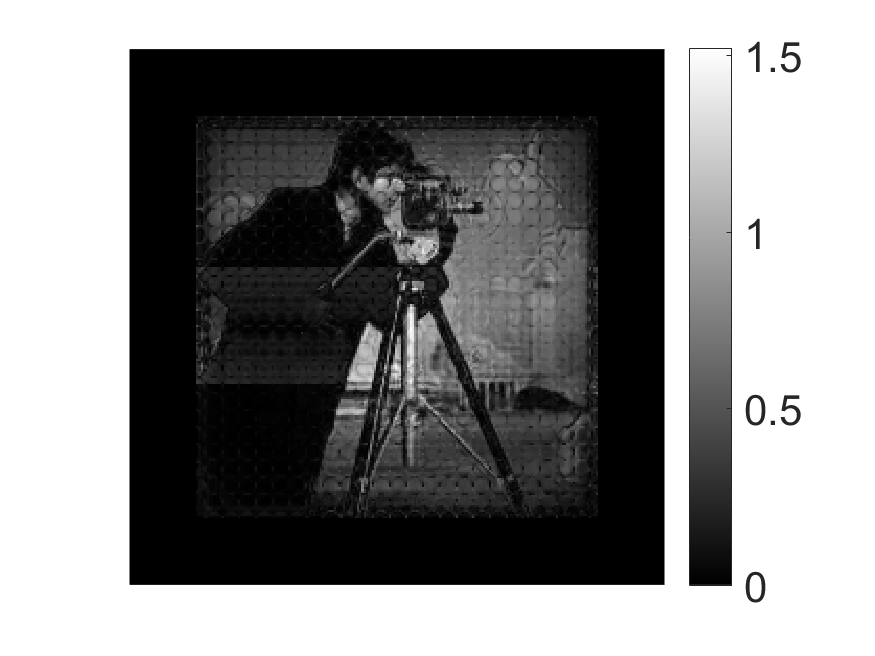}}
\subfigure[]{\includegraphics[width=.19\textwidth]{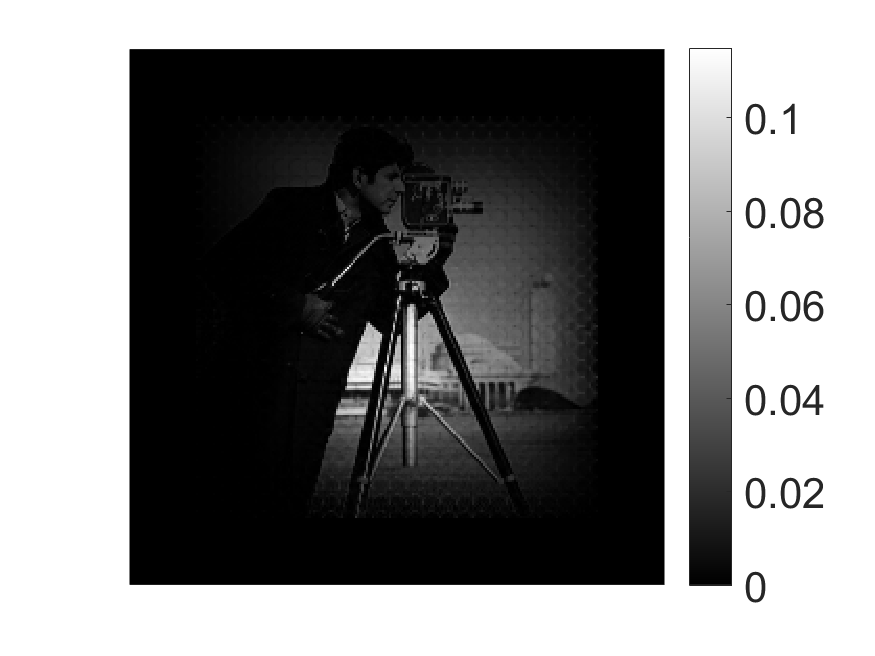}}
\subfigure[]{\includegraphics[width=.19\textwidth]{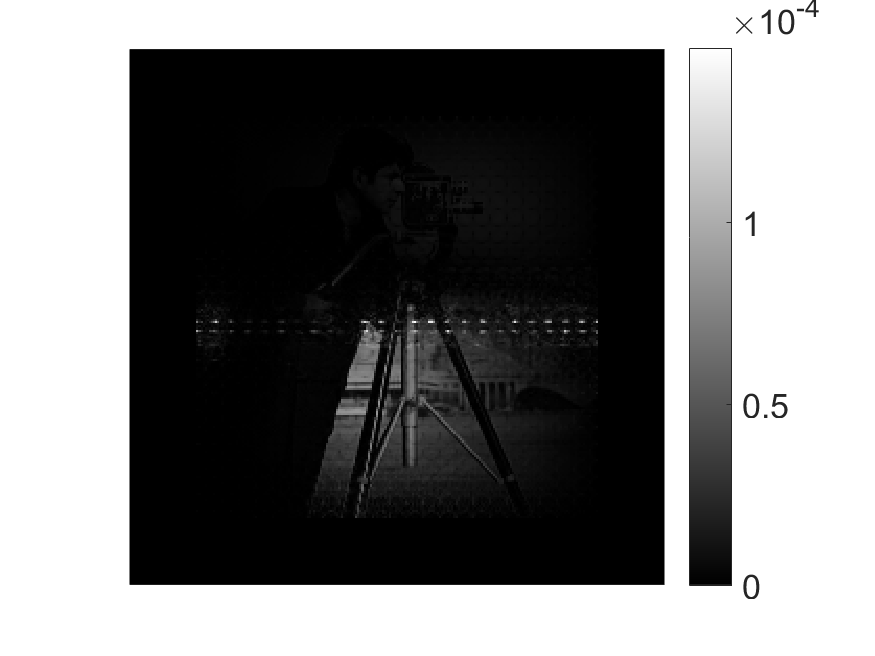}}
\vspace{-.1in}
\\
\subfigure[]{\includegraphics[width=.19\textwidth]{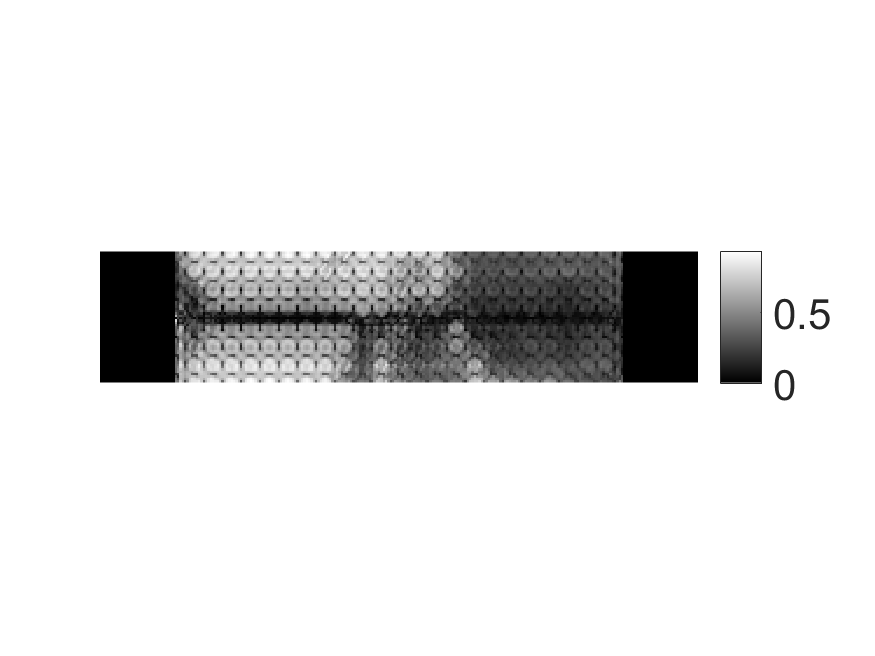}}
\subfigure[]{\includegraphics[width=.19\textwidth]{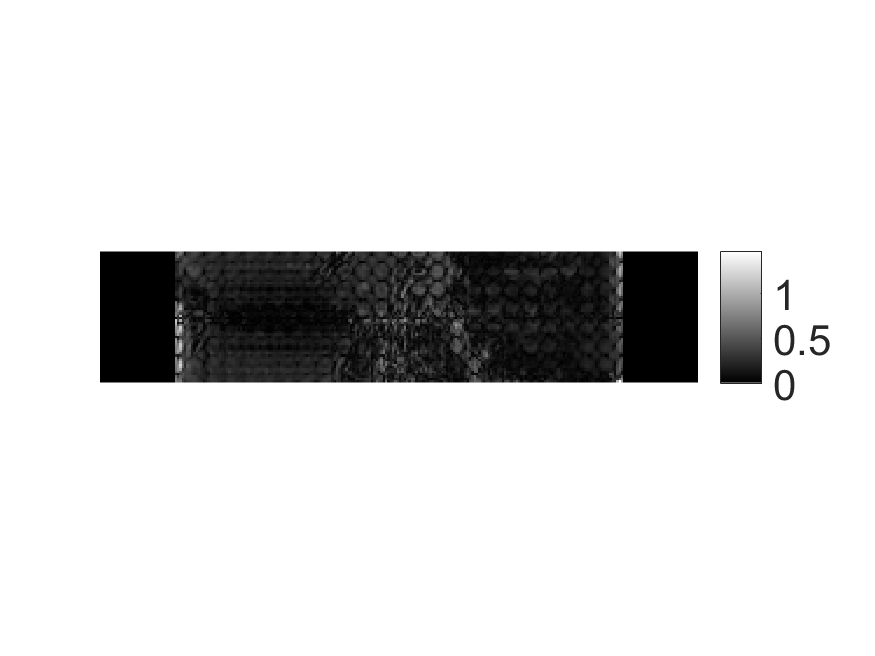}}
\subfigure[]{\includegraphics[width=.19\textwidth]{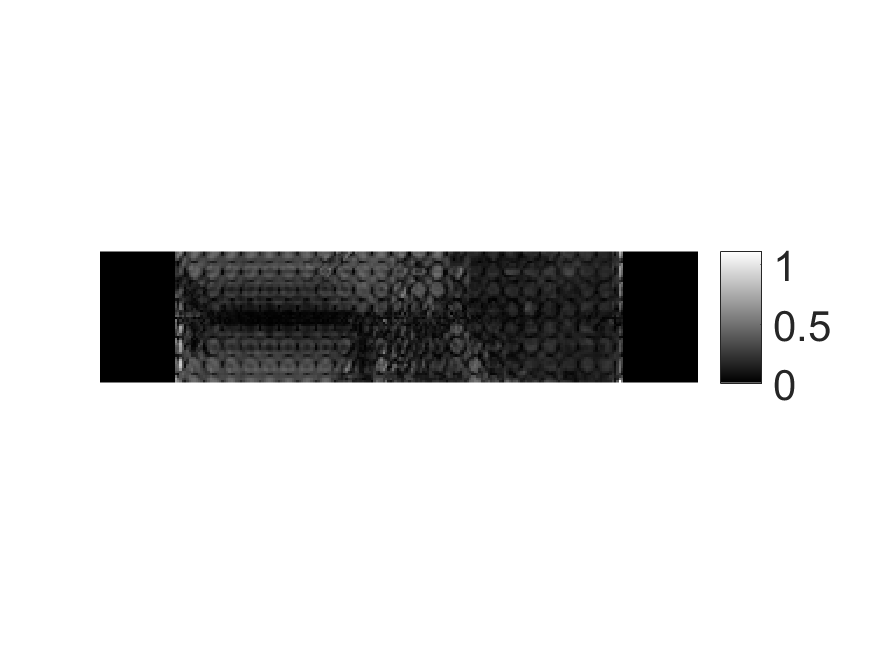}}
\subfigure[]{\includegraphics[width=.19\textwidth]{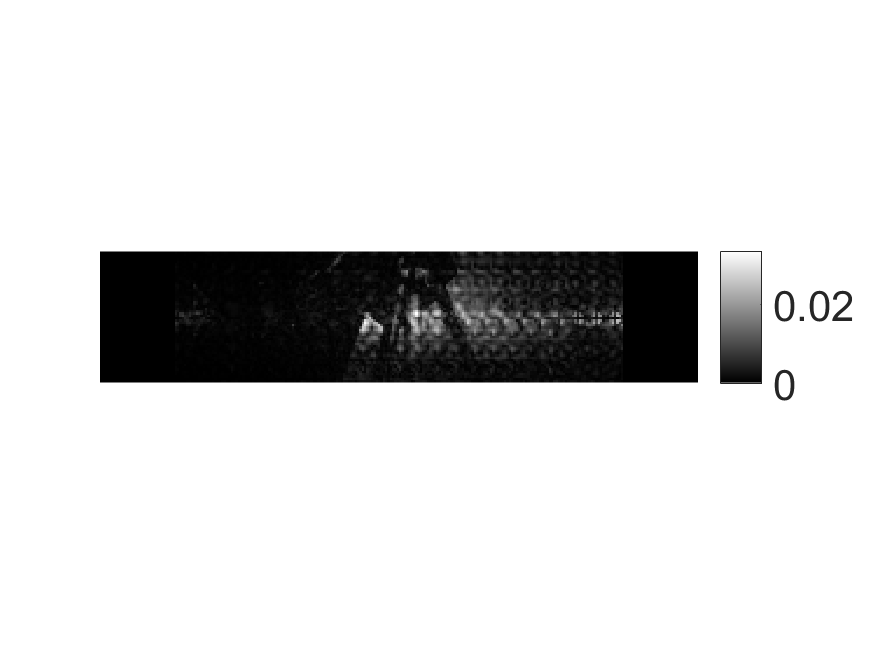}}
\subfigure[]{\includegraphics[width=.19\textwidth]{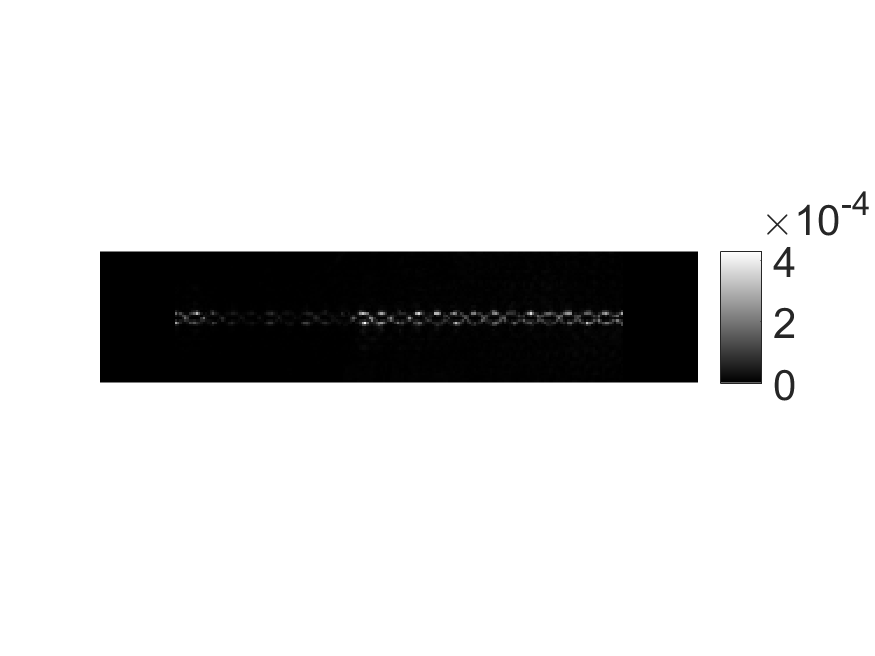}}
\vspace{-.15in}
\end{center}
    \caption{Residuals. (a)-(e): absolute values of the differences between the recovery results  at 1st, 2nd, 5th, 50th, 168th (final iteration) iterations and the truth;  (f)-(j): errors between the overlapping parts of two subdomains at different iterations.}
\label{fig3-1}
\end{figure}

\begin{figure}
\begin{center}
\subfigure[]{\includegraphics[width=.35\textwidth]{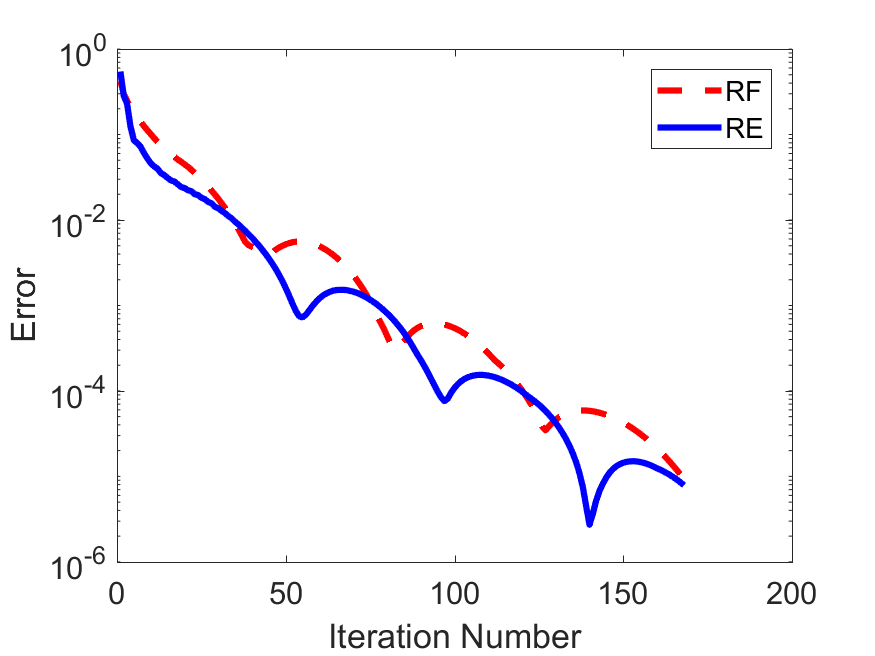}}
\subfigure[]{\includegraphics[width=.35\textwidth]{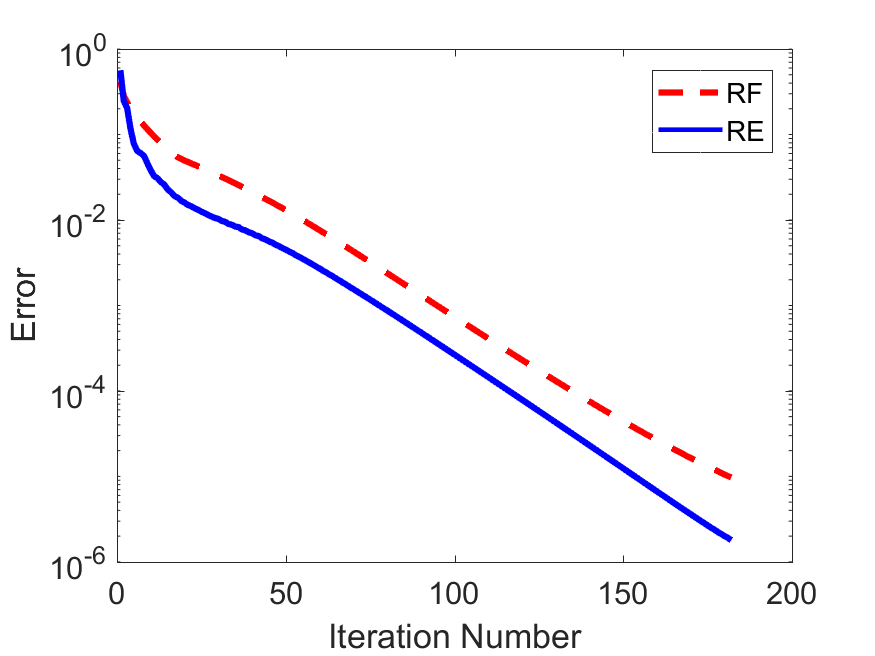}}\\
\end{center}
    \caption{RF (R-factor), and RE (Relative error)  changes v.s. iterations with different $\eta$ in the case of nonblind and 2-subdomain DD. { Left: $\eta=0.1, r=4.0\times 10^3$; Right: $\eta=0.2,  r=4.0\times 10^3$.}  }
\label{fig3-2}
\end{figure}

To further show the performance the proposed algorithm, the test on the noisy data is given as well. Noisy data is generated by contamination of Poisson noises  with different levels (using a scaling factor to control the noise level), such that two different noisy cases with SNR${}_{intensity}=39.8, 29.9$dBs (SNR${}_{intensity}$  denotes the SNR between the noisy and  clean measurements) are considered.  The parameter $\eta$ sets to the same as the noiseless case, while $r$ sets to $90$, and $150$ for two different noise levels respectively. { Consider the same DD as above noiseless case.}
We put the recovery results in Fig. \ref{fig3-3} (the SNRs of final recovered images are 18.1 dB and 12.2 dB). That demonstrates the proposed algorithm can work well for the Poisson noisy data,  showing great potential for real experimental data analysis (usually contaminated by Poisson noise).

\begin{figure}[]
	\begin{center}
		\begin{tabular}{cccc}
			\subfigure{\includegraphics[width=.18\textwidth]{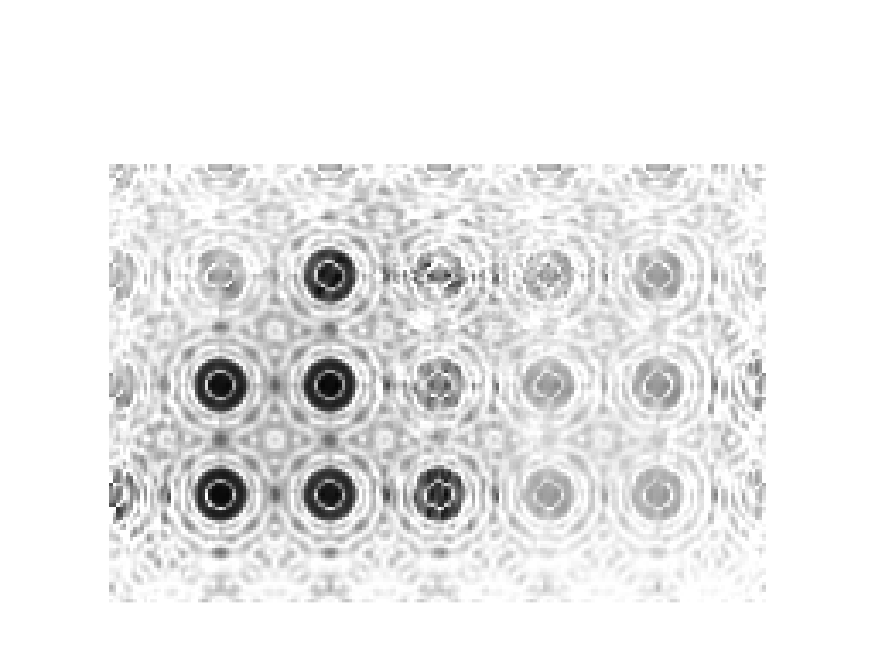}}
			&\subfigure{\includegraphics[width=.18\textwidth]{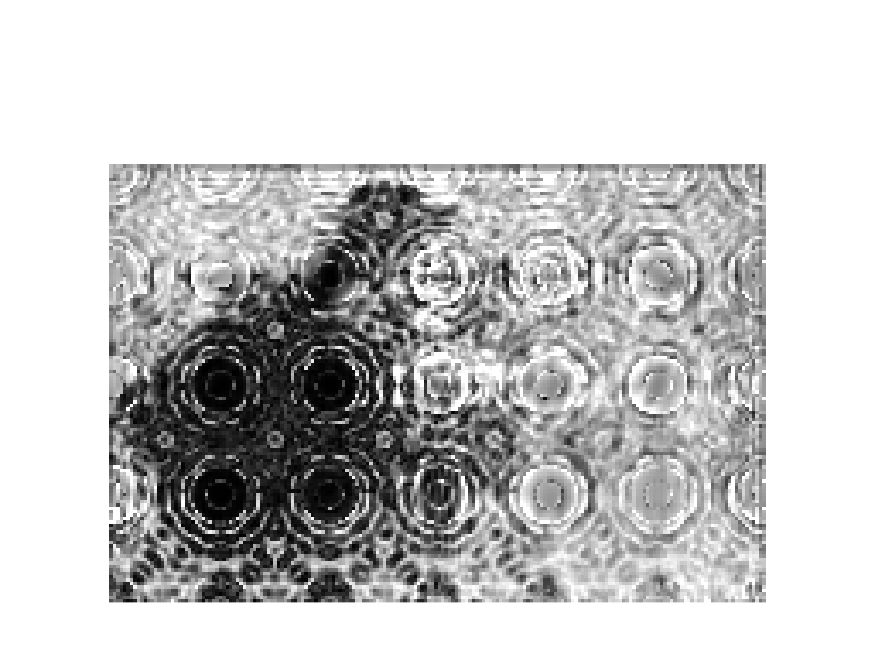}}
			&\subfigure{\includegraphics[width=.18\textwidth]{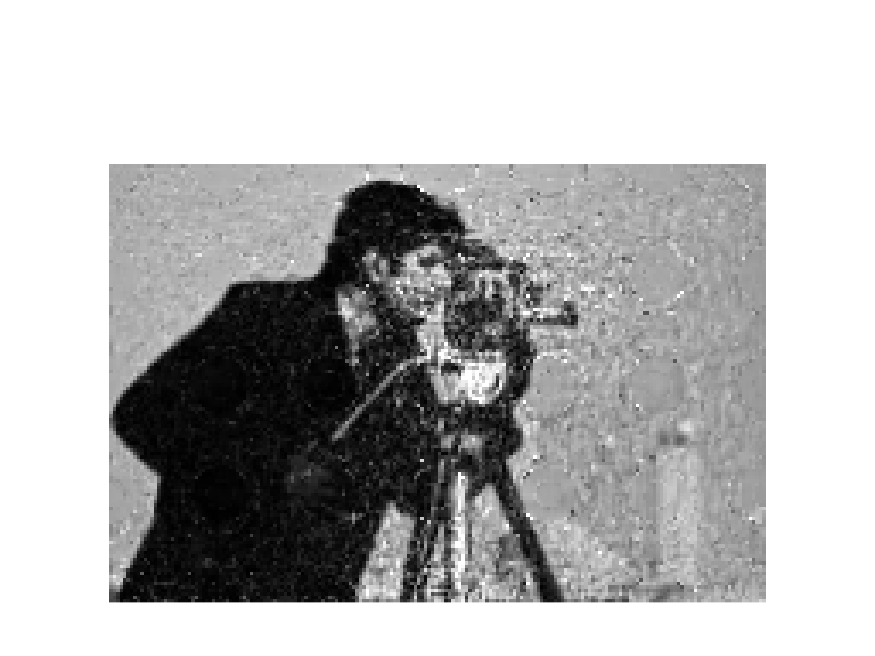}}
			&\subfigure{\includegraphics[width=.18\textwidth]{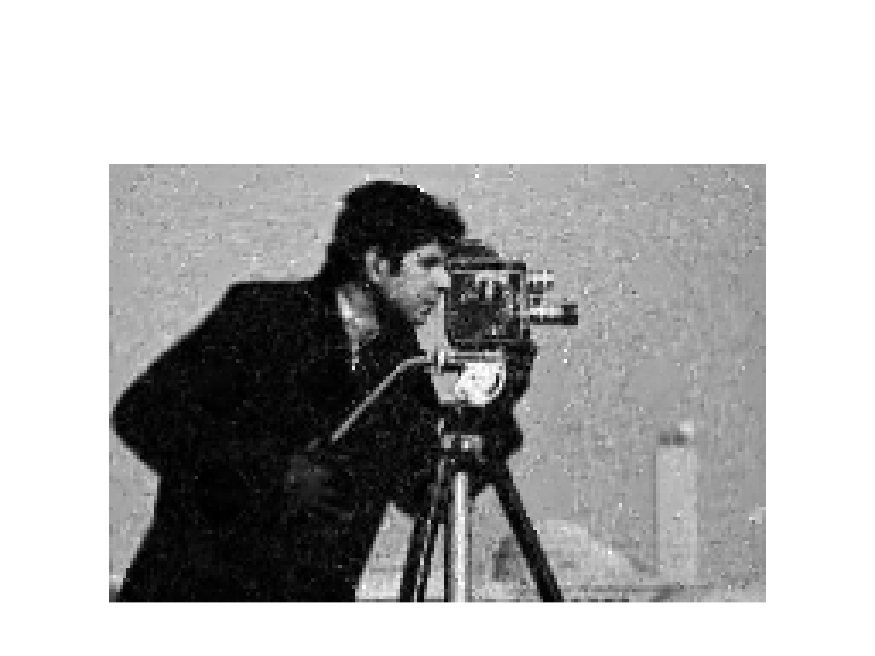}}			\vspace{-.5in}
			\\
			\subfigure{\includegraphics[width=.18\textwidth]{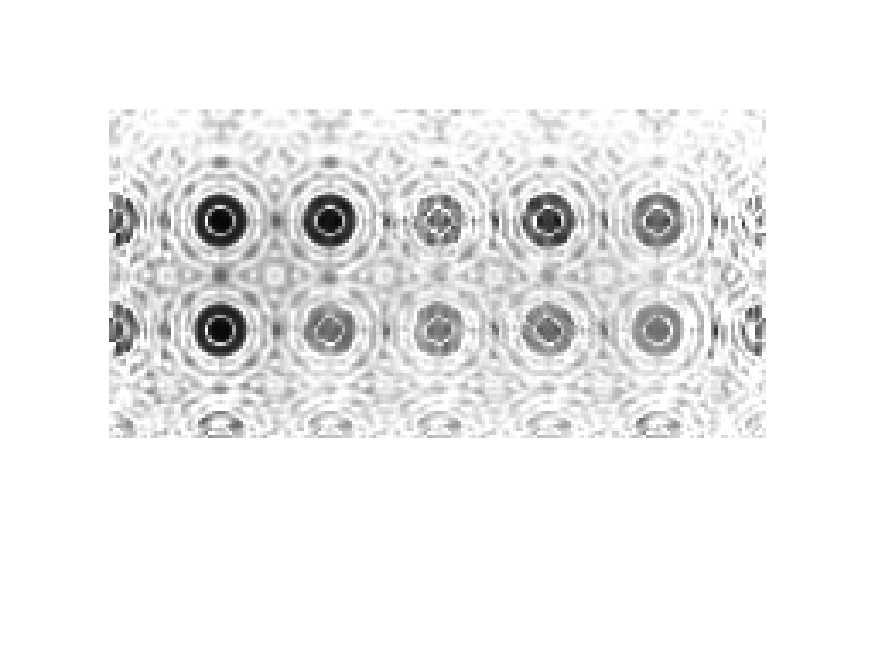}}
			&\subfigure{\includegraphics[width=.18\textwidth]{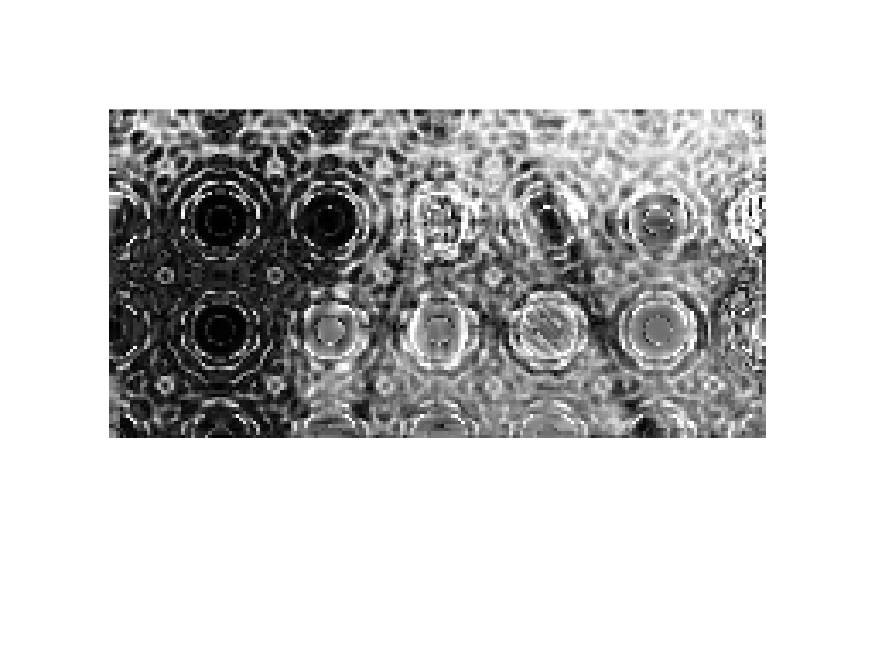}}
			&\subfigure{\includegraphics[width=.18\textwidth]{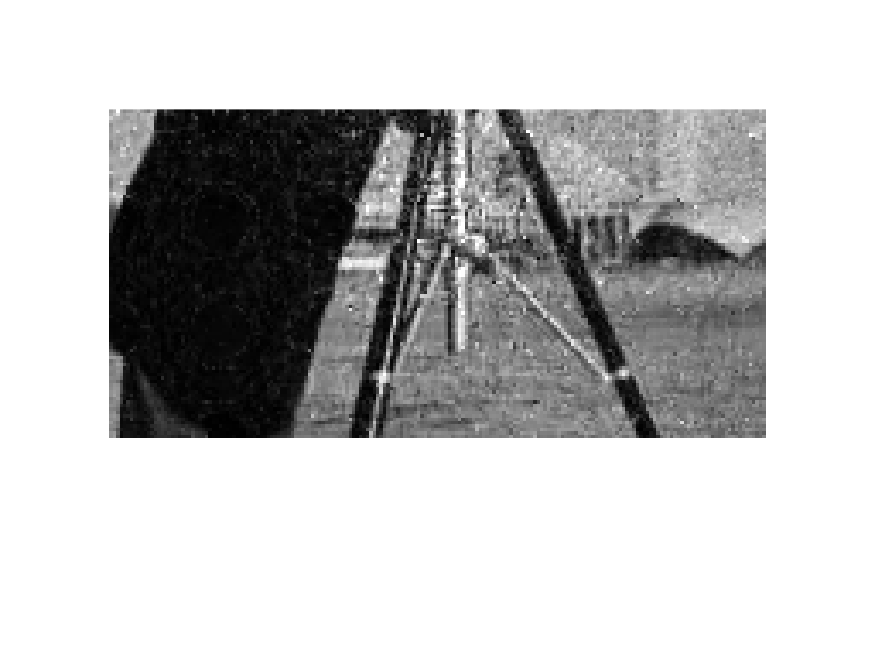}}
			&\subfigure{\includegraphics[width=.18\textwidth]{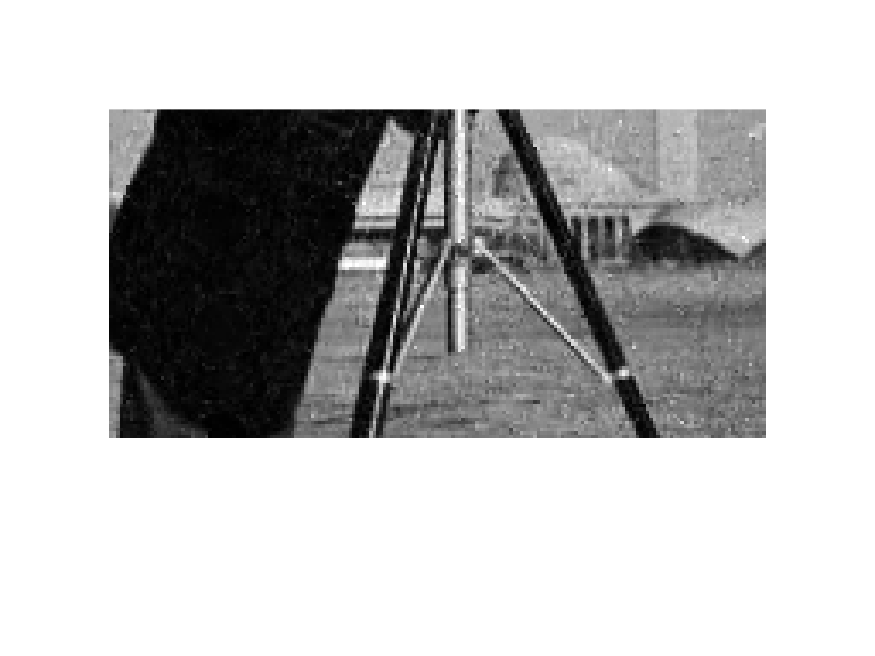}}
			\vspace{-.3in}
			\\
			\subfigure{\includegraphics[width=.18\textwidth]{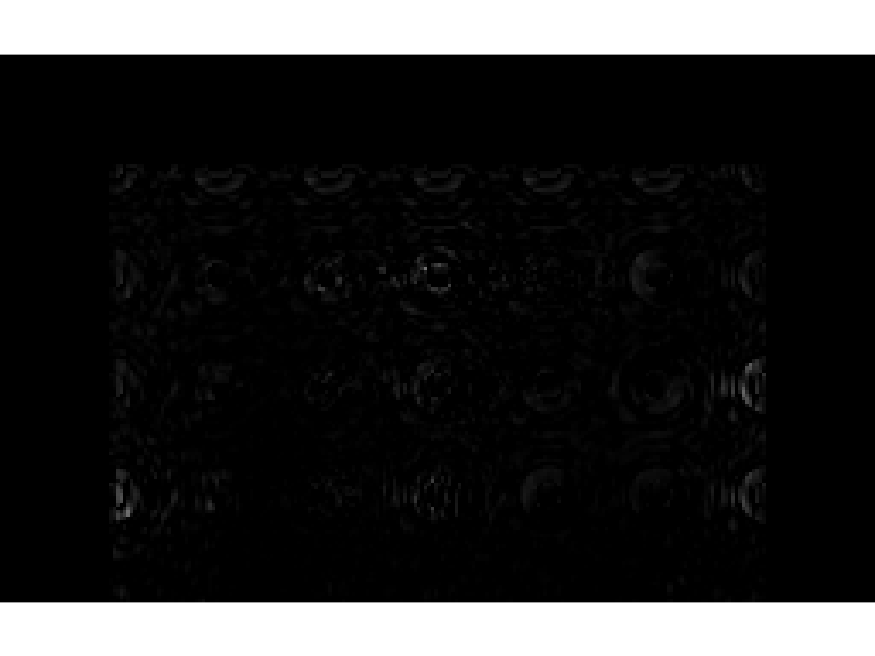}}
			&\subfigure{\includegraphics[width=.18\textwidth]{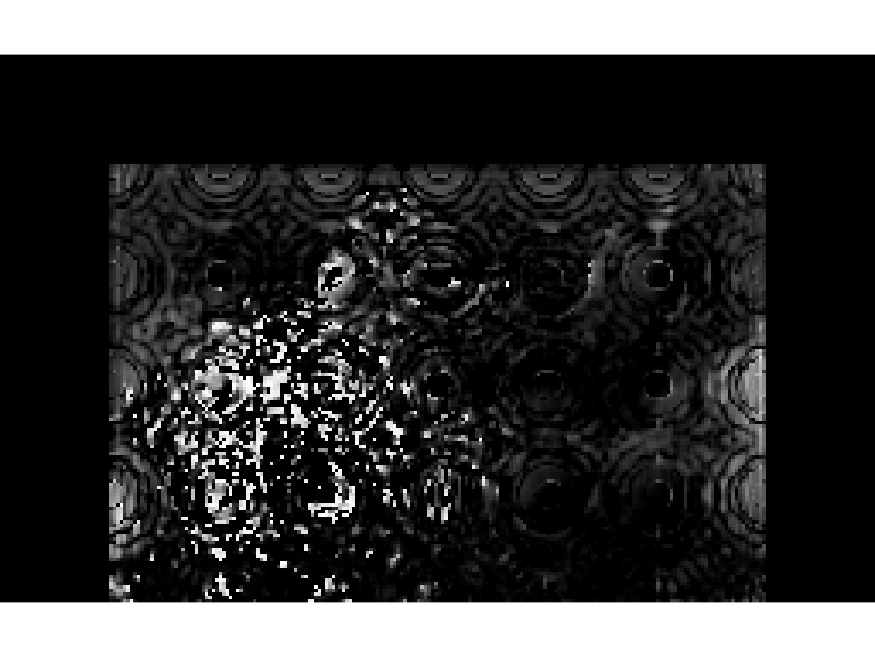}}
			&\subfigure{\includegraphics[width=.18\textwidth]{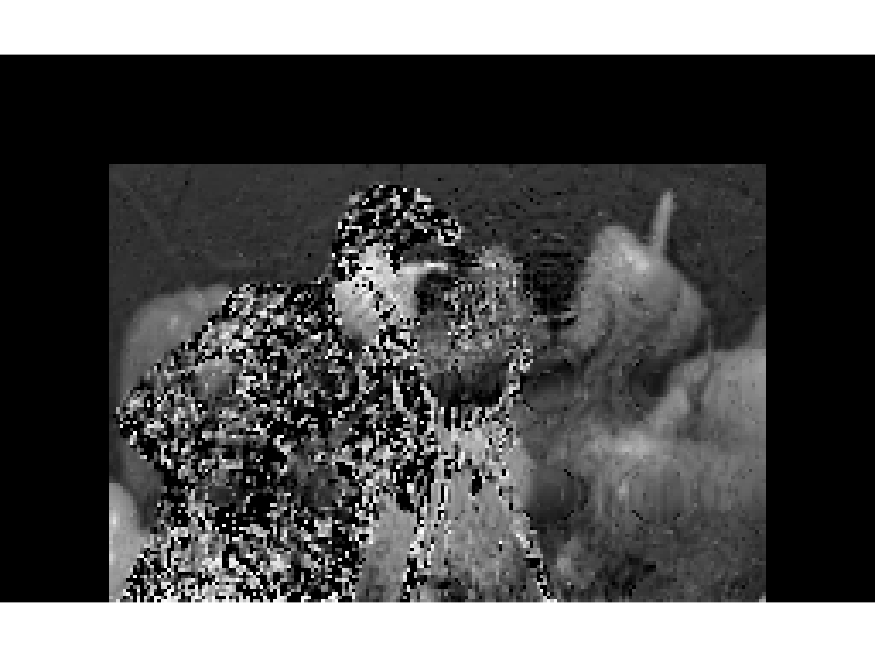}}
			&\subfigure{\includegraphics[width=.18\textwidth]{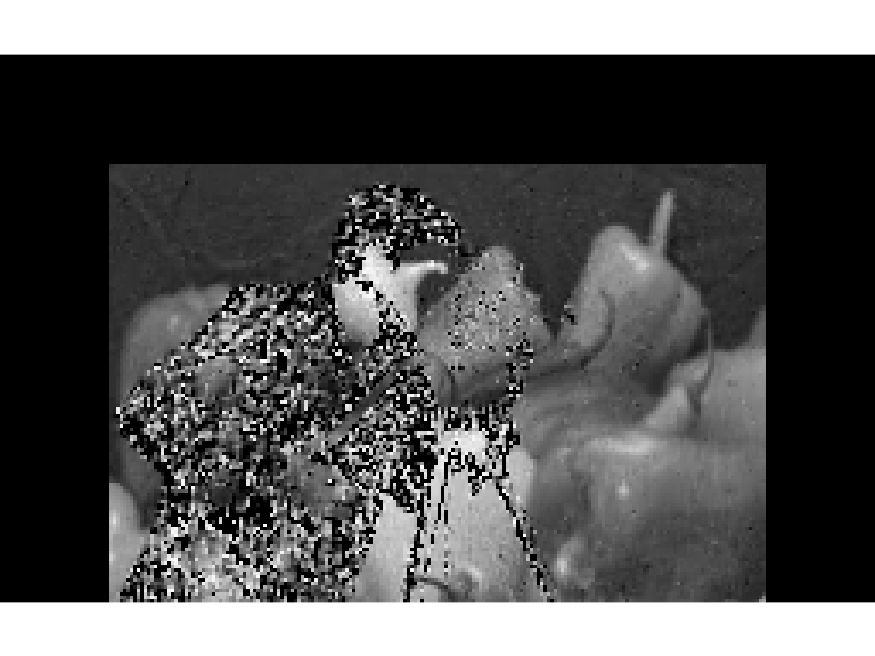}}
			\vspace{-.5in}
			\\
			\subfigure{\includegraphics[width=.18\textwidth]{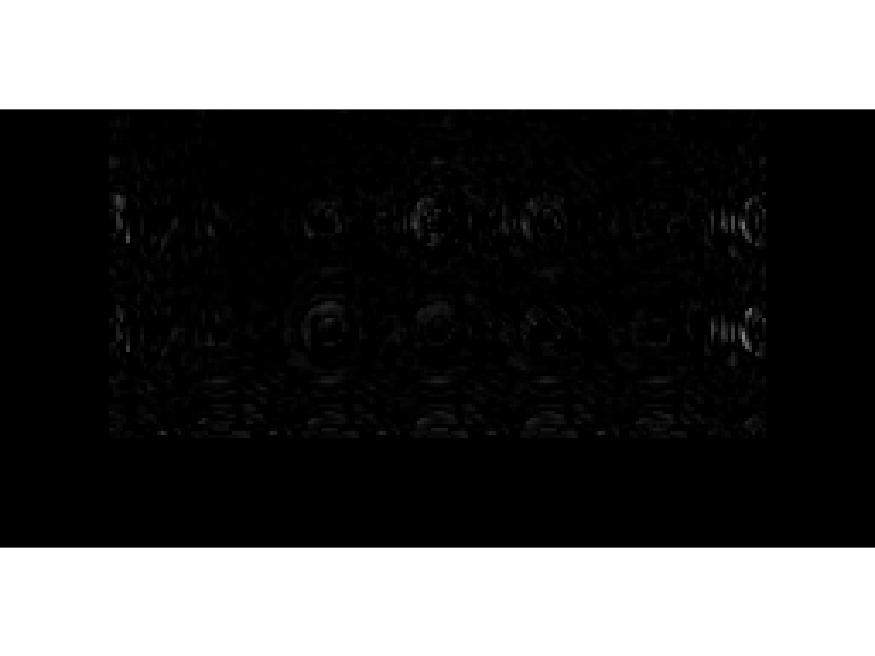}}
			&\subfigure{\includegraphics[width=.18\textwidth]{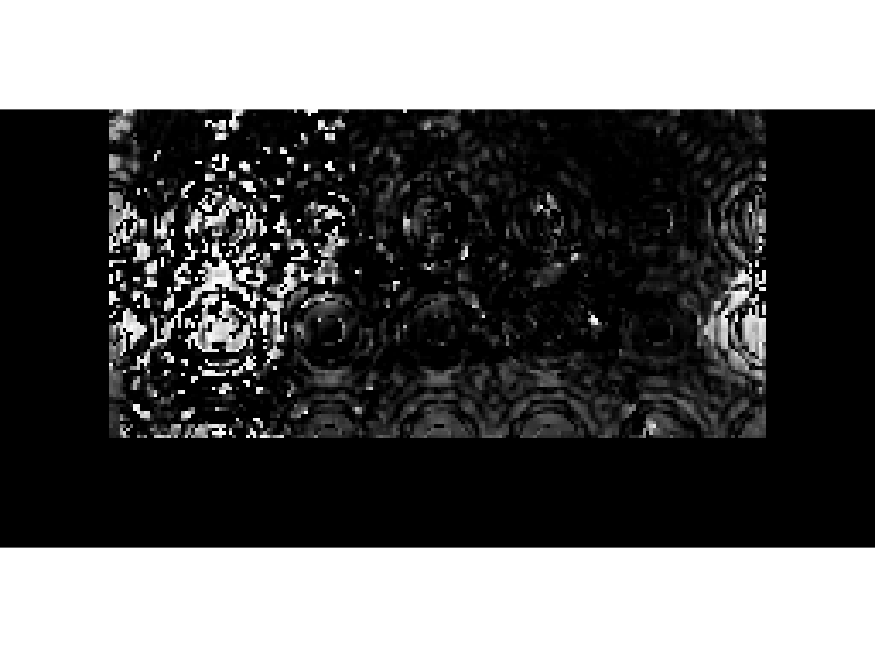}}
			&\subfigure{\includegraphics[width=.18\textwidth]{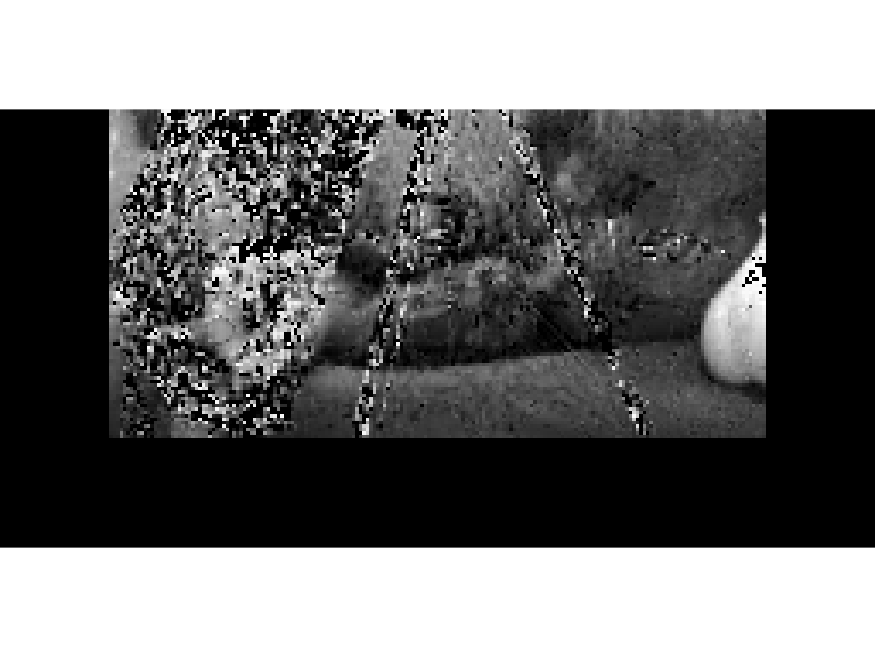}}
			&\subfigure{\includegraphics[width=.18\textwidth]{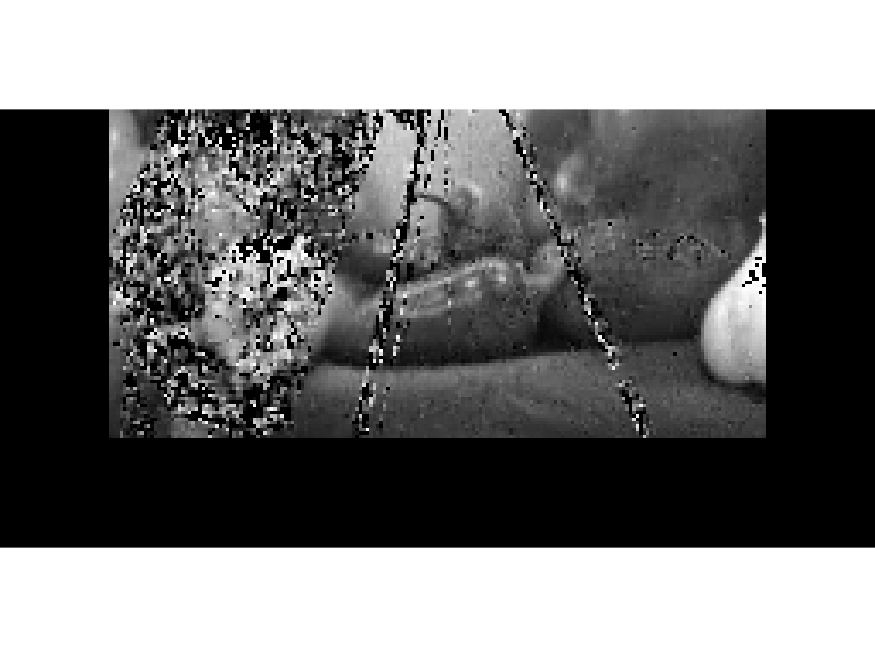}}\\
			\vspace{-.1in}
		    (a) 1st & (b) 5th &(c) 50th&(d) 200th\\
			\subfigure{\includegraphics[width=.18\textwidth]{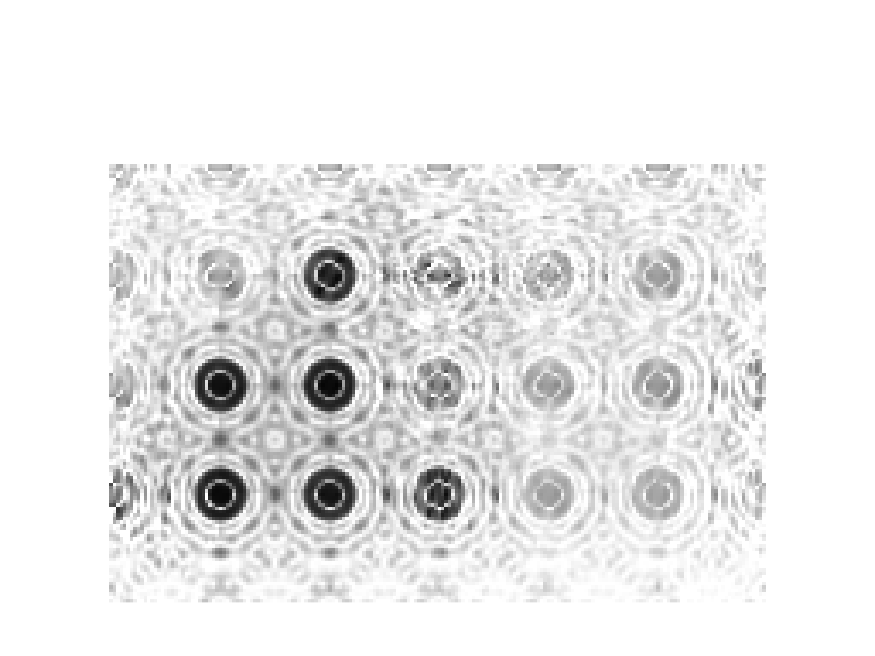}}
			&\subfigure{\includegraphics[width=.18\textwidth]{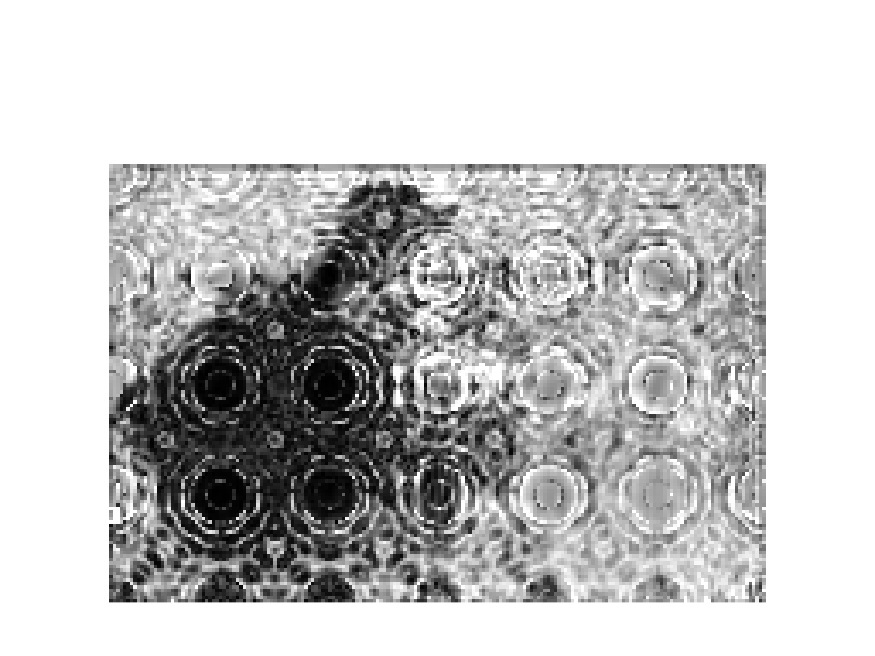}}
			&\subfigure{\includegraphics[width=.18\textwidth]{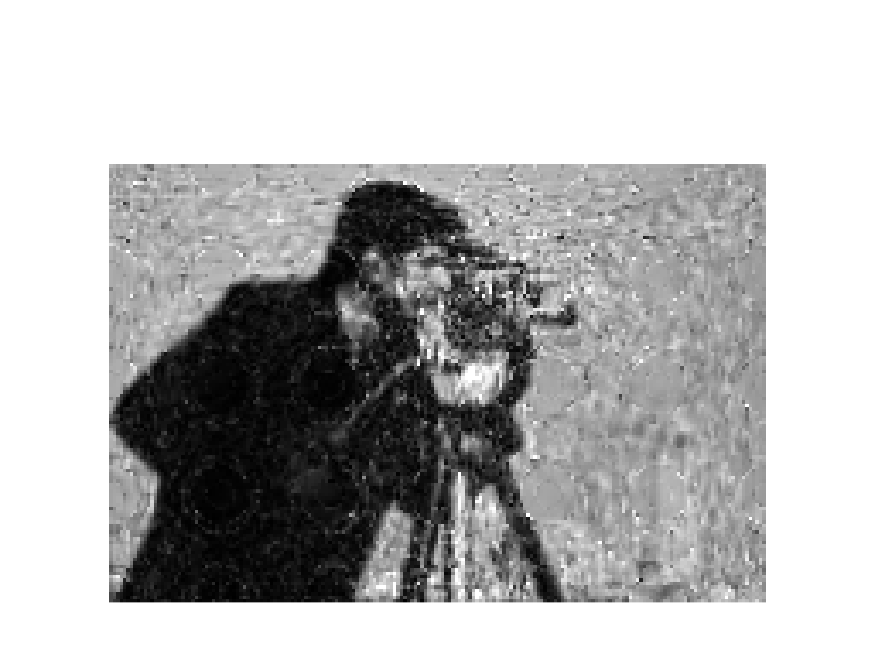}}
			&\subfigure{\includegraphics[width=.18\textwidth]{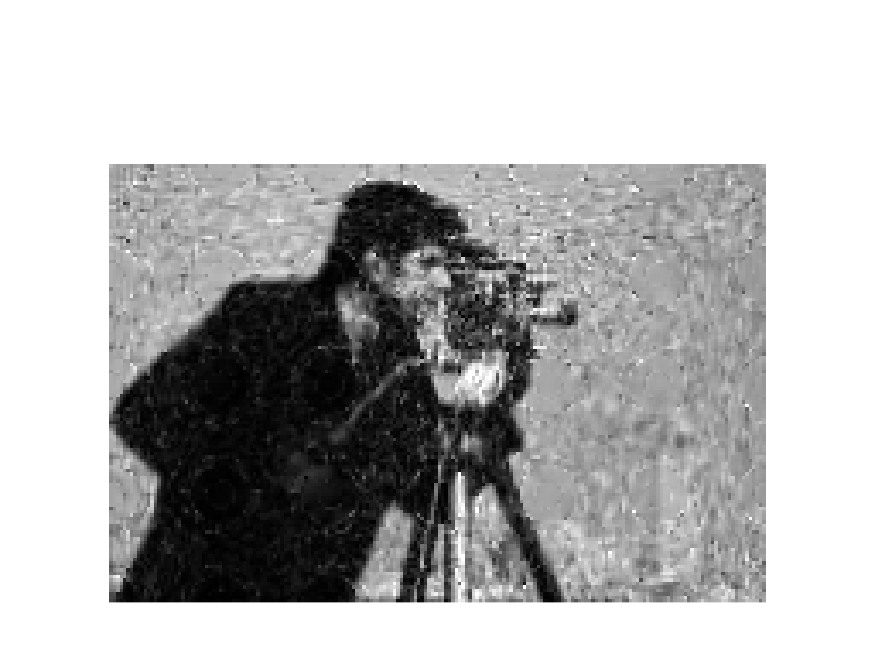}}
			\vspace{-.5in}
			\\
			\subfigure{\includegraphics[width=.18\textwidth]{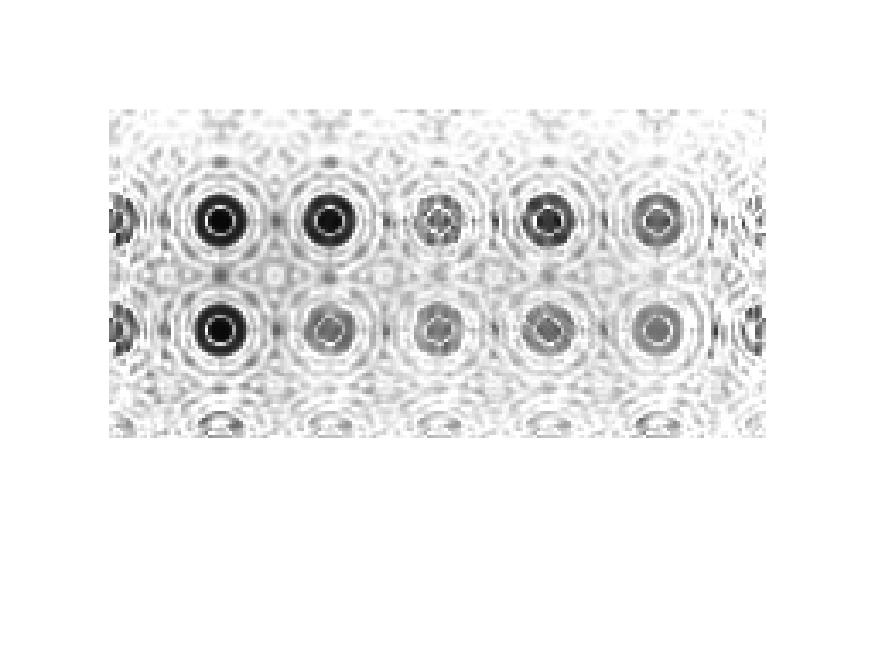}}
			&\subfigure{\includegraphics[width=.18\textwidth]{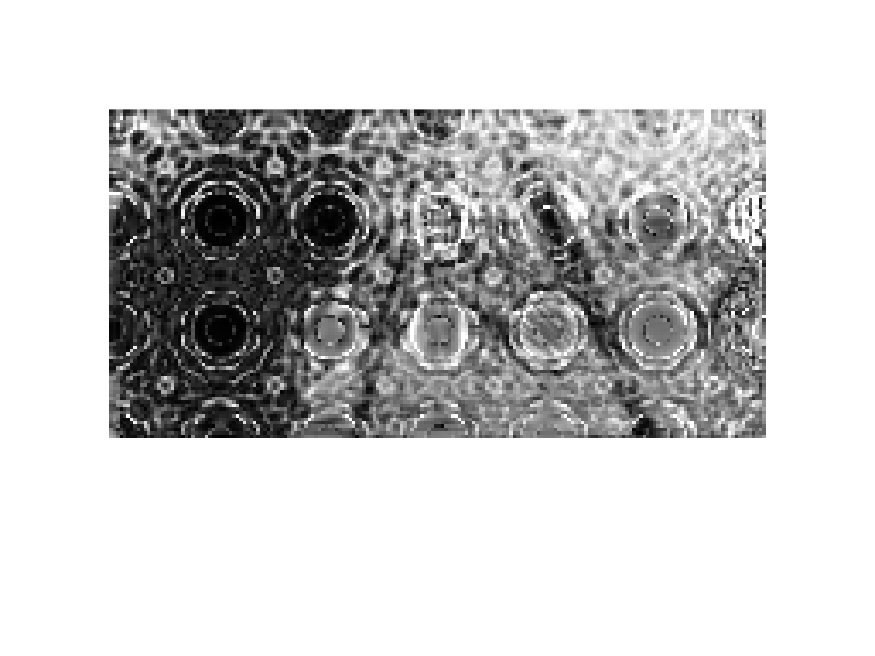}}
			&\subfigure{\includegraphics[width=.18\textwidth]{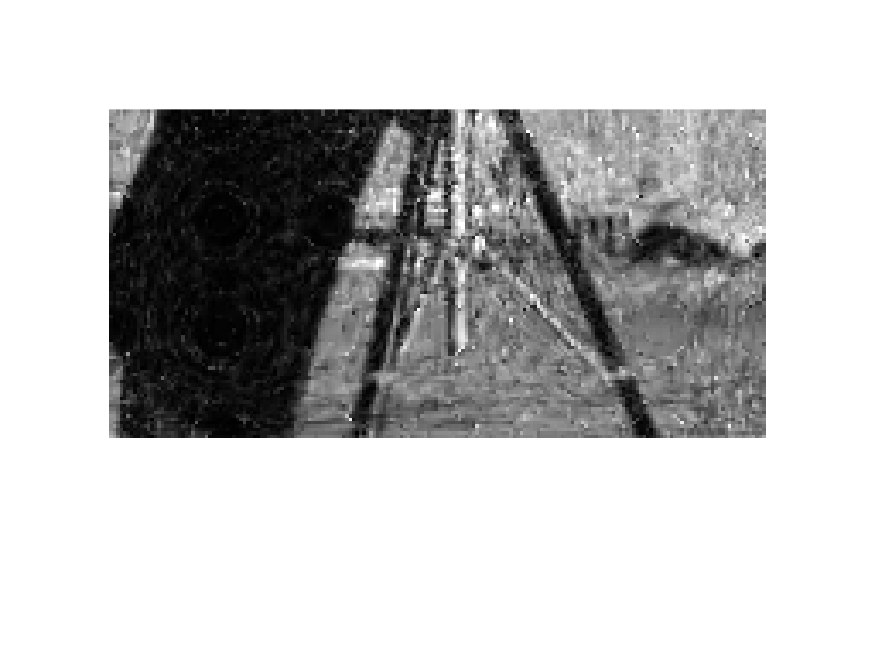}}
			&\subfigure{\includegraphics[width=.18\textwidth]{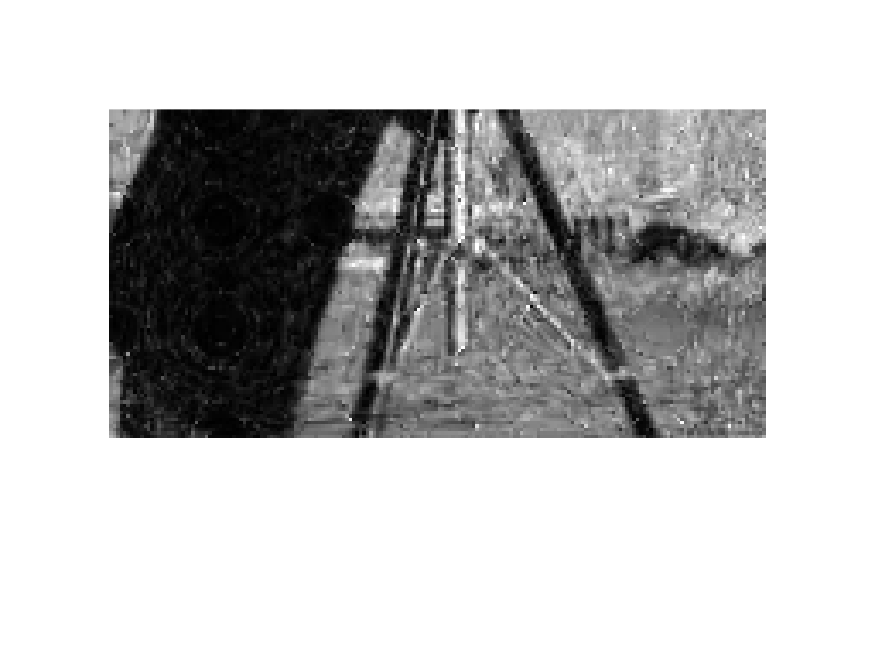}}
			\vspace{-.3in}
			\\
			\subfigure{\includegraphics[width=.18\textwidth]{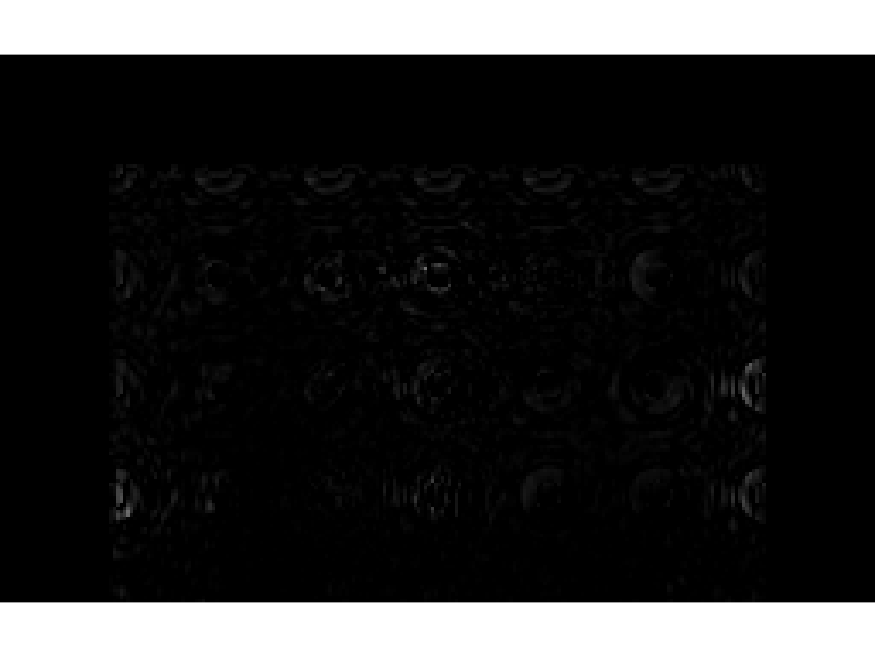}}
			&\subfigure{\includegraphics[width=.18\textwidth]{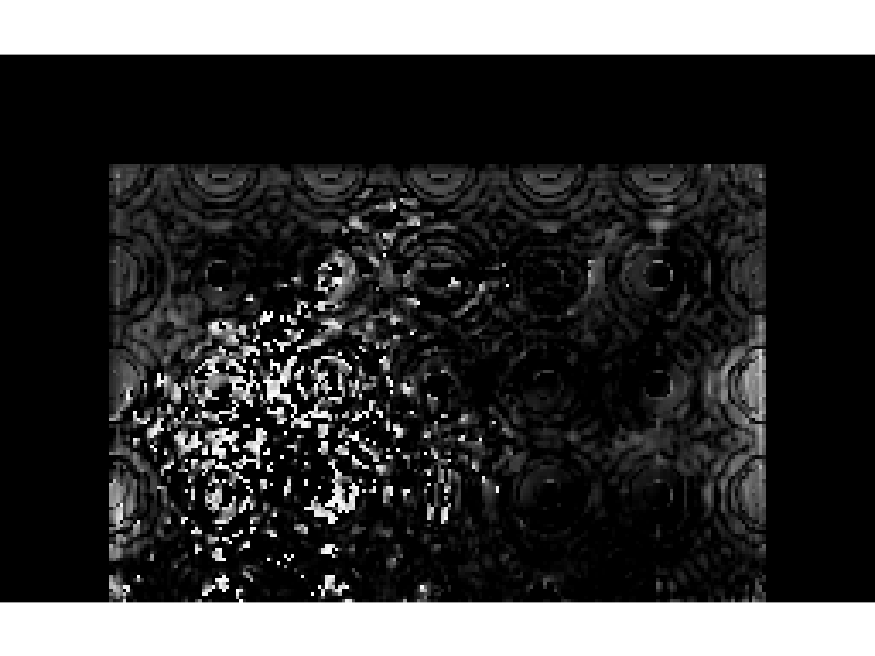}}
			&\subfigure{\includegraphics[width=.18\textwidth]{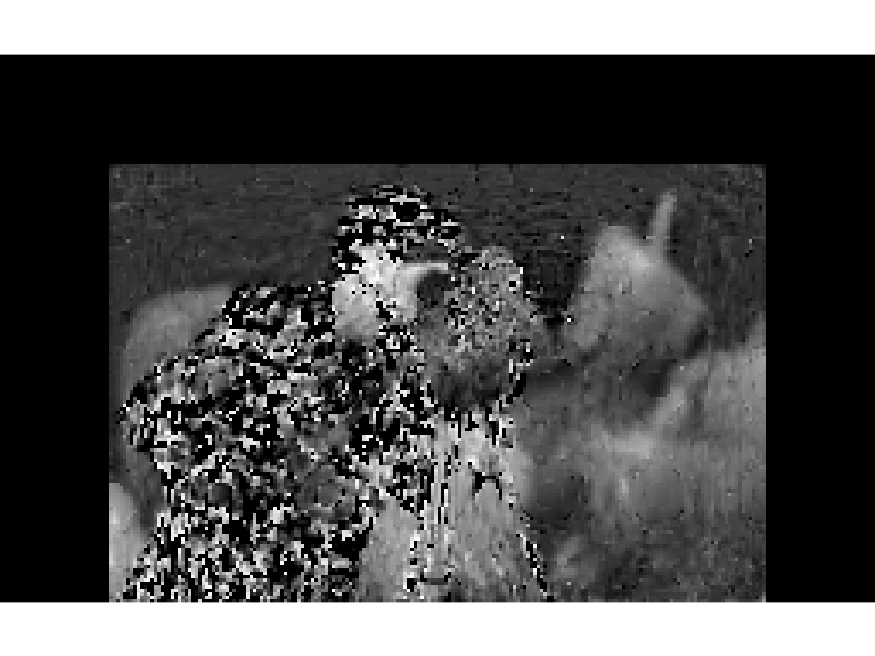}}
			&\subfigure{\includegraphics[width=.18\textwidth]{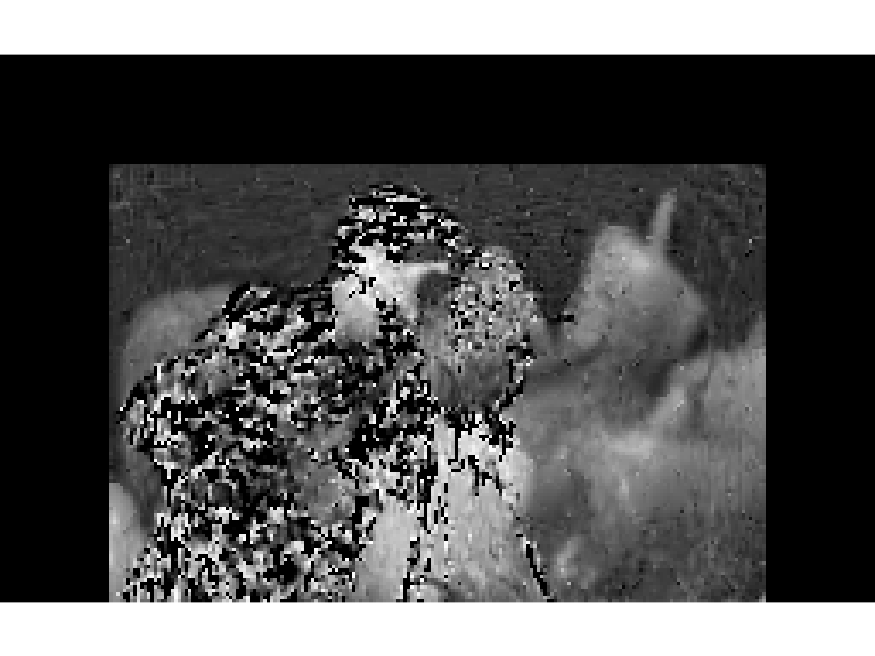}}
			\vspace{-.5in}
			\\
			\subfigure{\includegraphics[width=.18\textwidth]{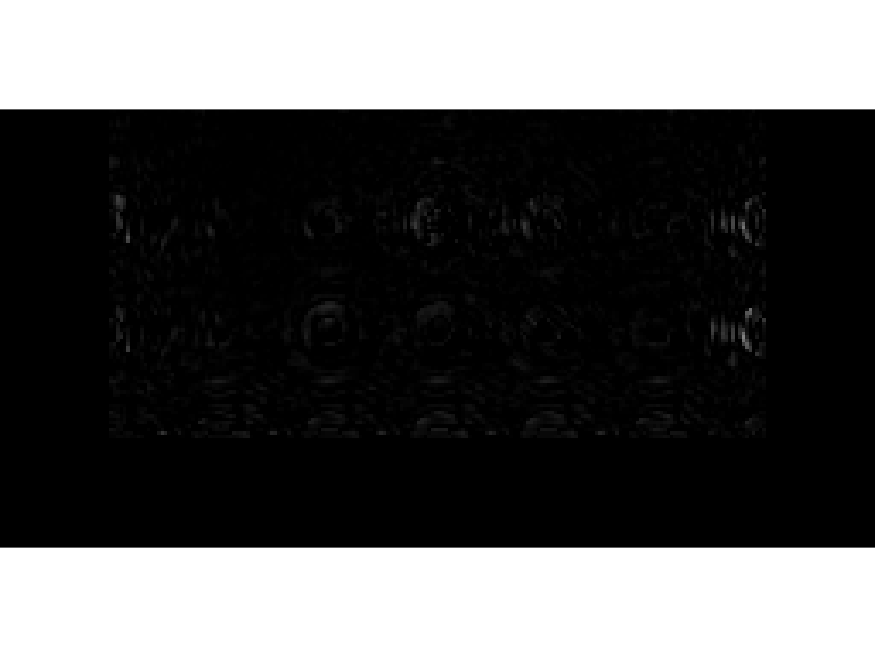}}
			&\subfigure{\includegraphics[width=.18\textwidth]{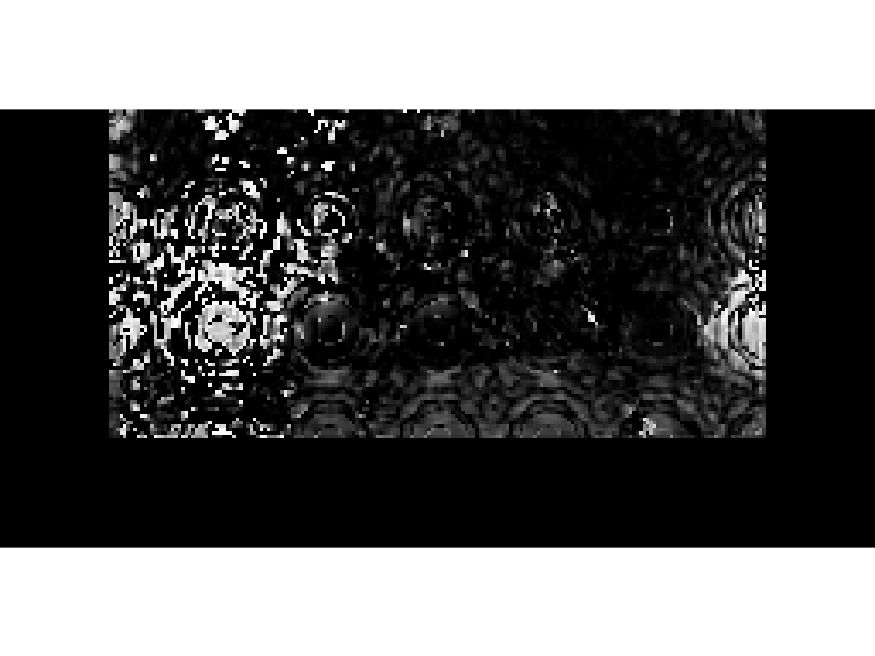}}
			&\subfigure{\includegraphics[width=.18\textwidth]{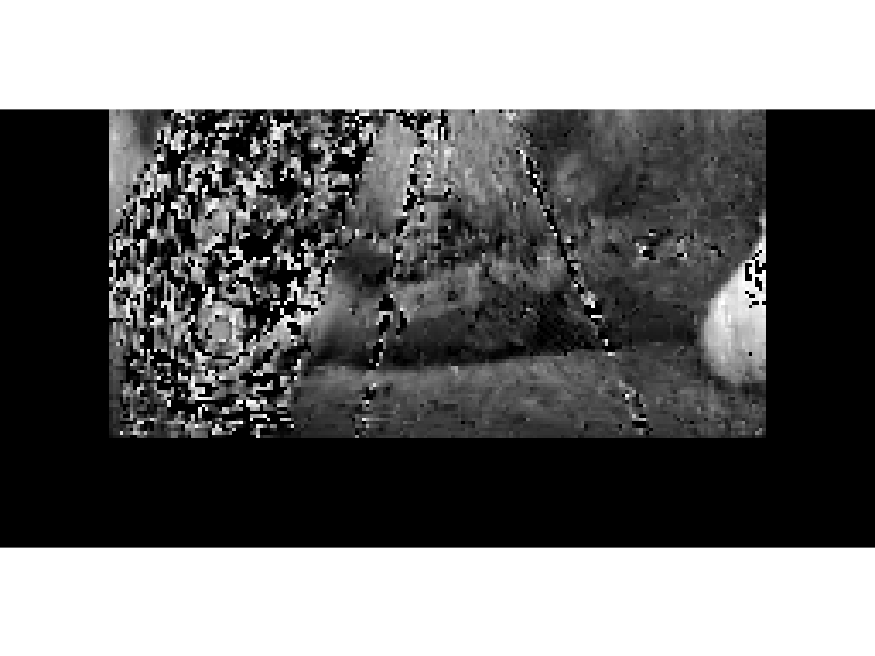}}
			&\subfigure{\includegraphics[width=.18\textwidth]{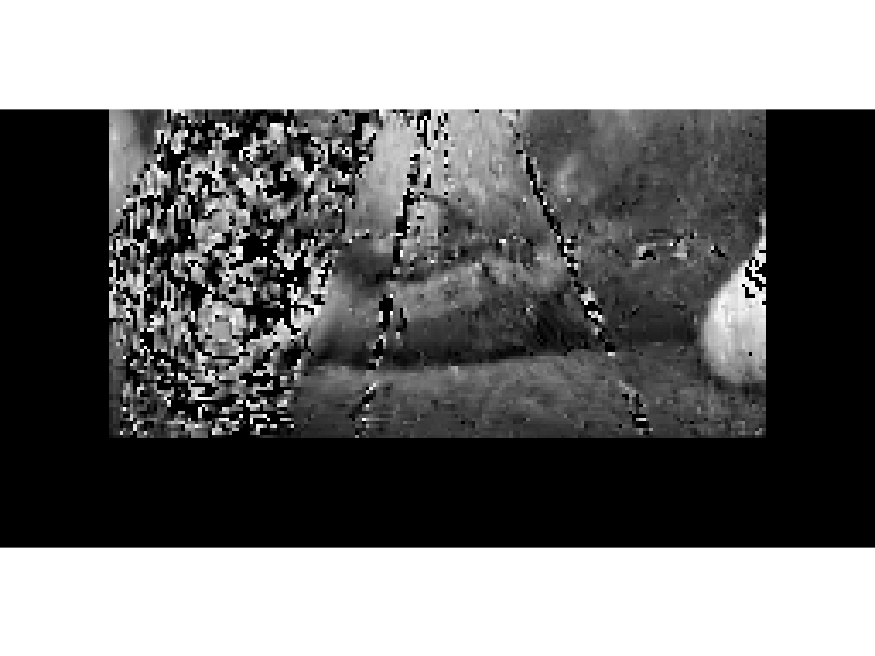}}\\
			\vspace{-.1in}
			(e) 1st & (f) 5th &(g) 50th&(h) 200th
		\end{tabular}
	\end{center}
	\caption{Recovery results from noisy measurements at 1st, 5th, 50th, 200th (final iteration) iterations. Top four rows for noisy measurements with SNR${}_{intensity}=39.8$: absolute parts (1st-2nd rows) and phase parts (3rd-4th rows) of recovered results on subdomains; Four rows below for noisy measurements with SNR${}_{intensity}=29.9$: absolute parts (5th-6th rows) and phase parts (7th-8th rows) of recovered results on subdomains.
		They are shown in the range of $[0,1]$
		and $[0,\pi]$ for the absolute and phase parts respectively.}
	\label{fig3-3}
\end{figure}

\subsection{Extended Tests}
To further test the performances of proposed DDMs, we will test more cases, including  blind recovery with two subdomains and the nonblind recovery with multiple subdomains. 
\vskip .02in
{\noindent\bf {(1) Blind recovery}}\\
Initial guess of the probe is generated by $w^0:=\tfrac{1}{J}\sum_j\mathcal F^*{\sqrt{f_j}}$  as \cite{chang2018Blind}.  Here we will show the performance of OD$^2$BP for the blind case with two-subdomain decomposition, { where same DD is considered as  section \ref{subsec-1}).}
We put the recovery results including the absolute parts of recovered probe, the absolute and phase parts of recovered images to Fig. \ref{fig4-1} at the 1st, 2nd, 5th, 50th, and the final iterations. 
Similarly to the nonblind case, the mismatch between the iterative solutions on the two subdomains gradually disappears and furthermore, the  probe is well recovered meanwhile, demonstrating that the proposed algorithm also works well for the blind case.

\begin{figure}
\begin{center}
\subfigure{\includegraphics[width=.18\textwidth]{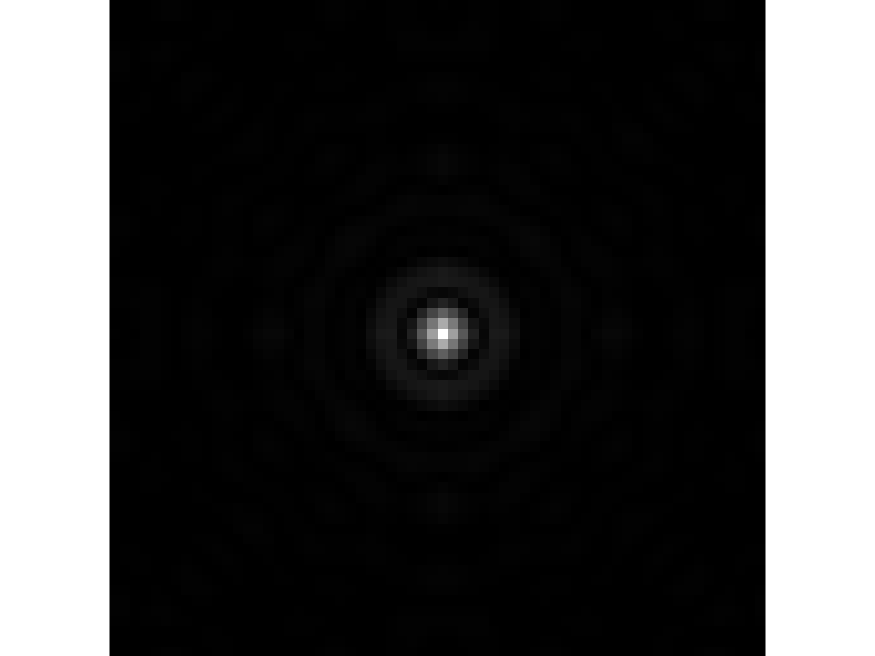}}
\subfigure{\includegraphics[width=.18\textwidth]{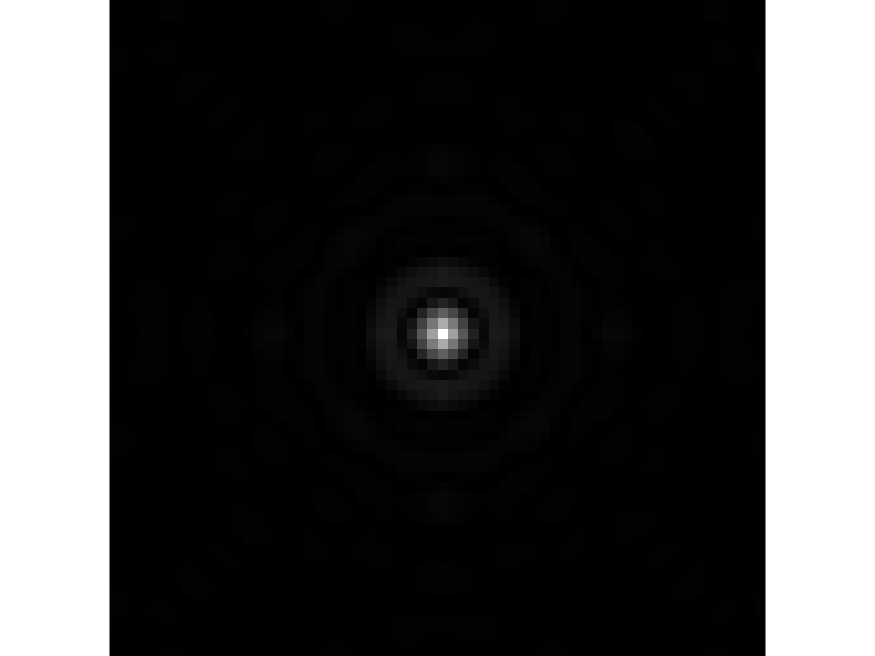}}
\subfigure{\includegraphics[width=.18\textwidth]{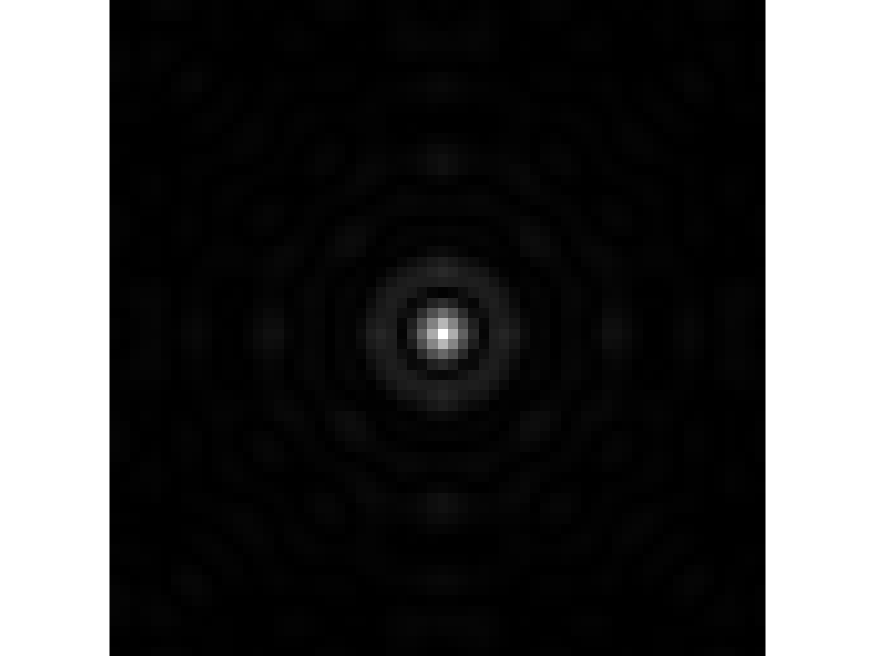}}
\subfigure{\includegraphics[width=.18\textwidth]{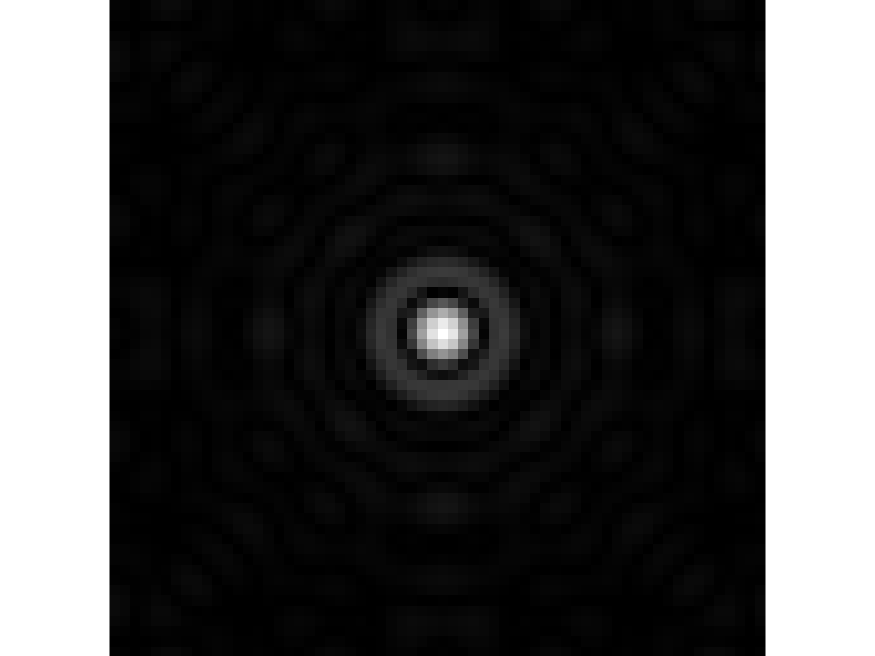}}
\subfigure{\includegraphics[width=.18\textwidth]{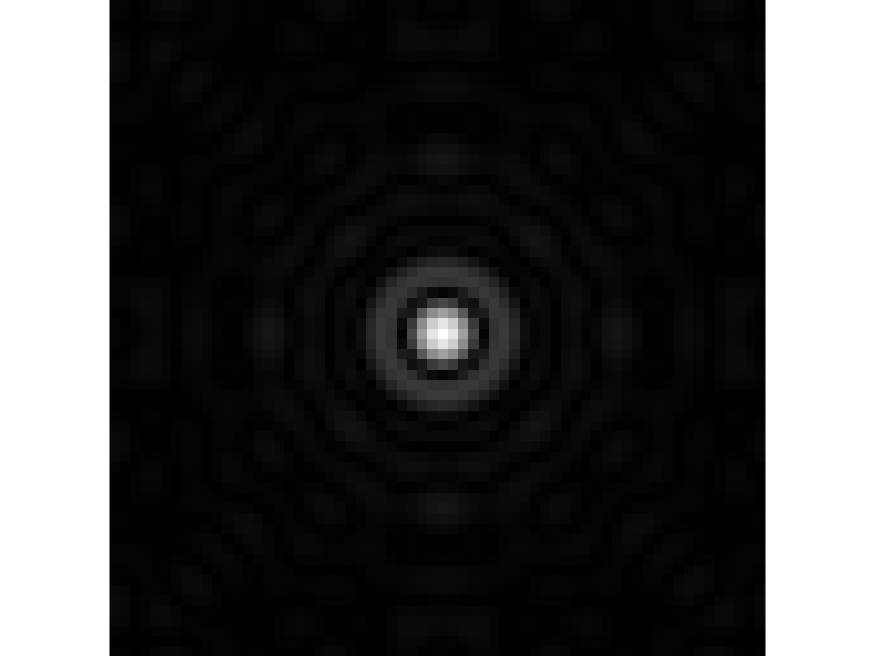}}
\\
\subfigure{\includegraphics[width=.18\textwidth]{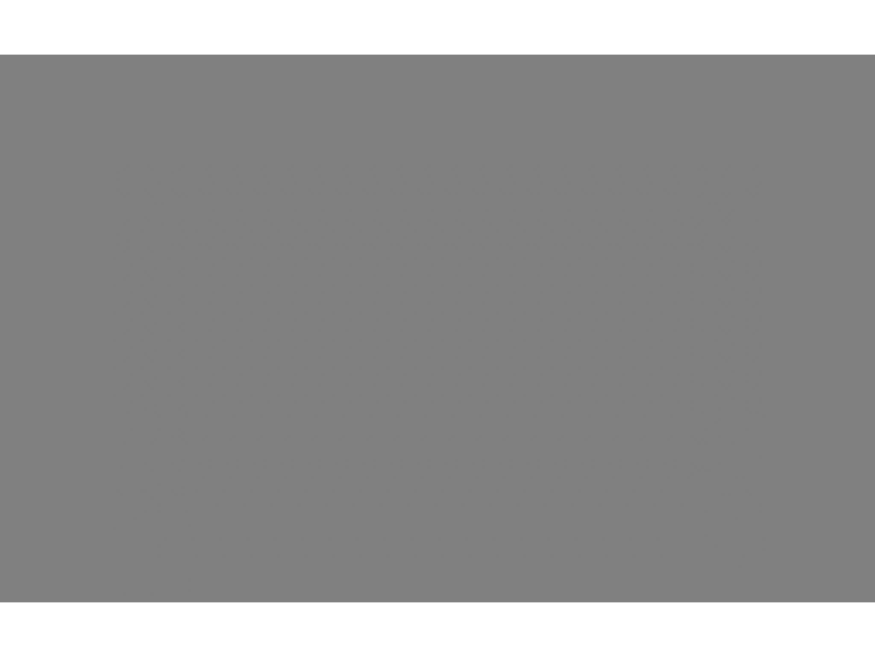}}
\subfigure{\includegraphics[width=.18\textwidth]{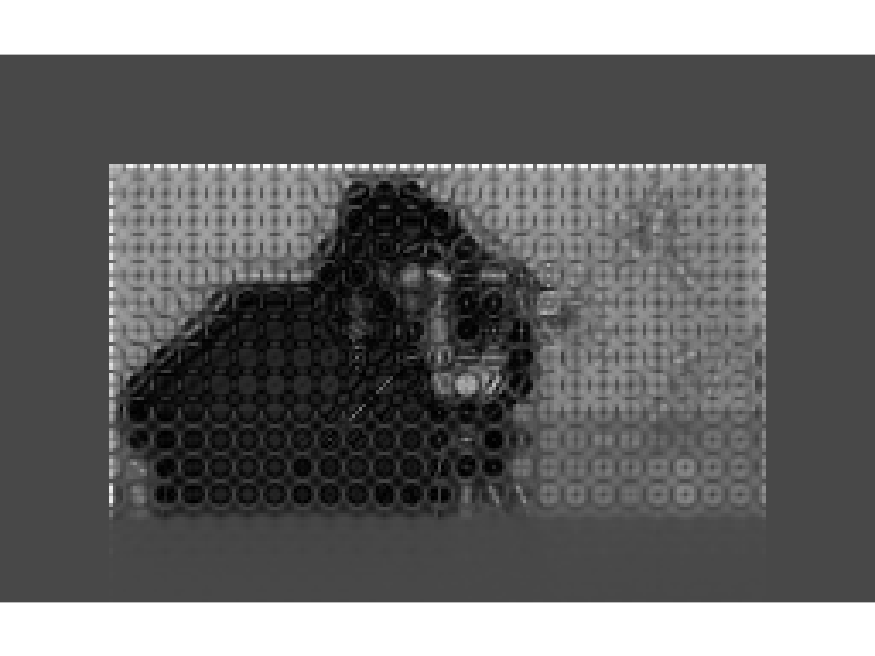}}
\subfigure{\includegraphics[width=.18\textwidth]{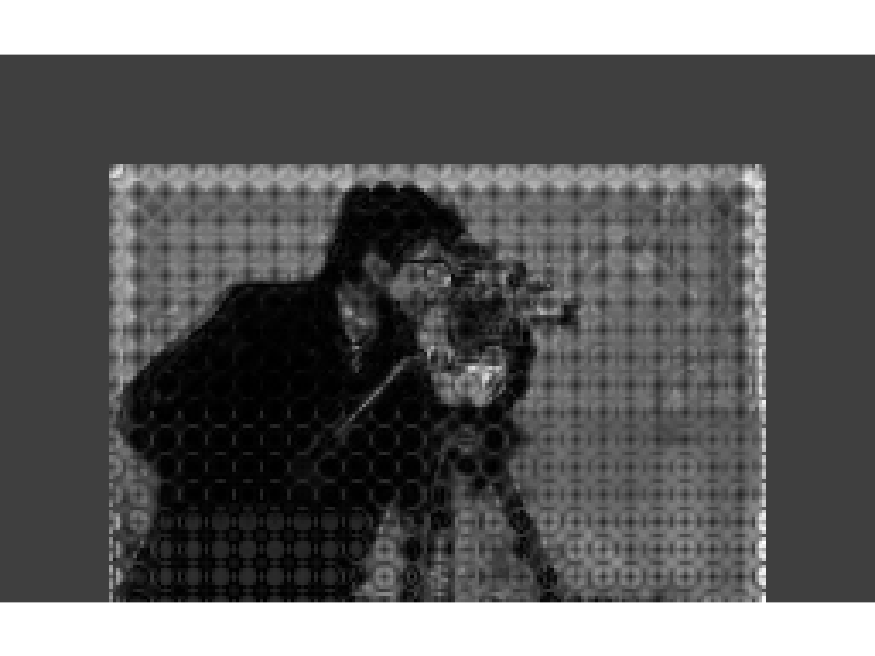}}
\subfigure{\includegraphics[width=.18\textwidth]{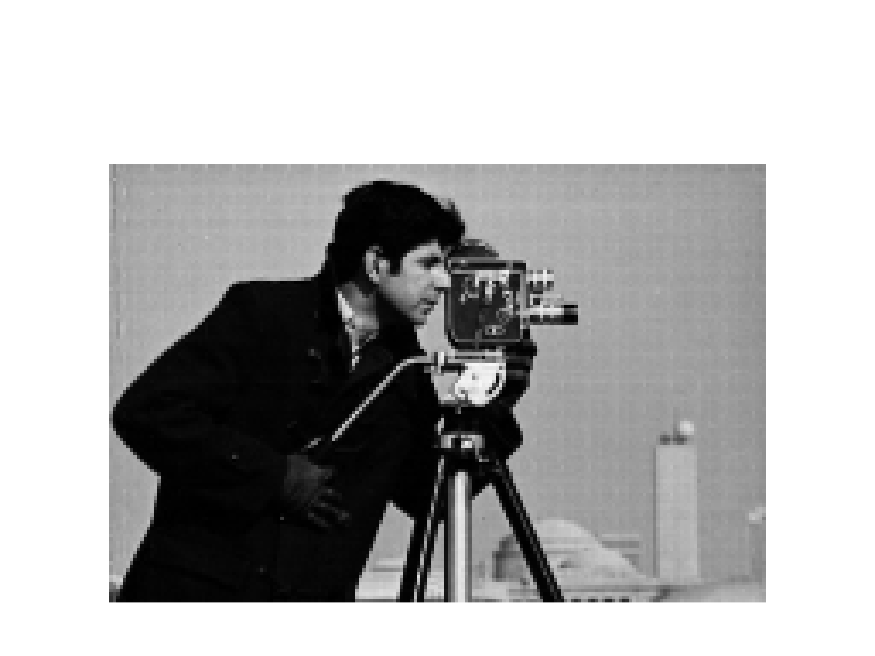}}
\subfigure{\includegraphics[width=.18\textwidth]{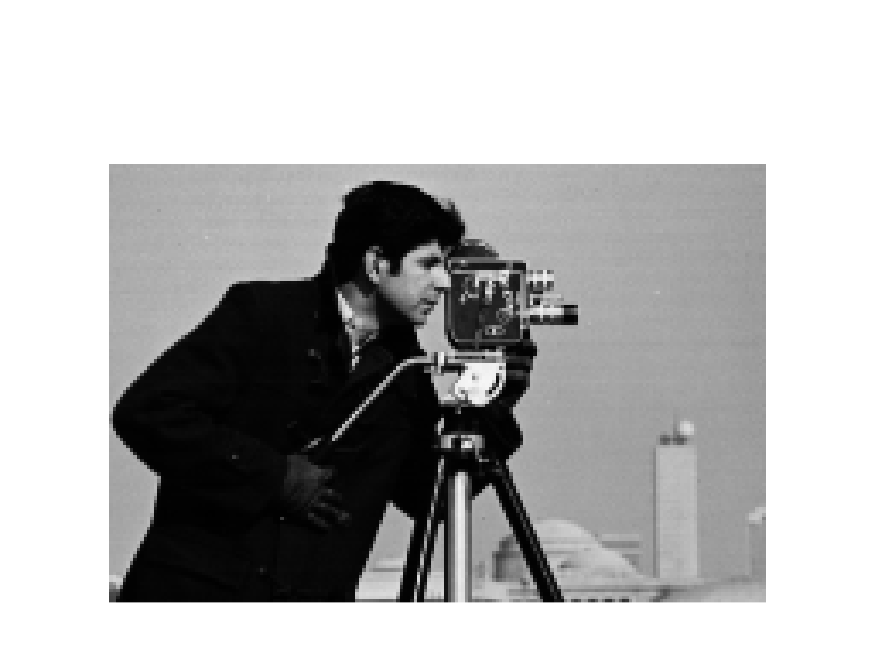}}
\vskip -.2in
\subfigure{\includegraphics[width=.18\textwidth]{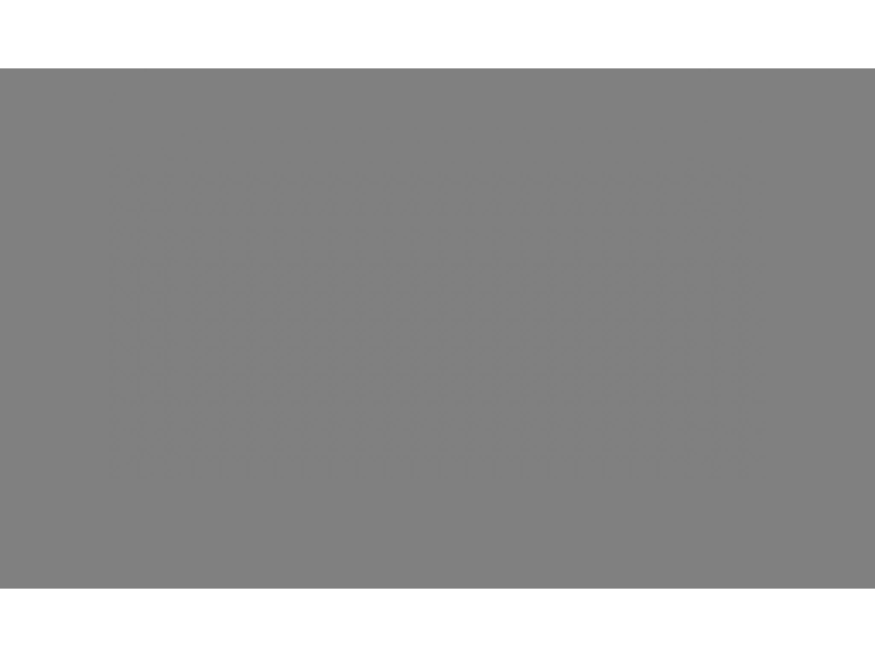}}
\subfigure{\includegraphics[width=.18\textwidth]{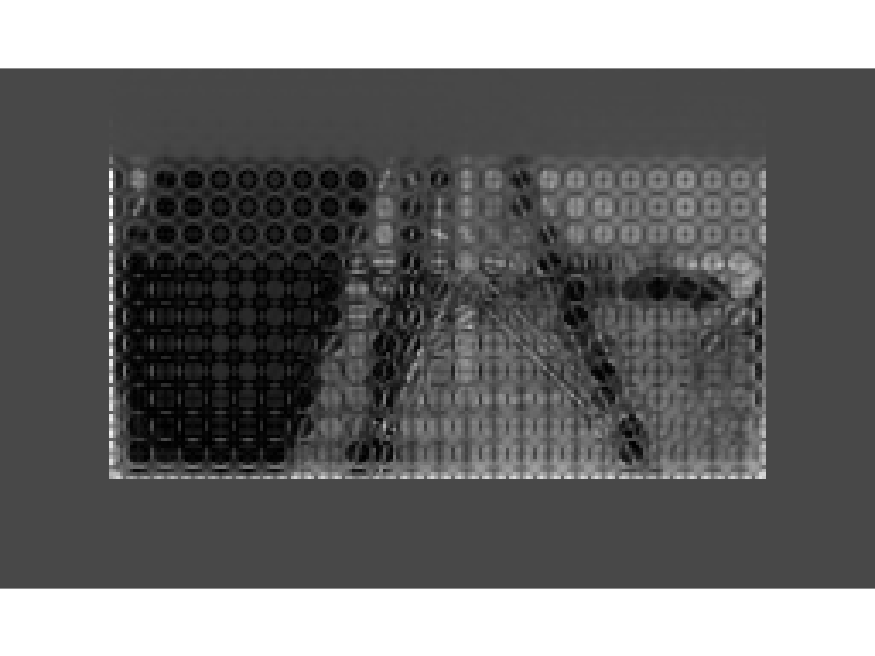}}
\subfigure{\includegraphics[width=.18\textwidth]{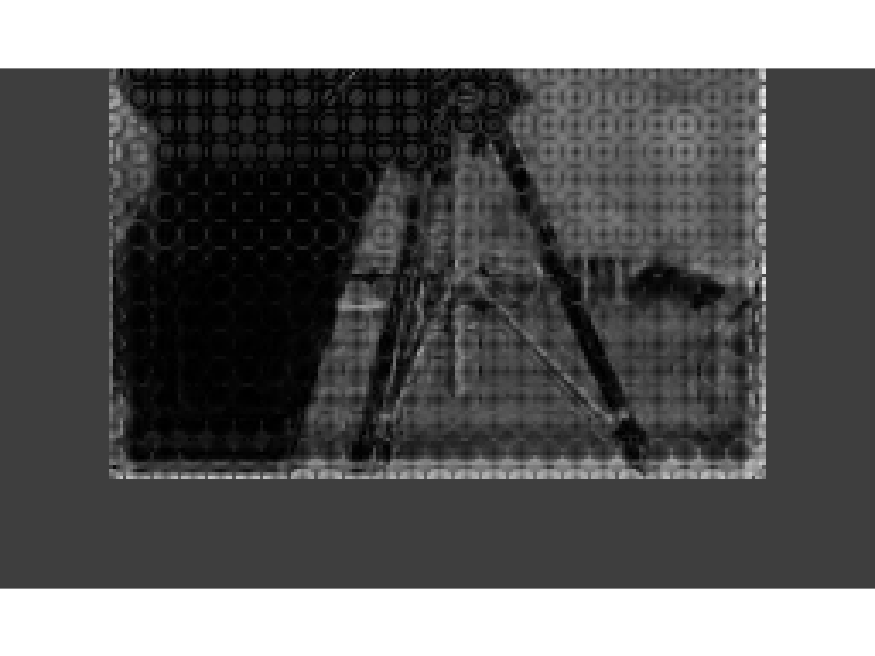}}
\subfigure{\includegraphics[width=.18\textwidth]{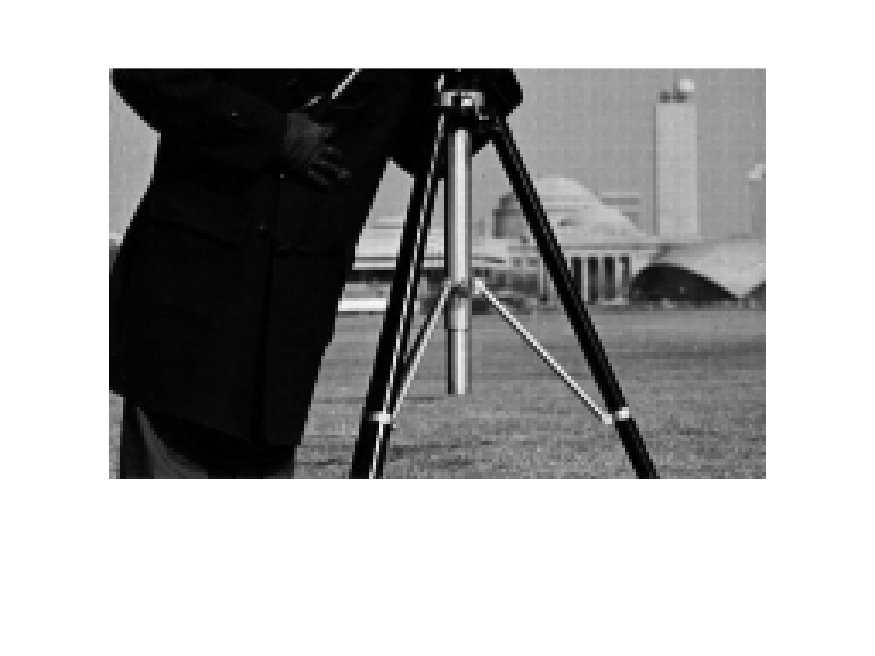}}
\subfigure{\includegraphics[width=.18\textwidth]{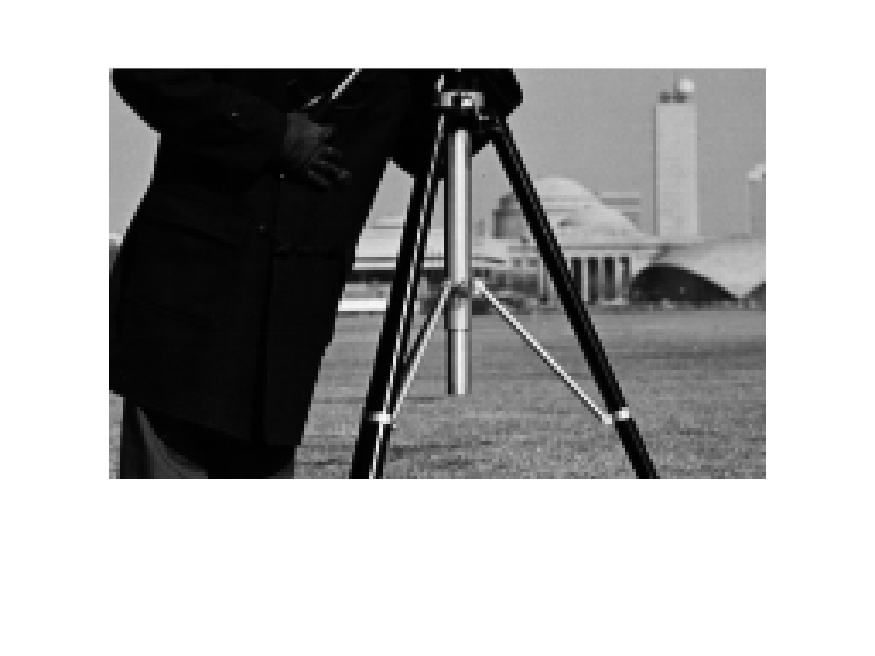}}
\\
\subfigure{\includegraphics[width=.18\textwidth]{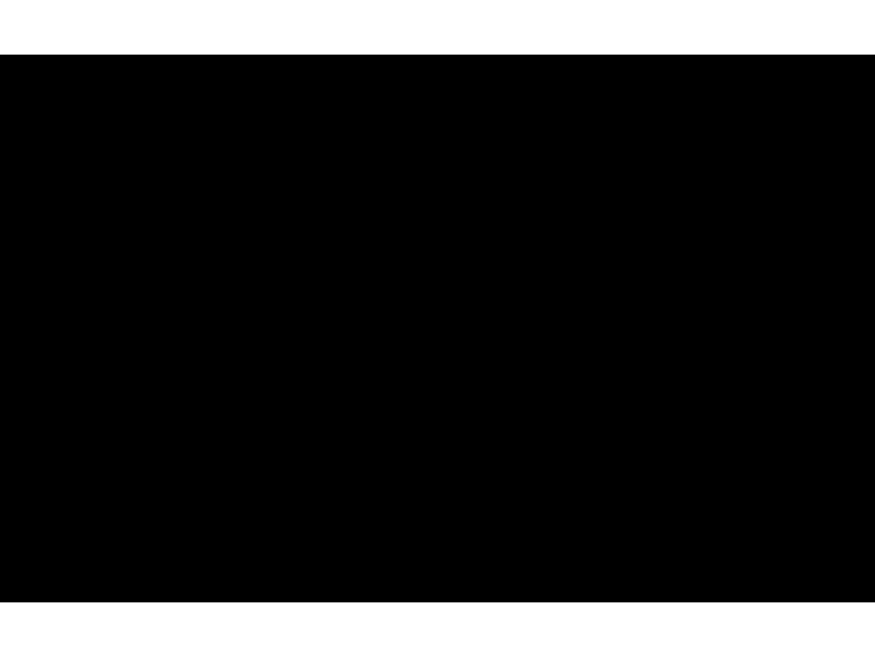}}
\subfigure{\includegraphics[width=.18\textwidth]{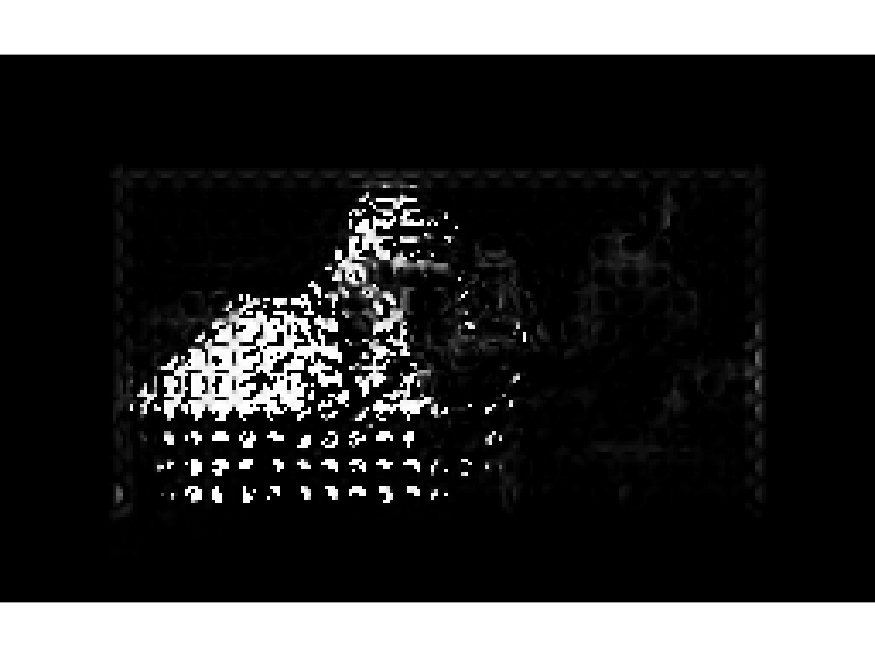}}
\subfigure{\includegraphics[width=.18\textwidth]{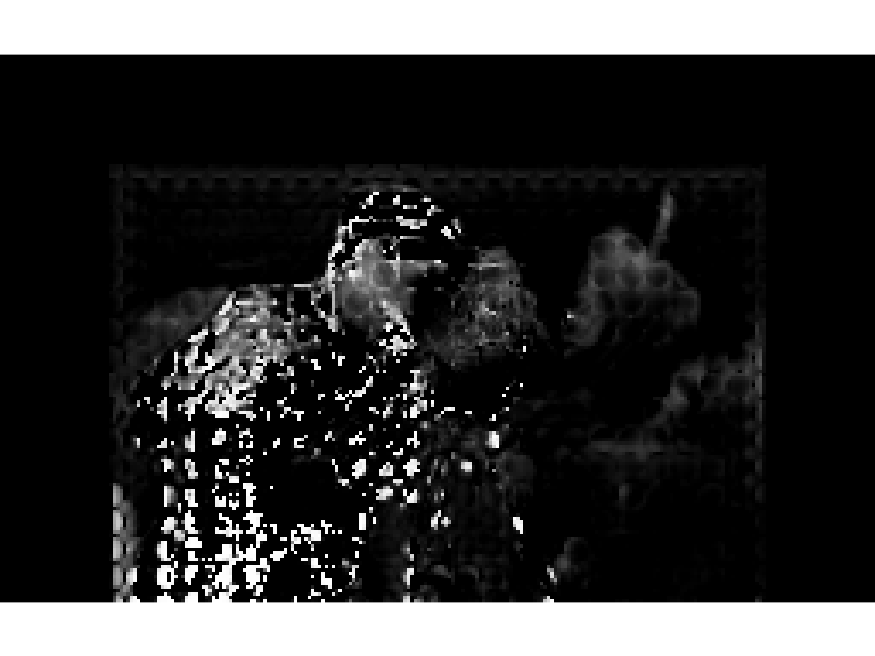}}
\subfigure{\includegraphics[width=.18\textwidth]{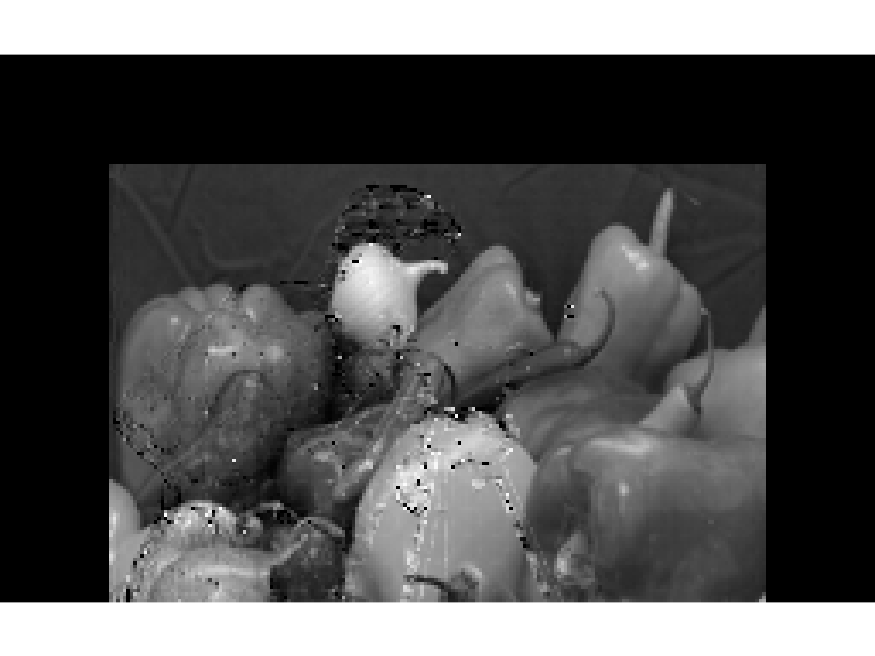}}
\subfigure{\includegraphics[width=.18\textwidth]{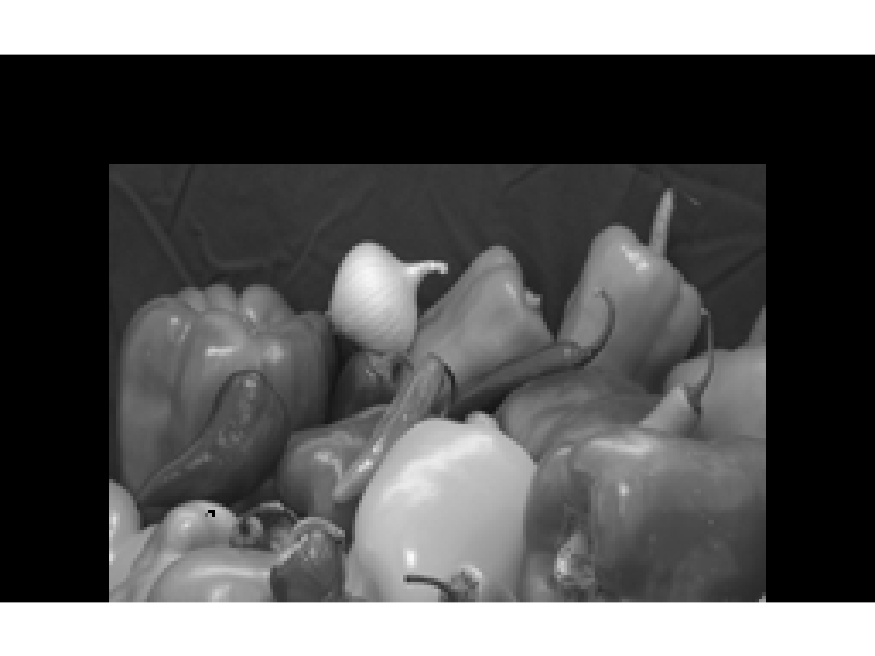}}
\vskip -.2in
\setcounter{subfigure}{0}
\subfigure[1st]{\includegraphics[width=.18\textwidth]{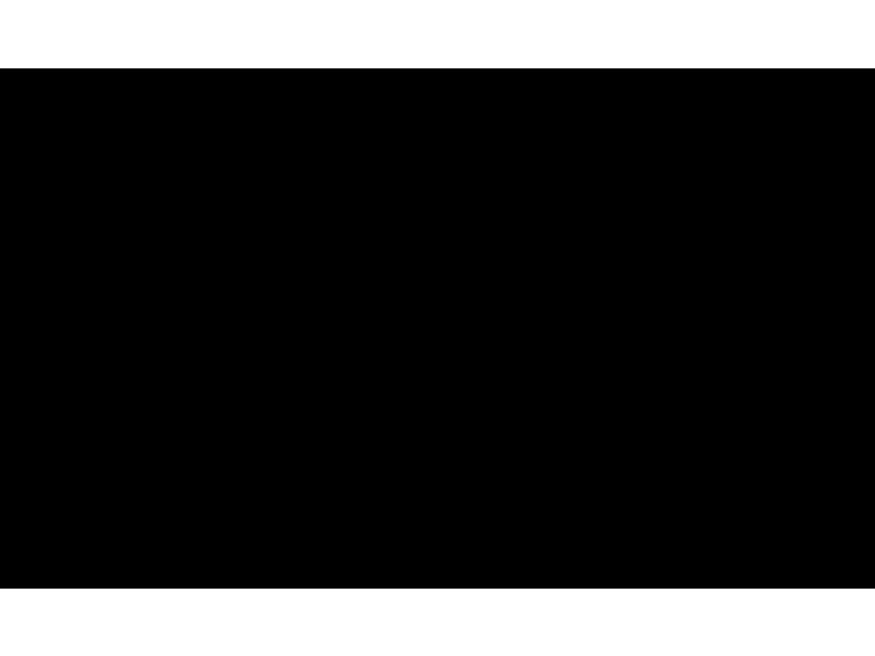}}
\subfigure[2nd]{\includegraphics[width=.18\textwidth]{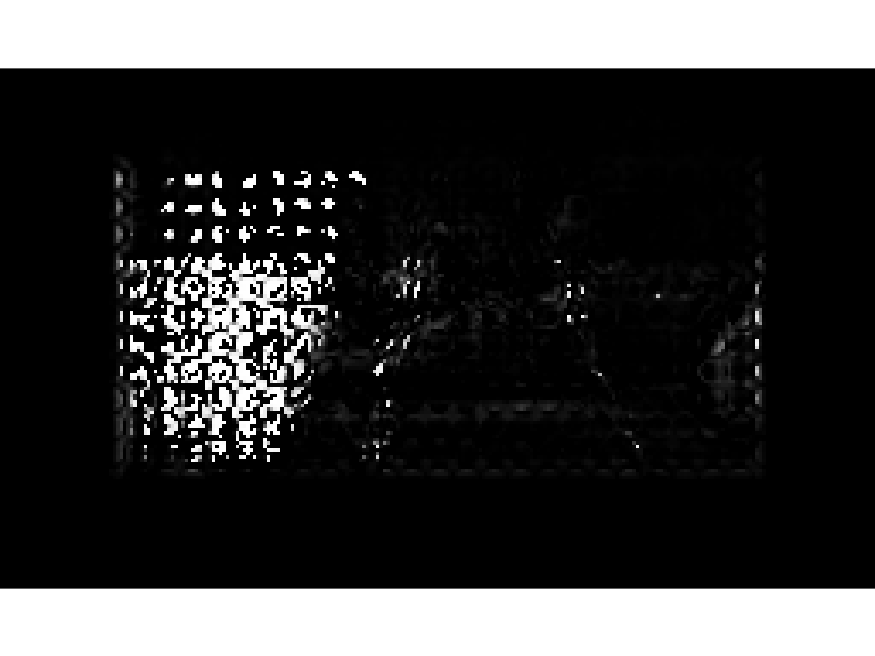}}
\subfigure[5th]{\includegraphics[width=.18\textwidth]{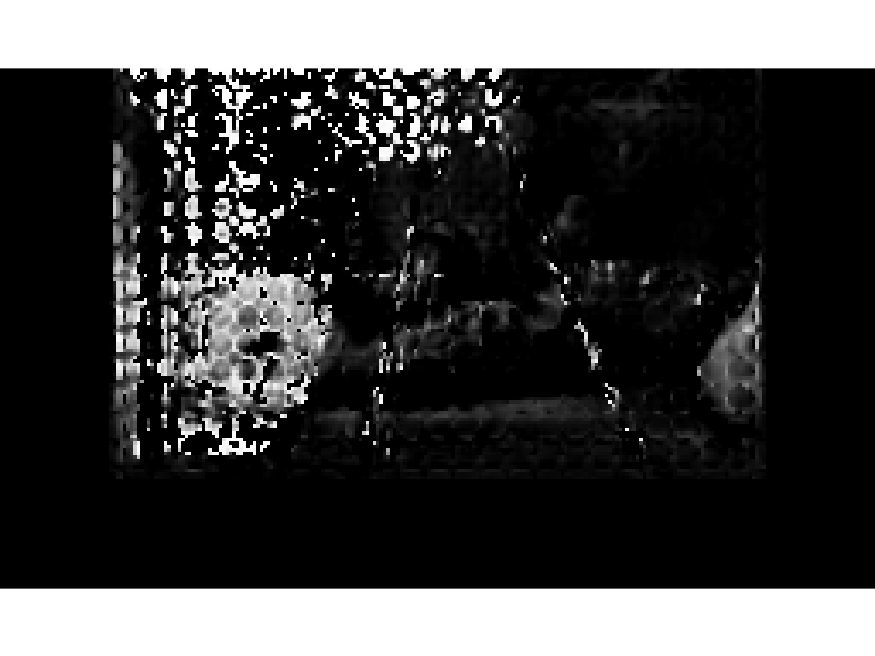}}
\subfigure[50th]{\includegraphics[width=.18\textwidth]{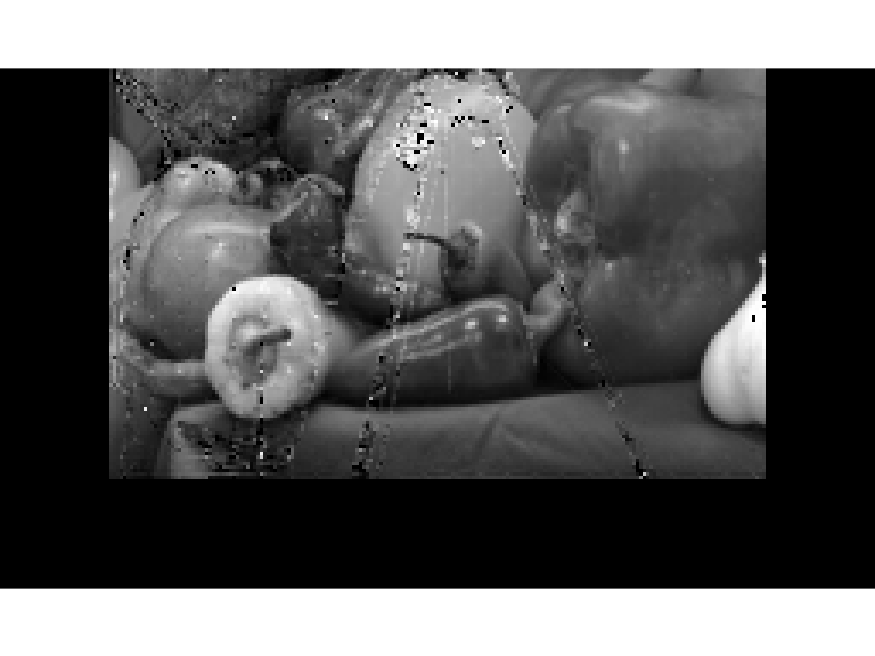}}
\subfigure[211th]{\includegraphics[width=.18\textwidth]{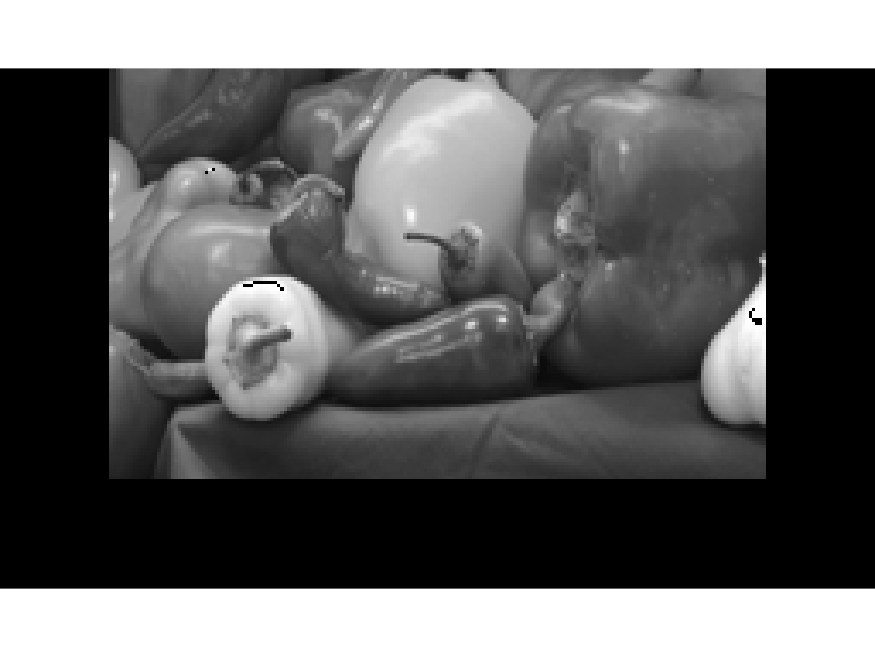}}
\end{center}
    \caption{Blind recovery results  at 1st, 2nd, 5th, 50th, 211th (final iteration) iterations. The first and second rows: Recovered absolute parts (top and down parts for decomposed samples) and phase parts (top and down parts for decomposed samples) respectively, shown in the range of $[0,1000]$, $[0,1]$
    and $[0,\pi]$ for the first, 2nd-3rd, and last two rows, respectively. The parameters  set to
$\eta=0.1, r=5.0\times 10^3, \mu=2.0\times 10^2.$ }
\label{fig4-1}
\end{figure}

\vskip .02in

{\noindent\bf {{(2) Multiple subdomains  } }}\\
Here we will show the performance of OD$^2$P$_m$. For simplicity,  the stripe-type multiple DD is implemented.   To get a relative large problem, we consider the    image with $512\times 512$ pixels (Interpolated of the image shown in Fig. \ref{fig2}), and the scan step size sets to 16 pixels. { The width of the overlapping regions is 48 (pixels).}  The parameters set to the same as the case of two-subdomain.
We first put the recovery results in the case of 4 subdomains to Fig. \ref{fig4-2}, immediately showing  the effectiveness of proposed algorithm in the case of multiple subdomains.  More tests are conducted with $D=6, 8, 10$ subdomains DD, and the related convergence curves are put to  Fig. \ref{fig4-3}. On one hand, one can readily infer that the proposed algorithm converges pretty well as numbers of subdomains vary. On the other hand, its convergence becomes slower as the number of subdomains increase. However, it seems not very sensitive to the number of subdomains { at least up to 10 subdomains}, since the maximum iteration number to reach the given accuracy changes a little bit (around 10\% more iterations for 10-subdomains than that for 2-subdomain) as reported in the following Table \ref{tab:my_label}.

We also measure the virtual wall-clock times ({The proposed   OD$^2$P$_m$ algorithm is assumed to be fully parallelized at the subdomain regardless of the communication time as \cite{lee2017primal}), and a single processor handles a single subdomain} in order to show the potential of parallel computing, and report them in Table \ref{tab:my_label}, that show high speedup ratios and the speedup efficiency (speedup ratios divided by the number of subdomains) is above $80\%$.  Although we only report the virtual wall-clock time regardless of the communication cost, the proposed algorithm should be suitable for parallel computing, due to the low cost of information exchange happening on the narrow overlapping regions of adjacent subdomains.   Hence, we will further evaluate the performance of proposed algorithms on  large-scale computer clusters, and leave it as future work.

\begin{figure}[]
\begin{center}
\subfigure{\includegraphics[width=.18\textwidth]{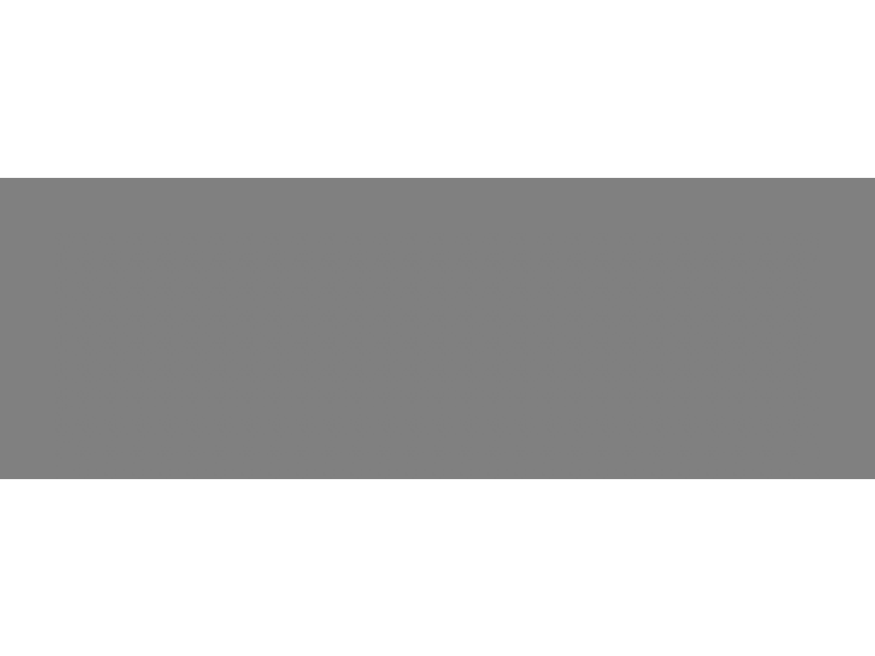}}
\subfigure{\includegraphics[width=.18\textwidth]{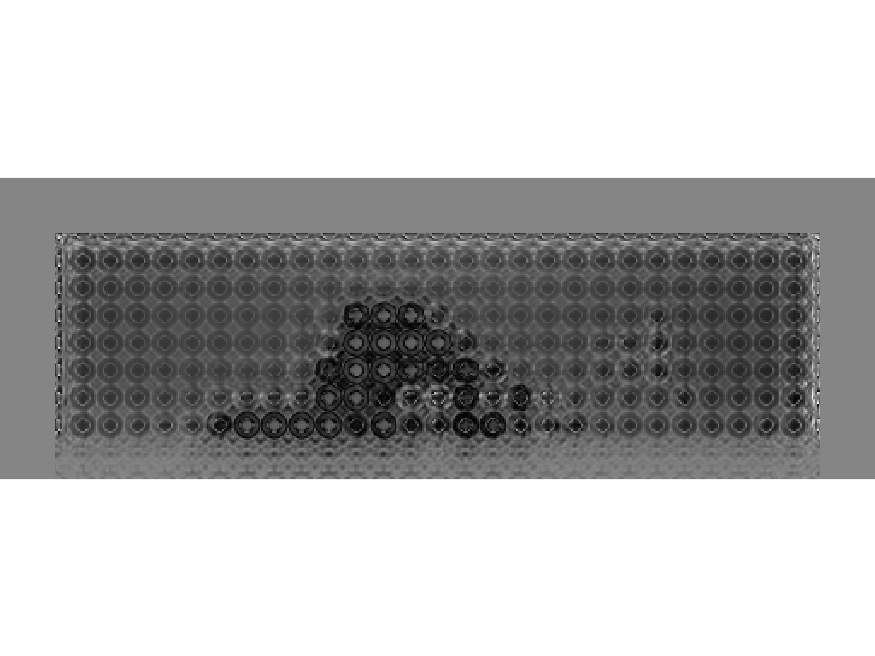}}
\subfigure{\includegraphics[width=.18\textwidth]{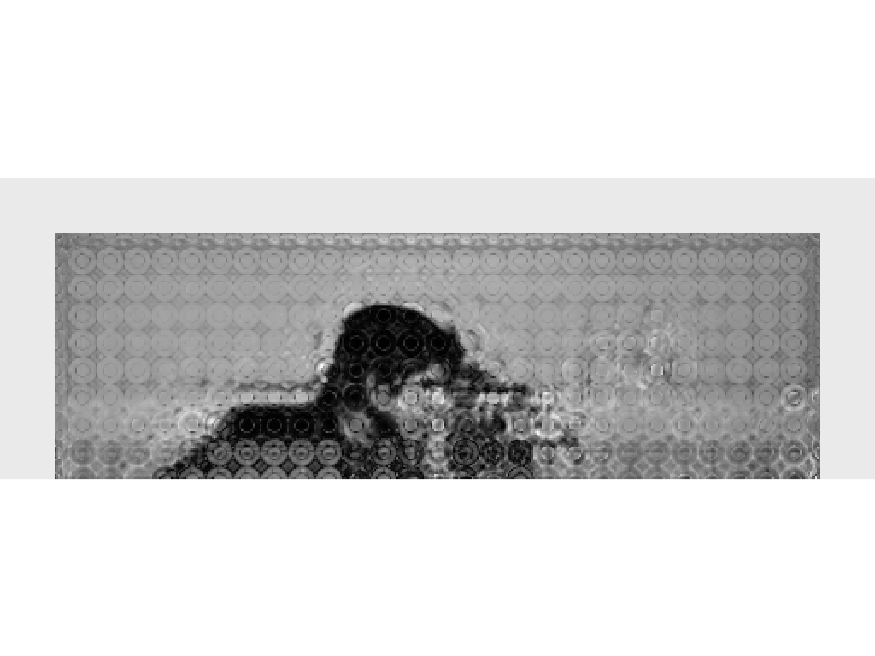}}
\subfigure{\includegraphics[width=.18\textwidth]{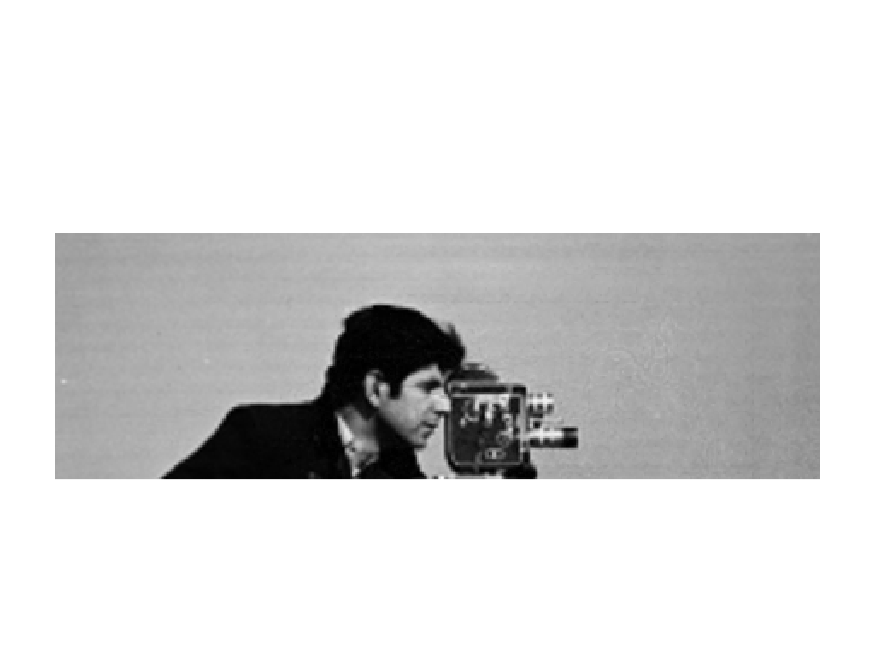}}
\subfigure{\includegraphics[width=.18\textwidth]{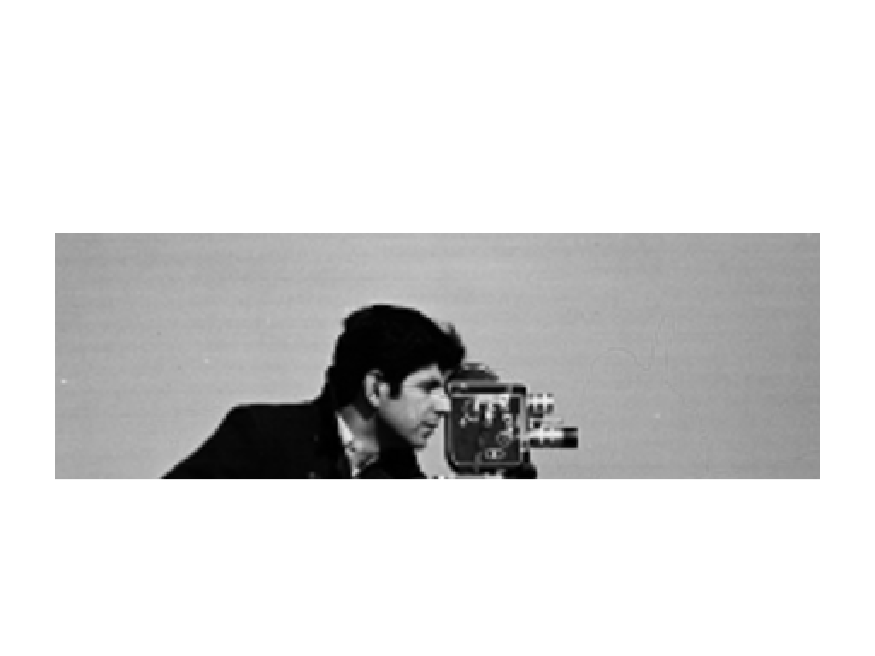}}
\vskip -.5in
\subfigure{\includegraphics[width=.18\textwidth]{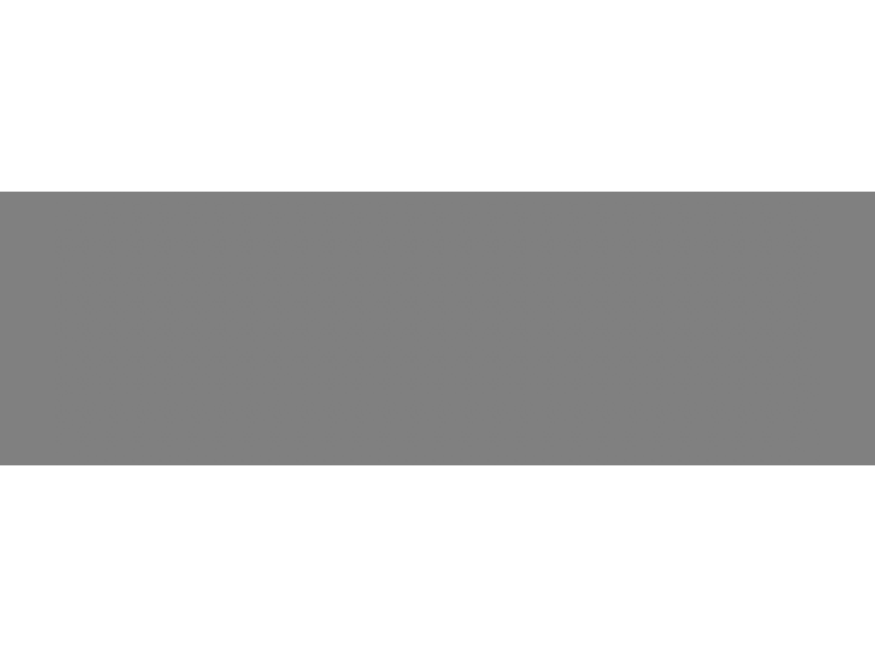}}
\subfigure{\includegraphics[width=.18\textwidth]{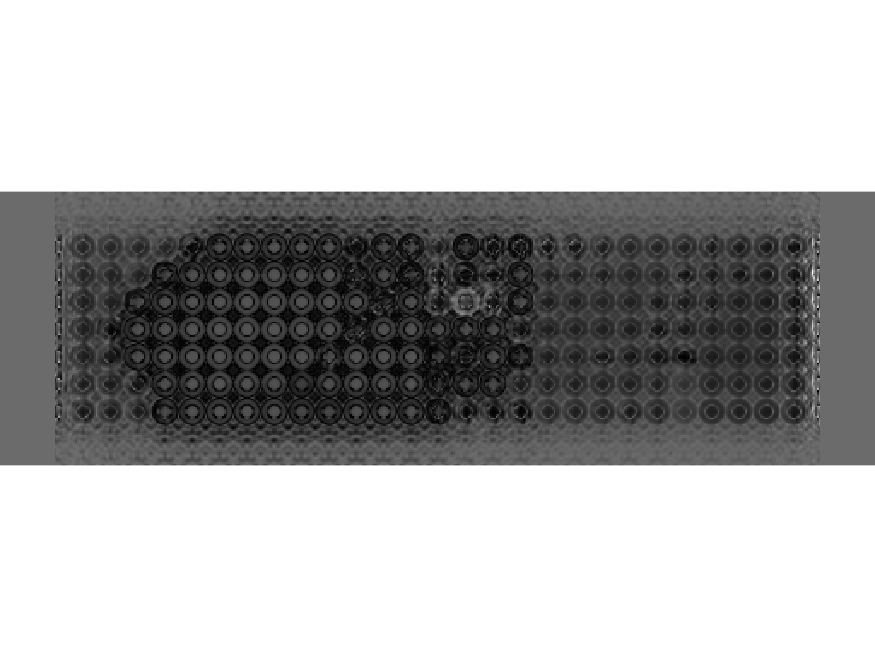}}
\subfigure{\includegraphics[width=.18\textwidth]{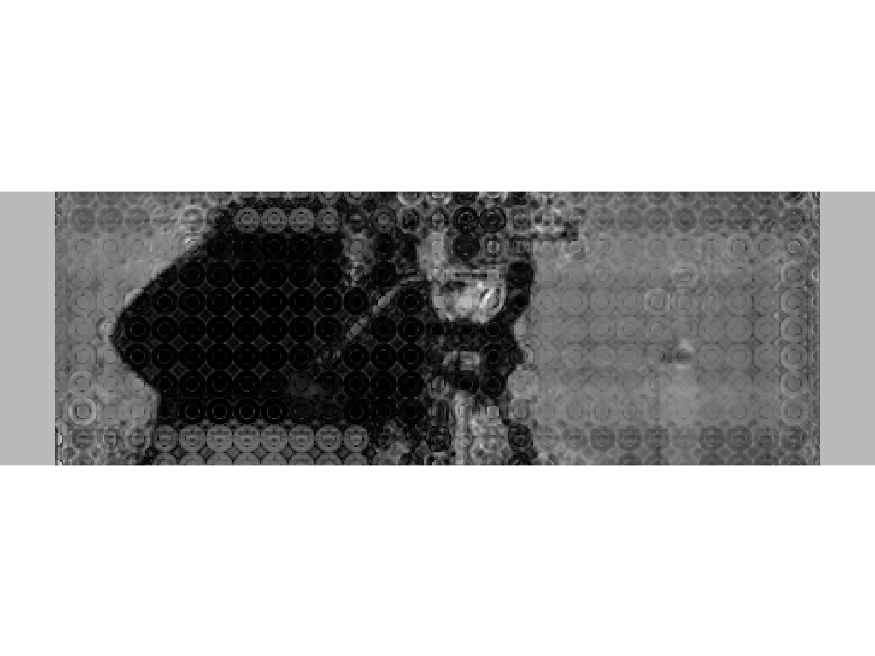}}
\subfigure{\includegraphics[width=.18\textwidth]{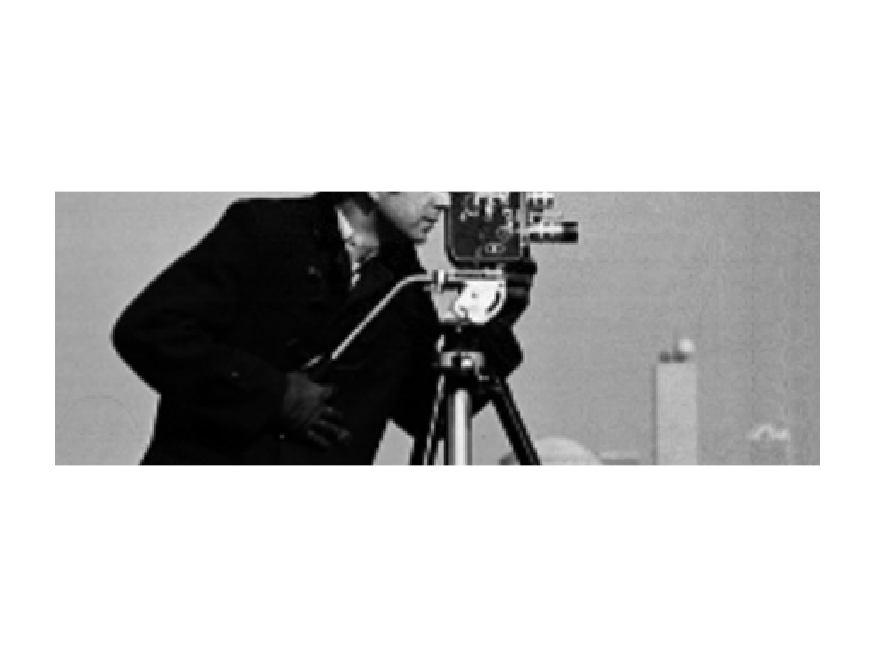}}
\subfigure{\includegraphics[width=.18\textwidth]{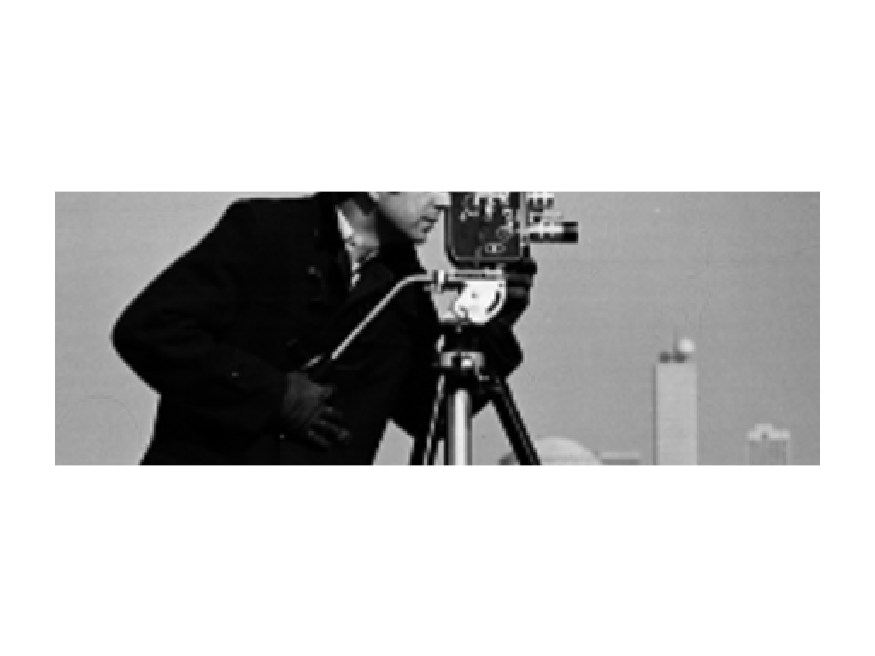}}
\vskip -.5in
\subfigure{\includegraphics[width=.18\textwidth]{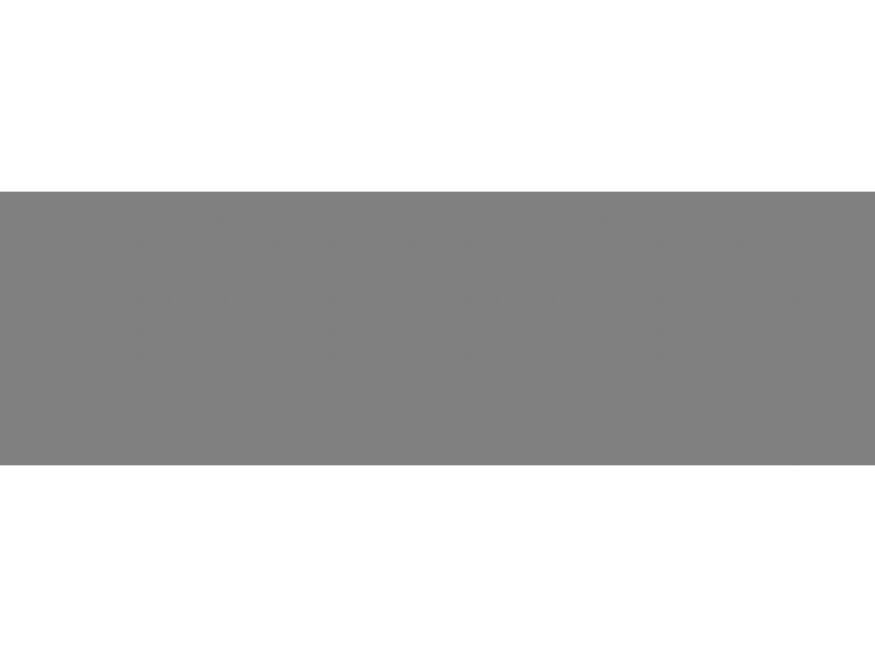}}
\subfigure{\includegraphics[width=.18\textwidth]{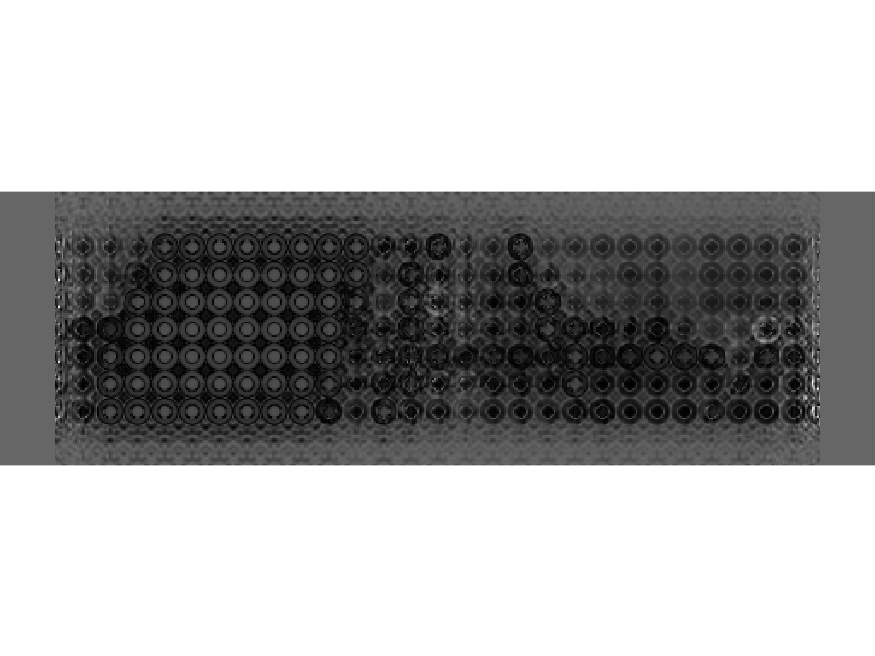}}
\subfigure{\includegraphics[width=.18\textwidth]{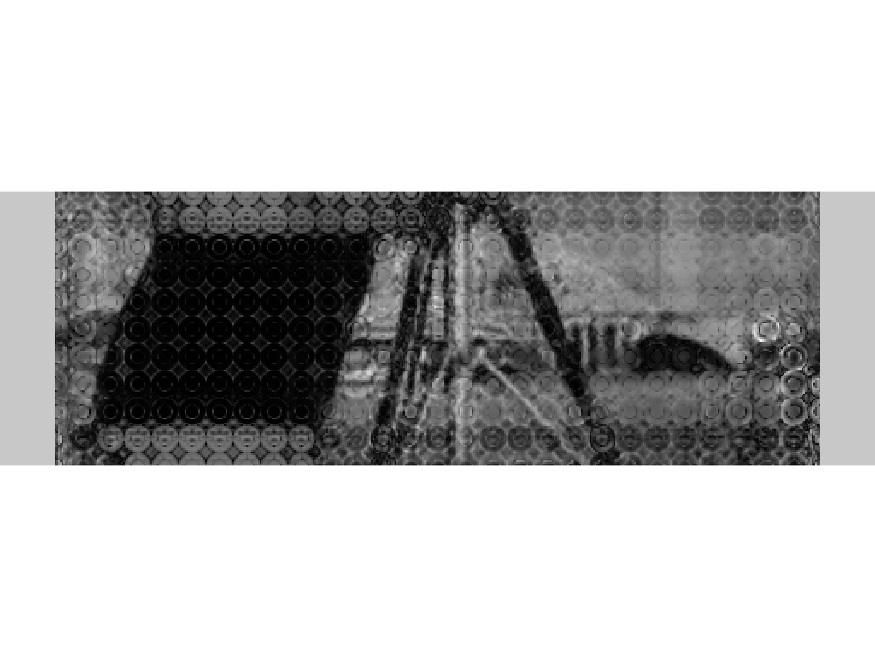}}
\subfigure{\includegraphics[width=.18\textwidth]{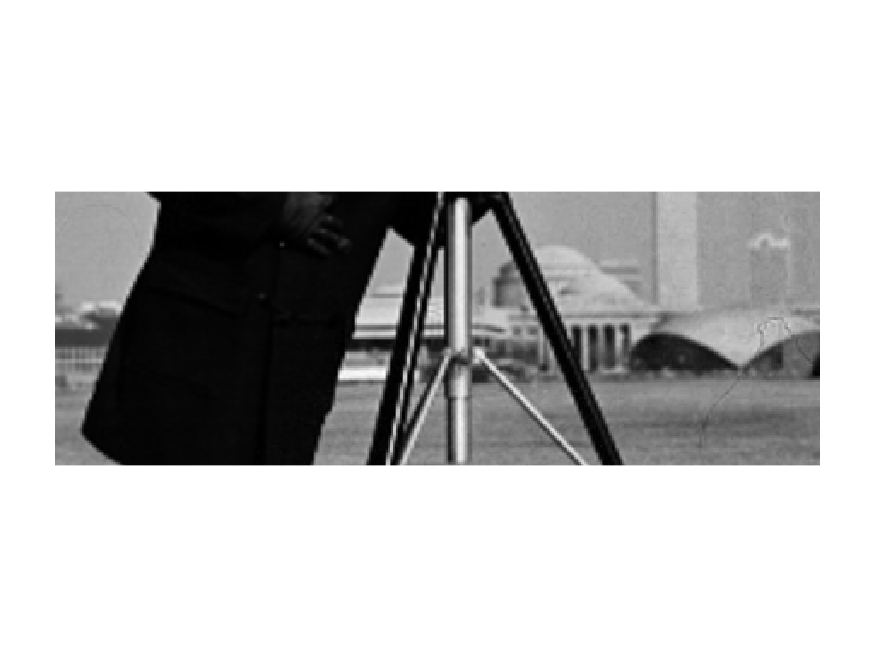}}
\subfigure{\includegraphics[width=.18\textwidth]{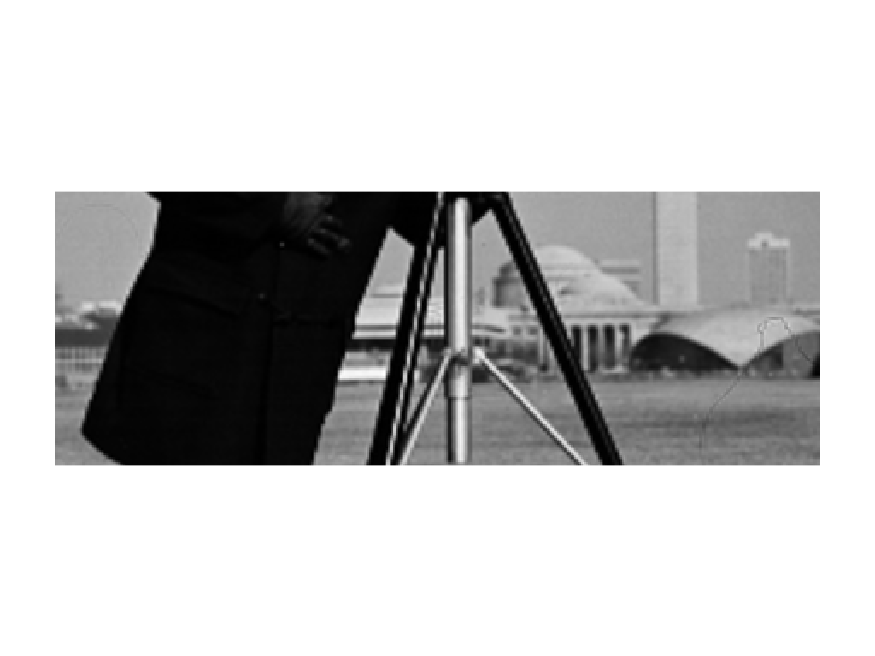}}
\vskip -.5in
\subfigure{\includegraphics[width=.18\textwidth]{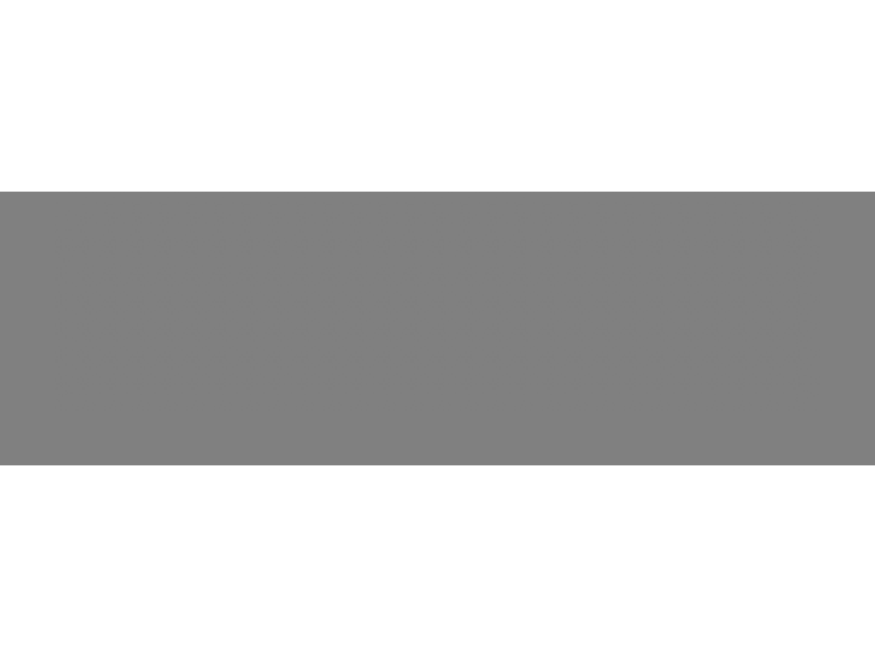}}
\subfigure{\includegraphics[width=.18\textwidth]{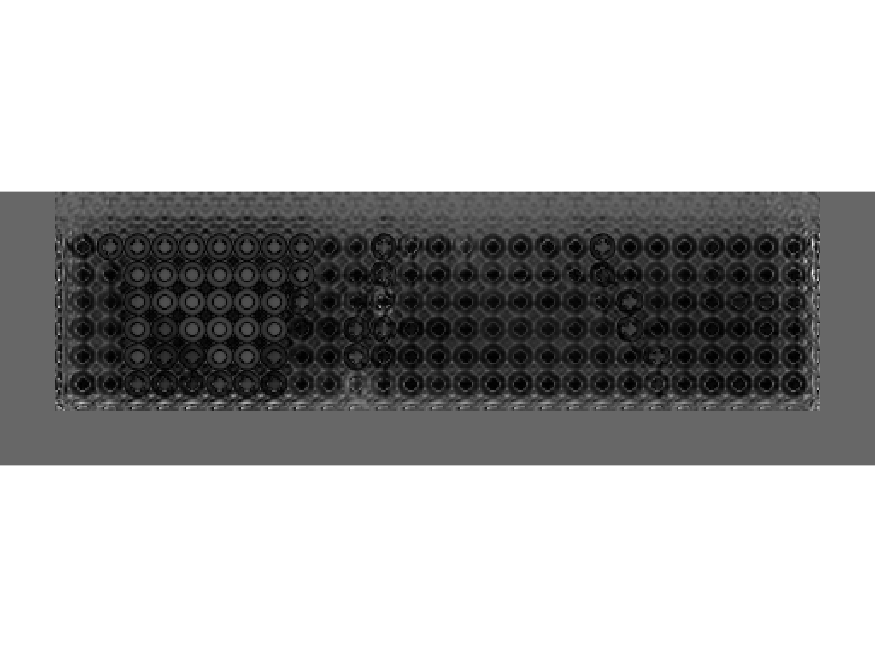}}
\subfigure{\includegraphics[width=.18\textwidth]{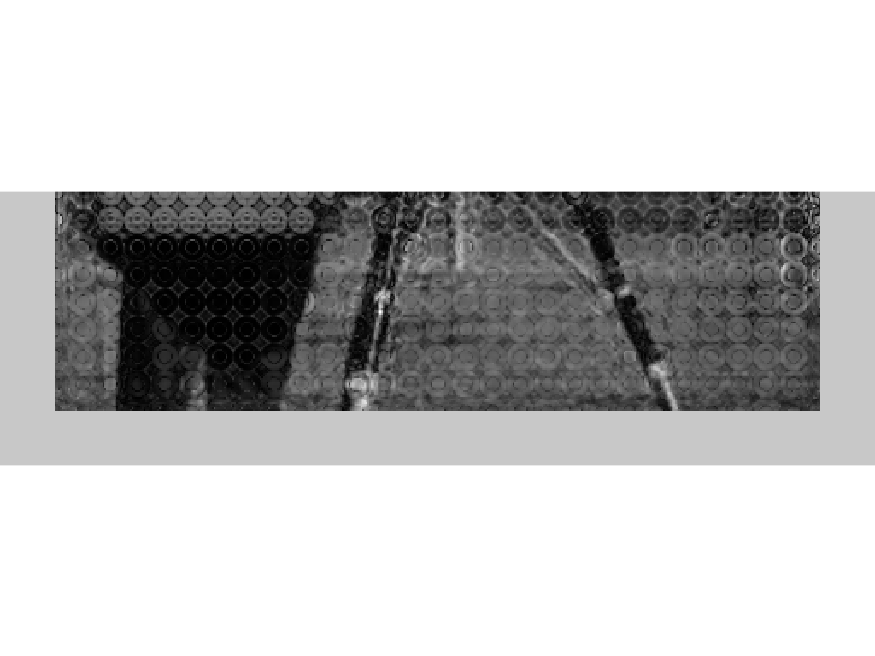}}
\subfigure{\includegraphics[width=.18\textwidth]{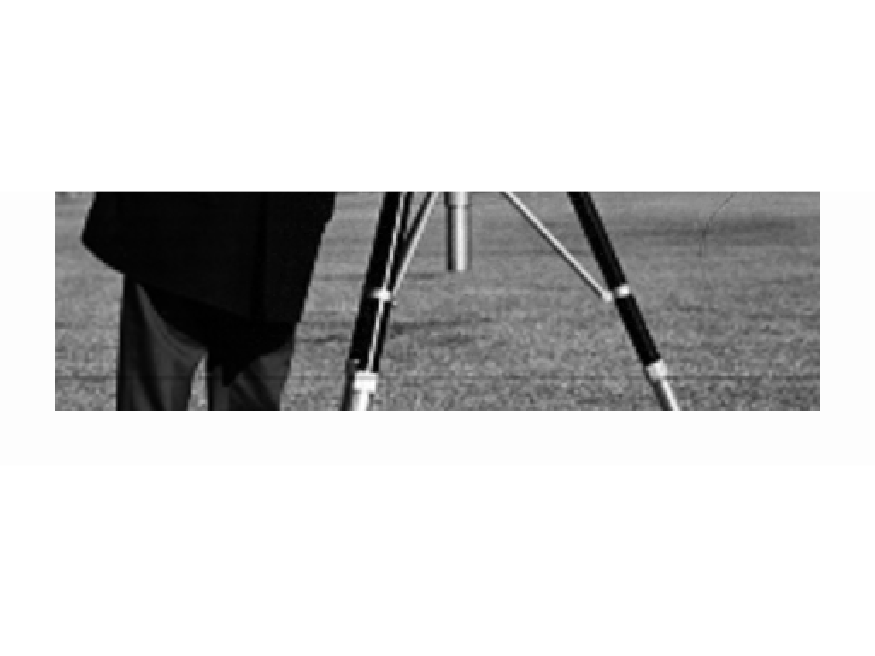}}
\subfigure{\includegraphics[width=.18\textwidth]{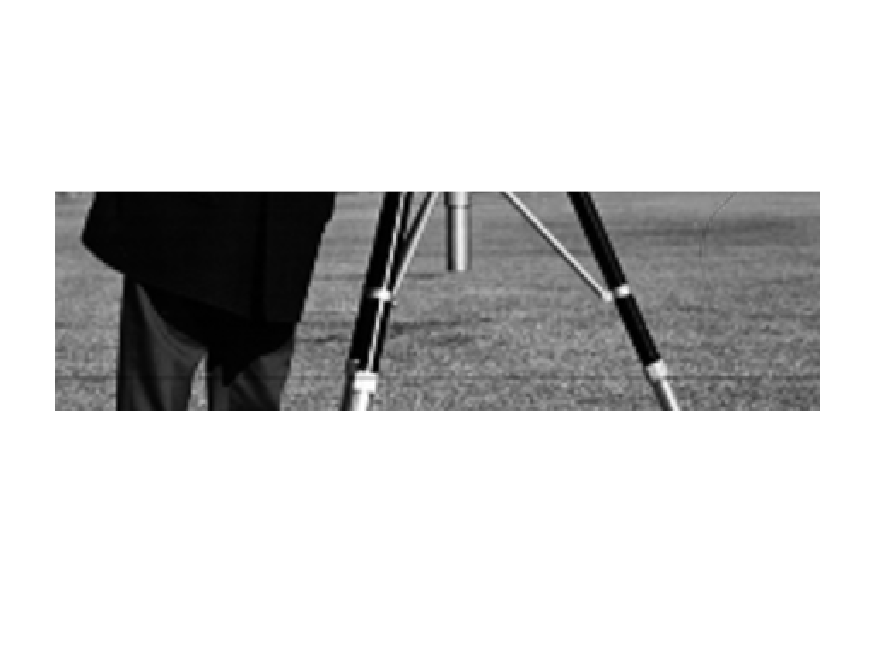}}
\\
\vspace{-.3in}
\subfigure{\includegraphics[width=.18\textwidth]{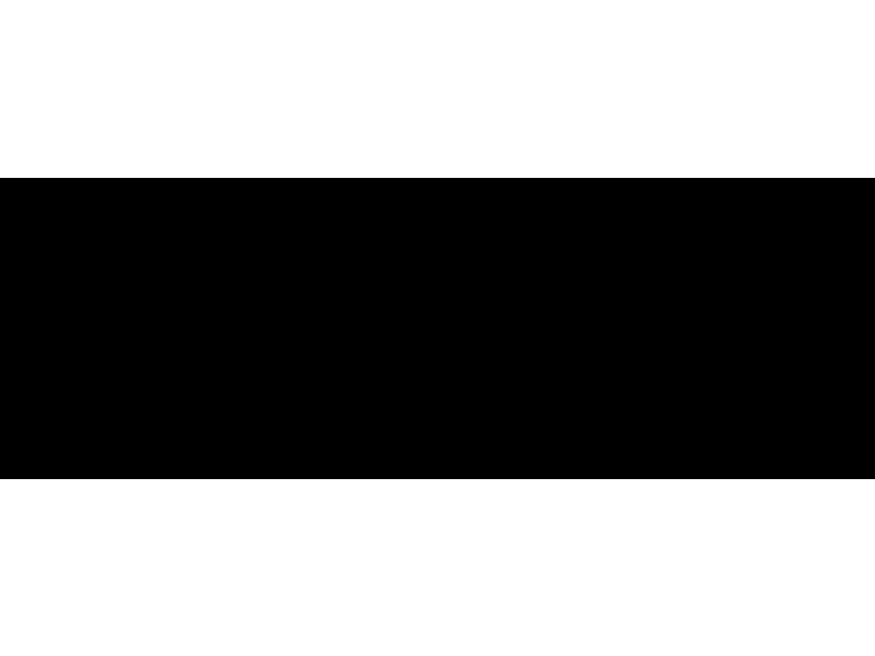}}
\subfigure{\includegraphics[width=.18\textwidth]{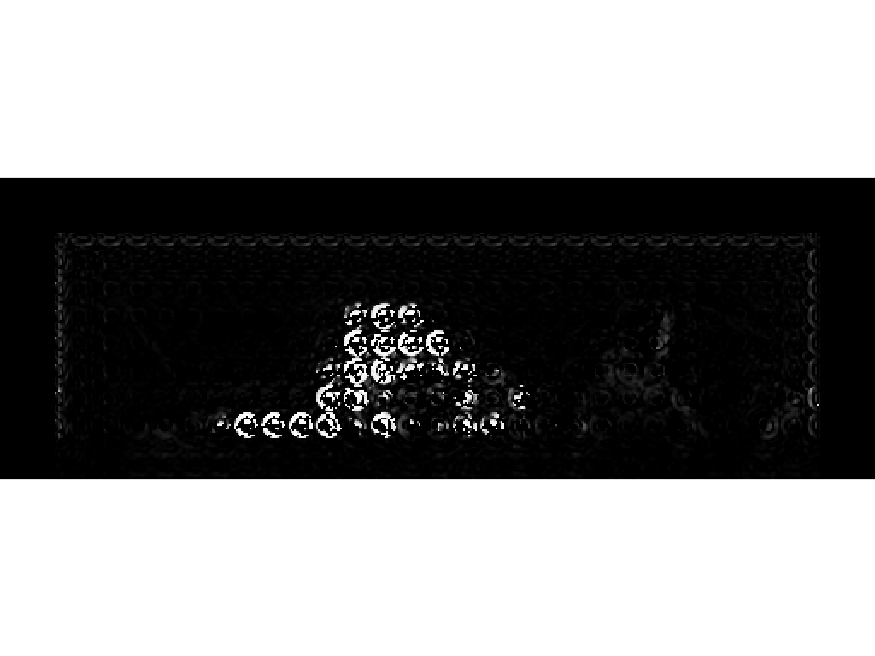}}
\subfigure{\includegraphics[width=.18\textwidth]{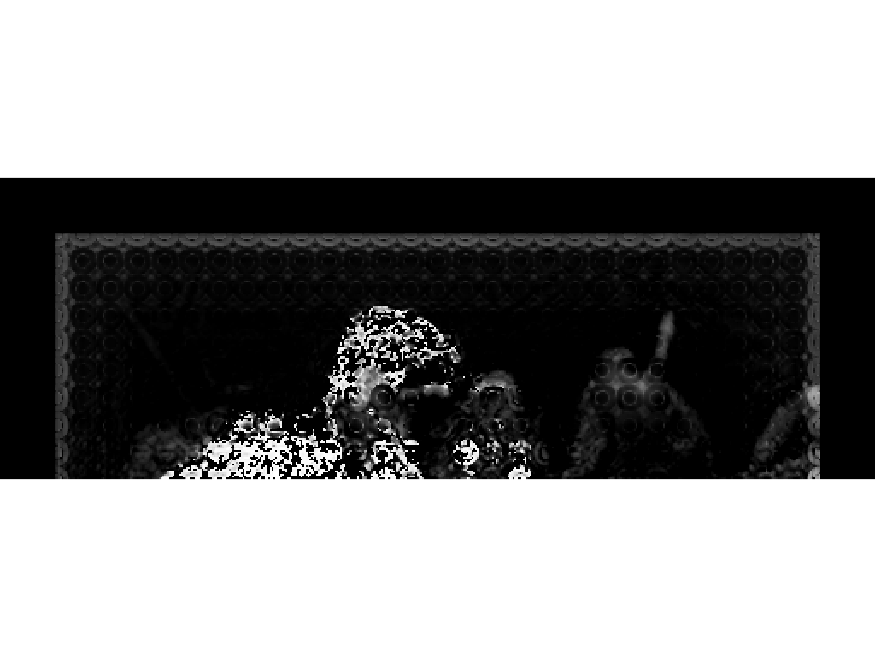}}
\subfigure{\includegraphics[width=.18\textwidth]{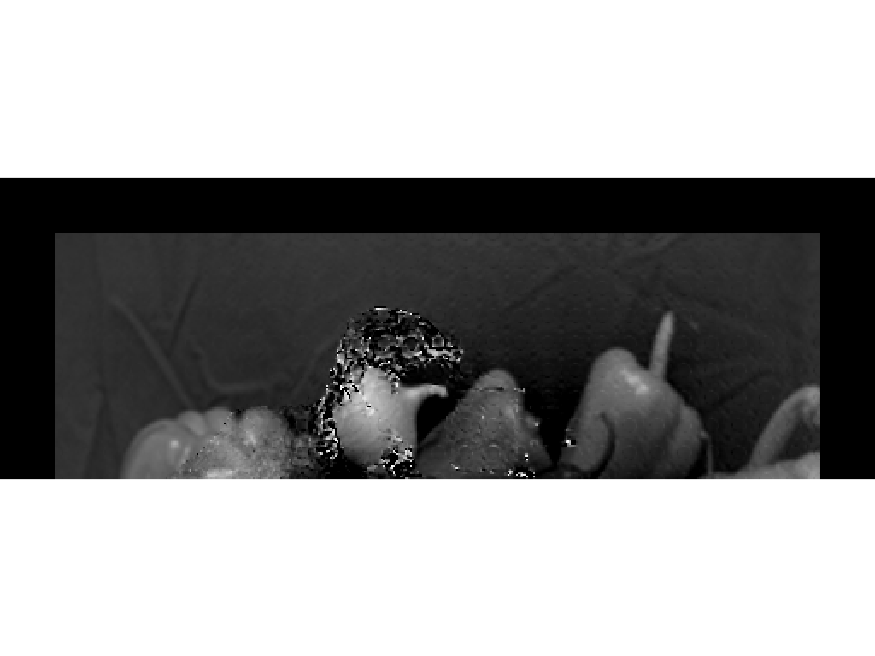}}
\subfigure{\includegraphics[width=.18\textwidth]{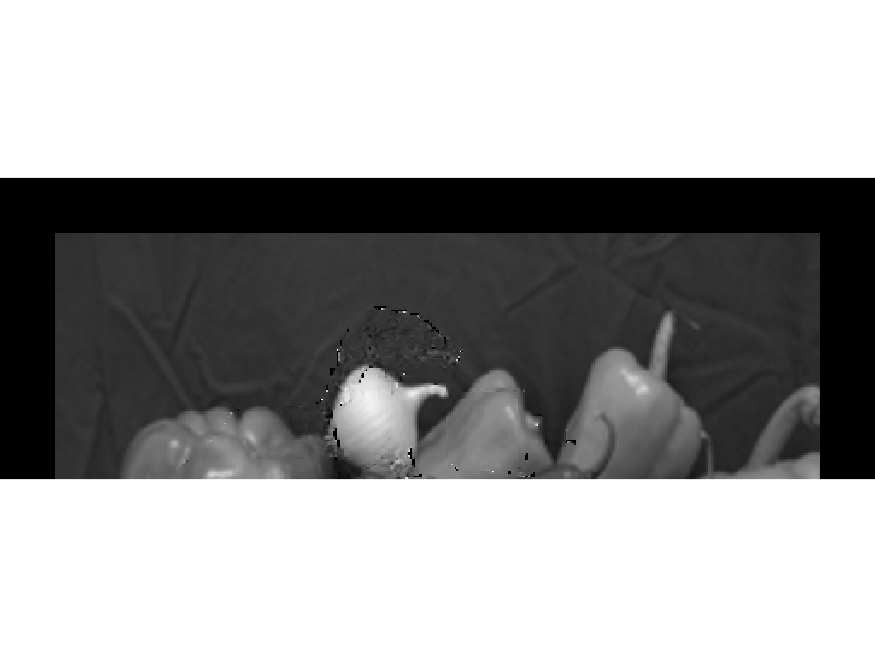}}
\vskip -.5in
\subfigure{\includegraphics[width=.18\textwidth]{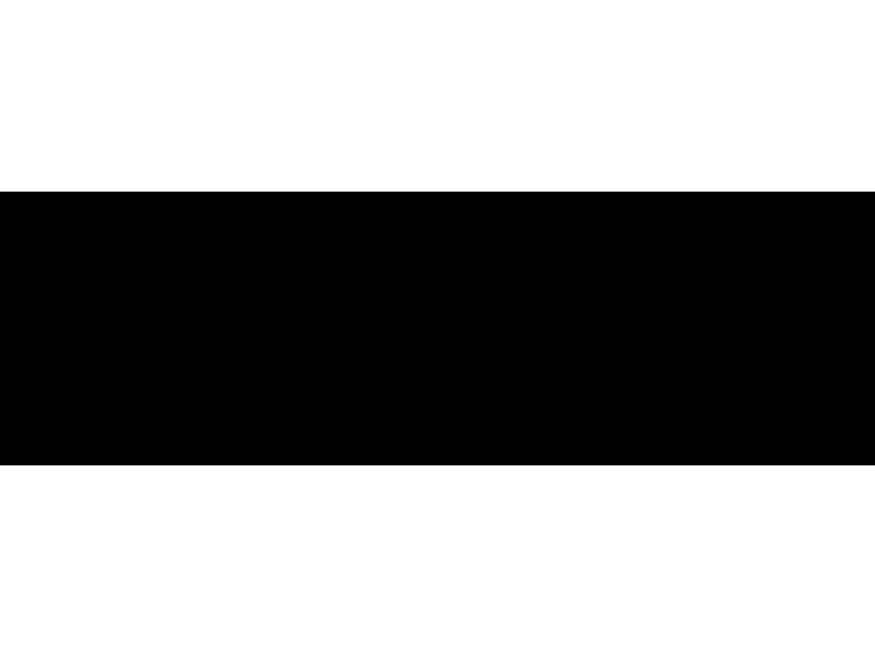}}
\subfigure{\includegraphics[width=.18\textwidth]{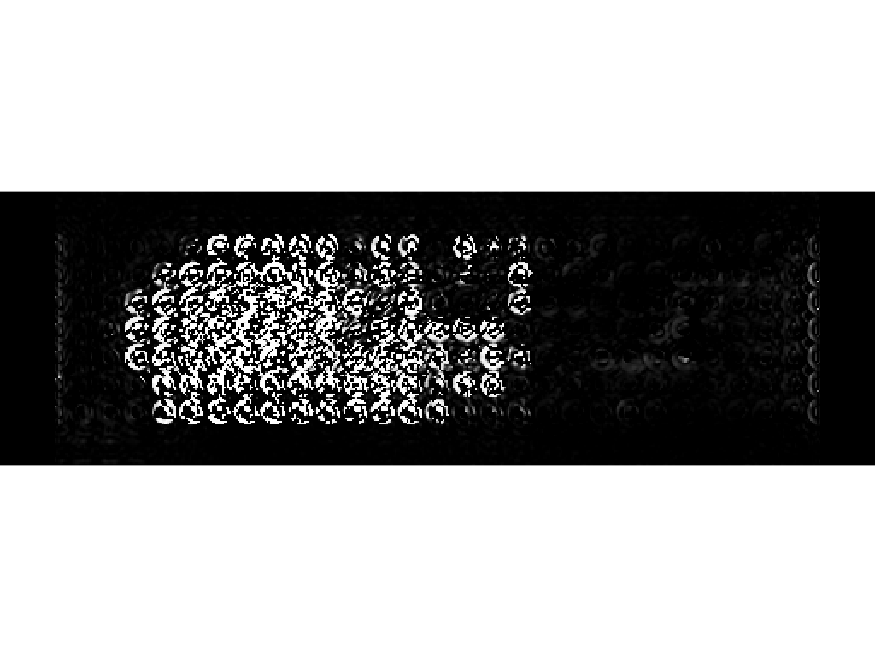}}
\subfigure{\includegraphics[width=.18\textwidth]{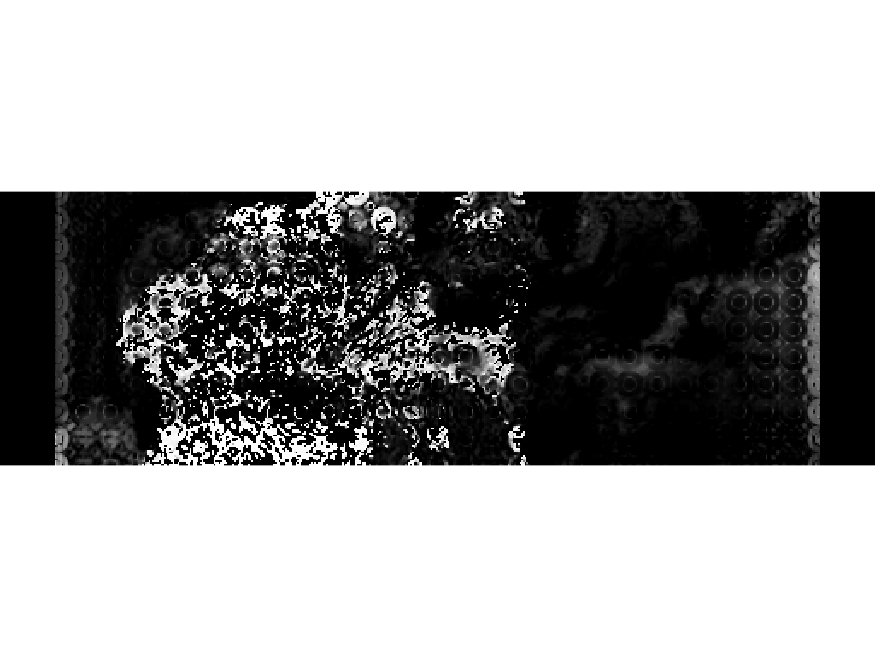}}
\subfigure{\includegraphics[width=.18\textwidth]{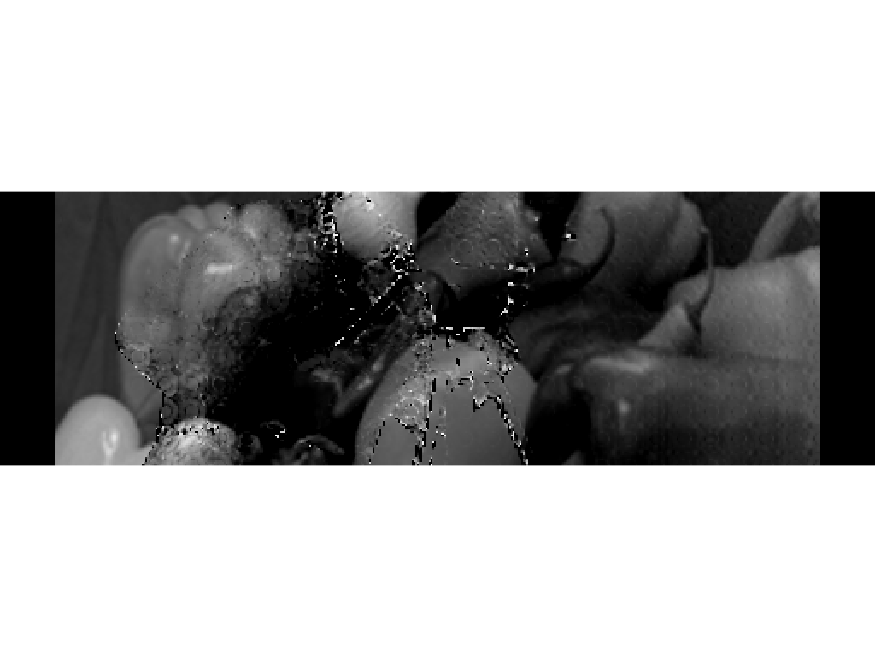}}
\subfigure{\includegraphics[width=.18\textwidth]{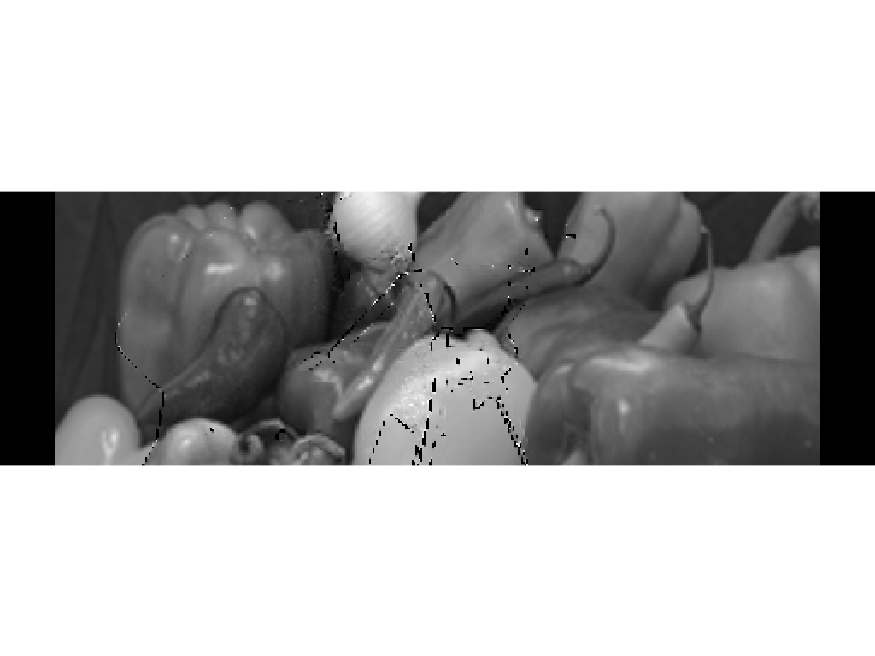}}
\vskip -.5in
\subfigure{\includegraphics[width=.18\textwidth]{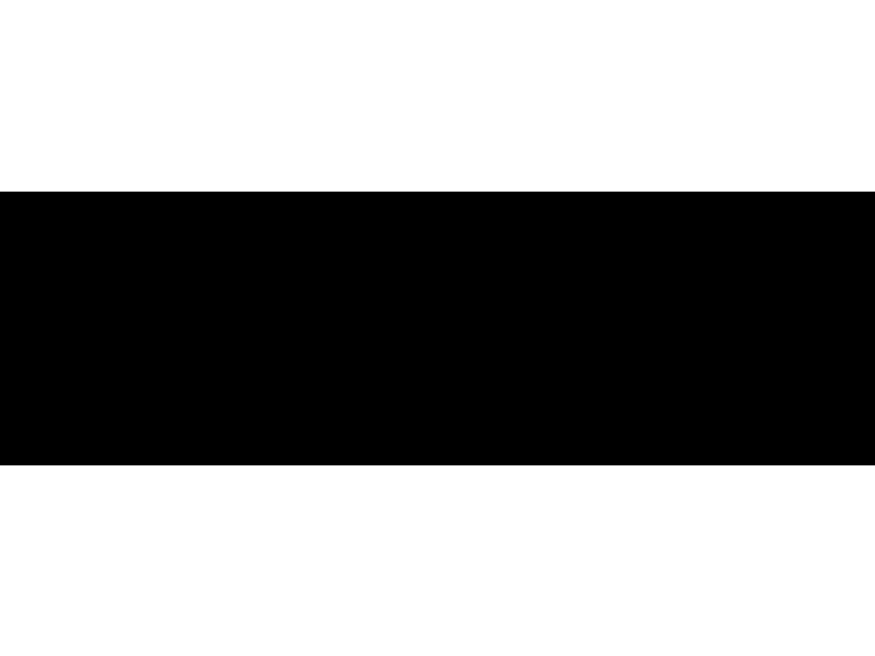}}
\subfigure{\includegraphics[width=.18\textwidth]{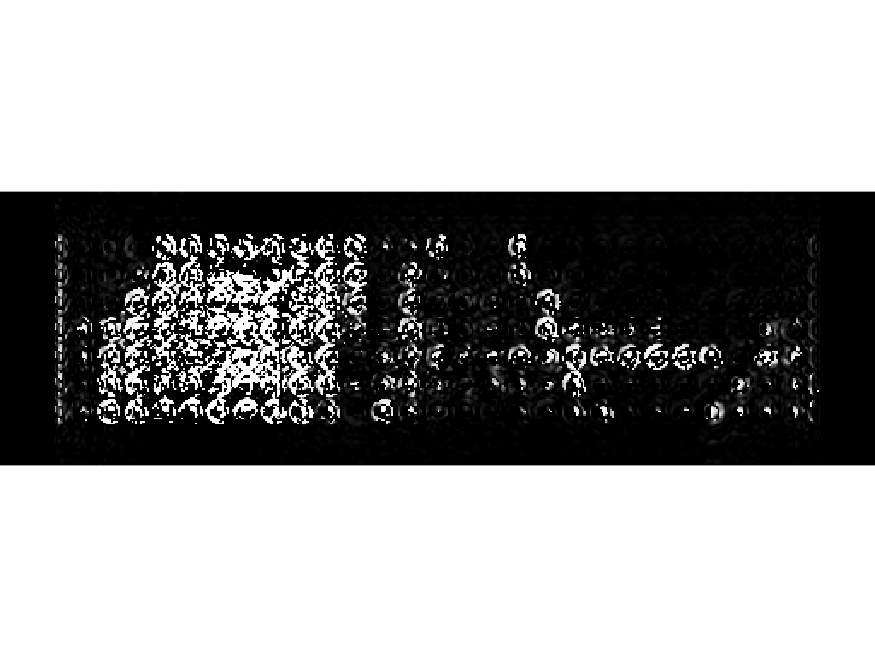}}
\subfigure{\includegraphics[width=.18\textwidth]{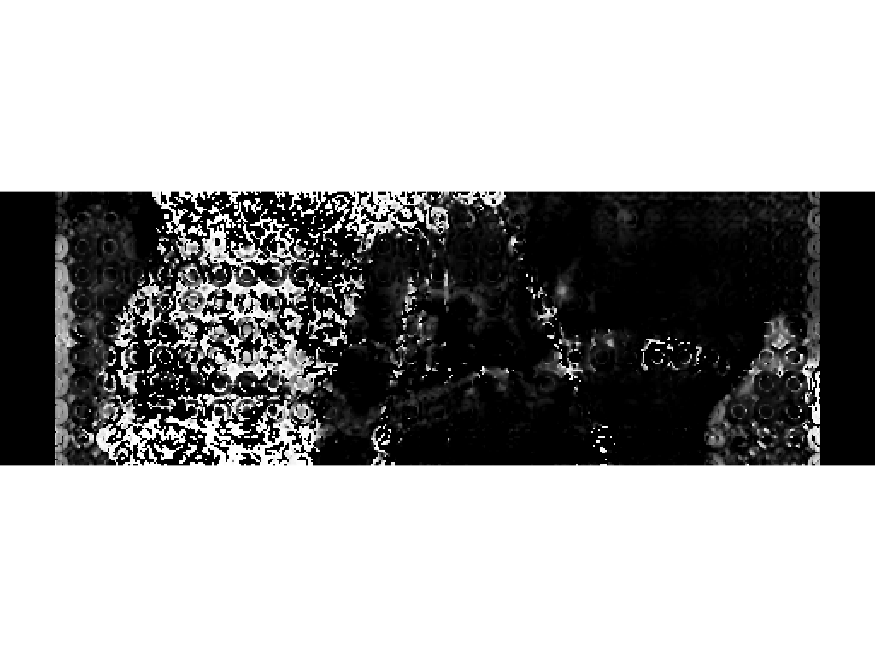}}
\subfigure{\includegraphics[width=.18\textwidth]{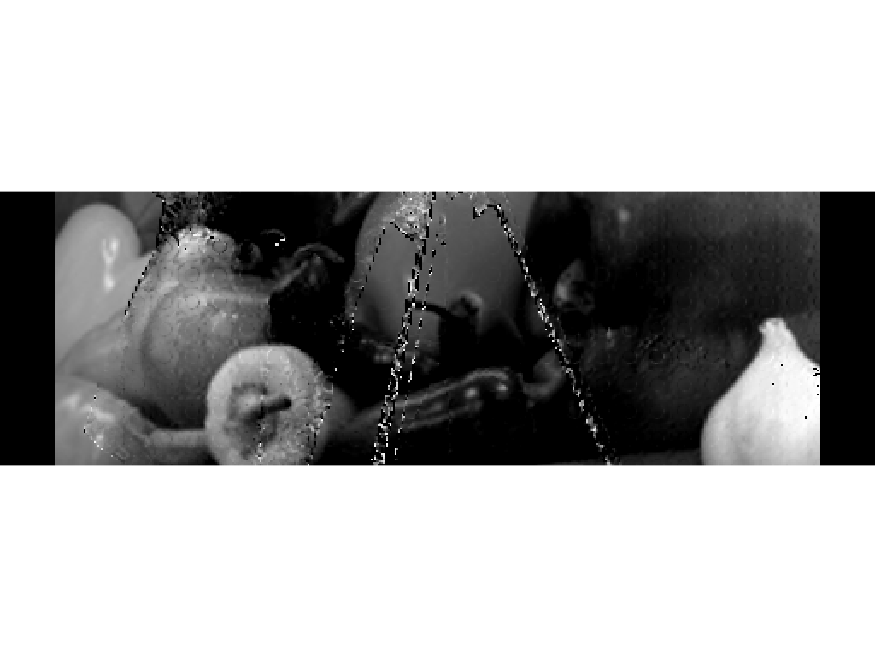}}
\subfigure{\includegraphics[width=.18\textwidth]{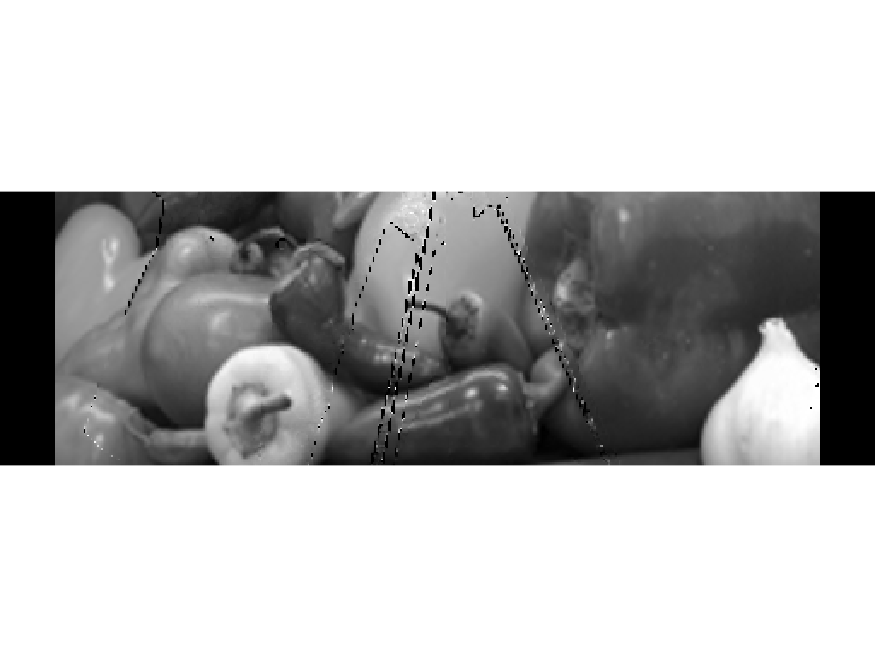}}
\vskip -.5in
\setcounter{subfigure}{0}
\subfigure[1st]{\includegraphics[width=.18\textwidth]{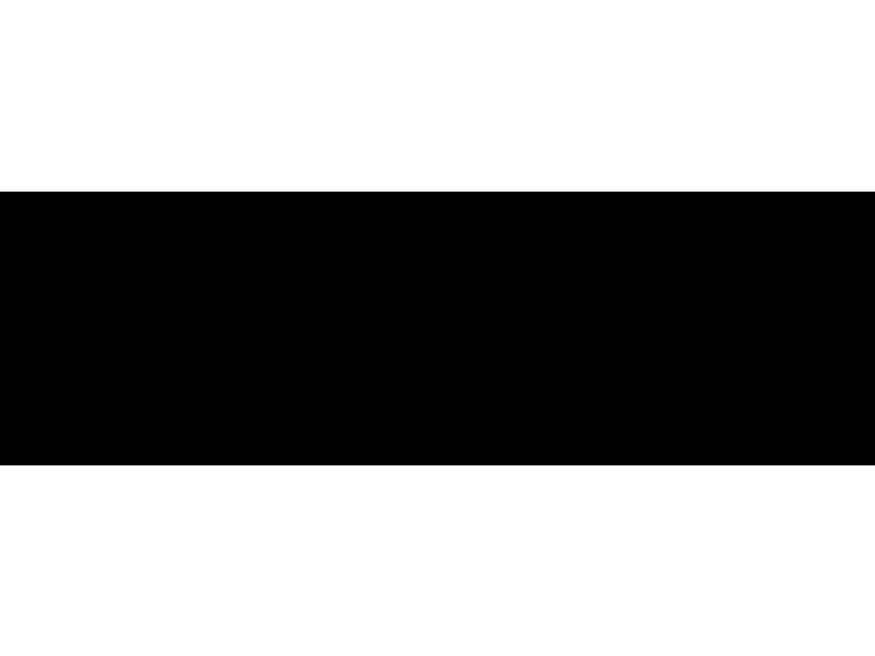}}
\subfigure[2nd]{\includegraphics[width=.18\textwidth]{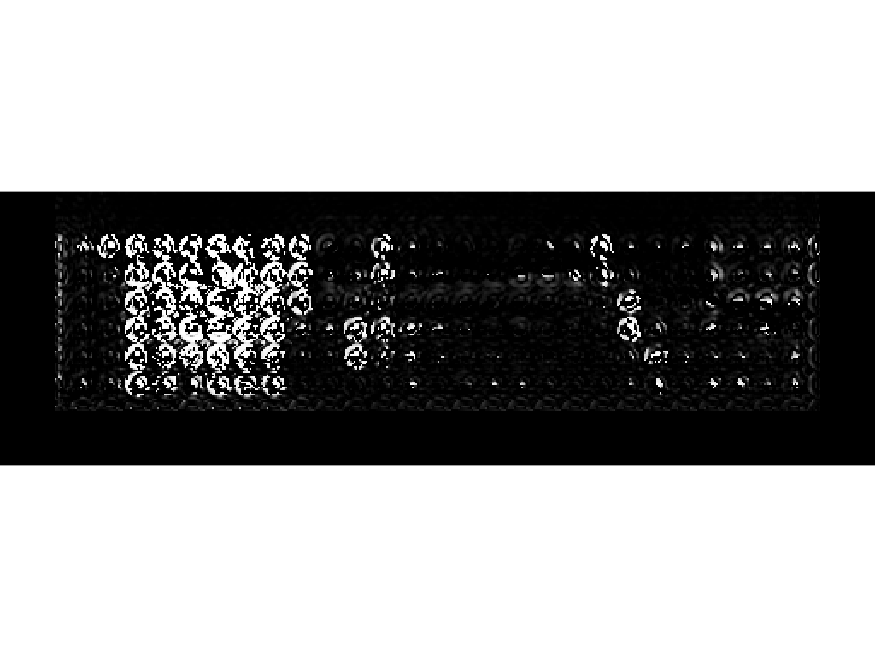}}
\subfigure[5th]{\includegraphics[width=.18\textwidth]{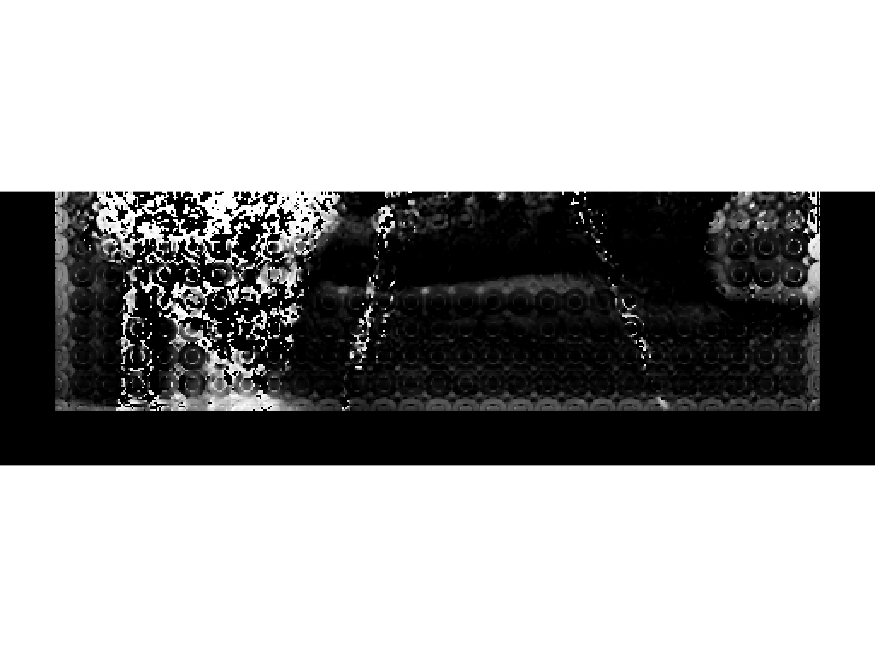}}
\subfigure[50th]{\includegraphics[width=.18\textwidth]{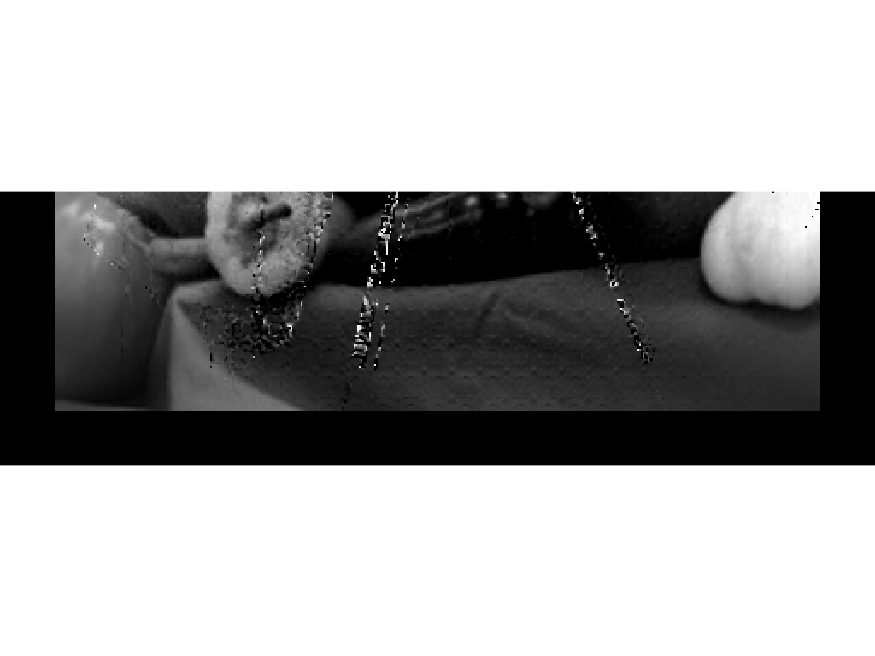}}
\subfigure[200th]{\includegraphics[width=.18\textwidth]{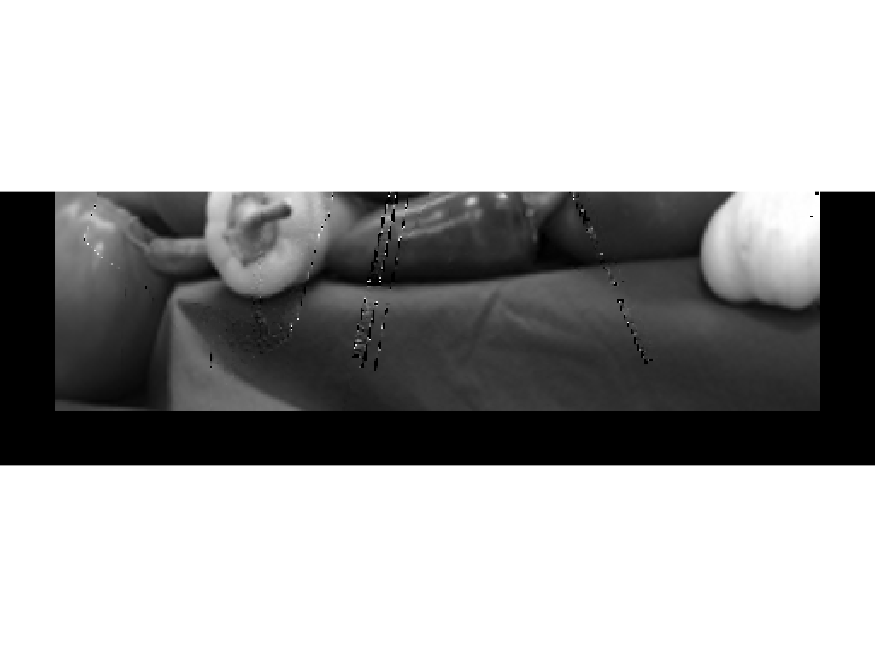}}
\end{center}
    \caption{Recovery results with $D=4$ subdomains at 1st, 2nd, 5th, 50th, 200th iterations: Recovered absolute parts (1st-4th rows) and phase parts (5th-8th rows) of the iterative solutions on the four subdomains, respectively. They are shown in the range of $[0,1]$
    and $[0,\pi]$ for the absolute and phase parts respectively.}
    \vskip -.2in
\label{fig4-2}
\end{figure}

\begin{figure}[]
\begin{center}
\subfigure[]{\includegraphics[width=.35\textwidth]{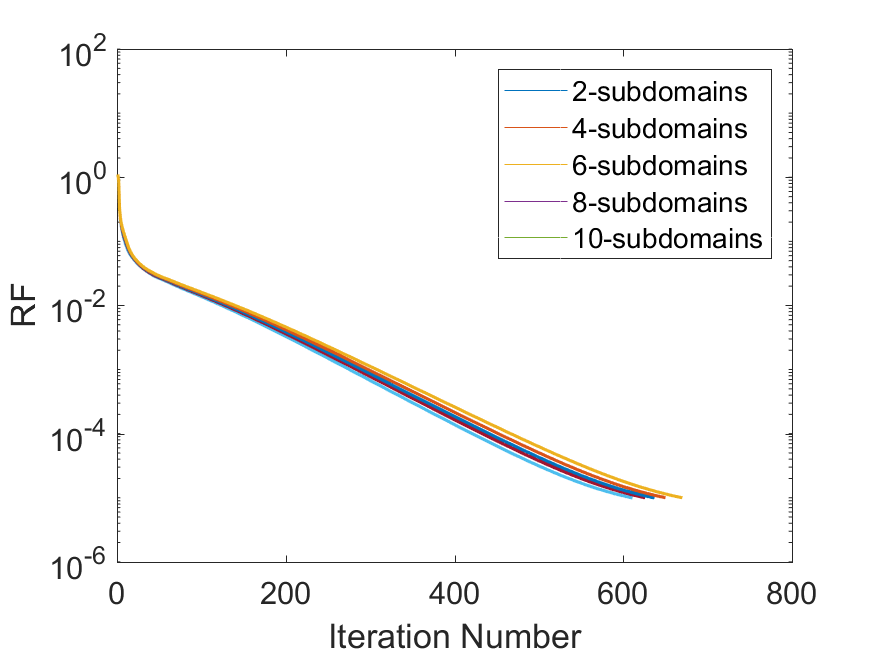}}
\subfigure[]{\includegraphics[width=.35\textwidth]{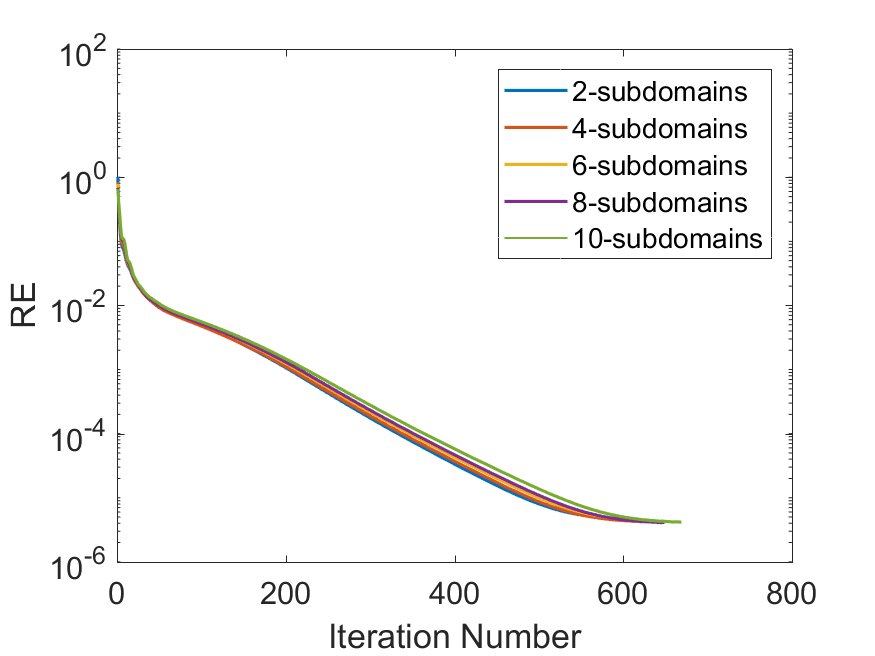}}
\end{center}
    \caption{Convergence curves w.r.t. different number of subdomains: (a) RF (R-factor), and  (b) RE (Relative error)  changes v.s. iterations. }
    \vskip -.2in
\label{fig4-3}
\end{figure}

\begin{table}[h!]
    \caption{Performances for the case of multiple subdomains. The speedup ratio is denoted by the ratio between the virtual wall-clock time and the runtime of ADMM algorithm \cite{chang2018Blind} without DD. The 2nd-3rd columns present the RF (R-factor), and RE (Relative error) for the proposed algorithms and 4th column presents the iter. no. (short for iteration number) when satisfying the stopping conditions.  }
    \centering
     \renewcommand\arraystretch{.8}
    \begin{tabular}{|c|c|c|c|c|}
    \hline
         D&RF(1E-5)&RE(1E-5)&Iter no.&\tabincell{c}{virtual wall-clock\\ time in seconds\\(Speedup ratio)}\\
         \hline
        1&0.997&0.482&587&601(1.00) \\
        \hline
        2&0.995&0.458&611&346(1.74)\\
        \hline
        4&0.998&0.426&626&167(3.60)\\
        \hline
        6&0.993&0.434&637&109(5.51)\\
        \hline
        8&0.999&0.409&651&88.0(6.83)\\
        \hline
        10& 0.999 &0.416&670 &72.0(8.35)\\
        \hline
    \end{tabular}
    \label{tab:my_label}
\end{table}

\subsection{Impact by parameters}\label{sec-4.3}

In order to evaluate the robustness of proposed  OD$^2$P w.r.t. the parameters, we conduct the experiments with noiseless data, where one parameter varies and meanwhile other parameters are kept unchanged.  Set $\eta_0=0.1, r_0=4.0\times 10^3$. The scan stepsize sets to 8 pixels. 
We show the performance impacted by  setting $\eta\in\{ 5^{-2}\times \eta_0, ~ 5^{-1}\times \eta_0,~ \eta_0,~ 5\times\eta_0,~ 5^2\times\eta_0\}$ (keeping $r=r_0$), and put the convergence curves in Fig. \ref{fig4-4} (a)-(b). Meanwhile, by setting $r\in\{ 5^{-2}\times r_0, ~ 5^{-1}\times r_0,~ r_0,~ 5\times r_0,~ 5^2\times r_0\}$ (keeping $\eta=\eta_0$), the convergence curves are put in Fig. \ref{fig4-4} (c)-(d).  Readily one knows that too smaller or bigger parameters would cause slow convergence or failure of convergence. In the future, an automatic parameter selection scheme should be developed, and we leave it as future work.
The parameter impacts to other proposed algorithms seem similar to that of OD$^2$P and  due to page limitation, we do not include them.  Here we also remark that our convergence guarantee requires that $\eta>1$ in order to make sure that the augmented Lagrangian of iterative sequence is lower bounded. However,  in the experiments, smaller values can give faster convergences, that motivates us to get much sharper estimate of the convergence condition of the parameters, and we also  leave it as future work.  

\begin{figure}[]
\begin{center}
\subfigure[]{\includegraphics[width=.24\textwidth]{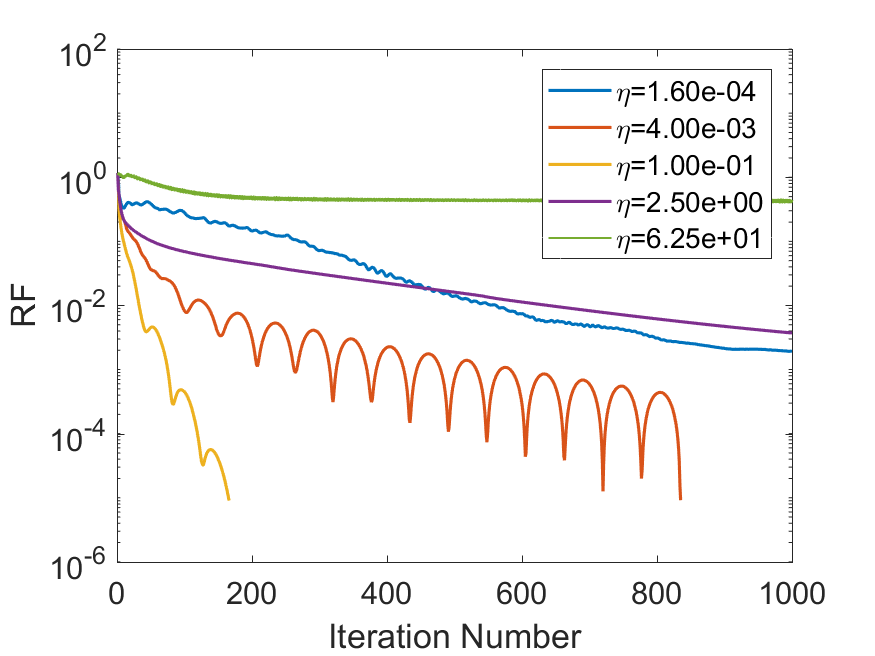}}
\subfigure[]{\includegraphics[width=.24\textwidth]{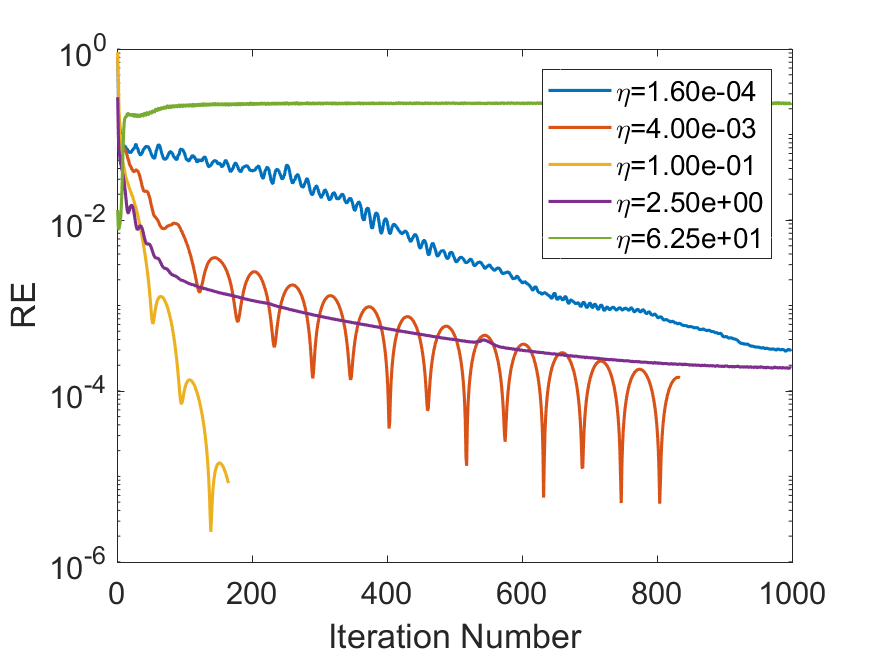}}
\subfigure[]{\includegraphics[width=.24\textwidth]{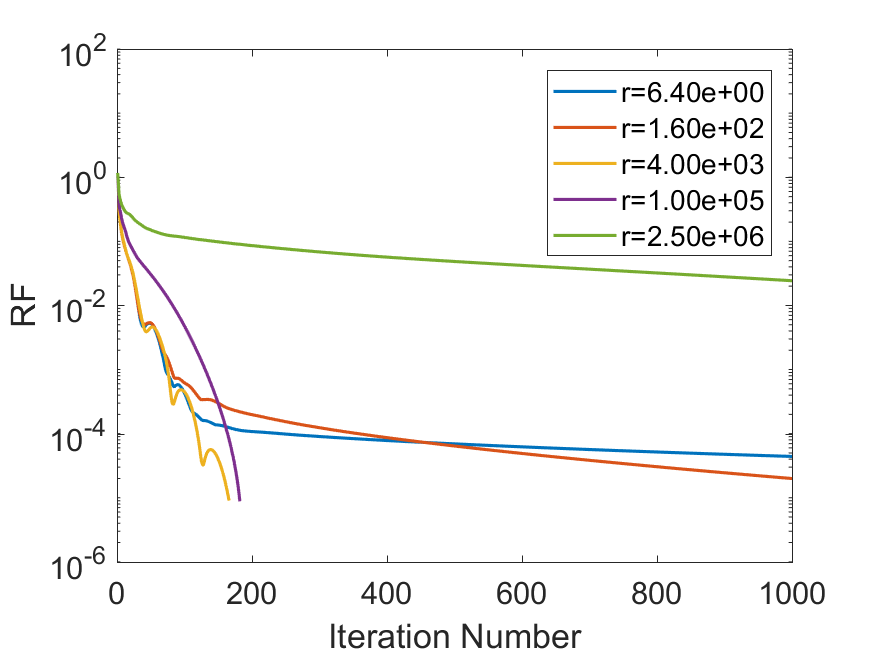}}
\subfigure[]{\includegraphics[width=.24\textwidth]{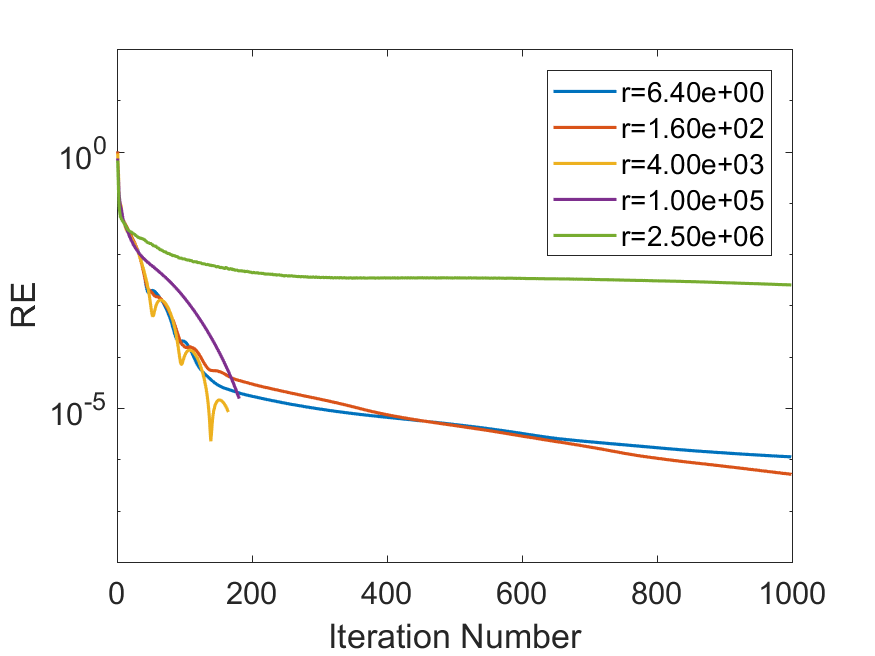}}
\end{center}
    \caption{Convergence curves w.r.t. different parameters: RF (R-factor) changes v.s. iterations with different $\eta$ in (a) and different $r$ in (c); 
    RE (Relative error)  changes v.s. iterations with different $\eta$ in (b) and different $r$ in (d);}
\label{fig4-4}
\vskip -.2in
\end{figure}

\section{Conclusions}
\label{sec5}

Overlapping DDMs have been successfully applied to ptychography reconstruction, that lead to  OD$^2$P for the two-subdomain nonblind recovery,  OD$^2$P$_m$ for multiple subdomains and OD$^2$BP for the blind recovery. With the newly-designed smooth truncated metric, these proposed algorithms are efficiently computed, since all subproblems have closed form solutions, and their convergences are well guaranteed under some mild conditions. Numerical experiments are further conducted to show the performance of proposed algorithms, demonstrating good convergence speed, robust to the noise. In the future, we will optimize the current algorithms on massively parallel processing computers, and explore more other applications including  Fourier ptychography \cite{zheng2013wide}, partial coherence analysis \cite{chang2018partially}, and high dimensional ptychographic imaging problems \cite{yu2018three}. 

\section*{Acknowledgments}
We would like to thank the two reviewers and the associate editor for
their valuable comments, which helped to improve the paper greatly. HC
acknowledges support from his hosts Professors Yang Wang and  James Sethian during the visits to the Dept. of Math. in Hong Kong University of Science and Technology and CAMERA in Lawrence Berkeley National Lab.

\bibliographystyle{siamplain}
\bibliography{rD}

\begin{thebibliography}{10}

\bibitem{bolte2014proximal}
{\sc J.~Bolte, S.~Sabach, and M.~Teboulle}, {\em Proximal alternating
  linearized minimization for nonconvex and nonsmooth problems}, Mathematical
  Programming, 146 (2014), pp.~459--494.

\bibitem{boyd2011distributed}
{\sc S.~Boyd, N.~Parikh, E.~Chu, B.~Peleato, and J.~Eckstein}, {\em Distributed
  optimization and statistical learning via the alternating direction method of
  multipliers}, Found. Trends Machine learning, 3 (2011), pp.~1--122.

\bibitem{chan1994domain}
{\sc T.~F. Chan and T.~P. Mathew}, {\em Domain decomposition algorithms}, Acta
  numerica, 3 (1994), pp.~61--143.

\bibitem{chang2018partially}
{\sc H.~Chang, P.~Enfedaque, Y.~Lou, and S.~Marchesini}, {\em Partially
  coherent ptychography by gradient decomposition of the probe}, Acta
  Crystallogr., Sect. A: Found. Adv., 74 (2018), pp.~157--169.

\bibitem{chang2018Blind}
{\sc H.~Chang, P.~Enfedaque, and S.~Marchesini}, {\em Blind ptychographic phase
  retrieval via convergent alternating direction method of multipliers}, SIAM
  J. Imaging Sci., 12 (2019), pp.~153--185.

\bibitem{chang2016Total}
{\sc H.~Chang, Y.~Lou, Y.~Duan, and S.~Marchesini}, {\em Total variation--based
  phase retrieval for {Poisson} noise removal}, SIAM J. Imaging Sci., 11
  (2018), pp.~24--55.

\bibitem{Chang2015sims}
{\sc H.~Chang, X.-C. Tai, L.-L. Wang, and D.~Yang}, {\em Convergence rate of
  overlapping domain decomposition methods for the {R}udin-{O}sher-{F}atami
  model based on a dual formulation}, SIAM J. Image Sci., 8 (2015),
  pp.~564--591.

\bibitem{chang2014domain}
{\sc H.~Chang, X.~Zhang, X.-C. Tai, and D.~Yang}, {\em Domain decomposition
  methods for nonlocal total variation image restoration}, Journal of
  Scientific Computing, 60 (2014), pp.~79--100.

\bibitem{chapman1996phase}
{\sc H.~N. Chapman}, {\em Phase-retrieval x-ray microscopy by
  wigner-distribution deconvolution}, Ultramicroscopy, 66 (1996), pp.~153--172.

\bibitem{chen2019parallel}
{\sc R.~Chen, J.~Huang, and X.-C. Cai}, {\em A parallel domain decomposition
  algorithm for large scale image denoising}, Inverse Problems \& Imaging, 13
  (2019), p.~1259.

\bibitem{enfedaque2019high}
{\sc P.~Enfedaque, H.~Chang, B.~Enders, D.~Shapiro, and S.~Marchesini}, {\em
  High performance partial coherent x-ray ptychography}, in International
  Conference on Computational Science, Springer, 2019, pp.~46--59.

\bibitem{fannjiang2019fixed}
{\sc A.~Fannjiang and Z.~Zhang}, {\em Fixed point analysis of
  {D}ouglas-{R}achford splitting for ptychography and phase retrieval}, arXiv
  preprint arXiv:1909.08600,  (2019).

\bibitem{fornasier2010convergent}
{\sc M.~Fornasier, A.~Langer, and C.-B. Sch{\"o}nlieb}, {\em A convergent
  overlapping domain decomposition method for total variation minimization},
  Numerische Mathematik, 116 (2010), pp.~645--685.

\bibitem{gao2019solving}
{\sc B.~Gao, Y.~Wang, and Z.~Xu}, {\em Solving a perturbed amplitude-based
  model for phase retrieval}, arXiv preprint arXiv:1904.10307,  (2019).

\bibitem{Glowinski1989}
{\sc R.~Glowinski and P.~L. Tallec}, {\em Augmented Lagrangian and
  operator-splitting methods in nonlinear mechanics}, SIAM Studies in Applied
  Mathematics, Society for Industrial and Applied Mathematics (SIAM),
  Philadelphia, PA, 1989.

\bibitem{glowinski1988domain}
{\sc R.~Glowinski and M.~F. Wheeler}, {\em Domain decomposition and mixed
  finite element methods for elliptic problems}, in First international
  symposium on domain decomposition methods for partial differential equations,
  1988, pp.~144--172.

\bibitem{guizar2011phase}
{\sc M.~Guizar-Sicairos, A.~Diaz, M.~Holler, M.~S. Lucas, A.~Menzel, R.~A.
  Wepf, and O.~Bunk}, {\em Phase tomography from x-ray coherent diffractive
  imaging projections}, Optics express, 19 (2011), pp.~21345--21357.

\bibitem{guizar2008phase}
{\sc M.~Guizar-Sicairos and J.~R. Fienup}, {\em Phase retrieval with transverse
  translation diversity: a nonlinear optimization approach}, Opt. Express, 16
  (2008), pp.~7264--7278.

\bibitem{guizar2014high}
{\sc M.~Guizar-Sicairos, I.~Johnson, A.~Diaz, M.~Holler, P.~Karvinen, H.-C.
  Stadler, R.~Dinapoli, O.~Bunk, and A.~Menzel}, {\em High-throughput
  ptychography using eiger: scanning x-ray nano-imaging of extended regions},
  Optics express, 22 (2014), pp.~14859--14870.

\bibitem{hesse2015proximal}
{\sc R.~Hesse, D.~R. Luke, S.~Sabach, and M.~K. Tam}, {\em Proximal
  heterogeneous block implicit-explicit method and application to blind
  ptychographic diffraction imaging}, SIAM J. Imaging Sci., 8 (2015),
  pp.~426--457.

\bibitem{jiang2018}
{\sc D.~Jiang and H.~Feng}, {\em Domain decomposition methods for recovering
  {R}obin coefficients in elliptic and parabolic systems}, Comput. Methods
  Appl. Math., 18 (2018), pp.~257--274.

\bibitem{1930-8337_2015_1_163}
{\sc D.~Jiang, H.~Feng, and J.~Zou}, {\em Overlapping domain decomposition
  methods for linear inverse problems}, Inverse Problems and Imaging, 9 (2015),
  pp.~163--188.

\bibitem{langer2019overlapping}
{\sc A.~Langer and F.~Gaspoz}, {\em Overlapping domain decomposition methods
  for total variation denoising}, SIAM Journal on Numerical Analysis, 57
  (2019), pp.~1411--1444.

\bibitem{lee2017primal}
{\sc C.-O. Lee and C.~Nam}, {\em Primal domain decomposition methods for the
  total variation minimization, based on dual decomposition}, SIAM Journal on
  Scientific Computing, 39 (2017), pp.~B403--B423.

\bibitem{lee2019finite}
{\sc C.-O. Lee, E.-H. Park, and J.~Park}, {\em A finite element approach for
  the dual {R}udin--{O}sher--{F}atemi model and its nonoverlapping domain
  decomposition methods}, SIAM Journal on Scientific Computing, 41 (2019),
  pp.~B205--B228.

\bibitem{lions1988schwarz}
{\sc P.-L. Lions}, {\em On the {S}chwarz alternating method. i}, in First
  international symposium on domain decomposition methods for partial
  differential equations, vol.~1, Paris, France, 1988, p.~42.

\bibitem{maiden2009improved}
{\sc A.~M. Maiden and J.~M. Rodenburg}, {\em An improved ptychographical phase
  retrieval algorithm for diffractive imaging}, Ultramicroscopy, 109 (2009),
  pp.~1256--1262.

\bibitem{marchesini2007invited}
{\sc S.~Marchesini}, {\em Invited article: A unified evaluation of iterative
  projection algorithms for phase retrieval}, Review of scientific instruments,
  78 (2007), p.~011301.

\bibitem{marchesini2016sharp}
{\sc S.~Marchesini, H.~Krishnan, D.~A. Shapiro, T.~Perciano, J.~A. Sethian,
  B.~J. Daurer, and F.~R. Maia}, {\em {SHARP}: a distributed, {GPU}-based
  ptychographic solver}, J. Appl. Crystallogr., 49 (2016), pp.~1245--1252.

\bibitem{osti_1324480}
{\sc S.~Marchesini, Y.-C. Tu, and H.-T. Wu}, {\em Alternating projection,
  ptychographic imaging and phase synchronization}, Applied and Computational
  Harmonic Analysis, 41 (2016).

\bibitem{nashed2014parallel}
{\sc Y.~S. Nashed, D.~J. Vine, T.~Peterka, J.~Deng, R.~Ross, and C.~Jacobsen},
  {\em Parallel ptychographic reconstruction}, Optics express, 22 (2014),
  pp.~32082--32097.

\bibitem{rodenburg2004phase}
{\sc J.~M. Rodenburg and H.~M. Faulkner}, {\em A phase retrieval algorithm for
  shifting illumination}, Appl. Phys. Lett., 85 (2004), pp.~4795--4797.

\bibitem{Shechtman2014}
{\sc Y.~Shechtman, Y.~C. Eldar, O.~Cohen, H.~N. Chapman, J.~Miao, and
  M.~Segev}, {\em Phase retrieval with application to optical imaging: a
  contemporary overview}, Signal Processing Magazine, IEEE, 32 (2015),
  pp.~87--109.

\bibitem{thibault2009probe}
{\sc P.~Thibault, M.~Dierolf, O.~Bunk, A.~Menzel, and F.~Pfeiffer}, {\em Probe
  retrieval in ptychographic coherent diffractive imaging}, Ultramicroscopy,
  109 (2009), pp.~338--343.

\bibitem{TOURNIER201988}
{\sc P.-H. Tournier, I.~Aliferis, M.~Bonazzoli, M.~de~Buhan, M.~Darbas,
  V.~Dolean, F.~Hecht, P.~Jolivet, I.~{El Kanfoud}, C.~Migliaccio, F.~Nataf,
  C.~Pichot, and S.~Semenov}, {\em Microwave tomographic imaging of
  cerebrovascular accidents by using high-performance computing}, Parallel
  Computing, 85 (2019), pp.~88 -- 97.

\bibitem{wen2012}
{\sc Z.~Wen, C.~Yang, X.~Liu, and S.~Marchesini}, {\em Alternating direction
  methods for classical and ptychographic phase retrieval}, Inverse Probl., 28
  (2012), p.~115010.

\bibitem{Wu&Tai2010}
{\sc C.~Wu and X.-C. Tai}, {\em Augmented {Lagrangian} method, dual methods and
  split-{Bregman} iterations for {ROF}, vectorial {TV} and higher order
  models}, SIAM J. Imaging Sci., 3 (2010), pp.~300--339.

\bibitem{xu2010two}
{\sc J.~Xu, X.-C. Tai, and L.-L. Wang}, {\em A two-level domain decomposition
  method for image restoration}, Inverse Probl. Imaging, 4 (2010),
  pp.~523--545.

\bibitem{yu2018three}
{\sc Y.-S. Yu, M.~Farmand, C.~Kim, Y.~Liu, C.~P. Grey, F.~C. Strobridge,
  T.~Tyliszczak, R.~Celestre, P.~Denes, J.~Joseph, H.~Krishnan, F.~R. N.~C.
  Maia, A.~L.~D. Kilcoyne, S.~Marchesini, T.~P.~C. Leite, T.~Warwick,
  H.~Padmore, J.~Cabana, and D.~A. Shapiro}, {\em Three-dimensional
  localization of nanoscale battery reactions using soft x-ray tomography},
  Nat. Commun., 9 (2018), p.~921.

\bibitem{zheng2013wide}
{\sc G.~Zheng, R.~Horstmeyer, and C.~Yang}, {\em Wide-field, high-resolution
  fourier ptychographic microscopy}, Nature Photonics, 7 (2013), pp.~739--745.

\end{thebibliography}

\end{document}